\newtheorem{theorem}{Theorem}[section]
\newtheorem*{theo}{Theorem}
\newtheorem{corollary}[theorem]{Corollary}
\newtheorem*{main}{Main Theorem}
\newtheorem{lemma}[theorem]{Lemma}
\newtheorem{proposition}[theorem]{Proposition}
\theoremstyle{definition}
\newtheorem{definition}[theorem]{Definition}
\newtheorem{remark}[theorem]{Remark}
\newtheorem{example}{Example}[section]
\newtheorem*{theorem*}{Theorem}
\DeclarePairedDelimiter\floor{\lfloor}{\rfloor}
\DeclarePairedDelimiter\ceil{\lceil}{\rceil}
\setlist{noitemsep,topsep=0pt,partopsep=0pt}
\newcommand{\MP}{\ensuremath{{\mathcal P}}}
\newcommand{\MZ}{\ensuremath{{\mathcal Z}}}
\newcommand{\ZZ}{\ensuremath{{\mathbb Z}}}
\newcommand{\CC}{\ensuremath{{\mathbb C}}}
\newcommand{\CW}{\ensuremath{{\widehat{\mathbb C}}}}
\newcommand{\RR}{\ensuremath{{\mathbb R}}}
\newcommand{\NN}{\ensuremath{{\mathbb N}}}
\newcommand{\QQ}{\ensuremath{{\mathbb Q}}}
\newcommand{\HH}{\ensuremath{{\mathbb H}}}
\newcommand{\E}{\ensuremath{{\mathscr E}}}
\newcommand{\R}{\ensuremath{{\mathcal R}}}
\font\myfont=cmr10 at 12pt
\newcommand{\e}{{\text{\myfont e}}}
\newcommand{\abs}[1]{\left\lvert#1\right\rvert}
\renewcommand{\Re}[1]{{\mathfrak{Re}\left(#1\right)}}
\renewcommand{\Im}[1]{{\mathfrak{Im}\left(#1\right)}}
\renewcommand{\bar}[1]{{\overline{#1}}}
\newcommand{\del}[2]{\frac{\partial #1}{\partial #2}}
\newcommand{\ddel}[2]{\dfrac{\partial #1}{\partial #2}}
\newcommand{\Arg}[1]{\text{Arg}\left(#1\right)}
\newcommand{\ent}[2]{{}_{#1}\mathscr{E}_{#2}}
\newcommand{\circled}[1]{ \text{\tikz[baseline=(char.base)]{
    \node[shape=circle,draw,inner sep=1pt] (char) {$#1$};}} }
\newcommand{\raiz}[1]{ \text{\tikz[baseline=(char.base)]{
    \node[shape=circle,draw,inner sep=0.001pt] (char) {
    \text{\tikz[baseline=(char.base)]{
    \node[shape=circle,draw,inner sep=1pt] (char) {$#1$};}}
    };}} }
\newcommand{\msigma}{\ensuremath{{\mathfrak{a}}}}
\newcommand{\mrho}{\ensuremath{{\mathfrak{r}}}}
\newcommand{\mvarrho}{\ensuremath{{\mathfrak{z}}}}
\newcommand*\bigcdot{\mathpalette\bigcdot@{.5}}
\newcommand*\bigcdot@[2]{\mathbin{\vcenter{\hbox{\scalebox{#2}{$\m@th#1\bullet$}}}}}
\title[Dynamics of vector fields with essential singularities II]
{Dynamics of singular complex analytic vector fields 
with essential singularities II}
\author[Alvaro Alvarez--Parrilla and Jes\'us Muci\~no--Raymundo]{}
\subjclass[2020]{Primary: 32S65; Secondary: 
37F75, 
30F10, 
30D20,
32M25.}
 \keywords{Complex analytic vector fields \and Riemann surfaces \and Essential singularities.}
 \email{alvaro.uabc@gmail.com}
 \email{muciray@matmor.unam.mx}
\thanks{$^*$ Corresponding author}
\begin{document}
\maketitle

\centerline{\scshape Alvaro Alvarez--Parrilla$^*$}
\medskip
{\footnotesize
 \centerline{Grupo Alximia SA de CV}
   \centerline{Ensenada, Baja California, CP 22800, M\'exico}
} 

\medskip

\centerline{\scshape Jes\'us Muci\~no--Raymundo}
\medskip
{\footnotesize
\centerline{Centro de Ciencias Matem\'aticas}
\centerline{Universidad Nacional Aut\'onoma de 
M\'exico, Morelia, M\'exico} 
}

\begin{abstract}
The singular complex analytic vector fields 
$X$ on the Riemann sphere $\CW_{z}$ belonging to the family
$$
\E(r,d)=\left\{ 
X(z)=\frac{1}{P(z)}\ \e^{E(z)}\del{}{z} 
\ \Big\vert \
P, E\in\CC[z]
\right\},
$$
where $P$ is monic,   
$\deg{P}=r, \ \deg{E}=d, \ r+d \geq 1 $,
have a finite number of poles on the complex plane 
and an isolated essential singularity at infinity
(for $d\geq 1$).  
Our aim is to
describe geometrically 
$X$, 
particularly the singularity at infinity. 
In order to do so, we use the 
natural one to one \emph{correspondence} between $X$,
a global singular analytic distinguished parameter 
$\Psi_X(z)=\int^z P(\zeta)\e^{-E(\zeta)}d\zeta$, and 
the Riemann surface $\R_X$ 
of this distinguished parameter.
We introduce \emph{$(r,d)$--configu\-ra\-tion trees} 
which are 
weighted directed rooted trees. 
An $(r,d)$--configuration tree completely 
encodes 
the Riemann surface $\R_X$
and the singular flat metric associated on $\R_{X}$.
The $(r,d)$--configuration trees provide ``parameters'' 
for the complex manifold $\E(r,d)$,
which give
explicit geometrical and dynamical information;
a valuable tool for the
analytic description of $X \in \E(r,d)$.
Furthermore, given $X$, the phase portrait of 
the associated real vector field $\Re{X}$
on the Riemann sphere
is decomposed into $\Re{X}$--invariant components: 
half planes and 
finite height strips. 
The germ of $X$ at infinity 
is described as a combinatorial word 
(consisting of hyperbolic, elliptic, parabolic and entire
angular sectors
having the point at infinity of $\CW_z$ as center). 
The structural stability, under perturbation in $\E(r,d)$, 
of the phase portrait of $\Re{X}$ 
is characterized by using the 
$(r,d)$--configuration trees. 
We provide explicit conditions, in terms of $r$ and $d$, 
as to when the number of topologically equivalent 
phase portraits of $\Re{X}$ 
is unbounded.
\end{abstract}

\newpage
\setcounter{tocdepth}{2} 
\tableofcontents

\section{Introduction}

Motivated by the nature of 
meromorphic and essential singularities of 
complex analytic vector fields on Riemann surfaces 
\cite{MR}, \cite{MucinoValero},
\cite{HockettRamamurti},
\cite{AlvarezMucino}, 
\cite{AlvarezMucino2},
we study the larger families 
\begin{equation}\label{familiaErd}
\E(r,d)=\left\{ X(z)=\frac{1}{P(z)}\e^{E(z)}\del{}{z}
\ \Big\vert\ 
\begin{array}{c}
P, E\in\CC[z], 
\ P \ \text{ monic},
\\
\deg{P}=r, \ \deg{E}=d, 
\ r+d \geq 1
\end{array}
\right\}, 
\end{equation}

\noindent 
of vector fields 
on the Riemann sphere $\CW$,
having $r$ poles on
$\CC$ and an isolated essential 
singularity at $\infty$ (for $d\geq 1$). 
The appearance of poles is one of the main new features,
respect to the previous
work in \cite{AlvarezMucino}.

\noindent 
Each
$X\in\E(r,d)$ is provided with a
global singular analytic 
distinguished parameter
\begin{equation}
\Psi_{X} (z)= \int^z P(\zeta) \e^{-E (\zeta )} d\zeta  
: \CC_{z}\longrightarrow \CW_{t},
\end{equation}

\noindent 
which in turn has an associated Riemann surface 
\begin{equation}
\R_{X}=\{(z,\Psi_{X}(z))\},
\end{equation}

\noindent
whose origin can be traced back to the seminal work of
R. Nevanlinna \cite{Nevanlinna1}, \cite{Nevanlinna2}.

\noindent  
Thus there is a bijective correspondence,
induced by $\Psi_{X}$,
between
\begin{equation}\label{Xcubiertas}
X \in \E(r,d)
\ \
\longleftrightarrow
\ \
\left\{
\begin{array}{l}
\text{branched 
coverings
} 

\pi
:\R_X\longrightarrow \big(\CW_{t},\del{}{t}\big) \text{ having } \\
d \text{ logarithmic branch points over } \infty, \\
d \text{ logarithmic branch points over } \{a_{\sigma}\}\subset\CC_{t}, \\
r \text{ 
finitely
ramified branch points
over } \{\widetilde{p}_{\iota}\}\subset\CC_{t}
\end{array} \right\},
\end{equation}
where 
$\pi(z,t)=t$. Along this work,
$(\CW_{z}, X)$ denotes a pair, Riemann sphere and a singular complex 
analytic vector field, 
$\big((\CC_{z}, z_0), X\big)$ denotes a germ of a singular analytic vector field $X$ on $(\CC_{z}, z_0)$.
The work of M. Taniguchi 
\cite{Taniguchi1}, \cite{Taniguchi2}
is a keystone for understanding 
the right side of
\eqref{Xcubiertas}. 
Analogous correspondences  
have been previously used in 
\cite{AlvarezMucino}, \cite{AlvarezMucino2}, \cite{AlvarezMucinoSolorzaYee}.  
The function $\Psi_X$ is single valued, 
as a consequence there exists a
biholomorphism
$\CC_z \cong \R_X$
and  $(\Psi_X)_* X = \del{}{t}$ providing 
a \emph{global flow box} for $X$, 
see Lemma \ref{maximalOmega}.

\noindent 
The Riemann surface $\R_{X}$ can be naturally described 
by gluing
half planes $\HH^2$ and finite height strips 
$\{ 0 < \Im{t} < h\}$, see Lemma 
\ref{descomposicion-planos-bandas}.

Three natural cases arise from the values $(r,d)$:

\smallskip 
\noindent
$\bullet$
$X\in\E(r,0)$
has $r$ poles on $\CC_{z}$ and 
$\Psi_X$ is a polynomial map. 
See W. M. Boothby \cite {Boothby1}, \cite{Boothby2} for pioneering work and 
S. K. Lando {\it et al.} 
\cite{LandoZvonkin} chapters 1 and 5 for advances in the combinatorial direction.

\smallskip 
\noindent
$\bullet$
$ X\in\E(0,d)$
has an isolated essential singularity at $\infty\in\CW_{z}$, no zeros or poles.
$\Psi_{X}$ is an infinitely ramified covering map
as in \eqref{Xcubiertas}; 
see R. Nevanlinna \cite{Nevanlinna1} chapter XI, M. Taniguchi \cite{Taniguchi1}, \cite{Taniguchi2}; 
and \cite{AlvarezMucino}.

\smallskip 
\noindent 
$\bullet$
$X\in\E(r,d)$, $r, \, d \geq 1$, 
has $r$
poles on $\CC_{z}$ and an isolated essential singularity 
at $\infty\in\CW_{z}$.
$\Psi_X$ is an infinitely ramified covering map as in \eqref{Xcubiertas}.

\medskip
\noindent

Obviously, 
$\E(r,d)$
is an open complex submanifold of $\CC^{r+d+1}$, 
see \cite{AlvarezMucino2}.
However for the study of analytical, geometrical and topological aspects 
of $X$ and $\Psi_X$, ``geometrical parameters'' 
that shed light on the description 
of the vector fields are desirable. 
Recall for instance the role of the critical 
value map $\{f(z)_c=z^2 + c \}  \mapsto c $, as 
dynamical parameters for the Mandelbrot set
of the quadratic family;
our search is for 
parameters with analogous properties.
In our case, even though the map 

\medskip

\centerline{
$\big\{\text{coefficients of }P(z), E(z) \text{ from }X \big\}
\longmapsto
\left\{
\begin{array}{c}
\text{critical and asymptotic values }
\\
\{\widetilde{p}_\iota\}\cup\{a_\sigma\} \subset \CC_t \text{ of }\Psi_{X}
\end{array}
\right\}$}

\medskip

\noindent
is holomorphic, 
\emph{the critical and asymptotic values 
of $\Psi_X$ are insufficient to 
completely describe the family $\E(r,d)$}; 
see 
Example \ref{ejemplo-Exp3}, 
Figure \ref{fig-Exp3}
for an instance in $\E(0,3)$ and 
Corollary \ref{Psi-con-valores-criticos-asintoticos-preasignados}.

The classical notion
of divisor for $X\in\E(r,0)$,
as a meromorphic section of the tangent line bundle
$T\CW_z$,
provides a finite collection of pairs; poles and zeros with their orders. 
The divisor characterizes the vector field up to multiplicative factor,
see Lemma \ref{Brill--Noether-elemental}.
However, for $d \geq 1$ the essential 
singularity of $X$ at $\infty$ encodes
the information related to the exponential. 
Following the idea of divisor,
for the transcendental 
case $d\geq 1$, 
we introduce a non--Hausdorff compactification of $\CC_z$ with 
$2d$ copies of $\infty$.
This allows us to obtain a finite collection of triplets; branch points
in \eqref{Xcubiertas}
with their ramification index.
The triplets play the role of the divisor for $X\in\E(r,d)$,
see \S\ref{divisores-ternas} and Definition 
\ref{divisor-reducido}.
An advantage is that the information contained in the triplets includes 
the poles of $X$, the critical and asymptotic 
values of $\Psi_X$.

However, the information contained in the triplets is not enough for a complete 
description of $X$.
With this in mind,
in Definition 
\ref{d-confTree}, 
we introduce \emph{$(r,d)$--configuration trees 
$\Lambda_X $} which are  
weighted directed rooted trees 
that completely 
encode the branched Riemann surfaces $\R_X$, for $X\in\E(r,d)$.
They provided explicit 
``geometrical parameters'', 
which allows us to obtain a 
\emph{complete global analytical and 
geometrical description} for 
$X$.

\noindent
The \emph{vertices} of $\Lambda_{X}$ are the 
triplets associated to 
the branch points in $\R_{X}$,
over $\CC_t$ as in \eqref{Xcubiertas}, 
including their ramification index.

\noindent
The \emph{weighted edges} of $\Lambda_{X}$ provide us with 
the relative position of the branch points on $\R_X$, 
\emph{i.e.} two pieces of information:
\\
1) each \emph{edge} specifies which pair of branch points share the same 
sheet of $\R_{X}$, 
\\
2) the \emph{weight} of the edge tells us the relative number of  
sheets of $\R_{X}$, we must go ``up or down'' on the surface in order to find another sheet containing other
branch points.

Letting

\centerline{
$\E(r,d)^*\doteq
\left\{ X\in\E(r,d)\ \Big\vert \ 
\begin{array}{c}
\R_X 
\text{has at least two branch points over  
different}
\\
\text{critical and asymptotic values 
of }\Psi_X 
\end{array}
\right \}
$,}

\noindent
we have:

\begin{main}[$(r,d)$--configuration trees as 
parameters
for $\E(r,d)$]
\label{ClassificationThm}
\hfill

\noindent 
For each pair $(r,d)$, $r+d\geq 2$,
there is an isomorphism, as complex manifolds of dimension $r+d+1$, 
between $\E(r,d)^*$ 
and equivalence
classes of $(r,d)$--configuration trees
with at least two vertices,
{\it i.e.}
$$
\E(r,d)^* \cong \left\{ \big[ \Lambda_{X}\big] 
\ \big| \ \Lambda_{X}\text{ is a }(r,d)
\text{--configuration tree 
with at least two vertices}
\right\}.
$$  
\end{main}

\smallskip

The vector fields avoided in  $\E(r,d)^*$ are of two kinds; 
those in $\E(0,1)$ and those in $\E(r,0)$ with a unique pole. 
In \S\ref{concreteexamples} 
explicit examples of $\Lambda_{X}$ 
can be found, while in \S\ref{dificultades}
a digression on some of the 
difficulties encountered in the proof of the Main Theorem,
are presented. 
Moreover, these difficulties require the consideration of classes $[ \Lambda_{X} ]$ of 
$(r,d)$--configuration trees;
the description of the classes, roughly speaking, originates from a re--labelling 
of the vertices of $\Lambda_{X}$, see Definition \ref{skeleton}.
The proof is presented in \S\ref{Mainclassification}, with 
the description of the equivalence relation and their classes 
$[\, \cdot \, ]$ in \S\ref{clasesE(d)}.
Our Main Theorem extends results of \cite{AlvarezMucino} 
\S8.5
in the families $\E(0, d)$, to the families $\E(r,d)$ with $r+d \geq 1$.

We decode the information at $\infty$, that is 
we shall answer the following question:

\smallskip
\emph{How can we describe the 
essential singularity of $X$ at $\infty\in\CW_{z}$, 
for $X\in\E(r,d)$?}

\smallskip

The classical idea, which 
has its roots in the work of
I. Bendixon, 
A. A. Andronov and F. Dumortier \emph{et al.} 
(see 
\cite{Andronov-L-G-M} p.~304, \cite{Arnold-Ilyashenko} p.~84
and 
theorem 5.1 in \cite{AlvarezMucino}),
is to look at the germ at infinity 
$\big( (\CW_z, \infty), X \big)$
and try to split into a finite union of  
hyperbolic $H$, elliptic $E$, parabolic $P$
and entire $\ent{}{}$ angular sectors, this last based upon 
$\e^z \del{}{z}$ at infinity; 
see Equation \eqref{de-germen-a-palabra} and
Figure \ref{los-cuatro-sectores}.
Thus obtaining a cyclic word 
$\mathcal{W}_X$ associated to 
$\big( (\CW_z, \infty), X \big)$.

For the essential singularity of $X$ at $\infty$  
the analytic/topological nature of 
the inva\-riants of $X$
is certainly a novel aspect, see
\S\ref{epilogo}, recalling that:

\noindent 
$\bigcdot$
The germ at infinity
$\big( (\CW_z, \infty),X \big)$,  which 
is a local analytic invariant 
of $X$ 
under biholomorphism germs
of $(\CW_z, \infty)$, and also under complex
affine transformations 
$Aut(\CC) \subset PSL(2, \CC)$ of $\CC_z$.

\noindent 
$\bigcdot$
The cyclic word 
$\mathcal{W}_{X}=W_{1} W_{2} \cdots W_{k}$, which 
is a local topological invariant 
of $\Re{X}$
under local homeomorphism of $(\CW_z, \infty)$ preserving the 
orientation. 

\noindent 
The following theorem answers the above posed question, as well as the
dynamical description of 
$X$ and equivalently for
the phase portrait of 
its associated real vector field
$\Re{X}$.

\begin{theo}[Dynamical applications]
\label{teorema-aplicaciones-dinamicas}
Let be $X\in\E(r,d)$.
\begin{enumerate}[label=\arabic*),leftmargin=*]
 
\item 
The cyclic word $\mathcal{W}_{X}$ associated to $X$ at $\infty$
is recognized as 
\begin{equation}
\big( (\CW_{z},\infty),\Re{X} \big) 
\longmapsto \mathcal{W}_X=W_{1} W_{2} \cdots W_{k}, 
\quad W_{\iota}\in\{ H,E,P,\ent{}{} \},
\end{equation}
with exactly $2d$ letters $W_{\iota}=\ent{}{}$.

\item 
The cyclic word $\mathcal{W}_X$  
is a topological invariant
of the germ
$\big( (\CW_{z},\infty),\Re{X} \big)$.

\item
Let 
$\big((\CC_z ,\infty), Y \big)$ be a 
singular complex analytic vector field germ,
the following are equivalent:

\noindent
$\bullet$
The germ
$\big((\CC_z ,\infty), Y \big)$
is analytically equivalent to 
the restriction of a vector field $X \in \E(r,d)$ for $d\geq 1$.

\noindent
$\bullet$
The cyclic word $\mathcal{W}_{Y}$ of the germ
$\big((\CC_z ,\infty), \Re{Y} \big)$ 
satisfies that 
\begin{enumerate}[label=\roman*)]
\item the residue of 
the 1--form of time $\omega_Y$ of $Y$ is 
$Res(\omega_{Y}, \infty ) = 0$,

\item the Poincar\'e--Hopf index 
of $Y$ is $PH(Y, 0 ) = 2 + r$, 

\item the word $\mathcal{W}_{Y}$ has exactly $2d\geq 2$ 
entire sectors $\ent{}{}$.
\end{enumerate}

\item
The phase portrait of $\Re{X}$ is structurally stable 
(under perturbations in $\E(r,d)$) if and only if

\begin{enumerate}[label=\roman*)]
\item
$X$ has only simple poles, 

\item
all edges of $\Lambda_X$ have 
non--zero imaginary component.
\end{enumerate}

\item
The number of non topologically equivalent phase portraits 

\centerline{$\{\Re{X} \ \vert\ X\in\E(r,d) \}$ \, on $\CW_z$}

\noindent
is  infinite if and only if 

\centerline{
$(r,d)\in\big\{(r\geq2,1), (r\geq1,2),(r\geq0,d\geq3)\big\}$.}

\end{enumerate}
\end{theo}

For the accurate assertions and proofs, see 
Theorem \ref{corresp-germen-palabra}, 
Theorem \ref{estabilidadestructural} and 
Theorem \ref{numbertop} respectively.
A stronger version of the decomposition of the phase 
portrait into $\Re{X}$--invariant components, 
can be found as Theorem \ref{HorizontalStripStructures}. 

Throughout this work, the objects previously described are 
related via the diagram

\begin{center}
\begin{picture}(180,85)(0,10)
\put(-35,78){$X\in \E(r,d) $}
\put(-65,35){$[\Lambda_X]$}
\put(15,35){$(r,d)\hbox{--soul}$}

\put(-50,47){\vector(1,1){25}}
\put(-32,65){\vector(-1,-1){20}}

\put(-15,37){\vector(1,0){25}}
\put(-10,37){\vector(-1,0){25}}

\put(20,47){\vector(-1,1){25}}
\put(2,65){\vector(1,-1){20}}

\put(70,35){$
\longrightarrow 
\underbrace{\big( (\CW_z, \infty), X \big) }_{
{\tiny{ 
\begin{array}{c}
\hbox{local  analytic}
\\ 
\hbox{invariant of }X \hbox{ at } \infty
\end{array}
} } 
}
\longrightarrow 
\underbrace{\mathcal{W}_{X}=W_{1} W_{2} \cdots W_{k}}_{
{\tiny{ \begin{array}{c}
\hbox{local topological}
\\ 
\hbox{invariant  of  } \Re{X} \hbox{ at } \infty
\end{array}
} }
}$.}
\end{picture}
\end{center}

\noindent 
The \emph{soul of $X$}, Definition
\ref{soul}, is the smallest flat Riemann surface 
inside $\R_X$
that encodes all 
the combinatorial information 
of $X$ (the analogous idea appears in riemannian 
geometry \cite{Cheeger-Gromoll} 
as the soul,
and vector fields 
\cite{MucinoValero} \S5.2
as dynamical locus). 
Summing up, 
the Main Theorem 
provides the global, on $\CW$, analytic 
bijection between 

\smallskip

\noindent $\bigcdot$
a vector field $X\in\E(r,d)$, 

\noindent $\bigcdot$
a class $[\Lambda_X]$ of $(r,d)$--configuration trees, and 

\noindent $\bigcdot$
an $(r,d)$--soul.  

\smallskip 
\noindent
Clearly, the germ
$\big( (\CW_z, \infty),X \big)$, 
does not determine the class of $X$ in $\E(r,d)/ Aut(\CC)$,
see Remark
\ref{el-germen-al-infinito-no-determina-la-clase-de-X}.

Moreover, the topological classification of functions $\Psi_X$
is coarser than the 
topological 
classification of phase 
portraits of vector fields $\Re{X}$, for $\E(r,d)$, 
see Remark \ref{function-coarser-vectorfield}.
In particular, for $X\in\E(r,d)$ the Riemann surface
$\R_{X}$ admits an infinite number of half planes 
$\bar{\HH}^2$
if and only if $d\geq1$.
However, following R. Nevanlinna,
Example \ref{TresValoresAsintoticos-texto}
provides a Riemann surface admitting a decomposition in an 
infinite number of half planes, 
where the corresponding vector field does not belong
to any $\E(r,d)$.

\smallskip
The study of complex functions and vector fields
under geometric tools (in our context: combinatorial with 
complex weights)
is possible due to the richness of their geometric structure, 
the roots of which goes back to H. Schwarz \cite{Schwarz} and F. Klein \cite{Klein-2}.
Our Main Theorem 
provides a geometrical 
characterization of the vector fields $X$, 
functions $\Psi_X$ and Riemann surfaces $\R_X$ originated 
from the families $\E(r,d)$. 
It enhances 
the work of A. Speiser \cite{Speiser2}, 
R. Nevanlinna \cite{Nevanlinna1}, \cite{Nevanlinna2} p.~291 and 
G. Elfving \cite{Elfving} 
on the classification, via line complexes, 
of (simply connected) Riemann surfaces $\R_{X}$ related to 
meromorphic functions $\Psi_{X}$. 
M. Taniguchi \cite{Taniguchi1} \& \cite{Taniguchi2} and 
K. Biswas {\it et al.} \cite{Biswas-PerezMarco-1}, \cite{Biswas-PerezMarco-2} 
\& \cite{Biswas-PerezMarco-3}
develop analytic aspects of the functions $\Psi_X$, for $d \geq 1$. 
More recently, the study of parameter spaces for
complex analytic vector fields is a current subject of interest; 
\emph{e.g.}
J. Muci\~no--Raymundo \emph{et al.} \cite{MucinoValero}, 
\cite{MR} in the rational case;
B. Branner \emph{et al.} \cite{Branner-Dias}, 
M.--E. Fr\'ias--Armenta \emph{et al.} \cite{Frias-Mucino},
K. Dias \emph{et al.} \cite{Dias-Tan}, 
M. Kilme\v{s} \emph{et al.} \cite{Kilmes-Rousseau} in the polynomial case.

\smallskip 

Some of the proofs presented are based upon 
technical results of \cite{AlvarezMucino}, however
the minimal previous results,
evidence and examples provided in this work allow for a self 
contained reading and understanding.

As future work,
see \S \ref{future-work}:
in the combinatorial framework
recall the fruitful ideas of 
Bely{\u \i} functions and
dessin's d'enfants, promoted by A. Grothendieck;
$(r,d)$--configuration trees follow this, see 
\S \ref{Dessin's-d'-enfants}. 
The extension of these ideas to $\E(r,d)$ will be appear elsewhere. 
A clear topological description 
of $\R_X$ as a ramified covering, see 
\eqref{Xcubiertas}, is missing
for the more general vector field $Y(z)=(Q(z)/P(z)) 
\e^{E (z)} \del{}{z}$ having zeros: it remains 
for future projects.
The possible construction of effective local parameters
for $\E(r,d)$, avoiding the equivalence classes
in $\{[ \Lambda_X] \}$ are discussed in the Epilogue
\S \ref{epilogo}. 

\newpage
\section{Different facets for singular analytic vector fields $X\in\E(r,d)$}\label{sec:prelim}

\subsection{Vector fields, differential forms, 
orientable quadratic differentials, flat metrics, distinguished parameters, Riemann surfaces}
\label{subsec:equivalencias}
Let
\begin{equation}\label{campo-X-con-P-y-E}
X(z)=\frac{1}{P(z)}\ \e^{E(z)}\del{}{z} 
\in \E(r,d), 
\ \ \ \deg{P}=r, \ \deg{E}=d, \ r+d\geq 1,
\end{equation}
be a singular complex analytic vector field in the family
\eqref{familiaErd}. 
The polynomials describing it can be expressed as
\begin{equation}
\label{coeficientes-P-y-E}
\begin{array}{c}
P(z)=(z- p_1) \cdots (z- p_r) 
\doteq 
z^r + b_1 z^{r-1}+ \cdots + b_r , \ \
\\
\vspace{-.2cm}
\\
E(z)=\mu\, (z- e_1) \cdots (z- e_d) 
\doteq
\mu\, \big(z^d + c_1 z^{d-1}
+ \cdots + c_d \big), 
\ \ \ \mu\in\CC^*.
\end{array}
\end{equation}

\noindent
Note that 
if $d=0$ then $P(z)$ is non necessarily monic,
so in this case, let 
\begin{equation}\label{def-lambda}
\lambda\doteq\e^{E(z)}=\e^{\mu c_0}\in\CC^*.
\end{equation}
\noindent
We denote by 
\begin{equation}\label{los-polos}
\MP \doteq \{p_1, \ldots, p_\iota, \ldots , p_r \}
\end{equation}
\noindent 
the set of \emph{poles of $X$}, allowing repetitions.

\noindent
The \emph{associated singular analytic} differential form, 
\begin{equation}\label{difform}
\omega_{X}=P(z)\, \e^{-E(z)} dz,
\end{equation}
is such that $\omega_{X}(X)\equiv 1$,
also called the 1--form of time of $X$.

\noindent
A \emph{singular analytic quadratic differential} $\mathcal{Q}$ on $\CW_{z}$ is 
\emph{orientable} if it is globally given as $\mathcal{Q}=\omega\otimes\omega$, 
for some singular analytic differential form $\omega$ on $\CW_{z}$.
In our case we have the quadratic differential, 
\begin{equation}\label{cuaddif}
\mathcal{Q}_{X}=\omega_{X}\otimes\omega_{X}
= P^{2}(z)\, \e^{-2E(z)} dz^{2} .
\end{equation}

\noindent 
The  singular \emph{horizontal trajectories} 
of $\mathcal{Q}_X$ on $\CC_{z}\backslash\MP$
are equivalent 
to the trajectories of the 
real vector field $\Re{X}$, 
see for instance equation (2.2) of \cite{AlvarezMucino}. 

\noindent
Since $\omega_{X}$ is holomorphic on $\CC_{z}$, the local notion 
of \emph{distinguished parameter}
can be extended as below
(see \cite{Jenkins}, \cite{Strebel}
for the local case).
\begin{definition}
Let $X\in\E(r,d)$, the map 

\centerline{
$\Psi_{X}(z)=\int_{z_{0}}^{z} P(\zeta)\, \e^{-E(\zeta)} d\zeta : 
\CC_{z} \longrightarrow \CW_{t} $} 

\noindent 
is a \emph{global distinguished parameter for $X$}
(note the dependence on $z_0\in\CC_{z}$). 
\end{definition}

\noindent
The \emph{singular flat Riemannian metric $g_{X}$ with singular set $\MP\subset\CC_{z}$} 
on $\CC_{z}\backslash\MP$ is defined as the pullback under 
$\Psi_{X}:(\CC_{z}\backslash\MP , g_{X})\rightarrow (\CC_{t},\delta)$, 
where $\delta$ is the usual flat
Riemannian metric on $\CC_{t}$. 
The singularities of $g_{X}$ at $p_{\iota}\in\MP$ are cone points with angle $(2\nu_{\iota}+2)\pi$, where $-\nu_{\iota}\leq-1$ is the order of the pole $p_{\iota}$ of $X$.
The trajectories of $\Re X$ and $\Im  X$ are unitary geodesics in 
$(\CC_{z}\backslash\MP , g_{X})$. 

\noindent
The graph of $\Psi_{X}$

\centerline{$
\R_{X}= \{(z,t) \ \vert \  t=\Psi_{X}(z) \} \subset \CC_{z}\times\CW_{t}
$}

\noindent 
is a Riemann surface. 
Let $\pi_{X,1}$ and $\pi_{X,2}$ be the projections 
from $\R_{X}$ to $\CC_{z}$ and $\CW_{t}$, respectively.
The flat metric
on $\big(\R_{X},\pi_{X,2}^{*}(\del{}{t})\big)$ is induced by the usual metric on
$\big(\CW,\delta\big)$, equivalently
$\big(\CW_{t},\del{}{t}\big)$, 
via the projection of $\pi_{X,2}$.
Since $\pi_{X,1}$, as in Diagram \ref{diagramaRX}, is an isometry.
\begin{lemma}\label{LemmaRX}
The following diagram commutes
\begin{center}
\begin{picture}(180,80)(0,10)

\put(-90,40){\vbox{\begin{equation}\label{diagramaRX}\end{equation}}}

\put(10,75){$\big(\CW_{z},X\big) $}

\put(115,75){$\big(\R_X,\pi^*_{X,2}(\del{}{t})\big)$}

\put(108,78){\vector(-1,0){60}}
\put(65,85){$\pi_{X,1}$}

\put(133,65){\vector(0,-1){30}}
\put(138,47){$ \pi_{X,2} $}

\put(38,65){\vector(2,-1){73}}
\put(55,39){$ \Psi_X $}

\put(115,20){$\big(\CW_t,\del{}{t}\big) $.}

\end{picture}
\end{center}

\noindent 
Moreover, 
$\Psi_X$ is single valued,
by removing 
$\infty\in\CW_{z}$.
The projection 
$\pi_{X,1}$ is a biholomorphism between

\centerline{ 
$\big( \R_{X},\pi^*_{X,2}(\del{}{t}) \big)$ 
\ and  \   $(\CC_{z},X)$.}
\hfill\qed
\end{lemma}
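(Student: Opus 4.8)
The plan is to deduce the whole statement from the single structural fact that $\R_{X}$ is, by construction, the graph of the holomorphic function $\Psi_{X}$, together with the elementary identity $d\Psi_{X}=\omega_{X}$ which records $\omega_{X}(X)\equiv 1$.

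\emph{Step 1 (commutativity).} First I would note that every point of $\R_{X}$ has the form $q=(z,\Psi_{X}(z))$ with $z\in\CC_z$, and that the coordinate projections act by $\pi_{X,1}(q)=z$ and $\pi_{X,2}(q)=\Psi_{X}(z)$. Hence $\pi_{X,2}=\Psi_{X}\circ\pi_{X,1}$ on $\R_{X}$, which is exactly the commutativity of the triangle in \eqref{diagramaRX}.

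\emph{Step 2 (biholomorphism).} Next, to see that $\pi_{X,1}$ is a biholomorphism, I would observe that it is the restriction to $\R_{X}$ of the holomorphic first-coordinate projection $\CC_z\times\CW_t\to\CC_z$, that it is bijective onto $\CC_z$ because $\Psi_{X}$ is single valued there, and that its set-theoretic inverse $z\mapsto(z,\Psi_{X}(z))$ is holomorphic since $\Psi_{X}=\int^{z}P\,\e^{-E}\,d\zeta$ is entire on $\CC_z$. Thus $\pi_{X,1}$ is a biholomorphism of Riemann surfaces, so $\R_{X}$ is, as a Riemann surface, a copy of $\CC_z$; the logarithmic branch points of $\pi_{X,2}$ over $\infty$ and over $\{a_{\sigma}\}$ are ideal points sitting on the ends of $\R_{X}$, not interior points, and do not enter this identification.

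\emph{Step 3 (intertwining of the vector fields).} Then I would transport the vector field. Differentiating gives $d\Psi_{X}=P(z)\,\e^{-E(z)}\,dz=\omega_{X}$, so $\omega_{X}(X)\equiv 1$ says precisely that $X$ is the unique singular analytic vector field on $\CC_z$ with $d\Psi_{X}(X)\equiv\del{}{t}$ (uniqueness on $\CC_z\backslash\MP$, where $\omega_{X}\neq 0$, with the obvious meromorphic extension across $\MP$). On the other side, $\pi^{*}_{X,2}(\del{}{t})$ is by definition the unique singular analytic vector field $V$ on $\R_{X}$ with $d\pi_{X,2}(V)\equiv\del{}{t}$. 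Using Step 1 and the chain rule $d\Psi_{X}\circ d\pi_{X,1}=d\pi_{X,2}$, the push-forward $W:=(\pi_{X,1})_{*}V$ satisfies $d\Psi_{X}(W)=d\pi_{X,2}(V)=\del{}{t}$, hence $W=X$ by uniqueness. Therefore $(\pi_{X,1})_{*}\big(\pi^{*}_{X,2}(\del{}{t})\big)=X$, i.e. $\pi_{X,1}$ carries $\big(\R_{X},\pi^{*}_{X,2}(\del{}{t})\big)$ to $(\CC_z,X)$, completing the proof.

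\emph{Main obstacle.} I expect the only real (if modest) obstacle to be the well-definedness and singular behaviour of the object $\pi^{*}_{X,2}(\del{}{t})$ on $\R_{X}$, as opposed to the purely formal manipulations above. Off the $r$ ramified branch points over $\{\widetilde{p}_{\iota}\}$ the map $\pi_{X,2}$ is a local biholomorphism and the pullback is unambiguous; across such a point one must invoke the normal form $t=w^{\,k}$ with $k=\mu_{\iota}+1$ equal to the local degree of $\Psi_{X}$ at the pole $p_{\iota}$ of order $\mu_{\iota}$ (equivalently, the cone angle $(2\mu_{\iota}+2)\pi$ of $g_{X}$) to check that $\pi^{*}_{X,2}(\del{}{t})$ extends there as a singular analytic field with a pole of order $\mu_{\iota}$, matching $X$; near the ends of $\R_{X}$ the exponential-type normal form of $\pi_{X,2}$ likewise reproduces the essential singularity of $X$ at $\infty$. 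This local matching is what makes the identification $\big(\R_{X},\pi^{*}_{X,2}(\del{}{t})\big)\cong(\CC_z,X)$ consistent, essential singularity included.
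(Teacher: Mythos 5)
Your proposal is correct and follows the only natural route, which is exactly what the paper has in mind: the lemma is stated there with no written proof (it is taken as immediate from the fact that $\R_{X}$ is the graph of the single--valued entire function $\Psi_{X}$, so $\pi_{X,2}=\Psi_{X}\circ\pi_{X,1}$, the inverse $z\mapsto(z,\Psi_{X}(z))$ is holomorphic, and $\omega_{X}(X)\equiv 1$, i.e. $d\Psi_{X}=\omega_{X}$, transports $\pi_{X,2}^{*}(\del{}{t})$ to $X$, poles and the essential point at $\infty$ included). Your Steps 1--3 and the remark about the logarithmic branch points lying only in the non--Hausdorff closure simply make explicit what the paper leaves to the reader, consistent with its Remark on $\overline{\CC}_{z}$ and with Lemma \ref{maximalOmega}.
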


\noindent 
In what follows, unless explicitly stated, we shall use the abbreviated form $\R_{X}$ instead of the more 
cumbersome $\big(\R_{X},\pi^*_{X,2}(\del{}{t})\big)$, see Figures \ref{figejemplo-un-polo},
\ref{figejemplo-dos-polos}, \ref{figejemplo-un-va} and \ref{figejemplo-alma-no-trivial}.

\noindent
In Diagram \ref{diagramaRX} we abuse notation slightly by saying that the domain of $\Psi_{X}$ is $\CW_{z}$. 
This is a delicate issue. 
\begin{remark}
By integrating along 
asymptotic paths 
associated to asymptotic values of 
$\Psi_{X}$ at the essential singularity 
$\infty\in\CW_{z}$, 
the choice of initial $z_0$ and end points $z$ 
for the integral defining $\Psi_X$ 
can be relaxed to include $\infty\in\CW_{z}$ as end point,
see 
Definition \ref{defasymptoticvaluepath} 
and Figure \ref{distribucion-exponentialtracts}.
\end{remark}

\begin{lemma}\label{maximalOmega}
1. The map $\Psi_X$ is a global flow box of $X$, 
\emph{i.e.}

\centerline{
$(\Psi_{X})_{*} X = \del{}{t} $ \ \ \ \ \ on the whole $\CC_z$.}

\noindent  
2. For fixed initial
condition $z_{0}\in\CC_{z}\backslash\MP$ , the maximal (under analytic continuation) time domain 
of the complex flow $\varphi$ of $X$ is
provided by $\R_{X}$, that is

\centerline{
$\varphi( z_{0} ,\,  \cdot \, ): 
\R_{X}\backslash 
\cup_{p_\iota\in\MP} \big\{(p_{\iota},\widetilde{p}_{\iota})\big\} \longrightarrow \CC_{z}\backslash\MP$,}

\noindent
is a maximal complex trajectory solution.
\end{lemma}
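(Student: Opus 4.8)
The plan is to dispatch the two statements in turn, the first being a direct computation that the second then leverages.

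\emph{Part 1.} I would use the defining relation $\omega_{X}(X)\equiv 1$ (the line after \eqref{difform}) together with the fact that $\Psi_{X}$ is holomorphic on all of $\CC_{z}$ and satisfies $d\Psi_{X}=\omega_{X}$, equivalently $\Psi_{X}^{*}\big(dt\big)=\omega_{X}$. Writing $X=\big(1/P(z)\big)\,\e^{E(z)}\del{}{z}$ and $\Psi_{X}'(z)=P(z)\,\e^{-E(z)}$, the coefficient of $(\Psi_{X})_{*}X$ is $\Psi_{X}'\cdot\big(1/P\big)\,\e^{E}\equiv 1$, so $(\Psi_{X})_{*}X=\del{}{t}$ as a holomorphic identity on $\CC_{z}$; the zero of $\Psi_{X}'$ of order $\mu_{\iota}$ at a pole $p_{\iota}\in\MP$ exactly cancels the pole of order $\mu_{\iota}$ of the coefficient of $X$, so the identity persists on all of $\CC_{z}$ even though $\Psi_{X}$ is not a local biholomorphism on $\MP$. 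This is precisely the assertion that $\Psi_{X}$ is a global flow box.

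\emph{Part 2.} Fix $z_{0}\notin\MP$ and set $t_{0}=\Psi_{X}(z_{0})$. Near $z_{0}$ the map $\Psi_{X}$ is a local biholomorphism, so by Part~1 the local flow of $X$ is conjugated through $\Psi_{X}$ to the translation flow $t\mapsto t_{0}+t$ on $\CW_{t}$; thus $\varphi(z_{0},t)=\Psi_{X}^{-1}(t_{0}+t)$ for the local inverse branch. The maximal analytic continuation of this germ is, by construction, the Riemann surface of the multivalued inverse of $\Psi_{X}$, which is nothing but $\R_{X}=\{(z,\Psi_{X}(z))\}$: the ``complex time shifted by $t_{0}$'' is read off through $\pi_{X,2}$ and the ``position'' through $\pi_{X,1}$, and $\R_{X}$ is simply connected (being $\cong\CC_{z}$ via $\pi_{X,1}$), so the continuation is complete. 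It remains to see which points must be excised. A trajectory reaches a pole $p_{\iota}$ in \emph{finite} time: since $t=\Psi_{X}(z)-t_{0}$ along the flow and $\Psi_{X}(p_{\iota})=\widetilde{p}_{\iota}$ is a finite value with $\Psi_{X}(z)-\widetilde{p}_{\iota}=O\big((z-p_{\iota})^{\mu_{\iota}+1}\big)$, the point $(p_{\iota},\widetilde{p}_{\iota})\in\R_{X}$ is a genuine finite-time boundary point at which $X$ blows up and past which the flow cannot be continued, hence must be removed. On the other hand the asymptotic values $a_{\sigma}$ and the logarithmic branch points are attained only ``at $\infty\in\CW_{z}$'' and therefore already lie off $\R_{X}\subset\CC_{z}\times\CW_{t}$, so no further deletion is needed. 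Thus the maximal time domain is $\Omega_{X}=\R_{X}\setminus\bigcup_{p_{\iota}\in\MP}\big\{(p_{\iota},\widetilde{p}_{\iota})\big\}$, and since $\pi_{X,1}\colon\R_{X}\to\CC_{z}$ is a biholomorphism by Lemma~\ref{LemmaRX} restricting to a bijection $\Omega_{X}\to\CC_{z}\setminus\MP$, the flow map $\varphi(z_{0},\cdot)$ coincides with $\pi_{X,1}|_{\Omega_{X}}$.

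\emph{Main obstacle.} The delicate point is entirely in Part~2: making rigorous the identification of the maximal analytic-continuation domain of the flow germ at $(z_{0},t_{0})$ with $\R_{X}$ minus the pole fibres. Concretely one must check (a) that no point of $\R_{X}$ lying over $\CC_{t}$ other than the pole fibres obstructs continuation, and (b) that the pole fibres \emph{are} reached, in finite time, so that they genuinely must be removed rather than being artifacts; this is where the finite cone angle $(2\mu_{\iota}+2)\pi$ at $p_{\iota}$ and the order-$(\mu_{\iota}+1)$ ramification of $\pi_{X,2}$ over $\widetilde{p}_{\iota}$ enter. Once Part~1 and Lemma~\ref{LemmaRX} are granted, the rest is formal.
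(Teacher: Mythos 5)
Your proposal is correct, and it fills in exactly the argument the paper leaves implicit (the lemma is stated with no written proof): Part 1 is the identity $\omega_{X}(X)\equiv 1$ read through $d\Psi_{X}=\omega_{X}$, and Part 2 is the identification of the maximal continuation of the flow germ with the Riemann surface of $\Psi_{X}^{-1}$, i.e. $\R_{X}$ with the ramified points $(p_{\iota},\widetilde{p}_{\iota})$ removed, via Lemma \ref{LemmaRX}. Your handling of the two delicate points (the cancellation of zeros of $\omega_{X}$ against poles of $X$, and the finite-time arrival at poles versus the asymptotic values lying only in the non--Hausdorff closure) matches the paper's intent.
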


\begin{proof}
For assertion 2, note that 
the punctured 
$\R_X \backslash 
\cup_{p_\iota\in\MP} 
\big\{(p_{\iota},\widetilde{p}_{\iota})\big\} $
is a translation Riemann surface, 
following  
\cite{Thurston} \S 3.3 and
\cite{MasurTabachnikov}. Moreover, 
$\R_X$ is provided with singular 
horizontal and vertical foliations 
$\Re{ \pi^*_{X,2} (\del{}{t}) }$, 
$\Im{ \pi^*_{X,2} (\del{}{t})}$, of real and imaginary time. In the 
spirit of Riemann surface theory, the complex trajectory 
$\varphi(z_0, \,  \cdot \,  ) \doteq \pi_{X,1}( \, \cdot \,  )$ 
is  holomorphic and single valued
function of the variable in this punctured Riemann surface.
\end{proof}

\subsection{The singular complex analytic dictionary}
\noindent

\begin{proposition}[Dictionary between the singular analytic objects 
originating from $X\in \E(r,d)$, 
\cite{AlvarezMucino} p. 137] \label{basic-correspondence}
The following diagram describes a canonical one--to--one correspondence 
between its objects

\begin{picture}(200,138)(-70,-40)
\put(55,86){$X(z)=\frac{1}{P(z)}\, \e^{E(z)}\frac{\partial}{\partial z} $}
\put(-14,53){$\omega_X (z)= P(z)\, \e^{-E(z)}dz$}
\put(120,53){$\Psi_X (z)= \int\limits^z P(\zeta)\, \e^{-E(\zeta)}d\zeta$}
\put(60,-35){$\big((\CC ,g_X ), \Re X\big)$ \ .}

\put(58,80){\vector(-1,-1){15}}
\put(44,66){\vector(1,1){15}}

\put(152,66){\vector(-1,1){15}}
\put(138,80){\vector(1,-1){15}}

\put(58,-24){\vector(-1,1){15}}
\put(44,-10){\vector(1,-1){15}}

\put(151,-10){\vector(-1,-1){15}}
\put(138,-23){\vector(1,1){15}}

\put(0,0){$\mathcal{Q}_X= P^{2}(z)\, \e^{-2E(z)}dz^{2}
$}
 \put(128,0){$\big(\R_X, \pi^{*}_{X,2} (\frac{\partial}{\partial t})\big)$}

\put(36,37){\vector(0,-1){20}}
\put(36,22){\vector(0,1){20}}

\put(155,37){\vector(0,-1){20}}
\put(155,22){\vector(0,1){20}}

\put(-104,15){\vbox{\begin{equation}\label{diagramacorresp}\end{equation}}}
\end{picture}
\hfill\qed
\end{proposition}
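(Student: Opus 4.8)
\medskip
\noindent
\emph{Sketch of a proof.} The plan is to check that every edge of Diagram \eqref{diagramacorresp} is a bijection realized by canonical constructions in both directions (choice--free apart from the base point $z_0$, which is recorded), and that the hexagon commutes; the six objects are then identified. Part of the work is already done: Lemma \ref{LemmaRX} provides the biholomorphism $\pi_{X,1}$ identifying $\big(\CW_z,X\big)$ with $\big(\R_X,\pi^*_{X,2}(\del{}{t})\big)$, and Lemma \ref{maximalOmega} gives $(\Psi_X)_*X=\del{}{t}$, whence $\Psi_X=\pi_{X,2}\circ\pi_{X,1}^{-1}$; this settles the edges $X\leftrightarrow\Psi_X$ and $\Psi_X\leftrightarrow\R_X$ together with the commutativity of that part, which is Diagram \eqref{diagramaRX}. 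It remains to treat the algebraic and metric edges.

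\noindent
For $X\leftrightarrow\omega_X$: by \eqref{difform}, $\omega_X(X)\equiv1$, and since $\e^{E}$ is nowhere zero, $X$ has no zeros on $\CC_z$ and $\omega_X$ no poles there, the order of the pole of $X$ at each $p_\iota\in\MP$ matching the order of the zero of $\omega_X$. Hence $f(z)\,dz\mapsto\tfrac1{f(z)}\del{}{z}$ is a well--defined involution on the relevant classes of singular analytic objects and is the two--sided inverse of $X\mapsto\omega_X$. For $\omega_X\leftrightarrow\omega_X\otimes\omega_X$: the forward map is $\omega\mapsto\omega\otimes\omega$, which by \eqref{cuaddif} lands on the orientable quadratic differential $\MQ_X=P^2\e^{-2E}\,dz^2$; conversely, an orientable quadratic differential admits a global square root, unique up to sign, and the sign is exactly the orientation datum, recovering $\omega_X$. (This sign is the only genuine indeterminacy; it is harmless below, as $\Re{X}$ and $\Re{-X}$ define the same trajectories.)

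\noindent
For $\omega_X\leftrightarrow\Psi_X$: since $\omega_X$ is holomorphic on the simply connected $\CC_z$ it is exact, so $\Psi_X(z)=\int_{z_0}^{z}\omega_X$ is single--valued with $d\Psi_X=\omega_X$, the change of $z_0$ amounting to post--composition with a translation of $\CC_t$ --- precisely the indeterminacy recorded in the definition of the global distinguished parameter --- and differentiation inverts this. For the metric node: $X$ gives $g_X=\Psi_X^{*}(\delta)=|\omega_X|^{2}$, the flat metric on $\CC_z\backslash\MP$ with cone points of angle $(2\mu_\iota+2)\pi$ at the $p_\iota$, together with the unit horizontal geodesic field $\Re{X}$; conversely $g_X$ determines the conformal, hence complex, structure on $\CC_z\backslash\MP$ (the cone points reinstate $\MP$), the $(1,0)$--component of $\Re{X}$ for this structure recovers $X$, and developing $g_X$ along $\Re{X}$ returns $\Psi_X$ up to translation, which closes the hexagon; the remaining edge $\R_X\leftrightarrow\big((\CC,g_X),\Re{X}\big)$ is transport along the biholomorphism $\pi_{X,1}$.

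\noindent
The step I expect to be the main obstacle is confirming that the object reconstructed from the metric/foliation side actually lies in $\E(r,d)$: that the data force $X$ to extend over $\infty\in\CW_z$ as $\tfrac1P\e^{E}\del{}{z}$ with $\deg P=r$ and $\deg E=d$, rather than being merely some singular analytic field on $\CC_z$. The number and cone angles of the $p_\iota$ fix $\deg P=r$, while the prescribed behavior at $\infty$ --- a finite $1$--order essential singularity, equivalently the $d$ logarithmic branch points of $\Psi_X$ over $\infty$ and over $\{a_\sigma\}$ as in \eqref{Xcubiertas} --- fixes $\deg E=d$; here one invokes the normal form and asymptotic analysis at $\infty$ from \cite{AlvarezMucino} together with Lemma \ref{pareja-finita-infinita}. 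Granting this, what remains is formal: from the explicit formulas above, going around Diagram \eqref{diagramacorresp} from any node to itself is the identity, so the six objects are in a canonical bijection.
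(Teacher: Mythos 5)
Your sketch is essentially the argument behind this proposition: the paper itself gives no proof here, importing the dictionary from \cite{AlvarezMucino}, and your edge--by--edge verification (algebraic inversion via $\omega_X(X)\equiv 1$, squaring/square root with the orientation datum, integration and differentiation on the simply connected $\CC_z$ with the base point recorded as a translation of $\CC_t$, and the developing--map recovery of $\Psi_X$ from $\big((\CC,g_X),\Re{X}\big)$, transported to $\R_X$ by $\pi_{X,1}$) is exactly the standard route used there. Two small remarks. First, your parenthetical that the sign ambiguity of the square root is harmless because $\Re{X}$ and $\Re{-X}$ ``define the same trajectories'' is only true for the unoriented foliation --- they reverse the time orientation; this does not matter, but for the reason you also give: the node $\omega_X\otimes\omega_X$ is taken together with its orientation (equivalently, the bottom node carries $\Re{X}$ itself), and that datum pins the sign. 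Second, the step you single out as the main obstacle --- that the object reconstructed from the metric/foliation side lies in $\E(r,d)$ --- is not actually needed for this proposition: every node of Diagram \eqref{diagramacorresp} is by definition an object \emph{originating from} a given $X\in\E(r,d)$, so bijectivity only requires the explicit inverses you already wrote down; the question of which abstract surfaces, metrics or functions arise from $\E(r,d)$ belongs to the correspondence \eqref{Xcubiertas} (via Lemma \ref{pareja-finita-infinita} and Taniguchi's structural finiteness) and to the Main Theorem, not to the dictionary itself.
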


\begin{remark}\label{comentariosCorrrespondencia}
The correspondence 
must be understood up to choice of initial point
$z_{0}$ for the integral defining the global distinguished 
parameter. 
Thus, $\Psi_X$ and $\Psi_X+ {\tt t }_0$ are considered
the same object.  
\end{remark}

\section{Analytic characteristics of $X\in \E(r,d)$}\label{sec:analytic}

\subsection{Order of growth at a singular point of $X$}
In the classic literature, the \emph{order of growth or growth order at $\infty$}
is defined for entire functions, 
these invariants extend for vector fields, 
see \cite{AlvarezMucino} \S4.1
and references therein.
In the present work, 
we only require the use of the \emph{1--order}.
\\
Let $\psi:(\CC\backslash\{0\},0)\rightarrow\CC$ be a germ of a complex 
analytic function with an isolated singularity at $z=0$; {\it i.e.} $\psi$ has 
a pole or an isolated essential singularity  
at the origin. 
We can define for $\varepsilon>0$

\centerline{
$
M_{\varepsilon}(\psi)=\max_{\abs{z}=\varepsilon}\{\log\abs{\psi(z)} \}.
$
}

\noindent
When the number $\rho\in \RR$ determined by

\centerline{
$
\rho(\psi)=\limsup_{\varepsilon\to0}
\frac{\log(M_{\varepsilon}(\psi))}{-\log(\varepsilon)}
$
}

\noindent
exists, it is called 
the \emph{1--order of growth of $\psi$ at $0$}.

\begin{definition}\label{class-order-type}
Let $\big((\CC,0),X(z)=f(z)\del{}{z}\big)$ be a germ of a singular 
analytic vector field, 
with 0 an isolated singularity of $X$.  
The \emph{1--order of $X$ at $0$} is the
corresponding 1--order of $f$,  
{\it i.e.} 
$\rho (X) := \rho (f)$. 
Analogously, if $\omega(z)=dz/f(z)$ is a germ of a differential form with $0$ an isolated singularity of 
$\omega$, 
then $\omega$ inherits the \emph{1--order} from that of the function $1/f$.
\end{definition}

\begin{lemma}[\cite{AlvarezMucino} p. 144]
\label{orden-de-crecimiento-de-X}
If 
$X=\frac{1}{P(z)}\, \e^{E(z)} \del{}{z}\in\E(r,d)$,  
then at $z= \infty$, 

\centerline{$X$ has 1--order $\rho(X) = deg(E(z)) = d$.}

\noindent  
In this case the 1--order of $\omega$ and $\Psi_{X}$ agree and is the negative of the 1--order of $X$.
\hfill $\Box$
\end{lemma}

\subsection{Asymptotic values of $\Psi_X$}\label{asymptoticvalues}
Asymptotic values for meromorphic functions in the classical setting appear in many instances, 
see \cite{HuaYang} 
p.~66, \cite{Nevanlinna1} pp.~298--303,   
we follow W. Bergweiler {\it et al.} \cite{BergweilerEremenko}, 
essentially verbatim from Definition \ref{eremenko1} 
to Definition \ref{logbranchpoint}, below.

Let $\Psi : \CC_{z} \longrightarrow \CW_{t}$ be a meromorphic function,
a priori not related to some vector field.
The inverse function $\Psi^{-1}$ can be defined on a 
Riemann surface which is conformally equivalent to 
$\CC$ via $\Psi^{-1}$.
We want to study the singularities of $\Psi^{-1}$. 
This can be done by adding to $\CC_{z}$ 
some ideal points and defining neighborhoods of 
these points.

\begin{definition}\label{eremenko1}
Take $a\in\CW_{t}$ and denote by 
$D(a,\rho)$ the disk of radius $\rho > 0$ (in the spherical metric) 
centred at $a$. 
For every 
$\rho > 0$,
choose a component $U(\rho)$ of 
$\Psi^{-1}(D(a,\rho))$ in such a way that 
$\rho_{1} < \rho_{2}$ implies $U(\rho_1) \subset U(\rho_{2})$. 
Note that the function $U : \rho \to U(\rho)$ 
is completely determined by its germ at 0. 

\noindent
Two possibilities can occur for the germ of $U$:
\begin{enumerate}[label=\arabic*),leftmargin=*] 
\item 
$\cap_{\rho>0} U(\rho)=\{z_{0}\},\, z_{0}\in\CC_{z}$. 
In this case $a=\Psi(z_{0})$. 

\noindent
Moreover, if $a\in\CC_{t}$ and $\Psi'(z_{0})\neq0$, 
or 
$a = \infty$ and $z_{0}$ is a simple pole of $\Psi$, 
then $z_{0}$ is called an \emph{ordinary point}. 

\noindent
On the other hand, 
if $a\in\CC_{t}$ and $\Psi'(z_{0}) = 0$, 
or 
if $a = \infty$ and $z_{0}$ is a multiple pole of $\Psi$, 
then $z_{0}$ is called a \emph{critical point} and $a$ is called a \emph{critical value}. 
We also say that the critical point $z_{0}$ \emph{lies over $a$}.

\item $\cap_{\rho>0}U(\rho) = \varnothing$. 
Then we say that our choice $\rho \to
U(\rho)$ defines a \emph{transcendental 
singularity of $\Psi^{-1}$},
and that the transcendental singularity 
$U$ \emph{lies over $a$}. 
\\
For every 
$\rho > 0$, 
the open set $U(\rho) \subset \CC_{z}$ is called a 
\emph{neighborhood of the transcendental singularity $U$}. 
So if $z_{k} \in\CC_{z}$, we say that $z_{k} \to
U$ if for every $\rho>0$ 
there exists $k_{0}$ such that $z_{k} \in U(\rho)$ for $k\geq k_{0}$.
\end{enumerate}
\end{definition}

\begin{definition}\label{defasymptoticvaluepath}
If $U$ is a transcendental singularity of $\Psi^{-1}$ 
then $a$ is an \emph{asymptotic value of $\Psi$}, which 
means that there exists an \emph{asymptotic path}
$\alpha(\tau ):(0, \infty) \longrightarrow \CC_z$ tending to 
$\infty$ 
such that  $\lim_{\tau \to \infty} \Psi( \alpha(\tau) ) = a$. 
\end{definition}

\noindent
In particular, it follows that 
every neighborhood $U(\rho)$ of a transcendental singularity $U$ is unbounded. 

\noindent
If $a$ is an asymptotic value of $\Psi$, then 
there is at least one transcendental singularity over $a$.
Certainly there can be many different transcendental singularities 
as well as critical and ordinary points over the same point $a$. 

\begin{definition}
A transcendental singularity $U$ over $a$ is called 
\emph{direct} 
if there exists $\rho > 0$ such that $\Psi(z)\neq a$ 
for $z \in U(\rho)$, this is also true for all smaller values of $\rho$.

\noindent
Moreover, $U$ is called
\emph{indirect} if it is not direct, 
{\it i.e.} for every $\rho > 0$ the function $\Psi$ takes the value $a$ in $U(\rho)$, 
in which case the function $\Psi$ takes the value $a$ infinitely often in $U(\rho)$.
\end{definition}

\begin{definition}\label{logbranchpoint}
The transcendental singularity $U$ is 
a \emph{logarithmic branch point over $a$} 
if $\Psi : U (\rho) \longrightarrow
D(a, \rho) \backslash\{a\}$ is a universal covering for some $\rho > 0$.
The (unbounded) neighborhoods $U(\rho)$ are called \emph{exponential tracts}. 
\end{definition}

The simplest case of a direct singularity is a logarithmic branch point, see Example \ref{ejemplolog}.

A simple example of an indirect singularity is given by the inverse function of $\sin(z) / z$, 
where the asymptotic value $0$ is a limit point of critical values. 
In fact W.~Bergweiler and A.~Eremenko prove that 
\emph{``If $f$ is a meromorphic function of finite order, then every indirect singularity of $f^{-1}$ 
is a limit of critical points''.}
However more complicated examples show that this is not always the case. 
See theorem 1 of \cite{BergweilerEremenko} for further discussion and details.

\subsection{Poles and zeros of $X$}
When we apply the above definitions to germs
of the distinguished parameter $\Psi_{X}$,
centered at the isolated singularities 
$\{p_1, \ldots , p_r, \infty \}$ of $X\in\E(r,d)$,
three cases appear;
poles, zeros and essential singularities. 
The local analytic normal forms of poles and zeros
of $X$ are well known, due to several authors.  
Figure \ref{fig:forma-normal}
shows their real phase portraits,
for further details see  
\cite{MR}, \cite{AlvarezMucino} p.~133 and examples 4, 5 
and 6 in \cite{AlvarezMucinoSolorzaYee}.

\begin{remark}\label{forma-normal-polos}
\emph{Analytic normal form for poles.}
The point $p_\iota \in \mathcal{P}$ is a pole of $X$, 
having order\footnote{
We convene that the order $-\nu_\iota$ of a pole 
$p_\iota$ is to be negative. }
$-\nu_\iota \leq -1$.
This corresponds
to a critical point of $\Psi_{X}$, thus a
\emph{finite covering}
$\Psi_X: U (\rho) \longrightarrow
D(\widetilde{p}_\iota, \rho) 
\backslash\{ \widetilde{p}_\iota \}$
where
$$
\widetilde{p}_\iota=\Psi(p_\iota)
\
\hbox{ is the critical value.}
$$ 
\noindent 
Furthermore, 
because of the local analytic normal forms, 
up to local biholomorphism 
\begin{equation}
\label{forma-normal-polo}
X(z)=\frac{1}{(z-p_\iota )^{\nu_\iota}}\del{}{z}
\ \ \text{ and } \ \
\Psi_{X}(z)=\frac{(z-p_\iota )^{\nu_\iota +1}}{
\nu_\iota +1 \ }
\ \ \ \hbox{ on }  (\CC, p_\iota).
\end{equation}
 
\noindent 
The local phase portrait of $\Re{X}$ at $p_\iota$
has 
$2(\nu_\iota + 1)$ hyperbolic sectors.
\end{remark}

\begin{remark}\label{forma-normal-ceros}
\emph{Analytic normal form for zeros.}
The point $\infty\in\CW_{z}$ is a
zero for $X \in \E(r,d)$ if and only if $r \geq 1$ and $d=0$. 
In which case 
$\infty$ is the unique zero of 
$X$ and has order  $s \doteq r+2 \geq 3$. 
$\Psi_{X}$ is a polynomial and $\infty$ 
is a pole of it. 
Using $\{ w \}$ as a local chart at $\infty$, 
the local analytic normal forms of $X$
and $\Psi_X$ on $(\CC_w, 0$) are 
\begin{equation}
\label{forma-normal-cero}
X(z)=w^s \del{}{w}
\ \text{ and } \
\Psi_{X}(z)= 
\frac{w^{1-s} }{(1-s) }, 
\ \ \ s \geq 3.
\end{equation}

\noindent
The local phase portrait of $\Re{X}$ at $\infty$
has $2(s - 1)$ elliptic sectors.
\end{remark}

\begin{figure*}[htbp]
\centering
\includegraphics[width=0.8\textwidth]{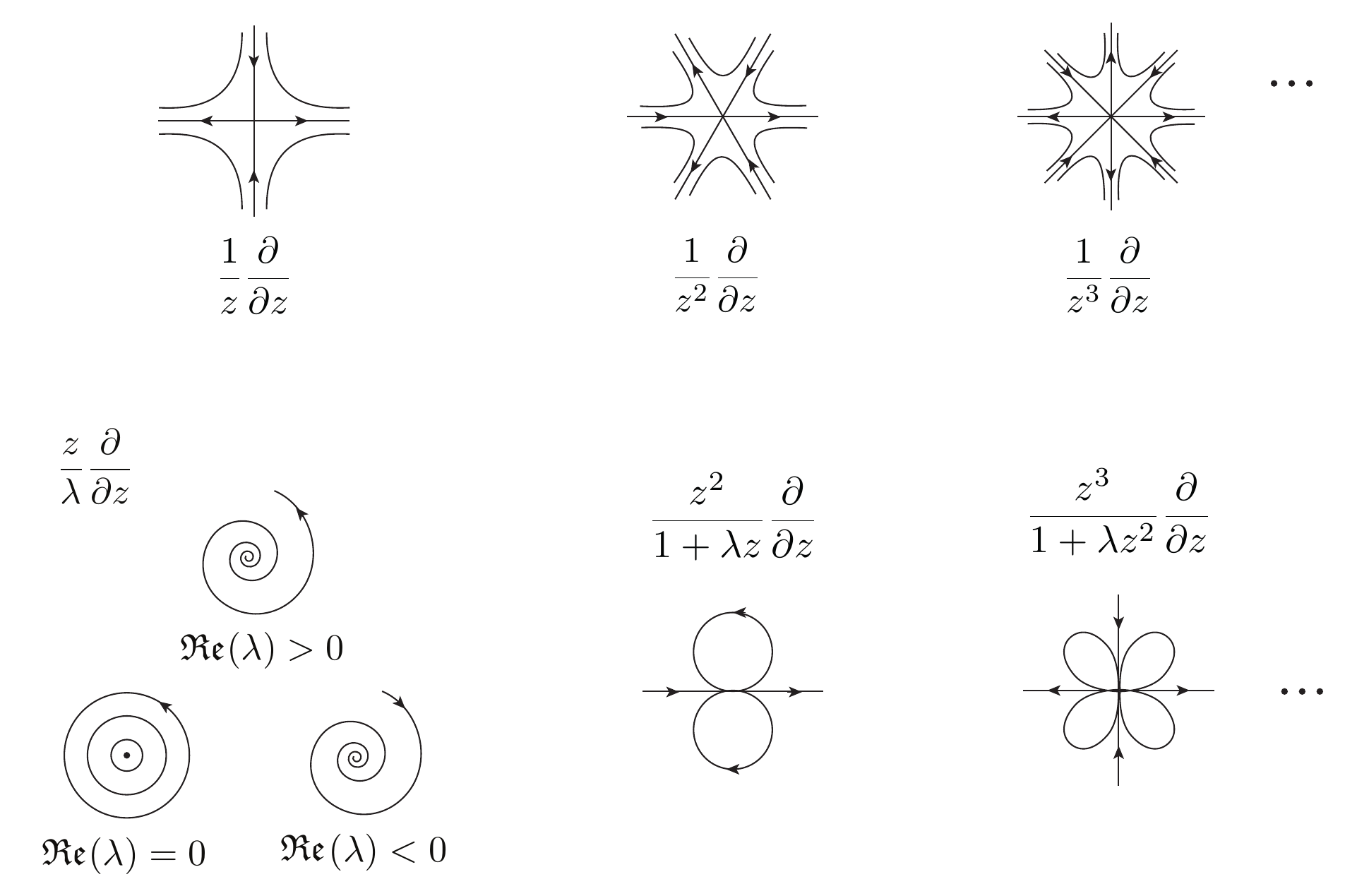}
\caption{
Normal forms of $X$ and phase portraits of 
$\Re{X}$ at poles or zeros in $z=0$.
Top row: for a pole of order $-\nu_\iota\leq -1$, the phase portrait has $2(\nu_\iota+1)$ separatrices 
arriving or leaving the pole and $2(\nu_\iota+1)$ hyperbolic sectors. 
Bottom row: simple zeros and zeros of order $s\geq 2$, here $\lambda=Res(\omega_{X},0)$. 
For simple zeros, the phase portrait is the pullback via $\Psi_{X}(z)=\lambda \log z$ of the constant vector 
field $Y(t)=\del{}{t}$. 
For $s\geq2$ the trajectories of $\Re{X}$ 
form a flower with $2(s-1)$ elliptic 
sectors. 
In our case $X \in \E(r, 0)$, note that
$\lambda=0$ and $s\geq 3$.
} 
\label{fig:forma-normal}
\end{figure*}

Summing up, and recalling \eqref{def-lambda}.
\begin{proposition}[Topological 
properties of $X \in \E(r,0)$]
\label{propRacional}
Let 

\centerline{$X(z)=\dfrac{\lambda}{P(z)} \del{}{z} \in \E(r, 0),
\ \ \ \deg{P}=r \geq 1$, $\lambda\in\CC^*$,} 

\noindent 
be a rational vector field, the following properties 
hold. 
\begin{enumerate}[leftmargin=*,label=\arabic*)]

\item
$X$ has a zero of order $r+2 \geq 3$ at $\infty \in \CW_z$.
 
\item 
The local phase portrait of $\Re{X}$ at $\infty$
has 
$2(r+1)$ elliptic sectors.

\item 
$X$ has a pole of order $-\nu_\iota$ at $p_\iota$ 
(a zero of $P(z)$ of order $\nu_\iota$).

\item 
The local phase portrait of $\Re{X}$ at $p_\iota$
has 
$2(\nu_\iota + 1)$ hyperbolic sectors.

\item 
The global phase portrait of $X$ has a decomposition 
into  

\noindent $\bigcdot$
$2r$ half planes $(\bar{\HH}^2_{\pm}, \del{}{t})$ and 

\noindent $\bigcdot$
$M$
finite height horizontal strips of the form
$\big(\{ 0 \leq \Im{t} \leq h \}, \del{}{t} \big)$,
where $0 \leq M \leq r-1$ and each $h > 0 $.
\end{enumerate}
\end{proposition}

\begin{proof}
Assertion (5) follows by a topological description
of the separatrices of $\Re{X}$ from 
the saddle points 
$p_\iota$; see
Figure \ref{EjemploArbolesE30} for examples of the 
assertions (4)--(5), in the case of three simple poles.
More detail about the decomposition in (5), will be provided
in Lemma \ref{descomposicion-planos-bandas}.
\end{proof}

A simple example that will be used throughout follows.
\begin{example}\label{campo-racional-un-polo}
Consider the vector field
\begin{equation}\label{X-1-polo}
X(z)= \dfrac{\lambda}{ (z-p_1)^{r} } \ddel{}{z} \in \E(r,0),
\ \ \ r\geq 1, 
\end{equation}
and its distinguished parameter
\begin{equation}\label{Psi-1-polo}
\Psi_{X}(z)=
\frac{1}{\lambda} 
\int\limits_{z_0}^{z} (\zeta-p_{1})^{r} d\zeta
=\frac{1}{\lambda(r + 1)} 
\big( (z-p_1)^{r + 1} - (z_0-p_1)^{r + 1}\big). 
\end{equation}
the pole $p_{1}$ of $X$ is the critical point of 
$\Psi_X$ and its critical value is
\begin{equation}\label{val-crit-1-polo}
\widetilde{p}_1=\Psi_{X}(p_{1})= 
-\frac{1}{\lambda (r + 1)}  (z_0-p_1)^{r + 1}.
\end{equation}

\noindent 
See also Example \ref{ejemplo-dos-polos}.
The vector field in Equation \eqref{X-1-polo} 
is such that, $\R_X$ has only one branch point.
Whence these vector fields
define vector fields in $\E(r,d) \backslash \E^*(r,d)$ 
which are forbidden in the Main Theorem.
\end{example}

\section{Branch points of $\R_X$}
\label{etiqueta-pendiente}

\subsection{Local ramification data for $\R_X$}\label{localram}

For $d\geq 1$, the point $\infty\in\CW_{z}$ is
an isolated essential singularity of
$$
X(z)=\frac{1}{P(z)}\ \e^{E(z)}\del{}{z},
$$

\noindent 
and the distinguished parameter $\Psi_{X}$,
belongs to the family 
\begin{equation}\label{structFinite}
SF_{r,d}=
\left\{
\Psi_X (z) =
\int_{z_0}^{z} P(\zeta)\, \e^{-E(\zeta)} d\zeta\  \ \Big{\vert} \ 
P, E\in\CC[z], \ \deg{P}=r, \ \deg{E}=d\right\},
\end{equation}
of \emph{structurally finite entire functions of type $(r,d)$}, see \cite{Taniguchi1}.
We recall the simplest object.

\begin{example}\label{ejemplolog}
Consider the vector field 
\begin{equation}\label{La-exponencial}
X(z)= \e^{\mu (z + c_1)}\del{}{z} \in \E(0,1)
\end{equation}
and its corresponding distinguished parameter
\begin{equation}\label{Psi-1-va}
\Psi_{X}(z)
=\frac{1}{\mu} \int_{z_0}^{z} \e^{-\mu (\zeta + c_1)} d\zeta=
\frac{1}{\mu} \big(\e^{-\mu (z_0+c_1)}-\e^{-\mu (z+c_1)} \big),
\end{equation}
with $\mu\in\CC^*$, $c_1\in\CC$ as in \eqref{coeficientes-P-y-E}.
Of course $\infty\in\CW_{z}$ is an isolated essential singularity of both.  
Moreover, $\Psi_{X}$ has two asymptotic values 
\begin{equation}\label{va-exponencial}
a_1= \frac{1}{\mu} \e^{-\mu (z_0 + c_1)} \in\CC_t
\quad \text{ and } \quad
a_2=\infty\in\CW_t
\end{equation} 
with exponential tracts the half planes 

\centerline{
$U_{1}(\rho)=\{ z\in\CC_z \ \vert\ \Re{\mu z}>\rho \}$ \ and \
$U_{2}(\rho)=\{ z\in\CC_z \ \vert\ \Re{\mu z}<-\rho \}$}

\noindent
respectively. 
The multivalued function 

\centerline{$
\Psi_{X}^{-1}(t)=
\dfrac{1}{\mu} \log \left(-\dfrac{\e^{\mu  (z_0+c_1)}}{t \, \mu \,  \e^{\mu  (z_0+c_1)}-1}\right) - c_1
$}

\noindent  
has two logarithmic branch points: 
one over the finite asymptotic value $a_1$ and the 
other over the asymptotic value $a_2=\infty$. 
Note that Equation \eqref{La-exponencial} defines a forbidden 
stratum in the whole family $\E(r,d)$ of the Main Theorem.
\end{example}

In order to determine the Riemann surface $\R_{X}$ precisely, one needs the knowledge of
the branch points 
under $\pi_{X,2}$
\begin{equation}
\label{puntos-ramificacion-RX}
\{( z_{\msigma},t _{\msigma} )\}
\subset\R_{X}, 
\ \ \
z_{\msigma}\in \{ p_1, p_2, \ldots, p_r, \infty \},
\ \ \
t _{\msigma} \doteq \Psi_X(z_{\msigma}),
\end{equation}
the subindex $\msigma$ will be very useful in 
several constructions. 
The next result clearly explains the singularities of $\Psi_{X}^{-1}$. 

\begin{lemma}
[The existence of finitely ramified and logarithmic branch points]
\label{pareja-finita-infinita}
Let $\Psi_X:\CC_{z}\to\CW_{t}$ be a 
structurally finite entire function of type $(r,d)$.
Then  
\begin{enumerate}[leftmargin=*,label=\arabic*)]
\item $\Psi_X$ has $r$ critical values $\{\widetilde{p}_\iota\}\subset\CC_{t}$ 
(counted with multiplicity),
\item $\Psi^{-1}_X$ has $d$ direct singularities corresponding to $d$ 
logarithmic branch points over $d$ finite asymptotic 
values $\{a_{\sigma}\}\subset \CC_t$ of $\Psi_X$, 
and 
\item $\Psi^{-1}_X$ has $d$ direct singularities corresponding to $d$ 
logarithmic branch points over $\infty\in\CW_{t}$.
\end{enumerate}
Furthermore, $\Psi^{-1}_X$ has no indirect singularities.
\end{lemma}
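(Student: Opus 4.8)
The plan is to analyze $\Psi_X(z)=\int_0^z P(\zeta)\,\e^{-E(\zeta)}\,d\zeta+b$ directly from the factorization of its derivative $\Psi_X'(z)=P(z)\,\e^{-E(z)}$, which never vanishes except at the $r$ zeros of $P$ (counted with multiplicity) and has no zeros or poles at finite points otherwise. First I would record that the critical points of $\Psi_X$ in $\CC_z$ are exactly the zeros of $P$; since $\deg P=r$, this gives part 1) after observing that a zero of $P$ of order $m$ produces a critical value $\widetilde p_\iota$ over which $\Psi_X$ is locally $(m{+}1)$--to--$1$, i.e. an ordinary (algebraic) ramified branch point of $\R_X$ of ramification index $m+1$, and the count $\sum(m)=r$ is the ``counted with multiplicity'' statement.

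Next, for parts 2) and 3), the essential point is the behavior near $\infty\in\CW_z$, which is the only other place $\Psi_X$ can fail to be a local biholomorphism. I would pass to the transcendental singularities of $\Psi_X^{-1}$ in the sense of Iversen/Nevanlinna: one studies the asymptotic values of $\Psi_X$ along paths tending to $\infty$. Writing $E(z)=c_d z^d+\dots$ with $c_d\neq0$, the plane near $\infty$ splits into $2d$ sectors bounded by the $2d$ critical directions $\arg z=\theta_j$ where $\Re(c_d z^d)$ changes sign. In the $d$ sectors where $\Re(c_d z^d)\to+\infty$ (so $\e^{-E(z)}$ decays super-exponentially), the integral $\int^z P\e^{-E}$ converges to a finite limit $a_\sigma$ along any path staying in the sector; this produces $d$ finite asymptotic values, each giving a direct (logarithmic) singularity of $\Psi_X^{-1}$ over $a_\sigma\in\CC_t$, hence a logarithmic branch point of $\R_X$ over $a_\sigma$ — this is part 2). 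In the complementary $d$ sectors where $\Re(c_d z^d)\to-\infty$, $|\Psi_X(z)|\to\infty$ and one checks $\Psi_X$ is asymptotically a logarithm of an exponential-type map on that sector, giving a direct singularity over $\infty\in\CW_t$, hence $d$ logarithmic branch points over $\infty$ — part 3). These assertions can be quoted from the structurally-finite-function literature cited (Taniguchi, Nevanlinna chap.\ XI, \cite{AlvarezMucino}), so I would state them and give only the sector bookkeeping in detail.

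For the final clause — no indirect singularities — I would argue that $\Psi_X'=P\,\e^{-E}$ has, on $\CC_z$, only finitely many zeros and no other critical behavior, so the only transcendental singularities of $\Psi_X^{-1}$ sit over the finitely many points $\{a_\sigma\}\cup\{\infty\}$ identified above; at each such point the singularity is \emph{direct} because, as the sector analysis shows, there is a neighborhood $U$ of the asymptotic value and a choice of connected component of $\Psi_X^{-1}(U\setminus\{\text{value}\})$ on which $\Psi_X$ omits the value (the decay, resp.\ growth, is monotone enough that the preimage component is a punctured-disc-like ``logarithmic end'' rather than one accumulating infinitely often). Equivalently, one invokes that structurally finite functions of finite order have finite Nevanlinna deficiency sum and the Denjoy--Carleman--Ahlfors count of $2d$ (here $=$ number of direct singularities over $\infty$ plus finite asymptotic values) is exactly attained, forcing every transcendental singularity to be direct.

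I expect the main obstacle to be the clause on the \emph{nonexistence of indirect singularities}: distinguishing direct from indirect singularities requires controlling the geometry of the preimage components near $\infty$, not just the existence of asymptotic values. The cleanest route is probably to show each logarithmic end is genuinely ``logarithmic'' — i.e. $\Psi_X$ restricted to a suitable sectorial neighborhood of $\infty$ is a universal (exponential) cover of a punctured neighborhood of the asymptotic value — using the explicit asymptotics of $\int^z P\,\e^{-E}$ from the saddle-point/Laplace analysis of the integrand along the critical directions; once that covering picture is in place, directness (and the absence of any further, indirect, singularities, since there are no further critical points of $\Psi_X'$ to accumulate) is immediate.
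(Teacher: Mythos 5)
Your proposal is sound and is, in substance, the argument that the paper itself delegates to the literature: the paper's proof of this lemma is a one--line citation (the case $(r,0)$ is called elementary, and the case $d\geq 1$ is quoted as lemma 8.4 of \cite{AlvarezMucino}, whose proof relies on Taniguchi \cite{Taniguchi1}, \cite{Taniguchi2}), whereas you reconstruct the analysis behind that citation. Your part 1) is exactly right: the critical points are the $r$ zeros of $P$ (with multiplicity), since $\e^{-E}$ never vanishes, and a zero of order $m$ gives a finitely ramified point of index $m+1$. Your sector bookkeeping for parts 2) and 3) — the $2d$ sectors at $\infty$ determined by the sign of $\Re{c_d z^d}$, with super--exponential decay of the integrand producing $d$ tracts with finite asymptotic values and the complementary $d$ tracts producing logarithmic ends over $\infty$ — is the standard picture for structurally finite entire functions of type $(r,d)$ and is consistent with what the cited sources prove. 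What your sketch buys is a self--contained explanation of where the $d+d$ logarithmic branch points come from; what the paper's citation buys is the verification of the covering structure of each sectorial end (that it is genuinely a universal cover of a punctured neighbourhood of the asymptotic value), which is precisely the point you flag as the main obstacle and which Taniguchi's structural finiteness results supply.

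One caveat on the final clause: the Denjoy--Carleman--Ahlfors shortcut you offer as an ``equivalent'' route does not work as stated, since DCA bounds the number of \emph{direct} singularities, and attaining the bound $2d$ does not by itself exclude further \emph{indirect} singularities over finite values. The clean quotable substitute is Bergweiler--Eremenko \cite{BergweilerEremenko}: for a function of finite order, an indirect singularity over a finite value forces infinitely many critical points with critical values accumulating at that value, which is impossible here because $\Psi_X'=P\,\e^{-E}$ has only $r$ zeros; and over $\infty$ every transcendental singularity of the inverse of an entire function is automatically direct, since the value $\infty$ is omitted. Your primary argument — the explicit logarithmic--end picture in each sector together with the finiteness of the critical set — also suffices once carried out in detail, so the proposal stands with that correction.
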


\begin{proof}
Case $(r,d)$ with $d\geq 1$ can be found as lemma 8.4 in \cite{AlvarezMucino} 
with a proof that relies heavily on the work of M.~Taniguchi \cite{Taniguchi1}, \cite{Taniguchi2}.
\end{proof}


\subsection{Branch point enumeration}\label{divisores-ternas}

As motivation, let $T\CW_z$ be  
the holomorphic tangent bundle of
$\CW_z$. 
For 
$X \in \E(r,0)$, the \emph{divisor of 
the meromorphic section $X:\CW_z \longrightarrow T\CW_z$} 
is the assignment\footnote{
A formal sum of pairs 
$(p, \nu)$ denoting a point in $\CW_z$ and its order
in $\ZZ^*$, positive for a zero of the vector field, negative for a pole.} 
\begin{equation}
\label{divisor-de-X-clasico}
X \
\longmapsto \ 
(\infty, r+2 ) \cup 
\big\{
(p_\iota, -\nu_{\iota})
\big\}_{\iota= 1}^{n} , 
\end{equation}

\noindent 
where 
$\sum_{\iota=1}^{n} \nu_{\iota} = r $
and $n \leq r$, the equality holds if and only if
all the poles of $X$ are simple. 
A very useful result for meromorphic vector fields 
on $\CW_z$ of the so called 
Brill--Noether theory in algebraic geometry 
(or the Poincar\'e--Hopf theory if you prefer); 
the zeros and poles determine a meromorphic vector field on $\CC_Z$
up to scalar factor.

{\begin{lemma}
\label{Brill--Noether-elemental}
Let $\nu_j, \, \nu_\iota \in \NN$, 
a divisor  

\centerline{$ 
\big\{
(q_j, \nu_j)
\big\}_{\j= 1}^{{\tt s}} 
\cup 
\big\{
(p_\iota, -\nu_{\iota})
\big\}_{\iota= 1}^{n}
\ 
\subset \CW_z \times \ZZ^*$}

\noindent 
determines a family of meromorphic vector fields 
$\{ \lambda Y  \ \vert \ \lambda \in \CC^* \}$
on $\CW_z$
if and only if 

\centerline{$
\sum_{j=1}^{\tt s} \nu_{j} - 
\sum_{\iota=1}^{n} \nu_{\iota} 
= s-r=2.$}
\hfill \qed
\end{lemma}

We would like to extend Lemma \ref{Brill--Noether-elemental} to the case of $X\in\E(r,d)$, $d\geq 1$.
Of course the divisor \eqref{divisor-de-X-clasico} 
is not well defined since $X$ is not meromorphic on $\CW_z$.

\noindent
In order to accomplish this 
an accurate enumeration of the
branch points 
$\{(z_{\msigma},t_{\msigma})\}\subset\R_{X}$ 
in \eqref{puntos-ramificacion-RX}
is required.
In what follows, the reader might find it helpful to follow along with Figures 
\ref{figejemplo-un-polo}--\ref{figejemplo-dos-polos}, 
\ref{figejemplo-un-va}--\ref{fig7ejemploscampos} in \S\ref{concreteexamples}.

\smallskip

For $r\geq1$, the point $z_\msigma$ is a \textbf{pole} $p_{\iota}\in\CC_{z}$ of $X$ 
if and only if its 
image $\widetilde{p}_{\iota}=\Psi_{X}(p_{\iota})\in\CC_{t}$ is a critical value of $\Psi_{X}$.
Moreover $(p_{\iota},\widetilde{p}_{\iota})\in\R_{X}$ is a finitely ramified branch point 
(under $\pi_{X,2}$) with ramification 
index $\nu_{\iota}+1\geq2$, where 
$-\nu_{\iota} \leq -1$ is the order of the pole 
$p_{\iota}$, recall \eqref{forma-normal-polo}. 

\noindent
Enlarging the pairs in the divisor,
we enumerate the corresponding finitely ramified branch points using \emph{triplets} 
\begin{equation}
\label{poleenumeration}
\left\{ 
\circled{\, \iota\, }
\doteq 
(p_{\iota},\widetilde{p}_{\iota}, - \nu_\iota)
\right\}_{\iota=1}^{n}\subset\R_{X}, 
\text{ with order } 
-\nu_{\iota}\leq-1 
\text{ and }
\sum_{\iota=1}^{n} \nu_{\iota} = r.
\end{equation}

\noindent 
Abusing notation, we say that the triplets
are in $\R_X$.

\smallskip 
For $d \geq 1$, after enumerating the poles as above, 
we also need to consider the logarithmic branch points in $\R_X\subset\CC_z \times \CC_t$.
We shall use two compactifications: the usual one for $\CC_t$, namely the Riemann sphere $\CW_t$,
and a non--Hausdorff one for $\CC_z$ as follows.

\begin{definition}\label{singesen}
The \emph{non--Hausdorff closure} 
\begin{equation}\label{CnoHausdorff}
\overline{\CC}_{z} \doteq
\Big(
\big(\CW\times\{1\}\big) \sqcup \big(\CW\times\{2\}\big) \sqcup \cdots \sqcup \big(\CW\times\{2d\}\big)
\Big) \Big\slash \sim
\end{equation}
is the sphere with $2d\geq2$ infinities, 
that is the disjoint union of $2d$ copies of the 
Riemann sphere $\CW$ with the equivalence relation  
$(z,\sigma)\sim (z,\eta)$ 
for all $\sigma,\eta \in \{1,\dots,2d\}$ 
when $z\neq\infty$.
\end{definition}

The point
$\infty\in\CW_{z}$ is an \textbf{isolated essential singularity of $X$}.
Hence,
we will denote the $2d$ different infinities, 
referred to in \eqref{CnoHausdorff} and Lemma \ref{pareja-finita-infinita}, by
\begin{equation}\label{notacion-infinitos}
\left\{
\infty_{\sigma} \doteq (\infty, \sigma ) 
\right\}_{\sigma= 1}^{2d}\subset\overline{\CC}_{z}.
\end{equation}
In order to include the 
logarithmic branch points,
$\R_X$ extends to 
$\overline{\R}_X \subset\overline{\CC}_z \times \CW_t$.
By simplicity, we shall use the same notation $\R_X$
for this extension. 
Lemma \ref{pareja-finita-infinita}
allows us to accurately denote the distinct
asymptotic values of $\Psi_X$ by
\begin{equation}\label{essenenumeration}
\begin{array}{rl}
\{a_{j}\}_{j=1}^{m}\subset\CC_{t} 
&
\text{ with multiplicities }\{ {\tt m}_j \geq 1 \}_{j=1}^{m}, \
\sum_{j=1}^{m} {\tt m}_{j}=d 
\text{ and }
\\[7pt]
a_{m+1} =\infty\in\CW_{t}  &
\text{ with multiplicity } d.
\end{array}
\end{equation}

\noindent 
Thus, $\pi_{X,2}^{-1}(a_{j})$ contains
${\tt m}_j $ (resp. $d$) 
logarithmic branch points
for each exponential tract 
$U_\sigma (\rho)$ 
associated to the asymptotic value $a_{j}$, $j=1,\ldots,m$ 
(resp. $j=m+1$).

\noindent
Recalling the 
multiplicities, 
the correspondence between 
indices is given by
\begin{equation}\label{correspjalphaIndices}
\begin{array}{r}
\sigma  \in 
\underbrace{1,\,  \ldots\, , {\tt m}_{1}} 
, 
\
\underbrace{{\tt m}_{1}+1\ ,\, \ldots\, , {\tt m}_{1}+{\tt m}_{2}} 
,\,\ldots\, ,
\,
\underbrace{d-{\tt m}_{m}+1, \, \ldots\, , d} 
,
\, \underbrace{d+1, \, \ldots\, , 2d} 
,
\\
j=j(\sigma) \in  
\quad\ \ \,
1 
\quad\ \ \,
,
\qquad \quad \quad \ 
2
\quad \quad \quad \quad  \,
,\,\ldots\, , 
\quad \quad \quad  \ \,
m 
\quad \quad \quad \ \ 
, 
\ \,
\quad 
m+1 \
\quad\ \, 
, 
\\
\end{array}
\end{equation}

\noindent
where
$\sigma$ enumerates the logarithmic branch points, 
while $j=j(\sigma)$  enumerates the distinct asymptotic values $a_{j}\in\CW_t$,
in accordance with \eqref{essenenumeration}.

\begin{remark}
1. The asymptotic paths $\alpha_{\sigma}(\tau)$ lie in the 
non--Hausdorff closure $\overline{\CC}_z$. 
If $\alpha_{\sigma}(\tau):(0, \infty) \longrightarrow \overline{\CC}_{z}$ is an asymptotic path
approaching $\infty_{\sigma}\in\overline{\CC}_{z}$ 
associated to the asymptotic value $a_{j(\sigma)}$ then
we may assume that $\alpha_{\sigma}(\tau)$ is restricted to one exponential tract (the one containing 
$\infty_{\sigma}\in\overline{\CC}_{z}$).
The actual choice of $\alpha_\sigma(\tau)$ inside the exponential tract $U_\sigma(\rho)$ will be 
made explicit in Proposition \ref{prop-equidistribution}.4 and 
Remark \ref{rem-equidistribution}.1.

\noindent
2. Each asymptotic path $\alpha_{\sigma}(\tau)$ together with 
the distinguished parameter $\Psi_{X}$ 
gives rise to the asymptotic value
\begin{equation}
a_{j(\sigma)}=
\lim_{\substack{z\to\infty_{\sigma}\\ z\in\alpha_\sigma }} 
\Psi_{X}( z )=
\lim_{\tau \to \infty } 
\int\limits_{z_0}^{\alpha_{\sigma}(\tau)} 
P(\zeta) \e^{-E(\zeta)} d\zeta \in\CW_t.
\end{equation}

\noindent
3.
Because of the multiplicity of $a_{j(\sigma)}$,
when $j(\sigma)\neq m+1$
there are exactly ${\tt m}_{j}$ asymptotic paths 
$\alpha_{\sigma}(\tau)$
and ${\tt m}_{j}$ exponential tracts for each 
of the $m$ distinct finite asymptotic values $a_{j(\sigma)}\in\CC_{t}$,
see Figure \ref{distribucion-exponentialtracts}. 
Moreover, there are $d$ asymptotic paths and $d$
exponential tracts for the asymptotic value $a_{m+1}=\infty\in\CW_{t}$.
\end{remark}

\smallskip
\noindent
Recalling that logarithmic branch points are infinitely ramified, and
using the notation provided by Equation \eqref{notacion-infinitos},
we will denote the logarithmic branch points 
over the 
asymptotic values 
$a_{j(\sigma)}\in\CC_{t}$
of $\Psi_X$, 
as \emph{triplets} 
\begin{equation}\label{essenvert}
\circled{\scalebox{0.75}{$n \text{$+$} \sigma$}  }
\doteq
(\infty_{\sigma},a_{j(\sigma)}, - \infty)
\in \R_{X}, 
\
\text{ for } \sigma\in\{1,\ldots,2d\}.
\end{equation}

\noindent 
Abusing notation, we say that the triplets
are in $\R_X$.

Note that the above discussion proves the following.
\begin{lemma}\label{sigma-enumera-todo}
For $X\in\E(r,d)$, there is a correspondence between: 

\noindent
$\cdot$ the $2d$ logarithmic branch points $\circled{\scalebox{0.75}{$ n \text{$+$} \sigma$}}$ in $\R_{X}$, 

\noindent
$\cdot$ the $2d$ asymptotic values $a_{\sigma}$ 
of $\Psi_X$
(counted with
multiplicities) in $\CW_{t}$, 

\noindent
$\cdot$ the $2d$ exponential tracts $U_\sigma (\rho)$ in $\CW_{z}$ and 

\noindent
$\cdot$ the $2d$ 
asymptotic paths $\alpha_{\sigma}(\tau)$ in $\overline{\CC}_{z}$.
\hfill \qed
\end{lemma}

\begin{remark}\label{correspexponentialtract}
1. An immediate advantage of the correspondence observed 
in Lemma \ref{sigma-enumera-todo} 
is that the index $\sigma$ simultaneously enumerates all of 
the objects in question.
In particular, from now on we agree that  

\centerline{$a_{\sigma}$ is referring to 
$a_{j(\sigma)}\in\CW_t$, as in \eqref{correspjalphaIndices}.}

\noindent  
For instance 
$\{a_{j(\sigma)}\}$ corresponds to one point in 
$\CC_{t}$ for $\sigma={\tt m}_{1}+1,\ldots, {\tt m}_{1}+{\tt m}_{2}$.

\noindent
2. Each exponential tract when considered in $\CW_z$
is an angular sector about $\infty$. 
Hence the exponential tracts have a natural counterclockwise
ordering about $\infty\in\CW_{z}$ arising from 
$S^1=\{\e^{i\theta}\}$. 
The ordering will be made explicit in Proposition \ref{prop-equidistribution}.3;
see also Figure \ref{distribucion-exponentialtracts} and 
the last row in Figure \ref{figEjemploE23-1-10-20-50}.
\end{remark}

We have the following \emph{ad hoc} notion, that expands 
the notion of the divisor of $X$ as meromorphic section, 
Equation \eqref{divisor-de-X-clasico}.

\begin{definition}\label{eldivisor}
Let $X \in \E(r,d)$, 
$d\geq 1$, the assignment 
\begin{multline}\label{eqdivisor}
X 
 \longmapsto 
\underbrace{
\Big\{
\circled{\, \iota \,}=
\big( p_{\iota},\widetilde{p}_{\iota}, -\nu_{\iota} \big)
\Big\}_{
\iota=1 
}^{n}
}_{\hbox{\tiny{pole vertices} }}
\cup
\underbrace{
\Big\{ 
\circled{\scalebox{0.75}{$n\text{+} \sigma$} }=
\big( \infty_{\sigma},a_{\sigma},-\infty \big)
\Big\}_{\sigma=1}^{d}
}_{ \hbox{\tiny{essential vertices} }}
\\
\cup
\underbrace{
\Big\{ 
\circled{ \scalebox{0.75}{$n\text{+} \sigma$} }=
\big( \infty_{\sigma}, \infty ,-\infty \big)  
\Big\}_{\sigma=d+1}^{2d} 
}_{ \hbox{\tiny{vertices over} } \infty }
\end{multline}

\noindent 
is the \emph{divisor of $X$}. 
\end{definition}

The notations in Equation \eqref{eqdivisor} 
will be the useful at several stages of the proof 
of the main result.
The following section explains the equidistribution of the exponential tracts and thus
provides a natural 
ordering/enumeration for 
the asymptotic values.

\subsection{Approximation of $X \in \E(r,d)$, $d \geq 1$, via rational vector fields $X_{\tt n}$}
\label{aproximando-via-Euler}
The vector fields $X \in \E(r,d)$, $d \geq 1$ can be approximated 
by rational vector fields of the form 
$X_{\tt n}(z)=
\frac{1}{ {\tt P}_{\tt n} (z)}\del{}{z}$.
Analogous ideas for other differential 
equations are applied in \cite{Masoero}. 
Moreover, as will be shown, the construction behaves nicely providing insight
into de combinatorial/geometrical structure of $X$, $\Psi_{X}$ and $\R_{X}$.

Let $X$ be as in \eqref{campo-X-con-P-y-E} 
and recall Euler's formula
\begin{equation}\label{formula-euler}
\e^{-E(z)} = \lim\limits_{{\tt n}\to\infty}
\Big( 1 - \dfrac{E(z)}{{\tt n}} \Big)^{\tt n}.
\end{equation}
Thus 
\begin{equation}\label{X-Psi-aproximantes}
X_{\tt n}(z)=\dfrac{ 1 }{ P(z) \big( 1 - \frac{E(z)}{{\tt n}} \big)^{\tt n} }\del{}{z} 
\ \ \ \text{ and  } \ \ \
\Psi_{\tt n} (z)=\int\limits^{z}_{z_0} P(\zeta) \Big( 1- \dfrac{E(\zeta)}{{\tt n}} \Big)^{\tt n} \, d\zeta
\end{equation}
converge to 

\begin{equation}\label{X-Psi}
X(z)=\dfrac{\e^{E(z)}}{P(z)} \del{}{z}
\ \ \ \text{ and  } \ \ \
\Psi_{X}(z)=\int_{z_0}^{z} P(\zeta) \e^{-E(\zeta)} d\zeta
\end{equation}
uniformly on compact sets of $\CC_{z}$,
while 

\centerline{
$\R_{X_{\tt n}}=\{(z,\Psi_{\tt n}(z)) \ \vert\ z\in\CC_z \}$
}

\noindent
converges to $\R_X$,
in the Caratheodory topology; see 
K. Biswas  \emph{et al.}
\cite{Biswas-PerezMarco-1}, 
\cite{Biswas-PerezMarco-2}
for details on Caratheodory convergence.

\noindent
Because of the Dictionary Proposition \ref{basic-correspondence}
(see also Remarks \ref{forma-normal-polos} and \ref{forma-normal-ceros}),
as ${\tt n}\to\infty$, 
the successions $\{ X_{\tt n} \}$, $\{ \Psi_{\tt n} \}$ 
and $\{ \R_{X_{\tt n}} \}$ 
enjoy the following analogous features:

\noindent
Each $X_{\tt n}$ has: 
\begin{itemize}[label=$\bigcdot$,leftmargin=*]
\item
$r$ poles at the roots $\{ p_\iota \}_{\iota=1}^{r}$ of $P(z)$ with orders $\{-\nu_\iota\}$,

\item 
$d$ poles at the roots $\{ \widehat{e}_\sigma=\widehat{e}_\sigma({\tt n}) \}_{\sigma=1}^{d}$
of ${\tt n}-E(z)$, each of order $-{\tt n}$ and

\item
a zero of order $r+d{\tt n}+2$ at $\infty\in\CW_{z}$.
\end{itemize}

\noindent
In consequence, 
each $\Psi_{\tt n}$ has: 
\begin{itemize}[label=$\bigcdot$,leftmargin=*]
\item
$r$ critical points $\{ p_\iota \}_{\iota=1}^{r}$ at 
the roots of $P(z)$,
with $r$ critical values $\{\widetilde{p}_\iota({\tt n}) \doteq \Psi_{\tt n}(p_\iota) \}_{\iota=1}^{r}$,

\item
$d$ critical points $\{ \widehat{e}_\sigma=\widehat{e}_\sigma({\tt n}) \}_{\sigma=1}^{d}$ 
at the roots of ${\tt n}-E(z)$ 
with critical values 
$\{ \widetilde{e}_{\sigma}({\tt n})\doteq\Psi_{\tt n}(\widehat{e}_\sigma) \}_{\sigma=1}^{d}$ 
and

\item
a pole of order $-(r+d{\tt n}+2)$ at $\infty\in\CW_{z}$. 
\end{itemize}

\noindent 
Each $\R_{X_{\tt n}}$ has: 
\begin{itemize}[label=$\bigcdot$,leftmargin=*]
\item
$r$ finitely ramified branch points 
$\{ (p_\iota,\widetilde{p}_\iota({\tt n}), -\nu_{\iota} ) \}_{\iota=1}^{r}$ 
with ramification index corresponding 
to $\nu_{\iota}+1$ where $-\nu_{\iota}$ is the order of the pole $p_{\iota}$,

\item
$d$ finite ramification points 
$\{ (\widehat{e}_\sigma({\tt n}),\widetilde{e}_{\sigma}({\tt n}), -{\tt n}) \}_{\iota=1}^{d}$ 
with ramification index ${\tt n}+1$
and 

\item
a finite ramification point $(\infty,\infty, r+{\tt n} d +2)$ with ramification index $r+{\tt n} d+3$.
\end{itemize}

\smallskip

Clearly the critical points $p_{\iota}$ do not change as ${\tt n}\to\infty$, 
however the critical values 
$ \widetilde{p}_{\iota}({\tt n})\in\CC_{t} $ do, but remain finite without
changing their ramification index, 
thus each finitely ramified branch point $ (p_{\iota},\widetilde{p}_\iota({\tt n}),-\nu_{\iota}) $ converges
to the finitely ramified branch point 
$(p_\iota, \widetilde{p}_\iota,-\nu_{\iota})=(p_\iota,\Psi_{X}(p_\iota),-\nu_{\iota})$.

As the critical points $\{ \widehat{e}_\sigma({\tt n}) \}$ 
approach $\infty\in\CW_{z}$, the corresponding critical values $\{ \widetilde{e}_\sigma({\tt n}) \}$ 
converge to the finite 
asymptotic values $\{a_\sigma\}$ 
of $\Psi_X$, 
as follows.

\noindent 
In $\CW_z$, arguments (directions) 
and angular sectors at $\infty$ are well defined.
The succession  $\widehat{e}_\sigma({\tt n})$
converges to a ray with constant argument $\theta_{\sigma}$ 
starting at $\infty \in \CC_z$.
Moreover, 
the rays $\theta_{\sigma}$ and $\theta_{\sigma+1}$ are exactly $2\pi/d$ radians apart.

\smallskip

A key point is the careful examination of the 
phase portraits of the rational 
vector fields $\Re{X_{{\tt n}} }$
(for the description of the phase portrait 
in the vicinity of poles and zeros, recall Proposition \ref{propRacional} 
and Figure \ref{fig:forma-normal}), 
having the following features:

\noindent i) the zero at $\infty$ of $X_{{\tt n}}$
determines 
$2r + 2 {\tt n} d+2$ elliptic sectors, and the same number of separatrices,

\noindent ii)
the $d$ poles at $\widehat{e}_\sigma$ of
$X_{{\tt n}}$
determine $2{\tt n} + 2$ hyperbolic  sectors, 
and the same number of separatrices.

The above is summarized in 
Figure \ref{distribucion-exponentialtracts}
and the following.

\begin{proposition}\label{prop-equidistribution}
Let $d\geq1$ and $X\in\E(r,d)$.
Then, the sequence of polynomial distinguished parameters $\Psi_{X_{\tt n}}$ 
given by \eqref{X-Psi-aproximantes}, converges to $\Psi_X$.
Furthermore:
\begin{enumerate}[leftmargin=*,label=\arabic*)]
\item 
Each of the $r$ sequences of finitely ramified branch points 
$(p_{\iota},\widetilde{p}_\iota({\tt n}), -\nu_{\iota}) \in \R_{X_{\tt n}}$ converges
to the finitely ramified branch point $(p_\iota, \widetilde{p}_\iota, -\nu_{\iota}) \in \R_{X}$.

\item
Each of the $d$ sequences of finitely ramified branch points
$(\widehat{e}_\sigma({\tt n}),\widetilde{e}_{\sigma}({\tt n}), -{\tt n})\in\R_{X_{\tt n}}$ 
converges to the logarithmic branch point
$(\infty_{\sigma},a_{\sigma},-\infty)\in\R_{X}$.

\item 
The $2d$ exponential tracts of $\Psi_{X}$ are angular sectors of angle 
$\pi/d$ about $\infty\in\CW_{z}$ and they 
alternate so that each exponential tract corresponding to a finite asymptotic value is in between two 
exponential tracts corresponding to the asymptotic value $\infty\in\CW_{t}$.

\item 
There exist $2d$ 
asymptotic paths $\alpha_{\sigma}(\tau)$ associated to 
the asymptotic values $\{a_\sigma\}_{\sigma=1}^{2d}$
of $\Psi_{X}$ which
are angularly equidistributed
about $\infty\in\CW_{z}$. 
\hfill $\Box$
\end{enumerate}
\end{proposition}
\begin{remark}\label{rem-equidistribution}
1. By recalling that 

\centerline{$
E(z)=
\mu\, \big(z^d + c_1 z^{d-1} + \cdots + c_d \big)
$ in\eqref{formula-euler},}

\noindent
a simple calculation shows that:

\noindent
a) for the finite asymptotic values $a_{\sigma}\in\CC_{t}$
of $\Psi_X$, 
without loss of generality we can choose
asymptotic paths $\alpha_{\sigma}(\tau)$ arriving at $\infty\in\CW_{z}$ with angle 
$\theta_{\sigma}=\Arg{\mu^{1/d}} + \frac{2\pi}{d} \sigma$ 
for $\sigma=1,\ldots,d$;

\noindent
b) likewise, the asymptotic paths $\alpha_{\sigma}(\tau)$ 
corresponding to the asymptotic value $\infty\in\CW_{t}$
arrive at $\infty\in\CW_{z}$ with angle
$\theta_{\sigma}=\Arg{\mu^{1/d}} + \frac{2\pi}{d}(\sigma-d) + \frac{\pi}{d}$
for $\sigma=d+1,\ldots,2d$. 

\noindent
2. Figure \ref{distribucion-exponentialtracts} shows the angular equidistribution of the exponential tracts 
around $\infty$ for $X\in\E(r,d)$. 
\end{remark}
\begin{figure}[htbp]
\begin{center}
\includegraphics[width=0.50\textwidth]{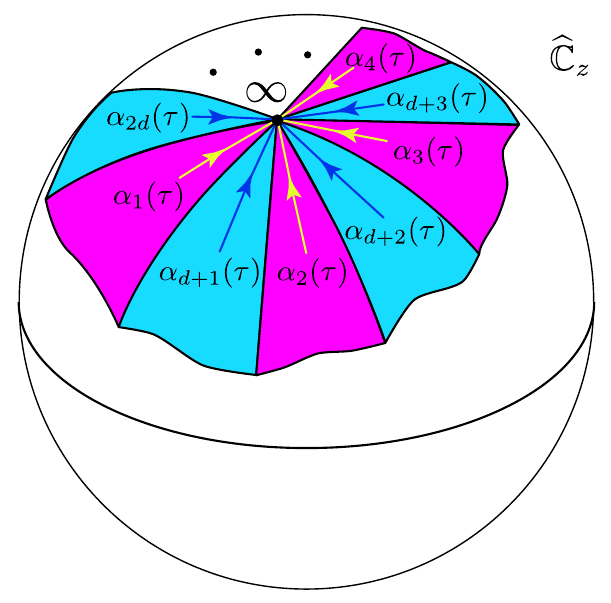}
\caption{
Angular equidistribution of the 
$2d$ exponential tracts about $\infty \in \CW_z$,
corresponding to the 
asymptotic values of $\Psi_{X}(z)$ for $X\in\E(r,d)$, 
$d\geq 1$.  
Purple angular sectors represent exponential tracts corresponding to finite asymptotic values 
$a_\sigma \in \CC_t$, in yellow their asymptotic paths $\alpha_{\sigma}(\tau)$, for $\sigma=1,\ldots,d$. 
Blue angular sectors represent exponential tracts corresponding to the asymptotic value $\infty\in \CW_{t}$,
in dark blue their asymptotic paths, $\alpha_{\sigma}(\tau)$, for $\sigma=d+1,\ldots,2d$.
Once again, the $2d$ asymptotic paths are equally distributed about $\infty\in\CW_{z}$.
Note that for the family $\E(r,d)$ this equidistribution property is independent of $r$.
}
\label{distribucion-exponentialtracts}
\end{center}
\end{figure}

\begin{example}\label{E23-1-10-20-50}
Let
$$
X(z)= \frac{\e^{z^d}}{z^r} \del{}{z}\in\E(r,d), 
\ \ \
\text{ for }d\geq1.
$$
Euler's formula provides the approximation of $X$ by the vector fields
$$X_{\tt n}(z)=\dfrac{ 1 }{ z^r \big( 1 - \frac{z^d}{{\tt n}} \big)^{\tt n} }\del{}{z},
\ \ \ 
\text{ for } {\tt n} \geq 1,
$$ 
so
$$\Psi_{\tt n} (z)=\int\limits^{z}_{0} \zeta^r \big( 1 - \frac{\zeta^d}{{\tt n}} \big)^{\tt n} \, d\zeta
=
\frac{z^{r+1} \, _2F_1\left(-{\tt n},\frac{r+1}{d};\frac{r+1}{d}+1;\frac{z^d}{\tt n}\right)}{r+1},
$$
where $ _2F_1$ is the classical Gauss's hypergeometric function 
(see for instance \cite{Abramowitz} ch.~15).

\noindent
The poles of $X_{\tt n}$ are $0$, of order $-r$, and 
$\{ \widehat{e}_\sigma({\tt n}) \doteq e^{\frac{2 i \pi \sigma}{d}} {\tt n}^{1/d} \}_{\sigma=1}^{d}$, 
of order $-{\tt n}$. 
Of course the poles of $X_{\tt n}$ are the critical points of $\Psi_{\tt n}$,

\noindent
On the other hand, the critical values of $\Psi_{\tt n}$ are given by $\Psi_{\tt n} (0)=0$ and
\begin{equation}\label{val-criticos-aproximacion}
\widetilde{e}_\sigma({\tt n}) \doteq \Psi_{\tt n} (e^{\frac{2 i \pi \sigma}{d}} {\tt n}^{1/d})
= 
e^{2 i \pi \sigma \frac{(r+1)}{d}} \,
\frac{\Gamma \left(\frac{r+1}{d}+1\right)}{(r+1)} \  
\frac{{\tt n}^{(r+1)/d}\ \Gamma ({\tt n}+1) }{ \Gamma \left({\tt n}+\frac{r+1}{d}+1\right)},
\end{equation}
for $\sigma=1,\ldots,d$.

\noindent
Furthermore $X_{\tt n}$ has a unique zero at $\infty\in\CW_{z}$ of order $r+{\tt n} d+2$.

\noindent
Hence each Riemann surface 

\centerline{
$\R_{X_{\tt n}}=\{(z,\Psi_{\tt n}(z)) \ \vert\ z\in\CC_z \}$
}

\noindent
has $(0,0,-r)\in\R_X$ as a branch point with ramification index $r+1$ and $d$ branch points
$$
\left( \widehat{e}_\sigma({\tt n}), \widetilde{e}_\sigma({\tt n},-{\tt n}) \right)\in\R_X,
\ \ \
\text{ for } \sigma=1,\ldots,d,
$$
with ramification index ${\tt n}+1$.

\begin{figure}[htbp]
\begin{center}
\includegraphics[height=0.9\textheight]{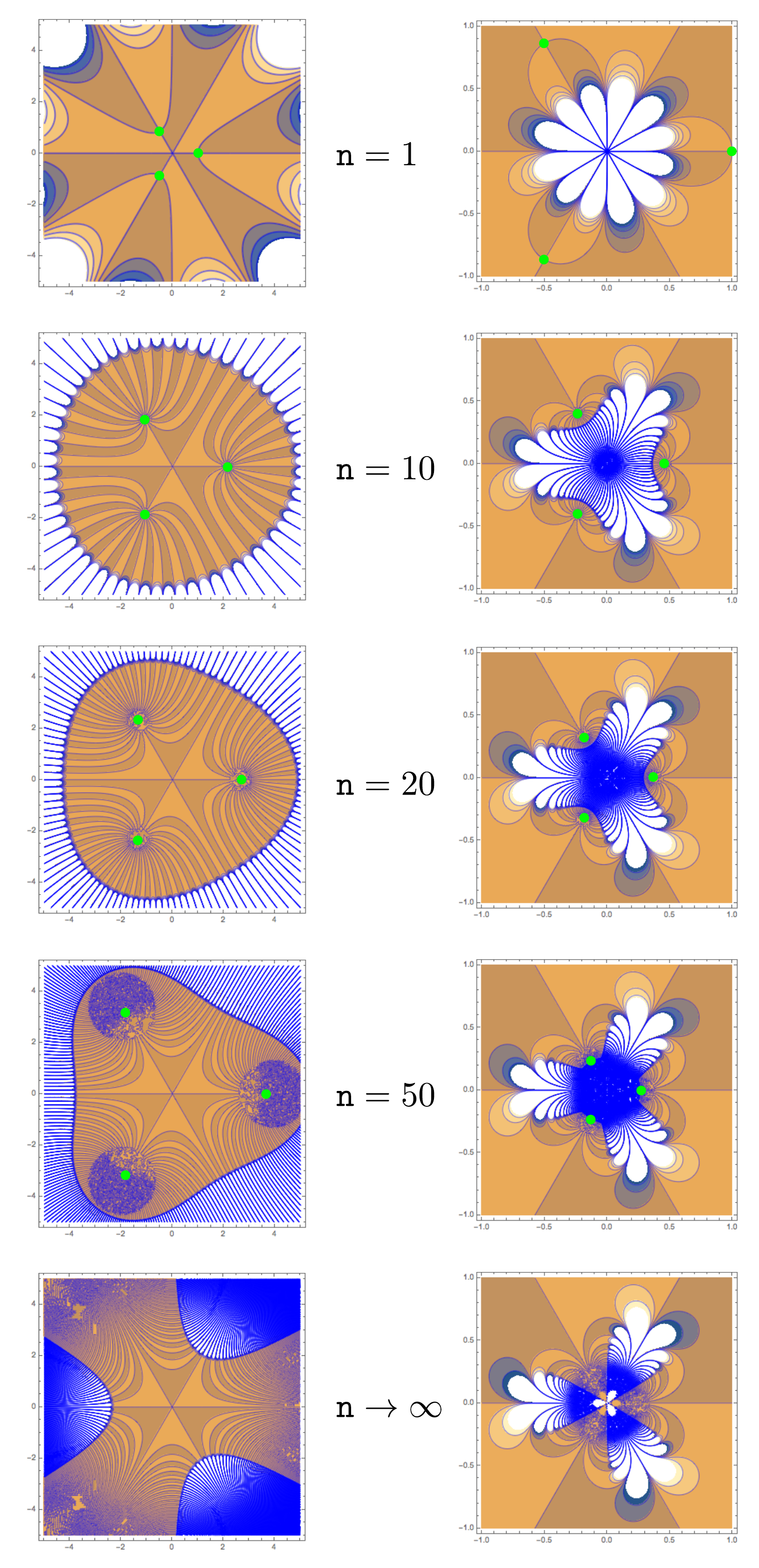}
\caption{Phase portrait of $\Re{X_{\tt n}}$ for ${\tt n}=1, 10, 20, 50$ converging to $\Re{X}$ 
with $X\in\E(2,3)$ as in Example \ref{E23-1-10-20-50}. 
Left hand side portrays a neighborhood of the origin, and the right hand side a neighborhood
of $\infty\in\CW_{z}$.
Note that by approaching $\infty\in\CW_{z}$ along paths that avoid the poles 
$\{ e^{\frac{2 i \pi \sigma}{d}} {\tt n}^{1/d} \}_{\sigma=1}^d$ (green dots), the value of 
$\Psi_{\tt n}(z)$ converges to $\infty\in\CW_t$. 
}
\label{figEjemploE23-1-10-20-50}
\end{center}
\end{figure}

Letting ${\tt n}\to\infty$ and since
$$
\lim_{{\tt n}\to\infty}
\frac{\Gamma ({\tt n}+1)\ {\tt n}^{\frac{r+1}{d}}}{\Gamma \left({\tt n}+\frac{r+1}{d}+1\right)} = 1,
$$
we conclude that the critical values $\widetilde{e}_\sigma ({\tt n})$ converge, along
the asymptotic paths $\alpha_{\sigma}(\tau)=\tau \e^{2 i \pi \sigma / d}$ 
suggested by the sequence of 
critical points $\{\widehat{e}_\sigma({\tt n})\}_{{\tt n}=1}^{\infty}$, 
to the finite asymptotic values $a_{\sigma}$ of 
$\Psi_{X}(z)=\int\limits_{0}^{z} \zeta^r \e^{-\zeta^d} d\zeta$, given by 
\begin{equation}\label{val-asintoticos-aproximacion}
a_{\sigma} = e^{2 i \pi \sigma \frac{(r+1)}{d}} \,
\frac{\Gamma \left(\frac{r+1}{d}+1\right)}{(r+1)} \in\CC_{t},
\ \ \
\text{ for } \sigma=1,\ldots,d.
\end{equation}
Furthermore, traveling along the asymptotic paths 
$\alpha_{\sigma}(\tau)=\tau \e^{2 i \pi (\sigma -d) / d} \e^{i\pi/d}$,
that arrive at $\infty\in\CW_{z}$ with angle $\frac{2\pi}{d}(\sigma-d)+\frac{\pi}{d}$, for
$\sigma=d+1,\ldots,2d$,
we see
that $\Psi_{\tt n}(z)$ converges to $\infty\in\CW_t$, thus there are $d$ 
(classes of) asymptotic paths that give rise to the asymptotic value $\infty\in\CW_t$.

\noindent
In Figure \ref{figEjemploE23-1-10-20-50} we visualize 
(using the techniques presented in \cite{AlvarezMucinoSolorzaYee}), for $r=2$, $d=3$,
the phase portraits of $\Re{X_{\tt n}}$, for ${\tt n}=1,10,20,50$, and $\Re{X}$. 
The poles $\{ e^{\frac{2 i \pi \sigma}{d}} {\tt n}^{1/d} \}_{\sigma=1}^d$ are portrayed as green dots.
Note that at $\infty\in\CW_{z}$ there is a zero of $X_{\tt n}$ of order 
exactly $r+d{\tt n}+2=3{\tt n}+4$.
\end{example}

\section{The geometry of the Riemann surface $\R_{X}$; setup for the proof of Main Theorem}
\label{RX}
This section has two goals: 
understanding the geometry of the Riemann surface $\R_{X}$ for $X\in\E(r,d)$
and
setting up the geometrical/combinatorial elements in order to define
vertices, edges and weights of the $(r,d)$--configuration trees. 

\subsection{Branch points of $\R_{X}$ as vertices}
Since the  branch points of $\R_{X}$ over 
$\infty\in\CW_{t}$ are independent
of $X\in\E(r,d)$, they will not enter the proof of the Main Theorem,
thus the following concept is natural.
\begin{definition}
\label{divisor-reducido}
For $d \geq 1$,
the \emph{reduced divisor of $X \in \E(r,d)$} is 
\begin{multline}\label{eq-divisor-reducido}
X\longmapsto
\underbrace{
\left\{ 
\circled{\, \iota \,} =
(p_{\iota},\widetilde{p}_{\iota},-\nu_{\iota})
\right\}_{\iota =1}^{n}
}_{\hbox{\tiny{pole vertices} } }
\cup
\underbrace{
\left\{
\circled{\scalebox{0.75}{$n \text{+} \sigma$} }=
(\infty_{\sigma},a_{\sigma},-\infty) 
\right\}_{\sigma =1}^{d}
}_{\hbox{\tiny{essential vertices} }}
\\
=
\Big\{ 
\circled{ \msigma } 
= 
(z_{\msigma},t_{\msigma}, - \nu_{\msigma}) 
\Big\}_{\msigma =1}^{n+d}
.
\end{multline}
\end{definition}

\begin{remark}
The corresponding critical points of $\Psi_X$ (and 
transcendental singularities of $\Psi^{-1}_X$) are
\begin{equation}\label{puntos-criticos-sing-trascen}
z_{\msigma}
\in \{p_1, \ldots , p_\iota, \ldots  p_n, 
\infty_{1},\ldots, \infty_{\sigma} 
\ldots 
\infty_{d}\}\subset\overline{\CC}_{z}
\end{equation}

\noindent 
once again 
$n \leq r$, with equality if and only if all the poles
of $X$ are simple.
Recalling 
Remark \ref{correspexponentialtract}.2,
their $n+m$ critical and finite asymptotic values 
of $\Psi_X$ 
are to be denoted by
\begin{equation}\label{valores-criticos-asintoticos}
t_{\msigma}
\in 
\{  \widetilde{p}_{1},\ldots ,\widetilde{p}_{\iota},
\ldots ,\widetilde{p}_{n},
a_{1},\ldots, a_{j(\sigma)}, \ldots, a_{m} \}
\subset\CC_{t},
\end{equation}

\noindent 
where
$m \leq d$, with equality if and only if
all the finite asymptotic values of $\Psi_X$ are of multiplicity one.

\noindent 
The main features of $\circled{\msigma}$ are
summarized in Table \ref{tablaTriada}.
\end{remark}

\begin{table}[htp]
\caption{Branch points of $\R_{X}$.}
\begin{center}
\begin{tabular}{|c|c|l|}
\hline
&& \vspace{-.3cm} 
\\
Branch point
 & Vertex & Notation in $(r,d)$--configuration trees; 
\\
$(z_{\msigma},t_{\msigma}) \in \R_{X}$ & 
$\circled{\msigma}=
( z_{\msigma},t_{\msigma}, -\nu_{\msigma})$ & 
meaning \\[4pt]
\hline
\hline
&& \vspace{-.3cm}\\
& & Pole vertices if $r\geq 1$;
\\
$(p_{\iota},\widetilde{p}_{\iota})$ & 
$\circled{\,\iota\,}=
( p_{\iota},\widetilde{p}_{\iota}, 
-\nu_{\iota})$  
& $\widetilde{p}_{\iota} = \Psi_{X}(p_{\iota})$ is a critical value 
of $\Psi_{X}$,
\\
& & $p_{\iota}$ is a pole of $X$ having order 
$-\nu_{\iota} \leq -1$,
\\
\eqref{poleenumeration} & & hence $(p_{\iota},\widetilde{p}_{\iota})$ is a branch point with \\
& & ramification index $\nu_\iota+1\geq2$.\\[2pt]
\hline
&& \vspace{-.3cm}
\\
$(\infty,\infty)$ &$(\infty,\infty, s)$ & Zero vertex when $d=0$, $r \geq 1$ in which
\\
&& case $s=2+r$;
\\
&& $\infty\in\CW_z$ is a zero of $X$ having order $s$.
\\
&& \vspace{-.3cm}
\\
\hline
&& \vspace{-.3cm}\\
& & Essential vertices if $d \geq 1$;\\
$(\infty_{\sigma},a_{\sigma})$ & 
$\circled{\scalebox{0.75}{$n \text{$+$} \sigma$} }=
(\infty_{\sigma},a_{\sigma},-\infty)$ & 
$\infty\in\CW_{z}$ 
is an essential singularity of $X$,\\
& & $a_{\sigma} \in \CC_{t}$ being a finite asymptotic 
value
\\
\eqref{essenvert} & & of $\Psi_{X}$, with exponential tract $U_\sigma (\rho)$, so\\
& & $(\infty_{\sigma},a_\sigma,-\infty)$ is a logarithmic branch\\
& &  point in (the closure of) $\R_{X}\subset\overline{\CC}_{z}\times\CW_{t}$. \\[2pt]
\hline
\end{tabular}
\end{center}
\label{tablaTriada}
\end{table}

\subsection{The families of surfaces $\R_{X}$ 
having only one vertex}
We describe the families of vector fields avoided in the 
Main Theorem, {\it i.e.} those
having exactly one branch point over $\CC_t$.

\begin{lemma}\label{oneasymptoticvalue}
Let $X\in\E(r,d)$. 
The associated $\Psi_{X}$ has exactly one 
finite critical or asymptotic value $t_{1}\in\CC_{t}$ 
if and only if 
\begin{equation*}
(r,d)=\begin{cases}
(r\geq1,0) & X \text{ has a unique pole of order }-r, \\
& \text{in which case } t_{1} \text{ is the finite critical value,} \\
(0,1) & X \text{ has an isolated essential singularity at }\infty\in\CW_{z}, \\
& \text{in which case }t_{1}\text{ is the finite asymptotic value.} 
\end{cases}
\end{equation*}
\end{lemma}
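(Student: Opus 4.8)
The plan is to exploit the counting in Lemma \ref{pareja-finita-infinita} together with the elementary structure of $\Psi_X$ in the polynomial case. First I would set up the bookkeeping: by Lemma \ref{pareja-finita-infinita}, for $d\geq 1$ the map $\Psi_X$ has exactly $r$ critical values in $\CC_t$ counted with multiplicity (equation \eqref{poleenumeration}) and exactly $m$ distinct finite asymptotic values $\{a_j\}_{j=1}^m$ with $\sum_{j=1}^m \nu_j = d$ (equation \eqref{essenenumeration}); for $d=0$ the map $\Psi_X$ is a polynomial of degree $r+1$ with no asymptotic values at all, and its finite critical values are the images of the $r$ zeros of $P$ counted with multiplicity. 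So the set of finite critical-or-asymptotic values is a finite set, and I want to decide exactly when it is a singleton.

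The forward direction (``$\Leftarrow$'') is the easy half. In case $(r\geq 1,0)$ with $P(z)=c(z-p)^r$, one computes $\Psi_X(z)=\frac{c}{r+1}(z-p)^{r+1}+b$ directly, which has the single critical value $t_1=b$; and there are no asymptotic values since $\Psi_X$ is a polynomial. In case $(0,1)$ one has $\omega_X=c\,\e^{-(az+b)}dz$ with $a\neq 0$, so $\Psi_X(z)=-\frac{c}{a}\e^{-(az+b)}+t_1$, which omits the single value $t_1$ (the finite asymptotic value, approached as $\Re(az)\to+\infty$) and has no critical points at all since $\omega_X$ is nowhere zero. So in both listed cases the finite critical/asymptotic value set is exactly $\{t_1\}$.

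The reverse direction (``$\Rightarrow$'') is where the real work lies, and I would argue by elimination using the constraints $\deg P=r$, $\deg E=d$. Suppose the finite critical/asymptotic value set is a single point $t_1$. If $d\geq 2$, then $\sum_j\nu_j=d\geq 2$; I must rule out that all these asymptotic values coincide with a single $a_1=t_1$. Here I would invoke the logarithmic branch point / exponential tract analysis already recalled in \S\ref{localram}: distinct exponential tracts of $\e^{-E(z)}$ at $\infty$ (there are $2d$ of them, the sectors where $\Re(E)\to+\infty$ alternating with those where $\Re(E)\to-\infty$) force, via the Denjoy--Carleman--Ahlfors type bound underlying Lemma \ref{pareja-finita-infinita} (Taniguchi \cite{Taniguchi1},\cite{Taniguchi2}), that the total number of finite asymptotic values plus critical values, weighted appropriately, is $r+d$; a configuration with a single finite asymptotic/critical value would then need $r+d=1$ essentially, i.e.\ $(r,d)\in\{(1,0),(0,1)\}$. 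For $d=0$ and $r\geq 2$ one argues purely algebraically: $\Psi_X'=P$ has degree $r\geq 2$, so $P$ has a zero; if $P$ had two distinct zeros the two corresponding critical values would have to coincide, which is a codimension condition one can violate, but more to the point the lemma asks for the value set to be a single point \emph{identically}, so I would show $P=c(z-p)^r$ is forced (all zeros equal) and then that even so, unless $r$ itself equals the claimed order this is consistent — hence the ``$X$ has a unique pole of order $-r$'' clause. The case $d=1,r\geq 1$ must also be excluded: then $\Psi_X$ has $r\geq 1$ critical values and $1$ finite asymptotic value, and since a polynomial critical value of $\Psi_X$ is the $\Psi_X$-image of a zero of $P$ while the asymptotic value is an omitted value, these cannot coincide generically; I would check that a pole of $X$ forces its image to differ from the asymptotic value by a residue/period computation (the asymptotic value is $\int$ along a tract, the critical value is $\int$ to a finite point), so $m+n\geq 2$ whenever $r\geq 1$ and $d=1$.

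The main obstacle I anticipate is the reverse direction in the mixed case $d=1$, $r\geq 1$: showing that a finite critical value of $\Psi_X$ can never coincide with the (unique) finite asymptotic value. This is not purely combinatorial — it requires knowing that the analytic continuation of $\Psi_X^{-1}$ around the critical value is genuinely ramified and cannot be ``absorbed'' into the logarithmic singularity over $a_1$, i.e.\ that a branch point and a logarithmic branch point over the same base point $t_1\in\CC_t$ still count as two distinct direct singularities. I would handle this by appealing to the ``no indirect singularities'' conclusion of Lemma \ref{pareja-finita-infinita} and the local normal forms: near a critical value the surface looks like $w\mapsto w^{\mu+1}$, near a logarithmic branch point like $w\mapsto \e^w$, and these germs are not identifiable, so the value set has cardinality $\geq 2$ as soon as both types are present. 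Everything else reduces to the explicit integrations above and the degree constraints.
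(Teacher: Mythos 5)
Your $(\Leftarrow)$ direction is fine and matches the paper's (an explicit antiderivative in each of the two cases). The problem is the $(\Rightarrow)$ direction, where your two key steps do not hold up. First, the Denjoy--Carleman--Ahlfors/Taniguchi count behind Lemma \ref{pareja-finita-infinita} counts \emph{singular points of $\Psi_X^{-1}$} (direct singularities and critical points, with multiplicity), not distinct values in $\CC_t$; it is perfectly possible for several of these singularities to lie over the same point, as in Examples \ref{ejemplo5} and \ref{ejemplo6}, where the single finite asymptotic value $a_1$ carries multiplicity $3$ (three logarithmic branch points over one value). So the inference ``one finite critical/asymptotic value $\Rightarrow r+d=1$'' is a non sequitur: the whole content of the lemma is precisely to exclude the coincidence of several singularities over one value, and the count alone cannot do that. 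Second, your fallback for the mixed case --- that the local models $w\mapsto w^{\mu+1}$ and $w\mapsto \e^{w}$ ``are not identifiable, so the value set has cardinality $\geq 2$ as soon as both types are present'' --- is also not an argument: a finitely ramified branch point and a logarithmic branch point are distinct points of $\R_X$ and can a priori both project to the same $t_1\in\CC_t$; distinctness of germs says nothing about their images, and ``no indirect singularities'' is irrelevant here. Your $d=0$, $r\geq 2$ discussion gestures at the right conclusion but never proves that all zeros of $P$ must coalesce (here an elementary degree count on $\Psi_X-t_1$ would work), and the proposed ``residue/period computation'' for $d=1$, $r\geq 1$ is only sketched for the simplest configuration.

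What is missing is the single global idea the paper uses: suppose all the branch points of $\R_X$ over finite values lie over one point $t_0$, i.e.\ there are $d$ logarithmic plus $n$ finitely ramified branch points over $t_0$ with $d+n\geq 2$. Since $t_0$ is the \emph{only} finite branch value, the sheets of $\R_X$ can only be glued along cuts emanating from $t_0$, so each branch point over $t_0$ generates its own connected component of $\R_X$, giving $d+n\geq 2$ components. But $\pi_{X,1}$ is a biholomorphism of $\R_X$ onto the (connected) $z$-plane (Lemma \ref{LemmaRX}), a contradiction; hence $d+n=1$, which yields exactly the two listed cases at once, with no case analysis on $(r,d)$. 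Without this connectivity step (or an equivalent monodromy argument), your proof does not go through.
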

\begin{proof}
$(\Leftarrow)$ 
The case 
$X$ has a unique pole of order $-r$, 
is obvious: the required distinguished parameter 
is given by Equation \eqref{Psi-1-polo} 
and the unique finite critical value is given by Equation \eqref{val-crit-1-polo}.

\noindent 
When $(r,d)=(0,1)$, the distinguished parameter is given by Equation \eqref{Psi-1-va} and 
the unique finite asymptotic value is given by Equation \eqref{va-exponencial}.

\smallskip
\noindent
$(\Rightarrow)$ 
By Lemma \ref{pareja-finita-infinita}, 
$\Psi_{X}^{-1}$ has $d$ logarithmic branch points over $d$ finite asymptotic values 
of $\Psi_X$, 
$d$ logarithmic branch points over 
$\infty\in\CW_{t}$ and $r$ critical values 
(counted with multiplicity).

\noindent 
Let $\{(z_{\msigma}, t _1 )\}
\subset
\pi_{X,2}^{-1}( t_1 )
\subset\R_{X}$ 
be all the branch points over  $t_1$.
The set $\{(z_{\msigma}, t _1 )\}$ consists of 

\noindent 
$\bigcdot$ 
exactly $d$ logarithmic branch points over $t_1$
and 

\noindent 
$\bigcdot$
$n$ ($\leq r$) finitely 
ramified branch points $(p_{\iota}, t_1)$ of ramification indices $\nu_{\iota}+1$ with 
$r=\sum_{\iota=1}^{n\leq r}\nu_{\iota}$.

\noindent 
Note that there are $d+n \geq 1$ 
\emph{distinct} branch points of $\R_X$.

Moreover,  $\R_X$ is a connected Riemann surface
(it is the graph of $\Psi_X$). 
The restriction of the second projection 
over the punctured plane 

\centerline{$
\pi_{X, 2}: \R_X \backslash 
\{ \pi_{X, 2}^{-1}(t_1 )\}
\longrightarrow 
\CC \backslash \{ t_1 \}
$}

\noindent 
is a holomorphic cover without ramification. 
The subgroups $G$ of $\pi_1(\CC \backslash \{ t_1\})
\cong \ZZ$ classify topologically these covers.

\noindent
For $G= \ZZ_{r+1}$ with $r \geq 1$, 
the cover is finite cyclic and  
we can recognize that 
$\Psi_X (z)$ 
is as in \eqref{Psi-1-polo},
using Riemann's removable singularity Theorem. 
This implies $X \in \E(r, 0)$ has a unique pole
as in the first assertion.  

\noindent 
While for $G= \ZZ$, 
using Lemma \ref{pareja-finita-infinita} assertion 3
we can recognize 
$\Psi_X(z)$ as in \eqref{Psi-1-va}, 
which in turn provides the corresponding $X \in \E(0,1)$. 

\noindent
The case $G=id$, gives $\Psi_X(z)= \frac{1}{\lambda} (z-p_1)$
and the constant vector field 
$X(z)= \lambda \del{}{z}$, 
that does not belong to $\E(r,d)$.
\end{proof}

\subsection{Diagonals of $\R_X$ as edges}
Because of Lemma \ref{oneasymptoticvalue} from now on, unless explicitly noted,
we assume that there are two or more finite critical or asymptotic values $t_{\msigma}\in\CC_t$
of $\Psi_X$.

In order to completely describe $\R_{X}$, we also require information of 
the relative position of the branch points of the surface,
recall Diagram \ref{diagramaRX}  
and Definition \ref{eldivisor};
let 
$\{ 
\circled{\msigma}= (z_\msigma, t_\msigma, -\nu_{\msigma})
\}_{\msigma=1}^{n+d}$
as in Equation \eqref{eq-divisor-reducido}.

Consider the oriented straight line segment $\overline{t_{\msigma}t_{\mrho}}\subset\CC_{t}$. 
The inverse image

\centerline{
$\pi_{X,2}^{-1}\big(\overline{t_\msigma t_\mrho}\big)
= \{ \Delta_{\vartheta \msigma \mrho}\} \subset\R_{X}$ }

\noindent
is a set consisting of 
a finite (when $m=0$, equivalently $d=0$) or 
an infinite (when $m\geq1$) 
number of copies of $\overline{t_{\msigma}t_{\mrho}}$. 
A priori, for each segment 
$\Delta_{\vartheta \msigma \mrho}$,
the projection 
$\pi_{X,1}(\Delta_{\vartheta \msigma \mrho})\subset \overline{\CC}_{z}$ can have
regular points at its end points.

\begin{definition}
\label{diagonal}
1. A segment 
\, $\Delta_{\vartheta \msigma \mrho}$ 
\emph{ is a diagonal of $\R_{X}$} 
\,
when the interior 
of $\pi_{X,1}(\Delta_{\vartheta \msigma \mrho})$ is in $\CC_{z}$ and 
the endpoints of
$\pi_{X,1}(\Delta_{\vartheta \msigma \mrho})$
are $z_{\msigma},z_{\mrho}
\in 
\{p_1, \ldots , p_n,  
\infty_{1}, \ldots,$ 
$\infty_{d}\} \subset \overline{\CC}_{z}$;
critical points of $\Psi_X$ or transcendental 
singularities of $\Psi_X^{-1}$. 
\\
2. A given diagonal 

\centerline{
$\Delta_{\vartheta \msigma \mrho}$, 
\emph{starts at 
$\circled{\msigma}=(z_{\msigma}, t_{\msigma}, -\nu_{\msigma})$ 
and ends at 
$\circled{\mrho}=(z_{\mrho}, t_{\mrho}, -\nu_{\mrho})$}.}

\noindent 
We shall say that the branch points of $\pi_{X, 2}$,
$\circled{\msigma}$ and 
$\circled{\mrho}$,
\emph{share the same sheet}
$\CC_{\Delta_{\vartheta \msigma \mrho}} 
\backslash \{\text{suitable branch cuts}\}$ in 
$\R_{X}$.
\end{definition}

\begin{remark}\label{casosVAf}
1. By notational simplicity, 
if we drop the index $\vartheta$ from 
$\Delta_{\vartheta \msigma \mrho}$ we are specifying the
unique diagonal $\Delta_{\msigma \mrho}$.
The following identification will be useful 
in the proof of the Main Theorem
\begin{equation}\label{diagonal-arista}
\underbrace{\Delta_{\msigma \mrho} \subset \R_X
\text{ with endpoints }
\circled{\msigma},
\,
\circled{\mrho}}_{
\text{\tiny{diagonal}} }
\ \  \longleftrightarrow 
\underbrace{
\Delta_{\msigma \mrho}}_{\substack{\text{\tiny oriented}\\ \text{\tiny edge}}
}\hskip-5pt.
\end{equation}

\noindent
2. Note that since $\Psi_{X}$ is a uni--valued function, 
there can not be homoclinic trajectories of $\Re{X}$ from a pole $p$ to itself; 
{\it i.e.} there does not exist a diagonal 
$\Delta_{\iota \kappa}$ whose endpoints are
the finitely ramified branch points
$(p_{\iota},\widetilde{p}_{\iota},-\nu_{\iota})$ and $(p_{\kappa},\widetilde{p}_{\kappa},-\nu_{\kappa})$,
with $\pi_{X,1}(p_{\iota})=\pi_{X,1}(p_{\kappa})=p$.

\noindent 
3. The
diagonals $\Delta_{\msigma \mrho}$ 
can have endpoints 
as follows:

\begin{enumerate}[label=\arabic*),leftmargin=*]
\item
$\Delta_{\iota \kappa}$ has as endpoints 
two finitely ramified branch points (corresponding to pole vertices):
$(p_{\iota},\widetilde{p}_{\iota},-\nu_{\iota})$ and $(p_{\kappa},\widetilde{p}_{\kappa},-\nu_{\kappa})$,
$\iota\neq \kappa$. 
For an example see Figure \ref{figejemplo-dos-polos} in \S\ref{concreteexamples}.

\item
$\Delta_{\iota \sigma}$ has as endpoints a 
finitely ramified branch point and a logarithmic 
branch point 
(corresponding to a pole and an essential vertex) or viceversa:
$(p_{\iota},\widetilde{p}_{\iota},-\nu_{\iota})$ and 
$(\infty_{\sigma},a_{\sigma},-\infty)$. 
For an example see Figure \ref{figejemplo-alma-no-trivial} in \S\ref{concreteexamples}.

\item
$\Delta_{\sigma \rho}$ has as endpoints two 
logarithmic branch points (corresponding to essential vertices) 
with finite asymptotic values $a_{\sigma}$ and $a_{\rho}$ with exponential tracts 
$\alpha_\sigma$ and $\alpha_\rho$: 
$(\infty_{\sigma},a_{\sigma},-\infty)$ to $(\infty_{\rho},a_{\rho},-\infty)$, 
$\sigma\neq \rho$, where the subscripts 
are as in \eqref{correspjalphaIndices}.
For an example see Figure \ref{fig-Exp3} in \S\ref{concreteexamples}.
\end{enumerate}
\end{remark}

\noindent
Following the notation in \cite{Frias-Mucino},
for $\Delta_{\msigma \mrho}$ a diagonal
the associated \emph{semi--residue} is
\begin{equation}\label{diagonalsemiresidue}
S(\omega_{X}, z_{\msigma}, z_{\mrho})
\doteq
\int_{z_{\msigma}}^{z_{\mrho} } 
P(\zeta)\e^{-E(\zeta)} d\zeta
= 
t_{\mrho}-t_{\msigma}. 
\end{equation}

\begin{lemma}[Existence of diagonals in $\R_X$]
\label{pareja-diagonales}
Suppose that there are at least two branch points
$
\big\{ 
\circled{ \msigma } 
= 
(z_{\msigma},t_{\msigma}, - \nu_{\msigma}) 
\big\}_{\msigma =1}^{n+d}
\subset\R_{X}$.
Then every branch point 
$\circled{\msigma}$
is an endpoint for at 
least two diagonals,
an incoming diagonal and an outgoing diagonal.
\end{lemma}

\begin{proof}
Consider any branch point 
$\circled{\msigma}=(z_{\msigma},t_{\msigma},-\nu_{\msigma})
\in\pi_{X,2}^{-1}(t_{\msigma})$, with
$t_{\msigma}$ 
as in Equation \eqref{valores-criticos-asintoticos}. 
Suppose that there is no diagonal 
$\Delta_{\msigma \mrho}$ with endpoint 
$\circled{\msigma}$. 
This implies that $\circled{\msigma}$ does not share a sheet, 
$\CC_{t}\backslash\{$suitable branch cuts$\}$,
with any other branch point 
$\circled{\mrho}=(z_{\mrho},t_{\mrho},-\nu_{\mrho})\in\pi_{X,2}^{-1}(t_{\mrho})$, 
for some finite asymptotic or critical value 
$t_{\mrho}\neq t_{\msigma}$
(note that the existence of $t_{\mrho}$ is guaranteed by Lemma \ref{oneasymptoticvalue}). 
In other words the only sheets, $\CC_{t}\backslash\{\text{suitable branch cuts}\}$, of $\R_{X}$ 
containing the branch point $\circled{\msigma}$ are of the form 
$\CC_{t}\backslash\{L_{\msigma}\}$, for $L_{\msigma}=[t_{\msigma},\infty)$.
Hence by the same arguments as in Lemma \ref{oneasymptoticvalue}, 
the Riemann surface 
$\R_{X}$ will have at least 2 connected 
components (one containing $\circled{\msigma}$ and 
the other containing $\circled{\mrho}$), 
leading to a contradiction.
\end{proof}

\subsection{Geometrical building blocks and the weights of edges} 
Our elementary building blocks are pairs, 
Riemann surface and singular complex analytic vector fields, as follows.

\begin{definition}\label{piezasFatou}
The pair 
$\big(\bar{\HH}^2_\pm,\del{}{t}\big)$ 
will be called a \emph{(closed) half plane};
the closure of $\bar{\HH}^2_\pm$ is considered in $\CC$, hence its 
boundary is $\RR$.

\noindent
Likewise, 
$\big(\{0\leq\Im{t}\leq h\}, \del{}{t}\big)$ will be called a \emph{(closed) finite height horizontal strip}. 
\end{definition}

In the flat surface category, 
surgery tools are widely used, {\it v.g.} 
\cite{Strebel} p.~~56 ``welding of surfaces'', 
\cite{MR}, 
or \cite{Thurston} \S 3.2.--3.3 for general 
discussion.

\begin{corollary}[Isometric glueing]
\label{pegado-isometrico}  
Let $(M^0, g_X)$, $(N^0, g_Y)$ be two flat surfaces arising 
from two singular complex analytic vector fields $X$ and $Y$. 
Assume that both spaces $M^0$, $N^0$
have as geodesic boundary components of the same length: the trajectories
$\sigma_1 (\tau)$, $\sigma_2(\tau)$ of $\Re{X}$ and $\Re{Y}$, 
$\tau \in I \subset \RR$. 
The isometric glueing of them 
along these geodesic boundary,  
is well defined, and provides a new flat surface 
on $M^0 \cup N^0$ arising from a new complex analytic vector field.
\hfill $\Box$
\end{corollary}

\begin{lemma} 
\label{descomposicion-planos-bandas}
Let $X \in \E(r,d)$.

\begin{enumerate}[leftmargin=*,label=\arabic*)]
\item 
The Riemann surface $\R_{X}$ can be constructed
by isometric glueing of

\noindent $\bigcdot$
half planes $(\bar{\HH}^2_{\pm}, \del{}{t})$ and 

\noindent $\bigcdot$
finite height horizontal strips 
$\big(\{ 0 \leq \Im{t} \leq h\}, \del{}{t} \big)$, 
where $h \geq 0 $.

\item
There exists a one--to--one correspondence 
$$
\begin{array}{ccc}
\left\{
\begin{array}{c}
\hbox{finite height horizontal strips}
\\ 
\big(\{ 0 \leq \Im{z} \leq h\}, \del{}{t} \big)
\end{array}
\right\}
& 
\longleftrightarrow 
& 
\left\{
\begin{array}{c}
\hbox{diagonals } \Delta_{\msigma \mrho}
\hbox{ with}
\\ 
\big\vert 
\Im{  \int_{z_{\msigma}}^{z_\mrho} \omega_X} 
\big\vert
= h \geq 0 
\end{array}
\right\}
\end{array} ,
$$
\noindent 
here
$\circled{\msigma} =
(z_{\msigma},t_{\msigma},-\nu_{\msigma})$ 
and 
$\circled{\mrho}=
(z_{\mrho},t_{\mrho},-\nu_{\mrho})$.

\item
The case when $\Im{  \int_{z_{\msigma}}^{z_\mrho} \omega_X}  =0$ 
corresponds to the finite height horizontal strip
degenerating into a segment of trajectory of $\Re{X}$ between the branch points 
$\circled{\msigma}$ and $\circled{\mrho}$,
{\it i.e.} a horizontal diagonal $\Delta_{\msigma \mrho}$.
\end{enumerate}
\end{lemma}
\begin{proof}
Assertion (1) follows by using Lemma \ref{pareja-finita-infinita},
we can consider the pullback under $\pi_{X, 2}$
of the vector field $\del{}{t}$ to 
$\R_X$. Thus, the phase portrait of $\Re{X}$
determines the decomposition.

\noindent
Assertion (2) and (3) follow directly from Definition \ref{diagonal}.
\end{proof}

Definitions \ref{horizontal-branch-cut}, 
\ref{rem-diagonal} below
apply for singular flat Riemann
surfaces (not necessarily of type $\R_X$). 

\begin{definition}\label{horizontal-branch-cut}
Let $\{ t_{\tt k} \}_{{\tt k}=1}^{\tt r} \subset \CC_t$
be a finite set of distinct points.
A {\it sheet} is a copy of $\CC_{t}$ 
with ${\tt r}\geq 1$ branch cuts $L_{\tt k}$; {\it i.e.}
$\CC_t$ is cut along horizontal right segments 
$L_{{\tt k}}= [t_{\tt k}, \infty)$, 
remaining connected
\begin{equation}\label{sheetminuscuts}
\CC_{t}\backslash \{ L_{\tt k}\}_{{\tt k} = 1}^{\tt r} 
\ \doteq \ 
\left[ \CC_{t}\backslash \big( \cup_{{\tt k}=1}^{{\tt r}} [t_{\tt k},\infty) \big) \right]
\cup_{{\tt k}=1}^{{\tt r}}\{[t_{\tt k},\infty)_{+},[t_{\tt k},\infty)_{-}\},
\end{equation}
having $2{\tt r}$ horizontal boundaries
where the subscripts $\pm$ refer 
to the obvious 
upper or lower boundary using $\Im{t}$.
\end{definition}

See Figure \ref{figPiezasBasicas} for examples of sheets.

\begin{remark}
1. The sheets $\CC_{t}\backslash \{ L_{\tt k}\}_{{\tt k} = 1}^{\tt r}$ include the upper and lower boundaries $[t_{\tt k},\infty)_{\pm}$.

\noindent
2. Note that cuts (and the corresponding boundaries) need not be to the right, 
they could be more general simple paths, 
however for notational simplicity and ease of the proofs we shall only use right cuts 
$[t_{\tt k}, \infty)_{\pm}$ as in \eqref{sheetminuscuts}. 
\end{remark}

\begin{definition}\label{rem-diagonal}
A \emph{diagonal of the sheet} $\CC_{t}\backslash\{L_{\tt k} \}_{{\tt k} = 1}^{\tt r}$ is 
an oriented straight line segment
\begin{equation}
\label{diagonal-first-version}
\Delta_{\msigma \mrho}=\overline{ t_{\msigma} t_{\mrho} }
\subset \CC_{t}\backslash\{L_{\tt k} \}_{{\tt k} = 1}^{\tt r}  ,
\end{equation}

\noindent  
starting at $t_{\msigma}$ and ending at $t_{\mrho}$, here
$\mrho,\msigma$ are as in Equation 
\eqref{valores-criticos-asintoticos}. 
\end{definition}

The abuse of notation in Equations  
\eqref{diagonal-arista}, 
\eqref{diagonalsemiresidue},
and 
\eqref{diagonal-first-version}, 
will be cleared in the proof of the main Theorem.
See Figures \ref{figPiezasBasicas}.b, \ref{figPiezasBasicas}.c 
and Figure
\ref{figejemplo-dos-polos} for examples of diagonals
of sheets.
\begin{figure}[htbp]
\begin{center}
\includegraphics[width=\textwidth]{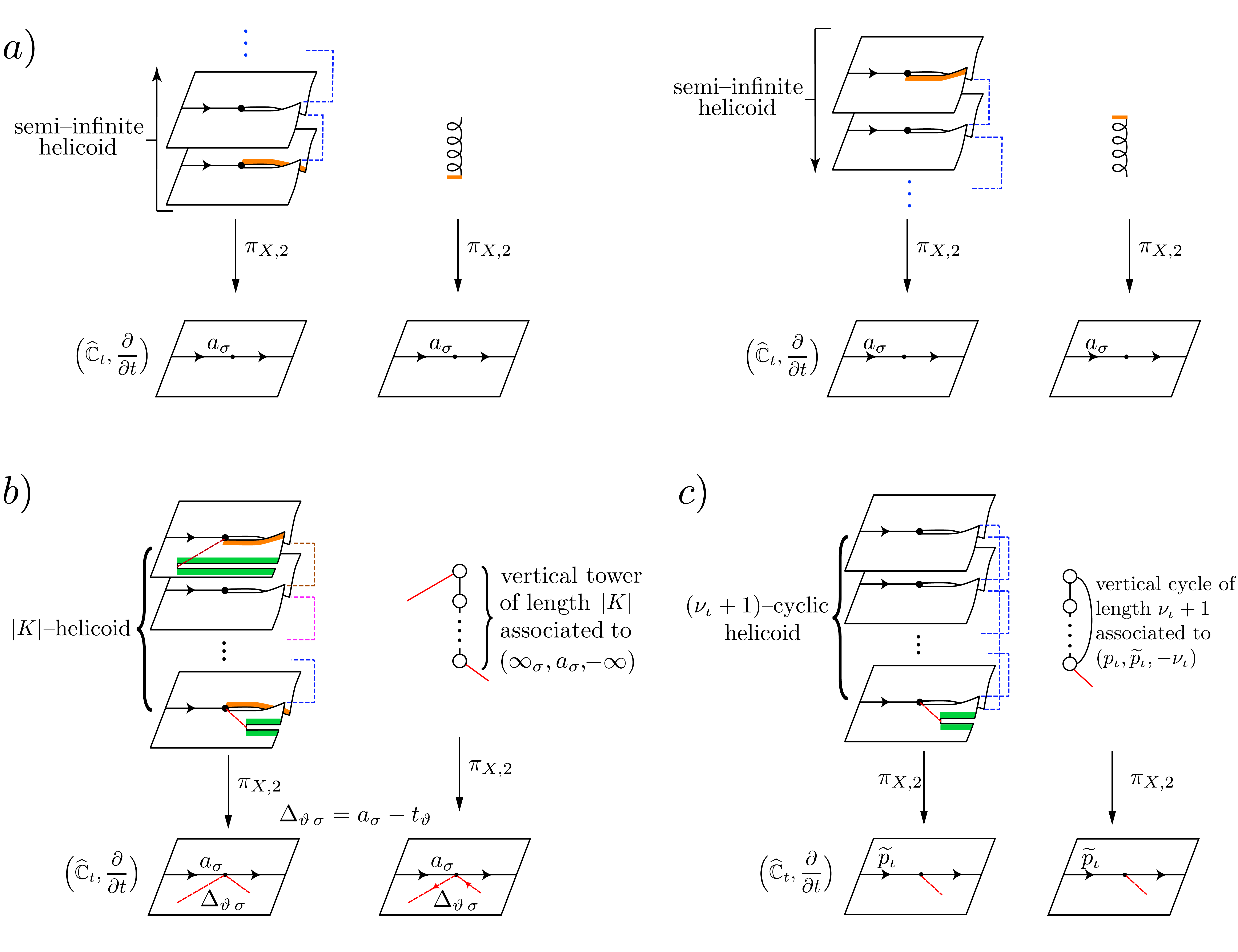}
\caption{In this figure we have the representation of the non elementary building blocks
as sheets with branch cuts glued appropriately in $\R_{X}$, and schematically as a graph.
(a) On the top row we have the semi--infinite (up and down) helicoids.
On the bottom row we have (b) on the left a $\abs{K}$--helicoid 
and (c) on the right an $(\nu_\iota +1)$--cyclic helicoid, for $(\nu_\iota +1)\geq 2$.
In the
$\abs{K}$--helicoids and $(\nu_\iota+1)$--cyclic
helicoids the red segments are diagonals.
}
\label{figPiezasBasicas}
\end{center}
\end{figure}

\smallskip

We introduce,
four non elementary building blocks, 
pictured in Figure \ref{figPiezasBasicas}, 
their Definitions \ref{helicoide-semiinfinito}, 
\ref{helicoide-finito}, \ref{helicoide-ciclico}
include a geometrical and a combinatorial framework.

\smallskip

Recalling Example \ref{ejemplolog} we propose the following.

\begin{definition}\label{helicoide-semiinfinito}
A \emph{semi--infinite helicoid} is the Riemann surface of 

\centerline{$
\int\limits_{z_0}^z \e^{-\mu(\zeta + c_1)} d\zeta 
- a_\sigma = 
-\dfrac{1}{\mu}\e^{-\mu (z+c_1)}
: \overline{\HH}^2_\pm 
\subset \CC_z \longrightarrow \CC_t
$}

\noindent
having a horizontal boundary $[a_\sigma,\infty)_\pm$,  
geometrically it is
an infinite succession of half--planes 
\\ 
\centerline{
$\Big( \big(\bar{\HH}^2_{\pm} \cup \bar{\HH}^2_{\mp} 
\cup \ldots \big),\del{}{t} \Big)$.}

\noindent 
glued in the usual way, Corollary \ref{pegado-isometrico}.
See Figure \ref{figPiezasBasicas}.a.
\end{definition}

\begin{remark}
\label{coils}
In the combinatorial framework, 
for $X \in \E(r,d)$, each essential vertex 
$\circled{\scalebox{0.75}{$n \text{$+$} \sigma$} }
= (\infty_\sigma, a_\sigma, -\infty)$
has associated two semi--infinite helicoids,
up and down, respectively.

\noindent 
Pictorically, we will represent each one by a small coil
in our figures. 
However, the semi--infinite helicoids
and their coils do not appear in the 
formal $(r,d)$--configuration trees.
\end{remark}

Geometric characteristics:
a semi--infinite helicoid

\noindent
$\bigcdot$ lies over the 
finite asymptotic value 
$a_\sigma \in \CC_t$,

\noindent 
$\bigcdot$
has an infinite number of sheets,

\noindent 
$\bigcdot$
its horizontal boundary is coloured in orange, 

\noindent
$\bigcdot$ 
is up or down 
(resp. the domain of its integral is 
$\bar{\HH }_{+}^2$ or $\bar{\HH }_{-}^2$).


\smallskip
Recalling the Example \ref{ejemplolog}, 
we have the following.

\begin{definition}\label{helicoide-finito}
For $K \in \ZZ$,  
a \emph{$\abs{K}$--helicoid} 
is the Riemann surface of 

\centerline{$
\int\limits_{z_0}^z \e^{-\mu(\zeta + c_1)} d\zeta 
- a_\sigma = 
-\dfrac{1}{\mu}\e^{-\mu (z+c_1)}:
\mathcal{D}
\subset \CC_z \longrightarrow \CC_t
$}

\noindent 
where 
$$
\mathcal{D} =\left\{ 
\begin{array}{ll}
\{0 \leq 
\Im{\mu z} 
\leq 2 \pi (K +1)\}, & K  \geq 0
\\
& \vspace{-.3cm}
\\
\{ 2 \pi K \leq 
\Im{\mu z} 
\leq 2\pi  \}, &
K < 0,
\end{array}
\right.
$$

\noindent
having a finite number $\ell \geq 2$ of branch cuts
$\{ L_{\tt k}\}_{{\tt k}=2}^{\ell}$,
geometrically it as 
a succession of $2(\abs{K} +1)$ 
half--planes 
\\ 
\centerline{
$\Big( \big(\bar{\HH}^2_{\pm} 
\cup\bar{\HH}^2_{\mp}\cup  \ldots\cup\, 
\bar{\HH}^2_{\mp} \big),\del{}{t}\Big) $ }

\noindent 
glued in the usual way, with $2\ell$ horizontal boundaries. 

\noindent
In the combinatorial framework, for $X \in \E(r,d)$,
the following objects are equivalent:

\noindent 
a $\abs{K}$--helicoid, 

\noindent 
a 
\emph{vertical tower of length $\abs{K}$}.

\noindent 
See Figure \ref{figPiezasBasicas}.b.
\end{definition}

\begin{remark}
\label{}
Each essential vertex 
$\circled{\scalebox{0.75}{$n \text{$+$} \sigma$} }
= (\infty_\sigma, a_\sigma, -\infty)$
determines at least one $\abs{K(\sigma)}$--helicoid.
\end{remark}

\noindent
Geometric characteristics: a $\abs{K}$--helicoid

\noindent
$\bigcdot$ 
lies over the finite asymptotic value 
$a_\sigma \in \CC_t$,

\noindent 
$\bigcdot$
has $\vert K \vert +1 $ sheets and there
are three subcases.

\noindent 
If $K > 0$ or $K < 0$, the $\abs{K}$--helicoid:

\noindent 
$\bigcdot$
goes up or down depending on the sign of $K \neq 0$,

\noindent 
$\bigcdot$
has $\ell \geq 2$ diagonals with common
extreme points over $a_\sigma \in \CC_t$,

\noindent 
$\bigcdot$ in particular, 
there are always 2 diagonals present:
one on the image of the strip  

\centerline{
$\{0 \leq \Im{z} \leq 2 \pi  \} $,}

and the other on the image of the strip 

\centerline{
$\{2\pi K \leq \Im{z} \leq 2 \pi (K+1)  \} $}

$K$ sheets above/below,

\noindent 
$\bigcdot$
has $2\ell \geq 4$ horizontal boundaries 
(arising from diagonals, colored green) and 

2 horizontal boundaries (orange),

\noindent
If $K=0$, the $\abs{K}$--helicoid:

\noindent 
$\bigcdot$
has $\ell \geq 0$ diagonals having common
extreme points over $a_\sigma \in \CC_t$,

\noindent 
$\bigcdot$
has $2\ell \geq 0$ horizontal boundaries from diagonals (green) and 

2 horizontal boundaries (orange).

\smallskip
Recalling the case of a pole, Example \ref{campo-racional-un-polo}, 
we have the following.

\begin{definition}\label{helicoide-ciclico}
A \emph{$(\nu_\iota+1)$--cyclic helicoid} 
is the Riemann surface of 

\centerline{
$\frac{1}{\lambda} \int\limits_{z_0}^z  (\zeta-p_\iota)^{\nu_\iota}\,d\zeta
+ \widetilde{p}_\iota =
\frac{1}{\lambda} \dfrac{(z-p_\iota)^{\nu_\iota+1}}{\nu_\iota+1} 
: \CC_z \longrightarrow \CC_t$}

\noindent
having a finite number $\{ L_{\tt k}\}_{{\tt k}=0}^{\ell} 
$, $\ell \in \NN \cup \{0 \}$,  of branch cuts,
geometrically it is 
a finite succession of $2\nu_\iota +2$ half--planes 
\\ \centerline{
$\Big( \big(\bar{\HH}^2_{\pm} \cup\bar{\HH}^2_{\mp}\cup  \ldots\cup\, \bar{\HH}^2_{\mp} \big),\del{}{t}\Big) $ }
glued in the usual way, with $2\ell \geq 0 $ horizontal boundaries.

\noindent 
In the combinatorial framework for $X \in \E(r,d)$, 
the following three objects are equivalent:  

\noindent 
a $(\nu_\iota+1)$--cyclic helicoid, 

\noindent 
a
\emph{vertical cycle of length $\nu_\iota+1$},

\noindent 
a pole vertex
$\circled{\, \iota \, }=
(p_\iota, \widetilde{p}_\iota, - \nu_\iota)$. 

\noindent 
See Figure \ref{figPiezasBasicas}.c. 
\end{definition}

\noindent 
Geometric characteristics: a $(\nu_\iota+1)$--cyclic helicoid

\noindent
$\bigcdot$ lies over the
finite critical value $\widetilde{p}_\iota \in \CC_t$,
 
\noindent 
$\bigcdot$
has $\nu_\iota+1$ sheets, 

\noindent
$\bigcdot$
has 
$\ell \geq 0$ diagonals with common
extreme points over $\widetilde{p}_\iota$.

\begin{remark}[Cut and paste of the different 
geometric pieces for $X \in \E(r,d)$]
The paste of a
semi--infinite helicoid and a cyclic helicoid
is forbidden; 
and the paste 
of two semi--infinite helicoids 
will appear essentially only for $X\in \E(0,1)$, 
as in Example 
\ref{ejemplolog}.

\noindent
Semi--infinite helicoids and 
$\abs{K}$--helicoids 
are glued along their orange horizontal boundaries,
see Figure \ref{figPiezasBasicas}.
\end{remark}

\begin{remark}
[The weights of the edges]
\label{pesos-como-info-geometrica}
1. In order to construct the surface $\R_X$
by glueing the geometric pieces described in 
Lemma \ref{descomposicion-planos-bandas},
we shall need to specify not only the branch points
$
\big\{ 
\circled{ \msigma } 
= 
(z_{\msigma},t_{\msigma}, - \nu_{\msigma}) 
\big\}_{\msigma =1}^{n+d}
$
and the corresponding diagonals
$\{ \Delta_{\msigma \mrho} \}$ of the sheets,
but also how many sheets 
each geometric piece has.

\noindent
2. In what follows, the term ``vertical'' refers to the 
$z$--direction in 
$\overline{\CC}_{z}\times\CC_{t}$.
Let 
$(z_\msigma, t_\msigma, - \nu_\msigma)
\in\R_{X}$ be a fixed branch point,
we consider the usual lifting 

$$\beta(\theta)=
\pi_{X,2}^{-1}\big( t_{\msigma}+\rho\,\e^{i2\pi\theta}) \subset \R_{X} 
\ \hbox{ where } \
\theta\in[\theta_{min},\theta_{max}]\subset\RR,
$$ 

\noindent 
for appropriate $\theta_{min}$, $\theta_{max}$ 
and small enough $\rho>0$.

\noindent
3. 
As is usual, three cases appear:

\noindent 
i) Going around a branch point 
counterclockwise corresponds to going 
\emph{up} on the ramified surface 
$\R_X$ and hence 
the number that 
separates the sheets is positive. 

\noindent 
ii)
Similarly going around the branch point clockwise 
corresponds to going \emph{down}. 

\noindent 
iii)
Furthermore going $K$ times around a finitely ramified branch point 
of $\R_X$ 
of ramification index $\nu$ is 
equivalent to going around it $K \pmod{\nu}$ times.

\noindent
4. Whenever there are two (or more) diagonals sharing the same branch point, the 
number of sheets that separate the diagonals in question can be counted on the 
Riemann surface $\R_{X}$ as  
number of planes
traversed by a small enough 
circular path $\beta(\theta)$ around the common branch point.
In this way, by choosing a \emph{local zero level sheet} for each branch point
of $\R_X$, 
we will be able to assign 
a weight to each edge/diagonal attached to the branch point (relative to the local zero level sheet).
\end{remark}

\section{Why is the geometrical description of $\E(r,d)$ difficult?}
\label{dificultades}

Let $X$ be in $\E(r,d)$. 
The graph of $\Psi_{X}$ is the flat 
Riemann surface $\R_{X}$.
In order to get an accurate combinatorial description
of $\R_X$ for the family $\E(r,d)$, 
we shall need to specify two basic sets:

\noindent
i) the vertices or branch points  
$$
\big\{ 
\circled{ \, \iota \,}=
(p_{\iota},\widetilde{p}_{\iota},-\nu_{\iota})
\big\}_{\iota =1}^{n}
\cup
\left\{
\circled{\scalebox{0.75}{$n \text{$+$} \sigma$} }
=
(\infty_{\sigma},a_{\sigma},-\infty) 
\right\}_{\sigma =1}^{d}
=
\Big\{ 
\circled{ \msigma } 
= 
(z_{\msigma},t_{\msigma}, - \nu_{\msigma}) 
\Big\}_{\msigma =1}^{n+d}
$$

\noindent 
of $\R_{X}$, {\it i.e.} the reduced
divisor of $X$, Definition \ref{divisor-reducido},

\noindent
ii) a subset of the diagonals 
$\{ \Delta_{\msigma \mrho} \}$,
Equation \eqref{diagonal-arista},
connecting the branch points 
(in particular it will be useful to know 
which branch points share specific sheets of $\R_{X}$).

The implicit combinatorial obstacles are:

\smallskip

\begin{enumerate}[leftmargin=0.7cm]
\item[D.1] No canonical choice for the initial 
integration point of 
$\Psi_X (z)=\int_{z_0}^z \omega_X$
can be given. 

\item[D.2]
There is no preferred/canonical 
\emph{global zero level}, denoted (GZL),

\centerline{ 
$\CC_{\Delta_{\msigma \mrho}} \backslash \{\text{suitable branch cuts}\}
\ \ \subset \ \     
\R_{X}$, }

\noindent 
that is to be chosen to start the description of 
$\R_{X}$ as a combinatorial object.

\item[D.3]
No canonical order can be given to the 
branch points 
$\big\{ 
\circled{ \msigma } 
= 
(z_{\msigma},t_{\msigma}, - \nu_{\msigma}) 
\big\}_{\msigma =1}^{n+d}$ 
of $\R_{X})$.

\item[D.4] {\it A priori}, the choice of
minimal subset of the diagonals 
required to describe
$\R_X$ is non canonical. 
 
\end{enumerate}

\smallskip

In particular, note that difficulty D.1 will have a repercussion on the labeling of the vertices in Definition 
\ref{d-confTree}, while difficulties D.2 and D.3
are associated to the choice of a suitable root $\raiz{1}$ in the reduced divisor.
Difficulty D.4 will require a certain ordering of the
subset of the diagonals on each sheet of $\R_{X}$
(as will appear in Definition 
\ref{arbol-lineal-enraizado}).

The resolution of these choices/conventions
motivates the notion of equivalence classes $[\Lambda_X]$ as in our Main 
Theorem, see \S\ref{clasesE(d)}.

There are also analytical obstructions/obstacles:

\begin{enumerate}[leftmargin=0.7cm]
\item[D.5] 
Not all the vertices
$\big\{ 
\circled{\, \iota \,}=
(p_{\iota},\widetilde{p}_{\iota},-\nu_{\iota})
\big\}_{\iota =1}^{n}
\cup
\left\{
\circled{\scalebox{0.75}{$n \text{$+$} \sigma$} }
=
(\infty_{\sigma},a_{\sigma},-\infty) 
\right\}_{\sigma =1}^{d}$
are possible 
as branch points for $\R_{X}$, only those that are a solution to the 
system of  transcendental equations
\begin{equation}
\left\{
\begin{aligned}\label{sistema-realizable}
\Psi_{X}(p_\iota) & = \widetilde{p}_\iota   
\\
\Psi_{X}^{(\ell)}(p_\iota) & = 0 \hskip2.0cm 1 \leq \ell \leq \nu_{\iota},\ \ \iota=1,\ldots,n, 
\\
\lim\limits_{\tau\to\infty} \alpha_{\sigma}(\tau) 
& =\infty_{\sigma} 
\\
\ 
\lim\limits_{\tau\to\infty}\Psi_{X}\big(\alpha_{\sigma}(\tau)\big) & =a_{\sigma}  \hskip1.9cm \sigma=1,\ldots,d.
\end{aligned}
\right.
\end{equation}
\end{enumerate}
The last two equalities are analytical expressions of the geometrical 
structure of 
Figure \ref{distribucion-exponentialtracts}.
From a general point of view, we can consider abstract divisors
(finite collection of triplets). 

\begin{definition}
We shall say that an abstract collection of 
$n+d$ vertices 

\centerline{
$\big\{ 
\circled{\, \iota \,}=
(p_{\iota},\widetilde{p}_{\iota},-\nu_{\iota})
\big\}_{\iota =1}^{n}
\cup
\left\{
\circled{\scalebox{0.75}{$n \text{$+$} \sigma$} }
=
(\infty_{\sigma},a_{\sigma},-\infty) 
\right\}_{\sigma =1}^{d}
$
}

\noindent
is \emph{realizable } 
if it is a solution of \eqref{sistema-realizable}, for some $\Psi_{X}$, $X\in\E(r,d)$. 
\end{definition}

\section{Combinatorial objects: $(r,d)$--configuration trees}
\label{CombinatorialTrees}

In order to 
make precise the choices required to resolve the issues  
D.1, D.3, D.4  and D.5 cited in \S \ref{dificultades},
we introduce the following auxiliary concepts:
weighted directed zero--rooted trees and
$(r,d)$--configuration trees, which can be understood
as associated to some $\R_X$
(even though they are abstract graphs,
a priori 
not necessarily associated to the Riemann 
surface $\R_{X}$).

\noindent
To accomplish the above, we shall use some basic notions of graph theory, namely trees, oriented or not, 
with and without roots and weights, see 
\cite{LandoZvonkin} pp.~46, 379 for standard concepts.
It is natural to use 
the branch points of $\R_{X}$, 
described by the reduced divisor, Definition \ref{divisor-reducido}, 

\centerline{
$
\Big\{ 
\circled{ \msigma } 
= 
(z_{\msigma},t_{\msigma}, - \nu_{\msigma}) 
\Big\}_{\msigma =1}^{n+d}
$ 
\quad with \quad
$z_\msigma\in\overline{\CC}_z$, $t_\msigma\in\CC_t$ and $\nu_\msigma\in\NN\cup\{\infty\}$,
}

\noindent 
as vertices of our graphs, 
{\bf with a possible re--labelling when needed}.

To resolve issue D.1, we introduce 
\emph{directed rooted trees} which have one vertex designated as the 
\emph{root} $\raiz{\varrho}$ and its edges are assigned 
an orientation
\begin{equation}\label{arbol-cero}
\Lambda
=
\Big\{ 
\underbrace{\circled{1},\ldots,
\circled{\mrho},\ldots,
\circled{\msigma},\ldots,
\circled{ \mathfrak{m} }}_
{\hbox{ \tiny{vertices}} }
\, ; \,  
\underbrace{\raiz{\varrho}}_{\hbox{\tiny{root}} }
\, ; \, 
\underbrace{ \Delta_{\msigma \mrho},  \ldots
}_{\substack{\tiny \mathfrak{m}-1\\
\hbox{\tiny{oriented edges}} }}
\Big\}  ,
\end{equation}

\noindent
where $\mathfrak{m} \leq  n+d$, 
{\it i.e.} not all vertices in Definition \ref{divisor-reducido} 
are necessarily considered.   
We shall consider directed rooted trees that have an orientation away from the root
and convene that 
$\Delta_{\msigma \mrho}$ denotes the edge starting at 
$\circled{\msigma}$ and ending at $\circled{\mrho}$. 
The \emph{tree--order} is the partial ordering on 
the vertices of  \eqref{arbol-cero} such that,
$ \circled{\iota} <  \circled{\mrho}$ if and only if 
the unique path from the root $\raiz{\varrho}$ to $ \circled{\mrho}$ passes 
through $ \circled{\iota}$. 
The \emph{depth} of the vertex $ \circled{\mrho}$ 
is the length of the path (number of edges) from the root.

As noted in Remark \ref{pesos-como-info-geometrica}, 
the building blocks provide weights for the edges of the 
directed rooted trees.

\begin{definition}
Given a directed rooted tree as in 
\eqref{arbol-cero}, 
by assigning a weight 
$K(\msigma,\mrho)\in\ZZ$ to each edge $\Delta_{\msigma \mrho}$, 
we obtain a \emph{weighted directed rooted tree} 
\begin{equation*}
\Big\{ 
\underbrace{\circled{1},\ldots,\circled{\mathfrak{m}}}_
{\hbox{ \tiny{vertices}} }
\, ; \,  
\underbrace{\raiz{\varrho}}_{\hbox{\tiny{root}} }
\, ; \, 
\underbrace{ (\Delta_{\mrho \msigma},K(\mrho,\msigma)), \ldots
}_{\substack{\tiny \mathfrak{m}-1\\
\hbox{\tiny{weighted edges}} }}
\Big\}  .
\end{equation*}

\noindent
A \emph{zero parent of a vertex $\circled{\msigma}\neq\raiz{\varrho}$} is the 
unique vertex $\circled{\mrho}$ connected to $\circled{\msigma}$ on the 
path to the root,
whose edge $(\Delta_{\mrho\msigma},K(\mrho,\msigma))$ has in addition 
$K(\mrho,\msigma)=0$.
\end{definition}

For some weighted directed rooted trees the root is not a zero parent.
\begin{example}
For the weighted directed rooted tree with all its weights equal to 1, namely
\begin{equation*}
\Big\{ 
\circled{1},\ldots,\circled{\mathfrak{m}}
\, ; \,  
\raiz{\varrho}
\, ; \, 
(\Delta_{1\, 2},1), \ldots, (\Delta_{\mathfrak{m}-1\, \mathfrak{m}},1)
\Big\}  ,
\end{equation*}
none of the vertices is a zero parent.
\end{example}

\begin{definition}
A weighted directed rooted tree
\begin{equation}
\label{cero-arbol-enraizado-con-pesos}
\Lambda_{\circled{\varrho }}
=
\Big\{ 
\underbrace{\circled{1},\ldots,\circled{\mathfrak{m}}}_
{\hbox{ \tiny{vertices}} }
\, ; \,  
\underbrace{\raiz{\varrho}}_{\hbox{\tiny{root}} }
\, ; \, 
\underbrace{ (\Delta_{\mrho \msigma},K(\mrho,\msigma)), \ldots
}_{\substack{\tiny \mathfrak{m}-1\\
\hbox{\tiny{weighted edges}} }}
\Big\}  ,
\end{equation}
whose root $\raiz{\varrho}$ is a zero parent will be called a 
\emph{weighted directed zero--rooted tree}.
\end{definition}
\begin{definition}
A \emph{zero child of a vertex $\circled{\mrho}$} is a vertex $\circled{\msigma}$ of which 
$\circled{\mrho}$ is the zero parent.
A \emph{zero descendant of a vertex $\circled{\mrho}$} is any vertex which is either 
the zero child of $\circled{\mrho}$ or is (recursively) the zero descendant of any of the zero
children of $\circled{\mrho}$.
\end{definition}
From the above definitions we immediately obtain the following.
\begin{lemma}\label{lema-arboles-horizontales}
Let $\Lambda_{\circled{\varrho}}$ be a weighted directed zero--rooted tree.

\noindent
1. The zero descendants of the root $\raiz{\varrho}$ form the 
\emph{horizontal rooted subtree of the root}, denoted by
$\Lambda_{H(\varrho)}$. 
Note that the root of $\Lambda_{H(\varrho)}$ is once again $\raiz{\varrho}$.

\noindent
2. Each weighted edge $(\Delta_{\mrho \msigma}, K(\mrho,\msigma))$ 
with $K(\mrho,\msigma)\neq 0$, 
defines a \emph{horizontal rooted subtree $\Lambda_{H(\mrho, \msigma)}$,} 
with root $\raiz{\mrho}$,
of the \emph{incoming edge $(\Delta_{\mrho \msigma}, K(\mrho,\msigma))$},
with vertices 

\centerline{
$V_{H(\mrho, \msigma)}= \{ \circled{\mrho}, \circled{\msigma} \}
\cup
\{ \circled{\iota} \ \vert \ \circled{\iota} \text{ is a zero descendant of }
\circled{\msigma} \}$
}

\noindent
and edges  

\centerline{
$E_{H(\mrho, \msigma)}= \{ (\Delta_{\mrho \msigma}, K(\mrho,\msigma)) \}
\cup
\{ \text{edges that end on the zero descendants of } \circled{\msigma} \}$.
}
\hfill\qed
\end{lemma}

Note that, on each horizontal rooted subtree, the incoming edges are the only edges with 
non--zero weight. 

\noindent
\begin{example}\label{ejemplo-descomposicion}
Consider the weighted directed zero--rooted tree
\begin{multline}
\Lambda_{\circled{1}}=\Big\{\circled{1},\ldots,\circled{12}\, ;\raiz{1}\, ; 
\\
(\Delta_{1\,2},0), (\Delta_{2\,4},0), (\Delta_{4\,7},-3), (\Delta_{7\,12},0), (\Delta_{7\,11},4),
\\
(\Delta_{1\,3},1), (\Delta_{3\,5},0), (\Delta_{3\,6},0), (\Delta_{6\,9},0), (\Delta_{6\,10},0),
(\Delta_{5\,8},1) 
\Big\}.
\end{multline}
The root is $\raiz{1}$, and the incoming edges are 
$\Delta_{1\,3}$, $\Delta_{4\,7}$, $\Delta_{7\,11}$ and $\Delta_{5\, 8}$. 
Then 

\centerline{
$\Lambda_{\circled{1}} 
= \Lambda_{H(1)} 
\cup \Lambda_{H(1,3)}
\cup \Lambda_{H(4,7)}
\cup \Lambda_{H(7,11)}
\cup \Lambda_{H(5,8)}$,
}

\noindent
provides the decomposition into horizontal rooted subtrees as in Lemma \ref{lema-arboles-horizontales}.

\noindent
In Figure \ref{IncomingEdges}, 
the horizontal rooted subtree of the root is colored red;
the horizontal rooted subtrees, corresponding to the incoming edges, are colored orange, 
blue, green, purple respectively.
Note that the weight of each of the incoming edges could be any non--zero integer.
The vertices $\circled{1}$, $\circled{4}$, $\circled{5}$ and $\circled{7}$ belong to more than one
horizontal subtree.

\begin{figure}[htbp]
\begin{center}
\includegraphics[width=0.4\textwidth]{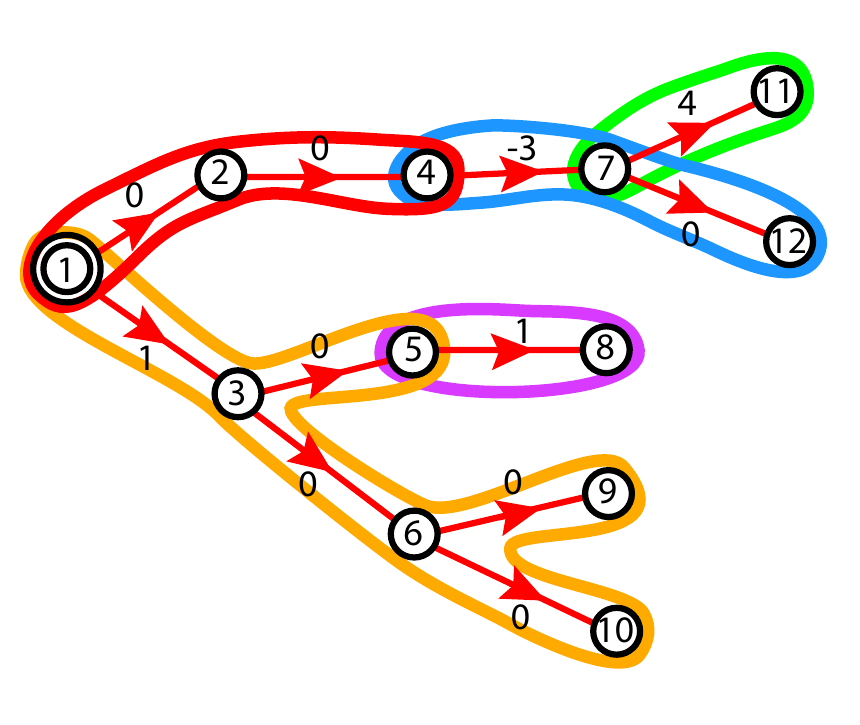}
\caption{The decomposition into horizontal rooted subtrees of the weighted directed rooted tree of
Example \ref{ejemplo-descomposicion}.
The weights are placed beside the corresponding edges.
The rooted subtree colored red  
is by definition the \emph{global zero level subtree}.
Recalling that the vertices correspond to branch points of $\R_{X}$, 
each of the horizontal subtrees is to represent the sheets of $\R_{X}$ that contain the 
corresponding subset of branch points.	
}
\label{IncomingEdges}
\end{center}
\end{figure}
\end{example}

\begin{remark}
The decomposition of $\Lambda_{\circled{\varrho}}$, given by Lemma \ref{lema-arboles-horizontales},
provides a disjoint partition on the set of edges. 
This relates to the fact that on $\R_{X}$ the diagonals between branch points are partitioned into disjoint sets 
according to the sheet they share.
\end{remark}

In order to overcome difficulty D.4, we shall need one more concept.

\noindent 
Consider the linear (weighted) directed tree 

\centerline{$G=\{ \mathcal{V}; \mathfrak{E}\}$}

\noindent 
with $\mathfrak{m}$ vertices 

\centerline{
$\mathcal{V}=
\{ \circled{\msigma} \doteq (z_\msigma, t_\msigma, -\nu_\msigma) \}_{\msigma=1}^\mathfrak{m}$, 
\ \ \
$z_\msigma\in\overline{\CC}_{z}$, 
$t_\msigma\in\CC_{t}$, 
$\nu_\msigma\in\NN\cup\{\infty\}$,
}

\noindent
where $\{ t_\msigma \}$ are different points, 
labelled so that 

\centerline{
$\Im{t_\msigma}\geq \Im{t_{\msigma+1}}$, \quad 
$\Re{t_{\msigma}}\leq\Re{t_{\msigma+1}}$,
} 

\noindent
and $\mathfrak{m}-1$ oriented weighted edges

\centerline{
$\mathfrak{E}=
\left\{ \big( \Delta_{(\msigma-1)\, \msigma}, K(\msigma-1,\msigma) \big)
\right\}_{\msigma=2}^\mathfrak{m}$, 
} 

\noindent
where $\Delta_{(\msigma-1)\, \msigma} \doteq (t_{\msigma } - t_{\msigma-1})$, 
and $K(\msigma-1,\msigma)\in\ZZ$.

\smallskip
\noindent 
In particular, $G$ can be understood as 
embedded in $\CC_{t}$. 

\noindent
Note that the vertices are connected from left to right and 
top to bottom;
starting with the top--\&--left--most vertex and ending at the bottom--\&--right--most vertex.

\begin{definition}\label{arbol-lineal-enraizado}
1. The $G$ 
constructed as above, is the 
\emph{left--right--top--bottom linear (weighted) directed tree of the 
vertices $\mathcal{V}$}. 
The underlying undirected 
linear graph will be called the 
\emph{undirected left--right--top--bottom linear (weighted) tree of the vertices $\mathcal{V}$}.

\noindent 2.
Moreover, 
for any choice of $\circled{\mrho}\in \mathcal{V}$,
the rooted tree

\centerline{
$G_{\circled{\mrho}}=
\left\{ \mathcal{V}; \raiz{\mrho}\,; \widehat{\mathfrak{E}}
\right\}$
}

\noindent
where 

\centerline{
$\widehat{\mathfrak{E}}=
\left\{ \Delta_{\mrho\, \mrho+1}, \Delta_{\mrho+1\, \mrho+2},
\ldots, \Delta_{\mathfrak{m}-1\, \mathfrak{m}},
\Delta_{\mrho\, \mrho-1}, \Delta_{\mrho-1\, \mrho-2},
\ldots, \Delta_{2\, 1} 
\right\}$,
}

\noindent
is called a \emph{linear (weighted) directed rooted tree 
with incoming vertex 
$\circled{\mrho}$}.
\end{definition}

Note that 
$G$ and $G_{\circled{\mrho}}$ have the same vertices, however different oriented edges
$\mathfrak{E}$ and $\widehat{\mathfrak{E}}$.  
Figure \ref{ArbolDirigidoHorizontal} 
provides an example with seven vertices.

\begin{figure}[htbp]
\begin{center}
\includegraphics[width=\textwidth]{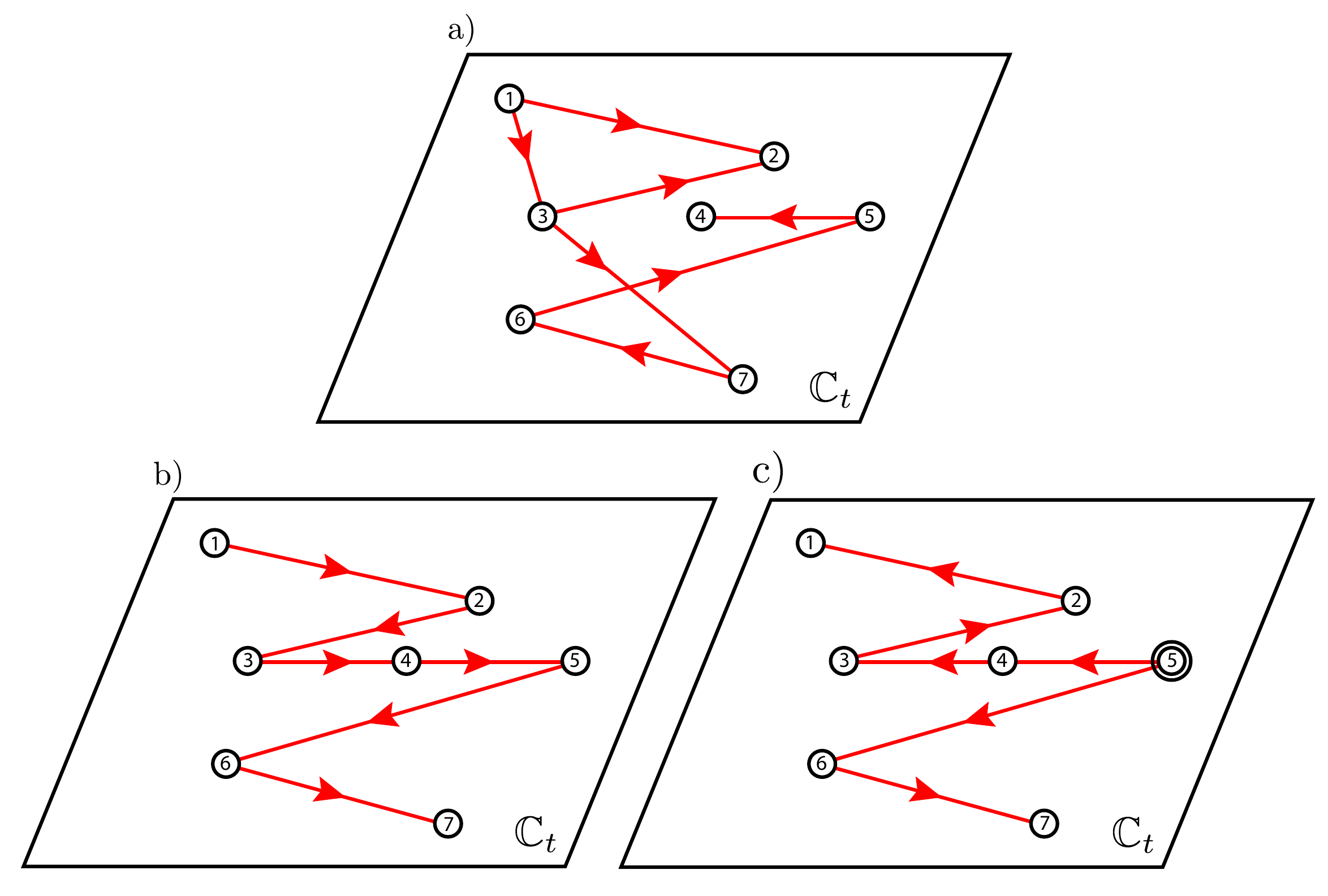}
\caption{In (a) we have a 
directed graph with $7$ vertices
$\mathcal{V}=\{ \circled{1}, \ldots , \circled{7} \}$.
In (b) we have eliminated and added some edges to obtain a
left--right--top--bottom linear directed tree $G$ of vertices
$\mathcal{V}$
with edges $\mathfrak{E}=\{ \Delta_{1\,2}, \Delta_{2\,3}, \Delta_{3\,4}, \Delta_{4\,5},
\Delta_{5\,6}, \Delta_{6\,7} \}$.
In (c) we have the linear directed rooted tree 
$G_{\circled{5}}$ where $\raiz{5}$ is both the incoming vertex and the root,
having edges $\widehat{\mathfrak{E}}=\{ \Delta_{5\,6}, \Delta_{6\,7}, 
\Delta_{5\,4},  \Delta_{4\,3}, \Delta_{3\,2}, \Delta_{2\,1} \}$, 
note the change of directions of the edges.
}
\label{ArbolDirigidoHorizontal}
\end{center}
\end{figure}

Recalling Table \ref{tablaTriada}, 
we are now ready to to introduce the particular weighted directed 
zero--rooted trees 
that will encode the information needed to specify the Riemann surfaces $\R_{X}$.
Issue D.5 will be dealt with by condition (1) of the following definition.

\begin{definition}\label{d-confTree}
For $r+d\geq1$,
a \emph{$(r,d)$--configuration tree} 
is a weighted directed 
zero--rooted tree 

\centerline{
$
\Lambda=\Big\{ V; 
\raiz{\varrho}\,;\,
E \Big\}
$}

\noindent
with: 

\noindent
$\bullet\ n+d$ \emph{vertices} 

\centerline{
$ V=
\Big\{ 
\circled{ \msigma } 
= 
(z_{\msigma},t_{\msigma}, - \nu_{\msigma}) 
\Big\}_{\msigma =1}^{n+d} \ ,
$ 
\quad \quad
$z_\msigma\in\overline{\CC}_z$, $t_\msigma\in\CC_t$ and $\nu_\msigma\in\NN\cup\{\infty\}$,
}

\noindent where 
$\sum\limits_{-\nu_\msigma \neq -\infty} \nu_{\msigma}=r$;

\noindent
$\bullet\ n+d-1$ \emph{weighted 
oriented edges} 
\begin{equation*}
E=\big\{ (\Delta_{\msigma \mrho},K(\msigma, \mrho)) \ \vert\ 
\Delta_{\msigma \mrho} \text{ starts at } 
\circled{\msigma} \text{ and ends at } \circled{\mrho},\ 
K(\msigma, \mrho)\in\ZZ \big\} ,
\end{equation*}
with the orientation of the edges being away from the root.

\noindent
In addition, the following conditions must be satisfied: 

%
%
\begin{enumerate}[label=\arabic*),leftmargin=*]
\item The set of vertices
$\big\{ \circled{\msigma} =(z_{\msigma}, t_{\msigma}, -\nu_{\msigma})
\big\}_{\msigma=1}^{n+d}$
must be realizable, {\it i.e.} 
they must satisfy 
the system of equations \eqref{sistema-realizable}.

\item
\emph{Types of vertices}.
Concerning the position of the vertices in 
$\overline{\CC}_z$;
$z_{\msigma}\in\CC_{z}$ if and only if $\nu_{\msigma}\in\NN$.
Thus, the vertices 
are classified in two types 
$$
\circled{\, \iota \, } =
\underbrace{\big(p_{\iota},\widetilde{p}_{\iota},-\nu_{\iota} 
\big)}_{\text{pole vertex}}
\in\CC_z
\quad\text{and}
\quad
\circled{ \scalebox{0.75}{$n \text{+} \sigma$} } 
=
\underbrace{\big( \infty_{\sigma},a_{\sigma}, -\infty\big)}_{
\text{essential vertex}} 
.
$$

\item
If $\Lambda$ consists of only one vertex, then
\begin{itemize}
\item[] the $(r,0)$--configuration trees are 
$\Big\{\circled{1}=\big(p_{1},\widetilde{p}_1, -r \big);
\raiz{1}\,;\,
\varnothing\Big\}$,
\item[]
the $(0,1)$--configuration trees are
$\Big\{\circled{1}=\big(\infty_{1},a_{1},-\infty\big);
\raiz{1}\,;\,
\varnothing\Big\}$.
\end{itemize}

\item \emph{Root condition}. 
If $r=0$ the root $\raiz{1}$ is the essential vertex 
$\circled{1}=\big(\infty_{1},a_{1},-\infty\big)$.

\noindent
If $r\neq 0$ the root $\raiz{ \varrho}$ is the pole vertex 
$\big(p_{\varrho},\widetilde{p}_{\varrho},-\nu_{\varrho} \big)$, 
whose $z$--coordinate is top \& left most:
{\it i.e.} 
$\Im{\widetilde{p}_\varrho}\geq\Im{\widetilde{p}_\iota}$ and 
when equality is achieved 
it is required that 
$\Re{\widetilde{p}_\varrho}<\Re{\widetilde{p}_\iota}$ for $1< \iota \leq n$.

\item 
\emph{Equality of vertices}. 
Given 
$\circled{\msigma}=(z_\msigma , t_\msigma , -\nu_\msigma)$ and 
$\circled{\mrho}=(z_\mrho , t_\mrho , -\nu_\mrho)$,
if $z_\msigma=z_\mrho$ then 
$t_\msigma = t_\mrho$ and $\nu_\msigma =  \nu_\mrho$,
{\it i.e.} 
necessarily 
$\circled{\msigma}=\circled{\mrho}$ in $\Lambda$.

\item \emph{Existence of edges}.
There is no edge between the vertices   
$\circled{\msigma}=
\big(z_{\msigma},t_{\msigma},-\nu_{\msigma}\big)$ and 
$\circled{\mrho}=\big(z_{\mrho},t_{\mrho},-\nu_{\mrho}\big)$
when $t_{\msigma}=t_{\mrho}$.

\item \emph{Horizontal subtree structure}.
We require that each of the horizontal rooted subtrees of $\Lambda$,
be a linear (weighted) directed rooted subtree 
$G_{\circled{\mrho}}$ with incoming vertex $\circled{\mrho}$,
as in Definition \ref{arbol-lineal-enraizado}.

\end{enumerate}
\end{definition}

\noindent
\begin{remark}\label{remArbol}
1. The root condition (4) allows us to make a canonical choice
of the root vertex, see Remark 
\ref{correspexponentialtract}.2.

\noindent
2. Condition (5) of Definition \ref{d-confTree} is equivalent to saying that $t_{\msigma}$ and 
$\nu_{\msigma}$ are functions of $z_{\msigma}$.
For instance in Examples \ref{ejemplo-E23} and \ref{ejemplo-E33}, 
even though the finite asymptotic values 
of $\Psi_X$ have
multiplicity 3, we can use 
$z_{\msigma}\in\overline{\CC}_z$
to label the vertices of $\Lambda_{X}$. 

\noindent
3. As will be seen in the proof of the Main Theorem,
condition (7) of Definition \ref{d-confTree} provides, for 
each sheet of $\R_{X}$, a choice of the diagonals that 
connect the branch points that
share the same sheet.
This choice will enable us to define appropriately the class $[\Lambda_{X}]$ of 
$(r,d)$--configuration trees.

\noindent 
4. In case that there is only one horizontal rooted subtree for $\Lambda$, 
Lemma \ref{lema-arboles-horizontales} ensures that the only horizontal rooted subtree 
is $\Lambda_{H(\rho)}$.

\noindent 
5. When $r=0$, Definition \ref{d-confTree} reduces 
to the definition of a $d$--configura\-tion tree presented in \S8.3 of \cite{AlvarezMucino}, 
for $X\in\E(0,d)$. 
The equivalence becomes explicit by observing that the essential vertices 
$\circled{\sigma}=
\big( \infty_{\sigma}, a_{\sigma},-\infty \big)$ 
of $(0,d)$--configuration trees correspond to 
the vertices $(\infty_{\sigma},a_{\sigma})$,
pairs in \cite{AlvarezMucino},
of $d$--configuration trees.
\end{remark}

\section{Low degree significative examples, from $X$ to $\Lambda_X$}
\label{concreteexamples}

We provide examples of the correspondence 
from vector fields to Riemann surfaces and 
configuration trees

\centerline{
$X \ \longmapsto \  \R_X 
\ \longmapsto \  \Lambda_X,$
}

\noindent 
using the basic geometric/combinatorial pieces
described in Figure \ref{figPiezasBasicas}.

{\bf About the meaning of the different 
data of $\Lambda_X$.}

\begin{enumerate}[label=\arabic*),leftmargin=*]

\item 
The vertices of $\Lambda_{X}$ are in bijection with 
the reduced divisor of $X$
(recall Definition \ref{divisor-reducido}, Table 
\ref{tablaTriada}),  and with 
the  branch points of $\R_X$ 
with respect to $\pi_{X,2}$, restricted over $\CC_t$, 
recall Diagram \ref{diagramaRX}. 

\item
The edges correspond to a subset of the diagonals, connecting the
branch points, necessary to describe completely 
the Riemann surface $\R_{X}$, 
recall Definition \ref{diagonal}.1.

\item 
The root vertex 
$\raiz{\varrho} \doteq(z_\varrho, t_\varrho, -\nu_\varrho)$, as usual 
in graph theory,  means 
the initial vertex in order to construct a tree,
as in \eqref{arbol-cero}. 
From the analytic and geometric point of view,
the root determines the initial point of
$\Psi_X (z)= \int_{z_1}^z \omega_X$.

\item 
The weights
$K(\msigma, \mrho)\in\ZZ$ in Definition \ref{d-confTree}:
If we can describe all the branch points of $\R_X$ using
only one sheet (Definition \ref{horizontal-branch-cut}), 
then $K(\msigma, \mrho)=0$. 
If several sheets are required, 
the weight of the edge $K(\msigma, \mrho) \in \ZZ \backslash \{ 0 \}$ 
tells us the relative number of  
sheets of $\R_{X}$, 
we must go ``up or down'' on the surface in order 
to find another sheet containing other
branch points. 
Vector fields having $K(\msigma, \mrho)\neq 0$
appear in 
Example \ref{ejemplo-E23} and
\S\ref{campos-3-0-y-sus-arboles}.

\noindent  
In particular, if there are only two branch points
then 
there is no need to go up or down at the starting branch point, 
so the weight of the only edge is 0. 

\item 
The \emph{global zero level sheet} denoted \emph{GZL}
(which is in general non canonical),
indicates a subset of the branch points that 
share the same sheet as the root $\raiz{1}$.

\end{enumerate}

\smallskip

\begin{remark}\label{primera-vez-soul}
The notion of \emph{skeleton} associated to 
$\Lambda_X$ will be described in 
Definition \ref{skeleton} and the notion of
\emph{$(r,d)$--soul} in Definition \ref{soul}.
They play an active role in the 
proof of the Main Theorem.

\noindent $\bigcdot$
For $X \in \E(r, d)$, $d \geq 1$, 
the $(r,d)$--soul is necessarily a flat Riemann 
surface with boundary, 
and is obtained from
$\R_X$ by removing its semi--infinite helicoids.

\noindent $\bigcdot$
In the case  $X \in \E(r, 0)$, 
the $(r,d)$--soul coincides with
$\R_X$. 
\end{remark}

\subsection{$X\in\E(r,0)$ has $r\geq1$ poles on $\CC_{z}$ and $\Psi_X$ is a polynomial map}

\begin{example}\label{ejemplo-un-polo}
Consider the vector field of Example \ref{campo-racional-un-polo}, 

\centerline{$X(z)=\dfrac{\lambda}{(z-p_1)^{r}} \ddel{}{z}$}

\noindent
and recall the notion of $(\nu_{\iota} +1)$--cyclic helicoid Definition \ref{helicoide-ciclico}.
The $(r,0)$--configuration tree consists of one pole vertex and no edges
\\
\centerline{
$\Lambda_{X}=\Big\{ 
\circled{1}=(p_{1},\widetilde{p}_{1}, -r);
\raiz{1}\, ;\,
\varnothing
\Big\},$}

\noindent
where $\widetilde{p}_{1}$ as in \eqref{val-crit-1-polo}
is the critical value. 
In Figure \ref{figejemplo-un-polo} the 
case $r=2$ is pictured:
on the left hand side the phase portrait of $X$ is shown, clearly there are 6 hyperbolic sectors 
each corresponding to a half plane;
on the second column the Riemann surface is shown; 
finally on the rightmost column the combinatorial 
objects are portrayed.
For the general case $-r \leq -1$ see 
Figure \ref{figPiezasBasicas}.c.
\begin{figure}[htbp]
\begin{center}
\includegraphics[width=\textwidth]{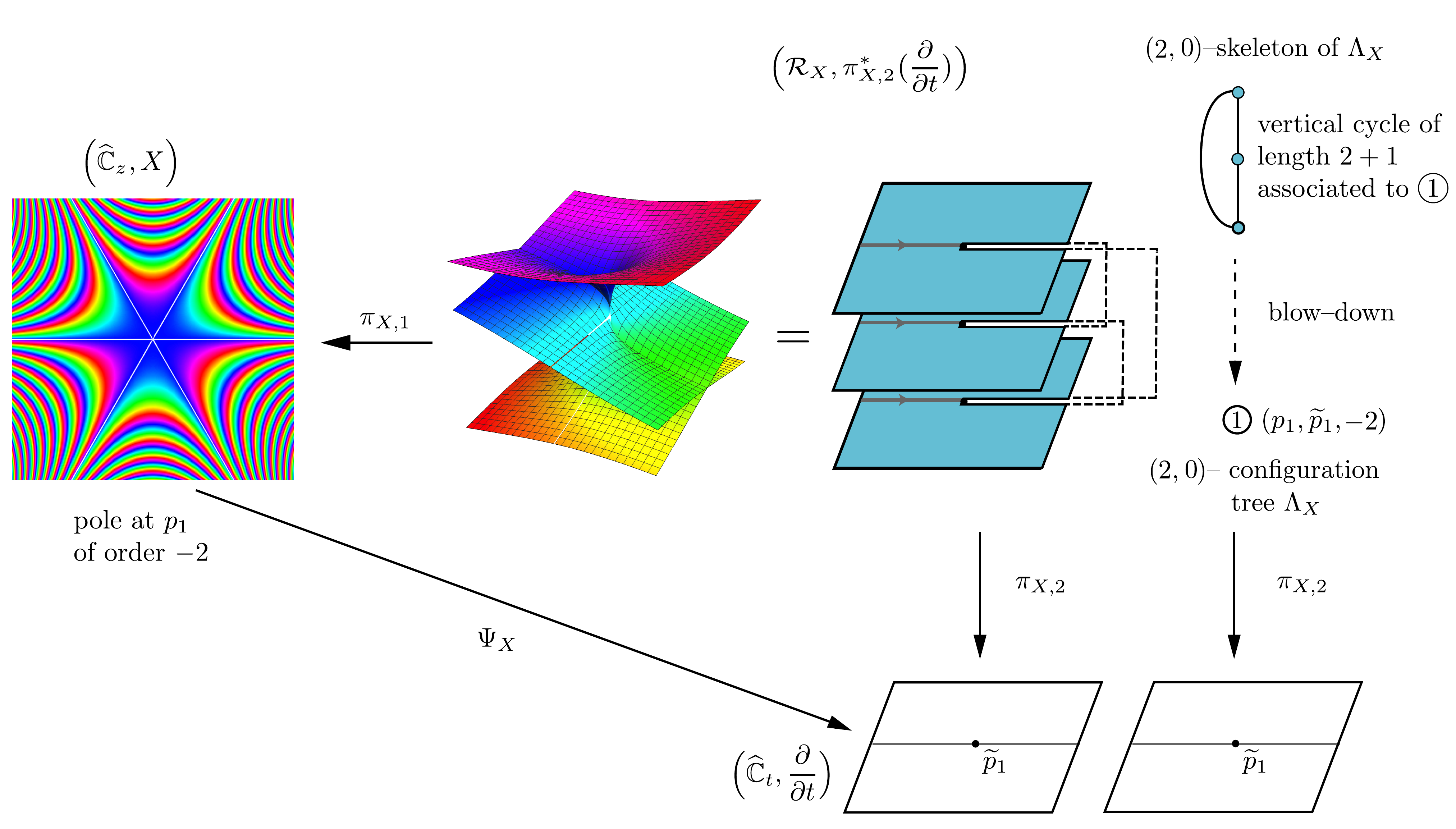}
\caption{{\bf Vector field 
$X(z)=\dfrac{1}{(z-p_1)^2 } \ddel{}{z}$ 
with a pole of order $-2$ at $p_{1}\in\CC_{z}$.}
The surface $\R_{X}$ consists of three sheets with a branch cut, glued together to form a 
$(2+1)$--cyclic helicoid. 
On the right column, 
the $(2,0)$--configuration tree consisting of one vertex,
and
its corresponding $(2,0)$--skeleton are portrayed; 
see \S\ref{confTree-to-skeleton} and Definition \ref{skeleton}.
}
\label{figejemplo-un-polo}
\end{center}
\end{figure}

\end{example}

In the next examples we shall consider 
$(r,d)$--configuration trees with two or more 
vertices, hence weighted edges shall appear. 

\begin{example}\label{ejemplo-dos-polos}
Consider the vector field
\begin{equation}
\label{ecuacion-dos-polos}
X(z)= \dfrac{\lambda}{(z-p_{1})^{\nu_{1}} (z-p_{2})^{\nu_{2}}} \ddel{}{z} \in \E(r,0), \
\ \nu_{1}+\nu_{2}=r, \ \nu_1,\nu_2\geq 1, 
\end{equation}

\noindent 
and its distinguished parameter
\\
\centerline{
$\Psi_{X}(z)=
\frac{1}{\lambda}
\int_{z_{0}}^{z} (\zeta-p_{1})^{
\nu_{1}} (\zeta-p_{2})^{\nu_{2}} d\zeta$. }

\noindent 
Without loss of generality, we assume that
the critical values
$\widetilde{p}_{j}=\Psi_{X}(p_{j})$, $j=1,2$,
satisfty
$\Im{\widetilde{p}_1} \geq \Im{\widetilde{p}_2}$.
In this case the $(r,0)$--configuration tree has two pole vertices and one edge
\\
\centerline{
$\Lambda_{X}=\Big\{ \circled{1}=(p_{1},\widetilde{p}_{1}, -\nu_{1}), 
\circled{2}=(p_{2},\widetilde{p}_{2}, -\nu_{2});
\raiz{1}\, ;\,
(\Delta_{1\, 2},0) 
\Big\},$}
where   
the edge $\Delta_{1\, 2}$ is the semi--residue 
$S(\omega_{X},p_{1},p_{2},\gamma)=\widetilde{p}_{2}-\widetilde{p}_{1}$, 
which according to \eqref{diagonalsemiresidue} is equivalent to the diagonal 
with the same name. 
Finally, the weight of the edge is 0,
since $\Delta_{1\, 2}$ 
is in the global zero level sheet.
See Figure \ref{figejemplo-dos-polos} and  \ref{figPiezasBasicas}.c. 

\noindent 
If $\Im{\widetilde{p}_1}>\Im{\widetilde{p}_2}$, then the semi--residue 
$\Delta_{1\,2}=S(\omega_{X},p_{1},p_{2},\gamma)=\widetilde{p}_{2}-\widetilde{p}_{1} \in \CC \backslash \RR$, 
giving origin to a finite height horizontal strip,
see left drawing in Figure \ref{figejemplo-dos-polos}.

\noindent 
If $\Im{\widetilde{p}_1}=\Im{\widetilde{p}_2}$, then the diagonal
$\Delta_{1\,2}$
coincides, up to orientation, with a saddle connection
of the real vector field
$\Re{X}$.

\begin{figure}[htbp]
\begin{center}
\includegraphics[width=\textwidth]{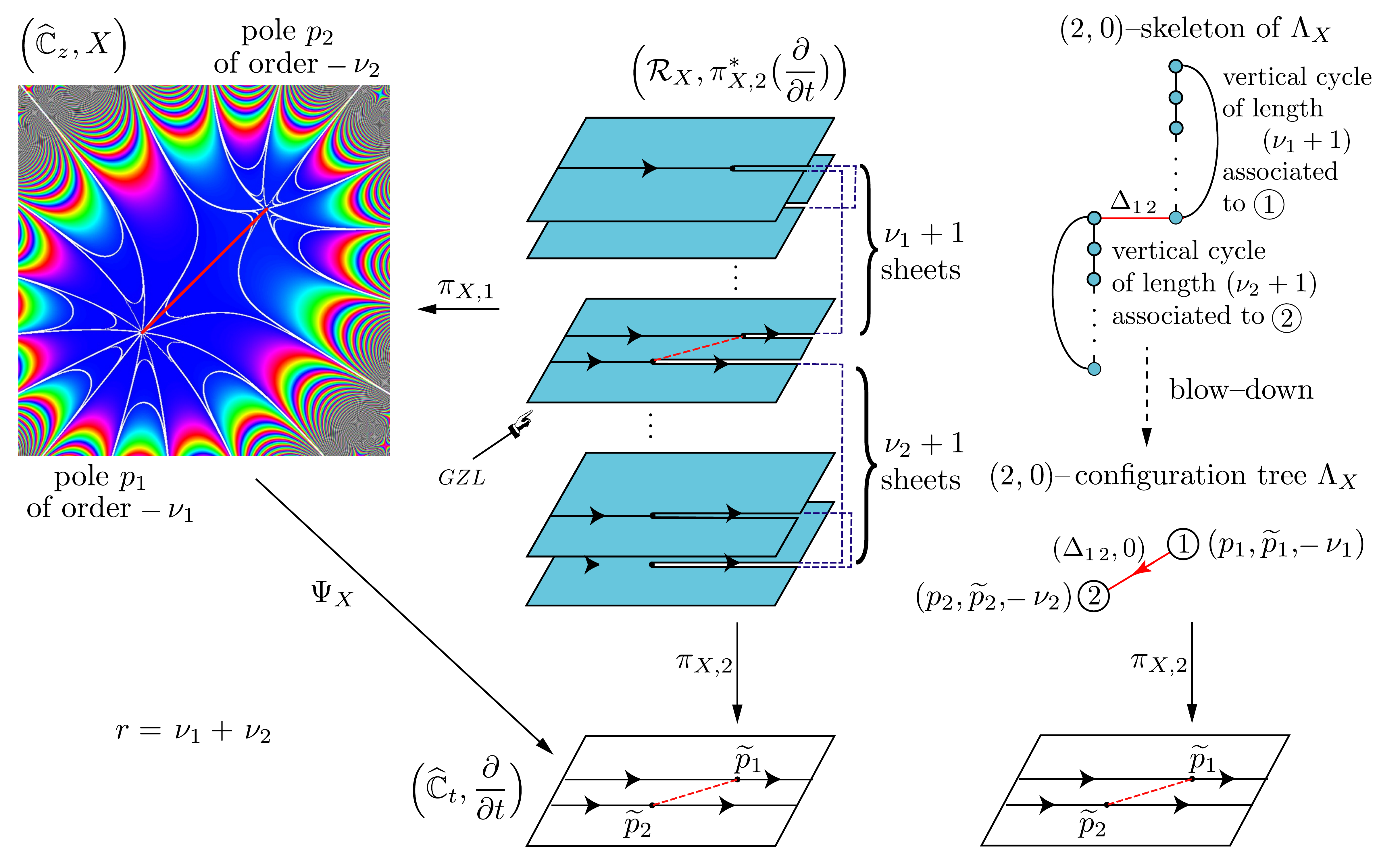}
\caption{{\bf Vector field 
$X(z)=\dfrac{\lambda}{(z-p_{1})^{\nu_{1}} (z-p_{2})^{\nu_{2}}} \del{}{z}$ 
with two poles $p_{\iota}$ of order $-\nu_{\iota}$.} 
The diagonal 
$\Delta_{1\,2}\subset\R_{X}$ 
associated to the finitely ramified branch points and its projections 
via $\pi_{X,1}$ and $\pi_{X,2}$ are coloured red. 
The two branch points are the endpoints of the diagonal 
$\Delta_{1\,2}\subset\R_{X}$ 
on the global zero level sheet.
The phase portrait (left drawing) is the case with 
poles of 
orders $-3=-\nu_2 $ and $-5= -\nu_1$.
See Example \ref{ejemplo-dos-polos}, and \S\ref{confTree-to-skeleton} 
for the drawing on the right. 
}
\label{figejemplo-dos-polos}
\end{center}
\end{figure}
\end{example}

\subsection{Vector fields in $\E(3,0)$ with three simple poles }
\label{campos-3-0-y-sus-arboles}
Consider vector fields 
$X \in \E(3, 0)$ with simple poles, 

\noindent 
$\bigcdot$
we fix two finite ramification values  
$\widetilde{p}_1=0, \, \widetilde{p}_2=1 $, 
and

\noindent 
$\bigcdot$
leave free the third $\widetilde{p}_3$
in the twice punctured plane 
$\CC\backslash\{0,1\}$. 

\noindent 
This gives origin to 
a suitable family of vector fields $\mathfrak{F}$. 
It is to be noted that there is a strong 
analytical and combinatorial dependence on 
the choice of the ramification values of the involved 
distinguised parameters $\Psi_X$.

\noindent 
We shall study the general 
problem of compute $\Psi_X$ starting with
a configuration of 
preassigned critical values 
$\{\widetilde{p}_1, \ldots ,\widetilde{p}_r\}$
in \S\ref{epilogo},   Corollary
\ref{Psi-con-valores-criticos-asintoticos-preasignados}.

\FloatBarrier
\begin{proposition}\label{lema-arbol-3-0}
Let $\mathfrak{F}$ be the family of 
vector fields defined by 
\begin{equation}\label{familia-campos-3-polos}
\begin{array}{rcl}
X(p_3, z):(\CC \backslash \{0, 1/2, 1 \}) \times \CC 
& \longrightarrow &
\E(3,0)
\\
&& \vspace{-.3cm}
\\
(p_3,z) &\longmapsto&
\dfrac{2p_3 - 1}{12 z(z-1)(z- p_3)}
\ddel{}{z}.
\end{array} 
\end{equation}
\begin{enumerate}[label=\arabic*),leftmargin=*]
 
\item 
The corresponding distinguished 
parameters are the polynomials

\begin{equation}\label{familia-polinomios-que-vienen-de-3-polos}
\Psi(p_3, z) = 
\frac{12}{2p_3 - 1}
\Big( \dfrac{1}{4} z^4 + \frac{-p_3 -1}{3} z^3
+ \frac{p_3}{2} z^2 \Big) \in \CC[z].
\end{equation}

\item
The corresponding reduced divisors are
\begin{equation}\label{tres-puntos-de-ramificacion}
X(p_3,z) \quad \longmapsto \quad
(0,0,-1) + (1,1,-1) + 
\Big( p_3, \dfrac{p_3^3(2-p_3)}{2p_3-1} , -1 \Big).
\end{equation}

\item
The 
$(3,0)$--configuration trees 
$\Lambda_{X(p_3, z)}$ for $X(p_3, z)$ are 
given by 
Equations \eqref{arbol-U1}--\eqref{arbol-U67}. 
\end{enumerate}
\end{proposition}

In simple words, each $\Lambda_{X(p_3, z)}$ describes 
the relative position of the branch points 

\centerline{
$\circled{1}=(0,0,-1)$, 
\, $\circled{2}=(1,1,-1)$, 
\, $\circled{3}=(p_3, \widetilde{p}_3,-1)$}

\noindent 
on the Riemann surface $\R_{X(p_3, z)}$.

\begin{remark} 
Motivation for the family
$\mathfrak{F}$.
Let $X \in \E(r, d)$ be a vector field 
with at least two different poles
$p_1$, $p_2$ (so $r \geq 2$), the choice of 

\centerline{
$(p_1, \widetilde{p}_1)=(0,0)$, 
$(p_2, \widetilde{p}_2)=
(1,1) \in \CC^2$ }

\noindent 
as in  
\eqref{ecuacion-dos-polos} 
and 
\eqref{tres-puntos-de-ramificacion}
can be explained as follows. 

\noindent 
1. We consider the complex analytic action 
\begin{equation}\label{LaAccion}
\mathcal{A}:Aut(\CC) \times \E(r,d) \longrightarrow \E(r,d),
\ \ \ (T, X) \longmapsto T^* X,
\end{equation}
of the affine transformation group
$Aut(\CC)$ corresponding to those
$T \in Aut(\CW)= PSL(2, \CC)$ that fix $\infty$,
see \cite{AlvarezMucino2} for general theory.
Using suitable $T$, we obtain $p_1=0$ and $p_2=1$.
(It is to be noted that the affine group $Aut(\CC)$ is the largest 
complex automorphism group that acts on $\E(r,d)$.)

\noindent 2. If $\Psi_X(z)=\int_{p_1=0}^z \omega_X$ 
then $\Psi_X (0) = 0 = \widetilde{p_1}$.

\noindent 
3.
Considering $\{ \lambda X \ \vert \ \lambda \in \CC^* \}$ 
as a projective class,  we normalize
by a suitable $\lambda_0$, thus  

\centerline{
$\int_0^1 \frac{1}{\lambda_0}\omega_X 
= 1 = \widetilde{p}_2$. }

\noindent 
In the particular case $\E(3, 0)$, we get 
Equation \eqref{familia-campos-3-polos}.
\end{remark}

\begin{proof}
{\it First step. A suitable tessellation 
for the third critical point 
$p_3$.}
The degree four rational map
\begin{equation}
\label{funcion-racional-R}
\CC_{p_3} \longrightarrow \CC_{\widetilde{p}_3},
\ \ \ 
p_3 \longmapsto \frac{p_3^3(2-p_3)}{2p_3-1} 
=
\widetilde{p}_3
\end{equation}

\noindent 
determines the behaviour of the third branch point
$(p_{3},\widetilde{p}_3, -1)\in \R_{X (p_3, z)}$.
The rational map \eqref{funcion-racional-R}
has 
$\{0, 1, 1/2\}$ as critical points
with critical values $\{0, 1, \infty \}$, respectively.
The inverse image $\widetilde{p}_3^{\, -1}(\RR)$
is drawn in Figure \ref{dominioK13}
using black, blue and orange 
segments to represent the inverse images of 
$(-\infty,0)$, $[0,1]$ and $(1,\infty)$ respectively.
There are eight open connected components 

\centerline{$\{U_j\}_{j=1}^8
=\CC_{p_3} \backslash \widetilde{p}_3 ^{\,-1}(\RR)$}

\noindent
determining a tessellation of $\CC_{p_3}$.
The regions $U_j$ with even index $j$,
coloured white,
are the 
inverse image of the lower half plane 
${\HH}^2_{-}$
and the odd index regions,
coloured gray,
are the inverse image of 
the upper half plane
${\HH}^2_{+}$.

The meaning and suitability of the tessellation 
of \eqref{funcion-racional-R} in $\CC_{p_3}$
is as follows. 

\begin{figure}[htbp]
\begin{center}
\includegraphics[width=\textwidth]{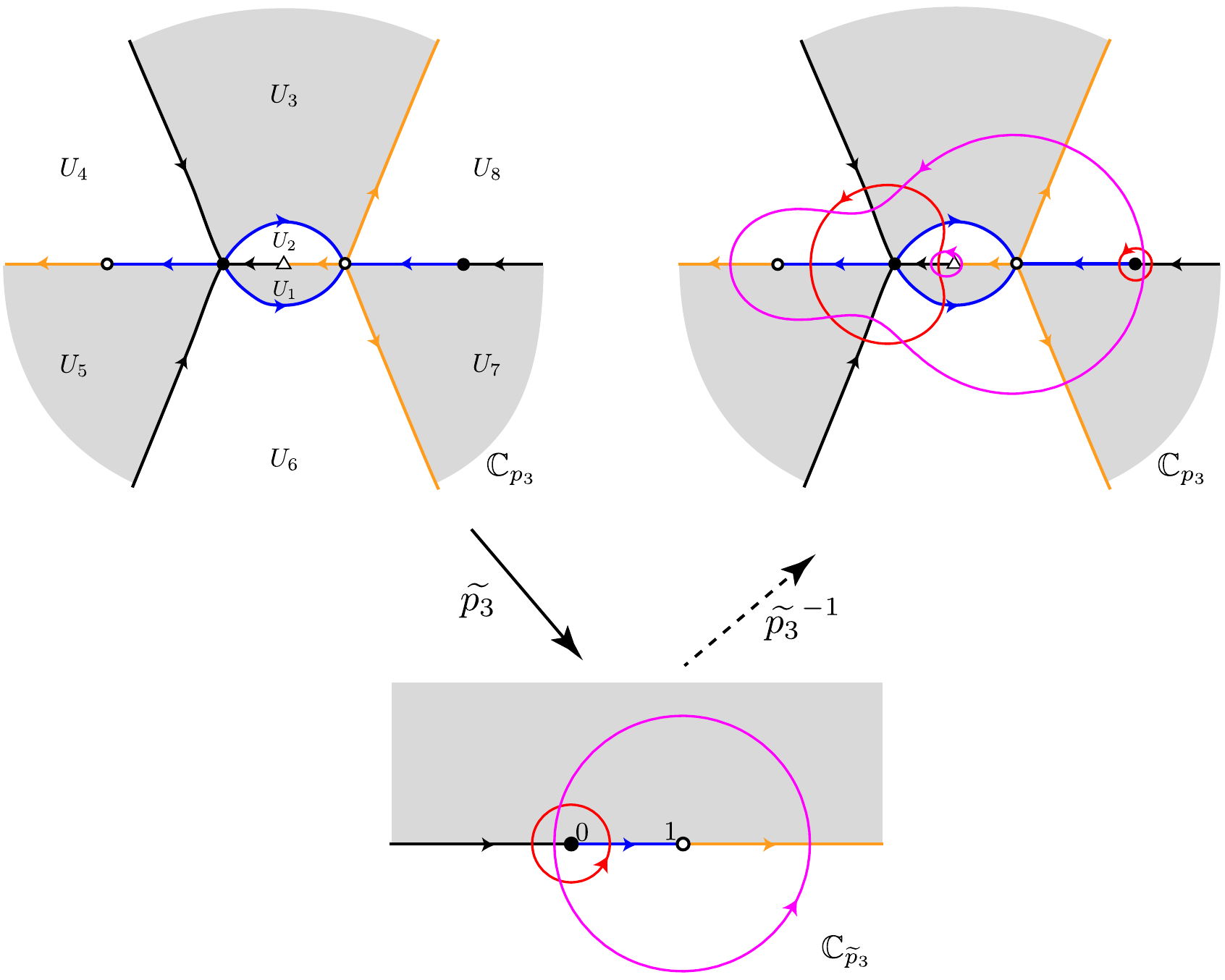}
\caption{
The degree four rational 
map $p_3 \longmapsto \widetilde{p}_3$ gives
origin to eight open regions $U_j$,
forming a tessellation of $\CC_{p_3}$, see
upper left figure. 
The points $0, \, 1, \, 1/2 \in \CC_{p_3}$, 
coloured black, white, triangle vertices,
correspond to the preimages of $0, 1, \infty \in \CC_{\widetilde{p}_3}$,
respectively. \,  
The upper right figure provides a 
description of the lift of the 
red $\alpha$ and magenta $\beta$ 
circles under $\widetilde{p}_3^{\, -1}$.
The blue tree 
$\widetilde{p}_3^{\,-1}\big( [0,1] \big)$ is 
the {\it dessin d'enfant} of the map $\widetilde{p_3}$.
}
\label{dominioK13}
\end{center}
\end{figure}

\smallskip

{\it  A description of the topological behavior 
of the third critical value 
$\widetilde{p}_3$ in loops enclosing
the other two critical values $0$, $1$.}

\begin{enumerate}[label=\arabic*), leftmargin=*]
\item
The red circle 
$\alpha$
describes a loop of $\widetilde{p}_3$
enclosing the ramification value $0$, but not $1$.

\item
The magenta circle 
$\beta$
describes a loop of $\widetilde{p}_3$
enclosing the ramification values $0$ and $1$.
\end{enumerate}

\noindent
These loops generate the fundamental group 
$\pi_1 (\CC_{\widetilde{p}_3} \backslash \{0,1\})$. 
The lifts of the circles $\alpha$, $\beta$
are described in Figure \ref{dominioK13}. 

In order to proceed with the proof,
the idea is as follows. 
If we determine the $(3,0)$--configuration trees
of $X(p_3, z)$ at one point $p_3 \in U_j$,  then 
by a continuity argument 
the analogous 
configuration tree remains valid for all $p_3 \in U_j$.

\smallskip

{\it Second step. 
Computation of the $(3,0)$--configuration trees.} 

\noindent
We shall need to consider the boundaries between the regions $U_j$, $j=1,\ldots,8$.
Let $U_i$ and $U_j$ denote two adjacent regions with 
common boundary (as open segments)

\centerline{ $\partial U_{i, j} \doteq 
(\overline{U_i} \cap \overline{U}_j) \backslash 
\big\{
\widetilde{p}_3^{-1}
\left( \{ 0, 1, \infty  \} \right)
\big\}
$.}

\noindent 
Moreover, we 
assign colors to the vertices and edges 
in Figure \ref{EjemploArbolesE30}, as follows 
$$
\begin{array}{c}
\circled{1}=
(0,0,-1) \, \hbox{ in red,} \
\circled{2}=
(1,1,-1) \, \hbox{ in green,}\
\circled{3}=
(p_3, \widetilde{p}_3,-1)\, \hbox{ in blue,}
\\
\vspace{-.3cm}
\\
\Delta_{1\, 2} =  
\overline{(0,0,-1), (1,1,-1)}, \ 
\Delta_{2\, 1} =  
\overline{(1,1,-1), (0,0,-1)}
\ \hbox{ in dashed black line,}
\\
\vspace{-.3cm}
\\
\Delta_{1\, 3} =  
\overline{(0,0,-1), (p_3, \widetilde{p}_3,-1)}, \
\Delta_{3\, 1} =  
\overline{(p_3, \widetilde{p}_3,-1), (0,0,-1)}
\ \hbox{ in red,}
\\
\vspace{-.3cm}
\\
\Delta_{2\, 3} =  
\overline{(1,1,-1), (p_3, \widetilde{p}_3,-1)}, \
\Delta_{3\, 2} =  
\overline{(p_3, \widetilde{p}_3,-1), (1,1,-1)}
\ \hbox{in green.}
\end{array}
$$

\noindent 
Because of Definition \ref{d-confTree}.4 
there are two possible cases for the root.

\noindent
1) If $\widetilde{p}_3\in\HH^2_{+}\cup(-\infty,0)$ then 
the vertex $\circled{3}$ is the root, 
by Equation \eqref{funcion-racional-R}, this is equivalent to 
\begin{multline}\label{raiz-p3}
p_3\in U_1 \cup U_3 \cup U_5 \cup U_7 \\
\cup \partial U_{3,4} \cup \partial U_{5,6} 
\cup \big(\partial U_{1,2} \cap (0,1/2) \big) 
\cup \big(\partial U_{7,8} \cap (2,\infty) \big).
\end{multline}

\smallskip
\noindent
2) If $\widetilde{p}_3\not\in\HH^2_{+}\cup(-\infty,0)$ then the 
vertex $\circled{1}$ is the root,
once again by \eqref{funcion-racional-R}
\begin{multline}\label{raiz-0}
p_3\in
U_2 \cup U_4 \cup U_6 \cup U_8 \\
\cup \partial U_{2,3} \cup \partial U_{1,6} 
\cup \partial U_{4,5} \cup \partial U_{3,8} \cup \partial U_{6,7} \\
\cup \big(\partial U_{1,2} \cap (1/2,1) \big) 
\cup \big(\partial U_{7,8} \cap (1,2) \big) .
\end{multline}

\begin{figure}[htbp]
\begin{center}
\includegraphics[width=\textwidth]{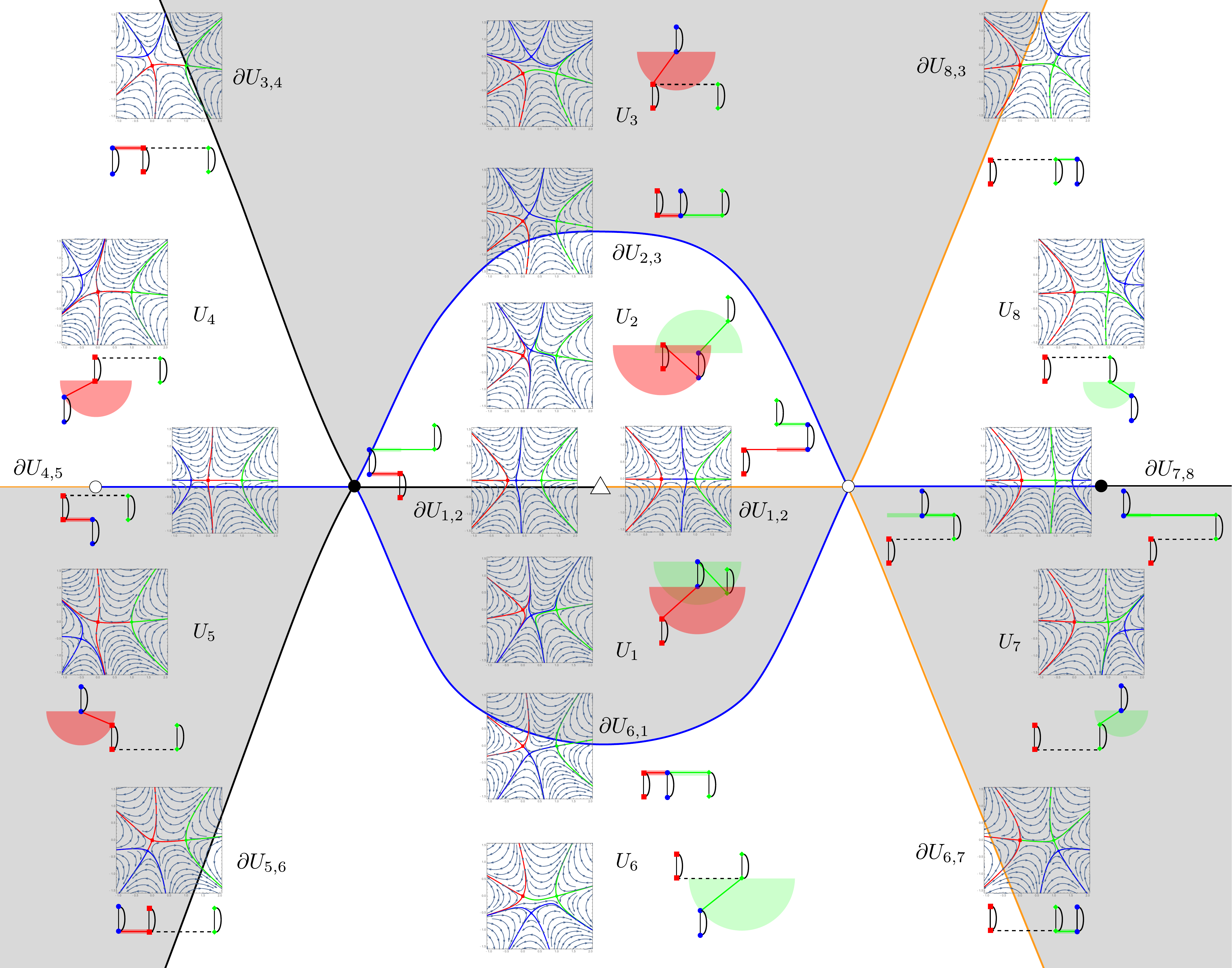}
\caption{
The plane $\CC_ {p_3}$ as 
vector field atlas of $X(p_3, z)$ and
their corresponding 
$(3,0)$--skeletons of 
$\Lambda_{X(p_3, z)}$.
This provides a bifurcation diagram for the phase portraits,
there appear  
12
non equivalent (topologically, 
orientation preserving), 
real vector fields $\Re{X(p_3, z)}$.
The shaded red and green areas represent the half planes or segments
where the diagonal corresponding to $\widetilde{p}_3$ can 
move, its total winding number is 4 coinciding 
with the degree of the map $\widetilde{p}_3$ in
\eqref{funcion-racional-R}.
}
\label{EjemploArbolesE30}
\end{center}
\end{figure}

\smallskip
\noindent
In either case, the diagonals/edges satisfy 
\begin{equation}
\begin{array}{c}
\Delta_{i\, j} = - \Delta_{j\, i} , \quad i,j\in\{1,2,3\}, \ i\neq j ,\\
\Delta_{1\, 2} + \Delta_{2\, 3} + \Delta_{3\, 1} = 0 .
\end{array}
\end{equation}

The $(3,0)$--configuration trees can be deduced upon careful consideration of 
the phase portraits portrayed in Figure \ref{EjemploArbolesE30}. 

\noindent
\emph{Case (1), the root is $\raiz{3}$ and colored blue.} 
From Equation \eqref{raiz-p3}, when
\begin{itemize}[leftmargin=*]
\item $p_3\in U_1$, the $(3,0)$--configuration tree is
\begin{equation}\label{arbol-U1}
\Lambda_{X(p_3, z)} =
\Big\{ \circled{1}, \circled{2}, \circled{3}\, ; \raiz{3} \, ;
(\Delta_{3\,2},0), \, (\Delta_{3\,1}, 1) 
\Big\} , \text{ with } \Delta_{3\,2}, \Delta_{3\,1}\in\HH^{2}_{-};
\end{equation}

\item $p_3\in U_3$, the $(3,0)$--configuration tree is
\begin{equation}\label{arbol-U3}
\Lambda_{X(p_3, z)} =
\Big\{ \circled{1}, \circled{2}, \circled{3}\, ; \raiz{3} \, ;
(\Delta_{3\,1},0), \, (\Delta_{1\,2}, 0) 
\Big\} , \text{ with } \Delta_{1\,2}=1 \text{ and }\Delta_{3\,1}\in\HH^{2}_{-};
\end{equation}

\item $p_3\in U_5$, the $(3,0)$--configuration tree is
\begin{equation}\label{arbol-U5}
\Lambda_{X(p_3, z)} =
\Big\{ \circled{1}, \circled{2}, \circled{3}\, ; \raiz{3} \, ;
(\Delta_{3\,1},0), \, (\Delta_{1\,2}, 1) 
\Big\} , \text{ with } \Delta_{1\,2}=1 \text{ and } \Delta_{3\,1}\in\HH^{2}_{-};
\end{equation}

\item $p_3\in U_7$, the $(3,0)$--configuration tree is
\begin{equation}\label{arbol-U7}
\Lambda_{X(p_3, z)} =
\Big\{ \circled{1}, \circled{2}, \circled{3}\, ; \raiz{3} \, ;
(\Delta_{3\,2},0), \, (\Delta_{2\,1}, 1) 
\Big\} , \text{ with } \Delta_{2\,1}=-1 \text{ and } \Delta_{3\,2}\in\HH^{2}_{-};
\end{equation}

\item $p_3\in \partial U_{1,2} \cap (0,1/2)$, the $(3,0)$--configuration tree is
\begin{equation}\label{arbol-U12-0-1/2}
\Lambda_{X(p_3, z)} =
\Big\{ \circled{1}, \circled{2}, \circled{3}\, ; \raiz{3} \, ;
(\Delta_{3\,1},0), \, (\Delta_{3\,2}, 1) 
\Big\} , \text{ with } \Delta_{3\,2}>0 \text{ and } \Delta_{3\,2}>1;
\end{equation}

\item $p_3\in \partial U_{7,8} \cap (2,\infty)$, the $(3,0)$--configuration tree is
\begin{equation}\label{arbol-U78-2-infty}
\Lambda_{X(p_3, z)} =
\Big\{ \circled{1}, \circled{2}, \circled{3}\, ; \raiz{3} \, ;
(\Delta_{3\,2},0), \, (\Delta_{2\,1}, 1) 
\Big\} , \text{ with } \Delta_{2\,1}=-1 \text{ and } \Delta_{3\,2}>1;
\end{equation}

\item $p_3\in \partial U_{3,4}$, the $(3,0)$--configuration tree is
\begin{equation}\label{arbol-U34}
\Lambda_{X(p_3, z)} =
\Big\{ \circled{1}, \circled{2}, \circled{3}\, ; \raiz{3} \, ;
(\Delta_{3\,1},0), \, (\Delta_{1\,2}, 0) 
\Big\} , \text{ with } \Delta_{1\,2}=1 \text{ and } \Delta_{3\,1}>0;
\end{equation}

\item $p_3\in \partial U_{5,6}$, the $(3,0)$--configuration tree is
\begin{equation}\label{arbol-U56}
\Lambda_{X(p_3, z)} =
\Big\{ \circled{1}, \circled{2}, \circled{3}\, ; \raiz{3} \, ;
(\Delta_{3\,1},0), \, (\Delta_{1\,2}, 0) 
\Big\} , \text{ with } \Delta_{1\,2}=1 \text{ and } \Delta_{3\,1}>0;
\end{equation}

\end{itemize}

\smallskip
\noindent
\emph{Case (2), the root is $\raiz{1}$ and colored red.} 
From Equation \eqref{raiz-0}, when 
\begin{itemize}[leftmargin=*]
\item $p_3\in U_2$, the $(3,0)$--configuration tree is
\begin{equation}\label{arbol-U2}
\Lambda_{X(p_3, z)} =
\Big\{ \circled{1}, \circled{2}, \circled{3}\, ; \raiz{1} \, ;
(\Delta_{1\,3},0), \, (\Delta_{3\,2}, 1) 
\Big\} , \text{ with } \Delta_{1\,3}, \Delta_{3\,2} \in\HH^{2}_{-};
\end{equation}

\item $p_3\in U_4$, the $(3,0)$--configuration tree is
\begin{equation}\label{arbol-U4}
\Lambda_{X(p_3, z)} =
\Big\{ \circled{1}, \circled{2}, \circled{3}\, ; \raiz{1} \, ;
(\Delta_{1\,2},0), \, (\Delta_{1\,3}, 1) 
\Big\} , \text{ with } \Delta_{1\,3}\in\HH^{2}_{-}, \Delta_{3\,2} \in\HH^{2}_{+};
\end{equation}

\item $p_3\in U_6$, the $(3,0)$--configuration tree is
\begin{equation}\label{arbol-U6}
\Lambda_{X(p_3, z)} =
\Big\{ \circled{1}, \circled{2}, \circled{3}\, ; \raiz{1} \, ;
(\Delta_{1\,2},0), \, (\Delta_{2\,3}, 0) 
\Big\} , \text{ with } \Delta_{1\,2}=1 \text{ and } \Delta_{2\,3} \in\HH^{2}_{-};
\end{equation} 

\item $p_3\in U_8$, the $(3,0)$--configuration tree is
\begin{equation}\label{arbol-U8}
\Lambda_{X(p_3, z)} =
\Big\{ \circled{1}, \circled{2}, \circled{3}\, ; \raiz{1} \, ;
(\Delta_{1\,2},0), \, (\Delta_{2\,3}, 1) 
\Big\} , \text{ with } \Delta_{1\,2}=1 \text{ and } \Delta_{2\,3} \in\HH^{2}_{-};
\end{equation}

\item $p_3\in \partial U_{4,5}$, the $(3,0)$--configuration tree is
\begin{equation}\label{arbol-U45}
\Lambda_{X(p_3, z)} =
\Big\{ \circled{1}, \circled{2}, \circled{3}\, ; \raiz{1} \, ;
(\Delta_{1\,2},0), \, (\Delta_{1\,3}, 1) 
\Big\} , \text{ with } \Delta_{1\,2}=1 \text{ and } \Delta_{1\,3} >0;
\end{equation}

\item $p_3\in \partial U_{1,2}\cap (1/2,1)$, the $(3,0)$--configuration tree is
\begin{equation}\label{arbol-U12-1/2-1}
\Lambda_{X(p_3, z)} =
\Big\{ \circled{1}, \circled{2}, \circled{3}\, ; \raiz{1} \, ;
(\Delta_{1\,3},0), \, (\Delta_{3\,2}, 1) 
\Big\} , \text{ with } \Delta_{1\,3}>1 \text{ and } \Delta_{3\,2} < 0;
\end{equation}   

\item $p_3\in \partial U_{7,8} \cap (1,2)$, the $(3,0)$--configuration tree is
\begin{equation}\label{arbol-U78-1-2}
\Lambda_{X(p_3, z)} =
\Big\{ \circled{1}, \circled{2}, \circled{3}\, ; \raiz{1} \, ;
(\Delta_{1\,2},0), \, (\Delta_{2\,3}, 1) 
\Big\} , \text{ with } \Delta_{1\,2}=1 \text{ and } -1<\Delta_{2\,3} < 0;
\end{equation} 

\item $p_3\in \partial U_{1,6}$, the $(3,0)$--configuration tree is
\begin{equation}\label{arbol-U16}
\Lambda_{X(p_3, z)} =
\Big\{ \circled{1}, \circled{2}, \circled{3}\, ; \raiz{1} \, ;
(\Delta_{1\,3},0), \, (\Delta_{3\,2}, 0) 
\Big\} , \text{ with } 0<\Delta_{1\,3}<1 \text{ and } 0<\Delta_{3\,2}<1;
\end{equation} 

\item $p_3\in \partial U_{3,8}$, the $(3,0)$--configuration tree is
\begin{equation}\label{arbol-U38}
\Lambda_{X(p_3, z)} =
\Big\{ \circled{1}, \circled{2}, \circled{3}\, ; \raiz{1} \, ;
(\Delta_{1\,2},0), \, (\Delta_{2\,3}, 0) 
\Big\} , \text{ with } \Delta_{1\,2}=1 \text{ and } \Delta_{2\,3}>0;
\end{equation} 

\item $p_3\in \partial U_{2,3}$, the $(3,0)$--configuration tree is
\begin{equation}\label{arbol-U23}
\Lambda_{X(p_3, z)} =
\Big\{ \circled{1}, \circled{2}, \circled{3}\, ; \raiz{1} \, ;
(\Delta_{1\,3},0), \, (\Delta_{3\,2}, 0) 
\Big\} , \text{ with } 0<\Delta_{1\,3}<1 \text{ and } \Delta_{1\,3} > 0;
\end{equation}   

\item $p_3\in \partial U_{6,7}$, the $(3,0)$--configuration tree is
\begin{equation}\label{arbol-U67}
\Lambda_{X(p_3, z)} =
\Big\{ \circled{1}, \circled{2}, \circled{3}\, ; \raiz{1} \, ;
(\Delta_{1\,2},0), \, (\Delta_{2\,3}, 0) 
\Big\} , \text{ with } \Delta_{1\,2}=1 \text{ and } \Delta_{2\,3} > 0.
\end{equation} 

\end{itemize}
\end{proof}

As an advantage of Proposition \eqref{lema-arbol-3-0},
recalling the complex analytic action \eqref{LaAccion}, 
then the quotient map
$$
\begin{array}{rcl}
\Pi: \E(r,d) & \longrightarrow & \dfrac{\E(r,d)}{Aut(\CC)}
\\  
X & \longmapsto & [X]
\end{array}
$$
\noindent 
determines the analytic classes of
vector fields. In particular, 
$\E(3,0)/Aut(\CC)$
is a complex analytic space of complex dimension two, 
having singularities originated from 
the vector fields in $\E(3,0)$ with non trivial 
isotropy group under the $Aut(\CC)$--action.
See \cite{AlvarezMucino2} for general theory 
on $\E(r,d)$ and 
\cite{Frias-Mucino}, 
\cite{Magana} for the rational case. 
Hence the family $\mathfrak{F}$ gives origin to a
curve $\Pi \circ X (p_2, z )$ of $Aut(\CC)$--classes of vector fields.

\begin{corollary}
\begin{enumerate}[label=\arabic*),leftmargin=*]
 \item 
The complex analytic curve

\centerline{$
\Pi \circ X(p_3, z):
\CC \backslash \{-1, 0, 1/2,  1, 2 \}  
\longrightarrow
\dfrac{\E(3,0)}{Aut(\CC)}
$}

\noindent 
is injective.

\item 
The parameter plane 
$\CC_{p_3} \backslash \{0, \, 1/2, \, 1 \}$ 
provides a bifurcation diagram for the phase portraits
of $\Re{X(p_3, z)}$, there appear  
12 
non equivalent (topologically, 
orientation preserving).
\end{enumerate}
\end{corollary}

The notion of topological equivalence is Definition
\ref{deftopequiv}.
We shall study the general
problem of the number of topological classes of 
$\Re{X}$, for $X \in \E(r,d)$, in 
\S\ref{topoclassification}, see Theorem \ref{numbertop}.

\begin{proof}
We want to describe the $Aut(\CC)$--equivalent
the vector fields  $X(p_3,z)$.
Consider the action $(T,X(p_3, z)) \longmapsto T^* X(p_3,z)$, 
then it lifts to the action on ditinguished parameters, as 

\centerline{
$(T,\Psi(p_3,z)) \longmapsto 
T^* \Psi(p_3,z)= \Psi(p_3, z) \circ T^{-1}(z)$. }

\noindent 
The polynomial $\Psi(p_3, z)$ has critical points
$\{(0,0), (1,1), (p_3, \widetilde{p}_3) \} \subset \CC^2$.
For $T \neq Id \in Aut(\CC)$, 
$\Psi(p_3, z) \circ T^{-1}(z)$ 
gives rise to
a permutation of 
$\{(0,0), (1,1), (p_2, \widetilde{p}_3) \}$. 
This is possible if and only if $\widetilde{p}_3 = 0$ or $1$. 
Using the definition of $\widetilde{p}_3$, 
Equation  \eqref{funcion-racional-R}, 
we have $p_3= -1$ or $2$. 
In fact 
$\Psi(-1,z)$ and $\Psi(2, z)$ are $Aut(\CC)$--equivalent
(using the translation map $T^{-1}$ that sends $\{ -1, 0, 1\}$ to
$\{0, 1, 2 \}$).

The assertion (2) uses careful inspection of 
Figure \ref{EjemploArbolesE30}.
We convene that $\sim$ means topologically equivalence, 
for $p_3$  the topologies correspond to:

\noindent 
$\bigcdot$
$U_1, \ldots ,  U_8$ eight topologies; clearly 
the topology remains without change for $p_3$
on each open set. 
For example, 
$U_1$, $U_2$ determine two horizontal strip flows for 
the corresponding $\Re{X(p_3, z)}$,
however the orientation of the flows make it different.

\noindent
$\bigcdot$
$\partial U_{1,2} \cap (0, 1/2) \sim \partial U_{4,5}$,
$\partial U_{7,8} \cap (1,1/2) \sim 
\partial U_{7,8} \cap (2, \infty)$, two topologies;
in fact the corresponding $\Re{X(p_3, z)}$ 
have two saddle connections, the orientation determines
two non equivalent cases. 

\noindent
$\bigcdot$
$\partial U_{3,4}\sim
\partial U_{1,6}\sim 
\partial U_{3,8}$,
$\partial U_{5,6}\sim 
\partial U_{2,3}\sim
\partial U_{6,7}$, two topologies; 
the corresponding $\Re{X(p_3, z)}$ 
have two saddle connections 
that have a common half plane as boundary
(this is the difference respect to the above case), and
the orientation determines two non equivalent cases. 

\noindent 
Hence we have 12 different 
topological classes.
\end{proof}

\subsection{$ X\in\E(0,d)$ has an isolated essential singularity at $\infty\in\CW_{z}$, no zeros or poles}
The simplest example corresponds to a $(0,1)$--configuration tree; 
only one essential vertex and no edges.

\begin{example}\label{ejemplo-un-va}
Consider once again Example \ref{ejemplolog}, that is

\centerline{
$X(z)= \e^{\mu (z + c_1)}\del{}{z} \in \E(0,1)$,}

\noindent
with $\mu\in\CC^*$, $c_1\in\CC$ as in \eqref{coeficientes-P-y-E}.
We then have an isolated essential singularity at $\infty\in\CW_{z}$ 
with finite asymptotic value $a_1$ 
given by \eqref{va-exponencial}.

\noindent
The $(0,1)$--configuration tree consists of one essential vertex and no edges

\centerline{
$\Lambda_{X}=\Big\{ 
\circled{1}=(\infty_{1},a_{1},
-\infty
); 
\raiz{1}\, ;\,
\varnothing \Big\}.$} 

\noindent 
See Figure \ref{figejemplo-un-va} and \ref{figPiezasBasicas}.a. 
The vertices are branch points of the Riemann surface $\R_{X}$, and since there is only
one branch point/vertex, then no weighted edges appear.
The soul of $\R_X$ is a sheet, recall Remark \ref{primera-vez-soul}, and coincides with the 
global zero level. 
It is to be noted that the semi--infinite helicoids, even though they are part of $\R_{X}$, are not
necessary in the combinatorial description of the surface;
the complete $\R_{X}$ is described
by making the natural convention to glue two semi--infinite helicoids to each vertical tower,
one on the top and one on the bottom. 
Hence the semi--infinite helicoids will have no counterpart in the combinatorial description as graphs.
However, to remind the reader of their existence in $\R_{X}$ we have schematically 
represented them in the figures by the ``springs'' or ``coils'' attached to the vertical towers.

\begin{figure}[htbp]
\begin{center}
\includegraphics[width=0.9\textwidth]{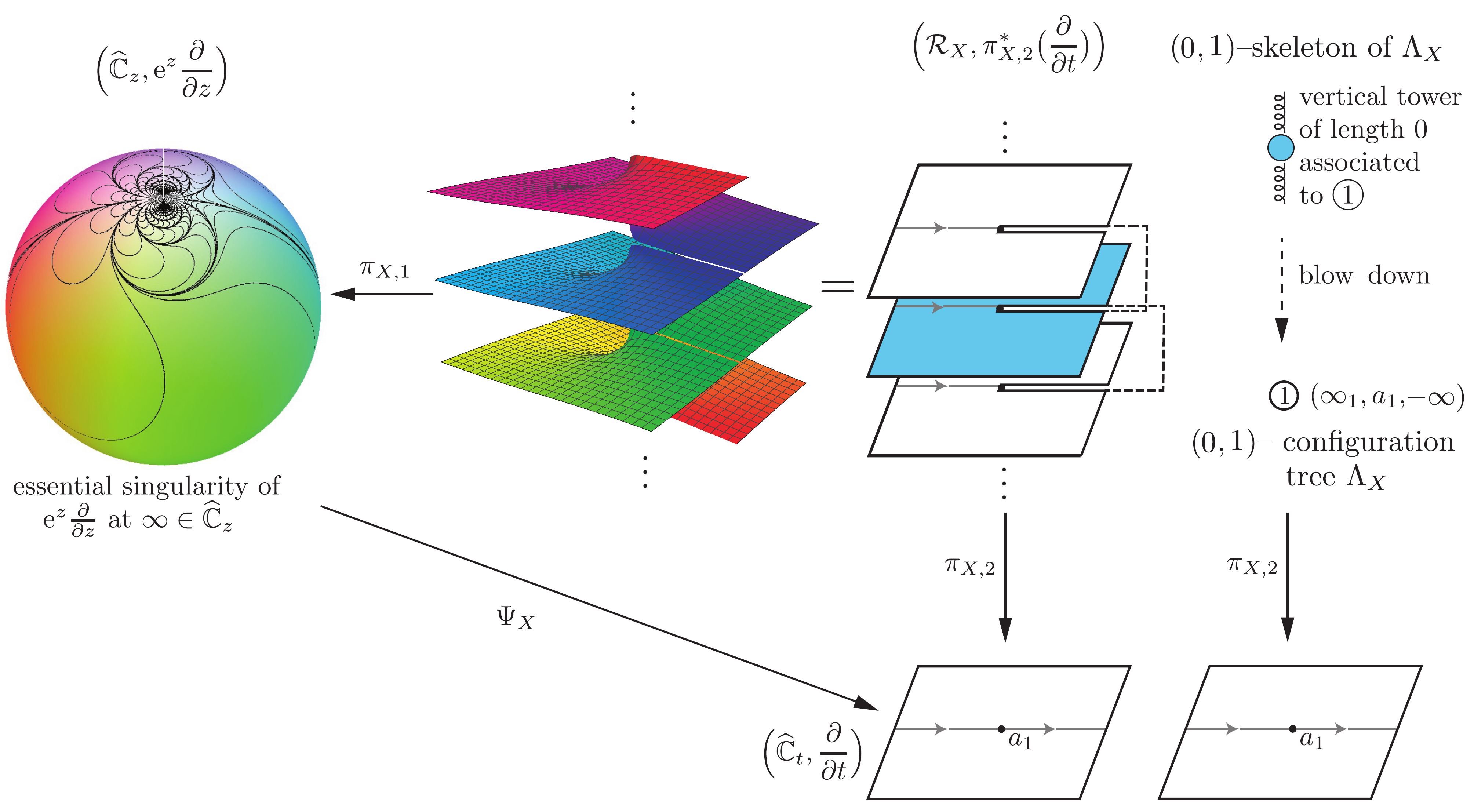}
\caption{{\bf Vector field 
$X(z)= \e^{\mu (z + c_1)}\del{}{z}$ 
with an essential singularity at $\infty\in\CW_{z}$.}
The surface $\R_{X}$ is 
a \emph{logarithmic spiral}
formed by two semi--infinite helicoids 
glued together.
The soul, Definition \ref{soul}, is shaded blue.
See Example \ref{ejemplo-un-va}, and \S\ref{confTree-to-skeleton} 
for the drawing on the right.
}
\label{figejemplo-un-va}
\end{center}
\end{figure}
\end{example}

In the following example there are $d\geq 2$ essential vertices and $d-1$ edges, all sharing
the same sheet.

\begin{example}\label{ejemplo-zd}
Consider the vector field 
\\
\centerline{
$X(z)= \e^{z^{d}}\del{}{z}
\in \E(0, d)$, for $d\geq2$.}
If $z_{0}=0$, the distinguished parameter is 
\\
\centerline{
$\Psi_{X}(z)=\int\limits_{0}^{z}\e^{-\zeta^{d}} d\zeta$.} 
Note that $a_1\doteq \int\limits_0^{\infty} \e^{-\zeta^d} d\zeta\in\RR^+$.
Moreover $\Psi_{X}$ has $d$ finite asymptotic values given by\footnote{
Our numbering of the indices $\sigma$ differ from the ones in \cite{Nevanlinna1} 
so that they agree with the conventions outlined in Remark \ref{correspexponentialtract}.3.
} 
(see \cite{Nevanlinna1} p.~168)

\centerline{
$a_\sigma= \e^{i2\pi (\sigma-1)/d} a_1$ \quad for \quad  $\sigma=1\ldots d$, 
}

\noindent
each with multiplicity one, and
a logarithmic branch point 
$(\infty_{\sigma},a_{\sigma}, -\infty ) \in\R_{X}$ 
over each finite asymptotic value. 
The exponential tracts $A_\sigma$ for each of the finite asymptotic values $a_\sigma$ are given by 

\centerline{
$A_{\sigma}=\left\{z\in\CC_{z} \ \vert\ 
\abs{\arg{z} - \frac{2 \pi (\sigma-1)}{d} } < \frac{\pi}{d} \right\}$,
\ \ \ for $\sigma=1,\ldots,d$.
}

\noindent
Thus the $(0,d)$--configuration tree $\Lambda_{X}$ will have $d$ essential vertices 
$V_H=\{ \circled{\sigma}=(\infty_{\sigma},a_{\sigma},-\infty) \}_{\sigma=1}^{d}$
with root $\raiz{1}=(\infty_1,a_1,-\infty)$.
Recalling Definition \ref{arbol-lineal-enraizado}.1, 
the $d-1$ edges $E_H$ are selected such that we obtain a 
left--right--top--bottom linear directed tree of the vertices $V_H$, 
where the index $\sigma_0$ for the top and left most vertex will be
given by the simple formula 
$$\sigma_0=\ceil*{\floor*{\dfrac{d}{2}} / 2} + 1,$$ 
where $\ceil{\cdot}$ and $\floor{\cdot}$ are the ceiling and floor functions respectively.
Recalling Definition \ref{arbol-lineal-enraizado}.2,  
we obtain $\Lambda_{H,\circled{1}}$ the linear directed rooted tree with incoming
vertex $\circled{1}$. 
Moreover, all $d$ branch points share the same sheet in $\R_{X}$, 
hence by assigning weight 0 to each of the edges 
$\Delta_{\msigma \mrho}\in \widehat{E}_H$
we obtain the $(0,d)$--configuration tree of $X$.
In Figure \ref{fig-ejemplo-z9} the case $d=9$ is illustrated, the $(0,9)$--configuration 
tree is 
\begin{multline}
\Lambda_{X}= \Big\{ \circled{1},\ldots,\circled{9}\,;\raiz{1}\,; 
(\Delta_{1\,5},0), (\Delta_{5\,2},0), (\Delta_{2\,4},0), (\Delta_{4\,3},0), 
\\	
(\Delta_{1\,6},0), (\Delta_{6\,9},0), (\Delta_{9\,7},0), (\Delta_{7\,8},0)
\Big\}.
\end{multline}
\begin{figure}[htbp]
\begin{center}
\includegraphics[width=\textwidth]{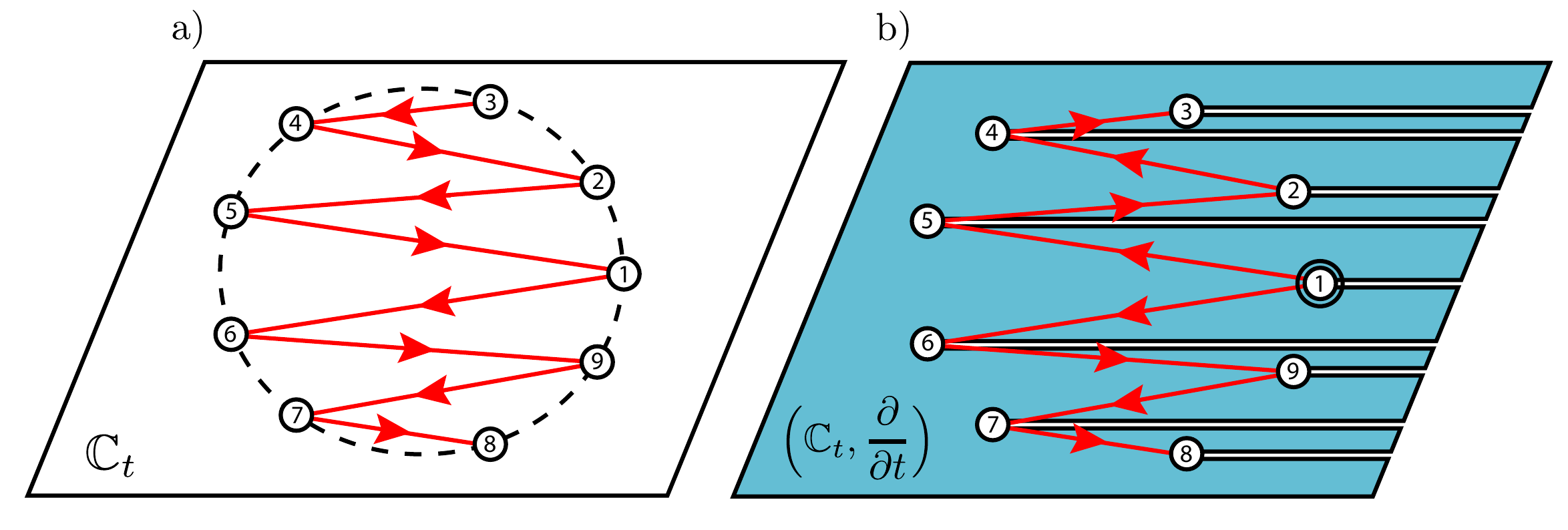}
\caption{{\bf Vector field 
$X(z)=\e^{z^{9}}\del{}{z}$ 
with essential singularity at $\infty$ 
and no poles.}
In this case the top and left most vertex is $\circled{\sigma_0}$ with 
$\sigma_0=\ceil{ \floor{9/2}/2}+1=3$.
(a) Represents the left--right--top--bottom linear directed tree of the vertices
$V_H=\big\{ \circled{1},\ldots,\circled{9} \big\}$.
(b) Represents the $(0,9)$--configuration tree $\Lambda_{X}$ where all the edges have weight 0.
The soul, Definition \ref{soul}, is shaded blue. 
The Riemann surface $\R_{X}$ consists of gluing $18$ semi--infinite helicoids, 
one above and one below each logarithmic branch point/vertex.
All the branch points and diagonals
belong to the global zero level sheet.
}
\label{fig-ejemplo-z9}
\end{center}
\end{figure}

\noindent 
The soul of $\R_X$ is a sheet and coincides with the 
global zero level. 
\end{example}

The following example is a family of vector fields in $\E(0,3)$ whose $(0,3)$--configuration
trees have two edges, one of which can assume a non--zero weight.
Recall that the weight of an edge indicates the number of sheets 
one has to go up or down, at the starting branch point, in order to reach the sheet that shares 
both the starting and ending branch point of the diagonal/edge ({\it i.e.} the sheet containing the
diagonal).

\begin{example}\label{ejemplo-Exp3}
Consider the vector field 
\begin{equation}\label{AnaliticExp3}
X(z)=2\pi i \exp\left(-\frac{1}{3}z^{3}+c_2 z\right)\del{}{z}, 
\ \ \ 
c_2\in\CC.
\end{equation}

\noindent 
We shall need some background on Airy functions and integrals, for full details see
\cite{AlvarezMucino}, pp.~200--203 and references therein.
Let
$$\mathcal{A}i(p,z)=\frac{1}{2\pi i}\int\limits_{\mathcal{L}(z)}  
\e^{\frac{1}{3}\zeta^{3}-p\zeta} d\zeta,
$$
be the \emph{Airy integral}, where $A=\{z\in\CC  \ \vert \ \arg(z)\in(\pi/6,3\pi/6)\}$ and
$\mathcal{L}(z):=\mathcal{L}(z,\tau):[0,1]\longrightarrow A$ is a simple $C^{1}$ path starting at $0$ 
and ending at $z\in A$. 
The relationship between the Airy function $\text{Ai}$ and the Airy integral is given by
\begin{equation}\label{defAiry}
\text{Ai}(p)=\mathcal{A}i(p)- \e^{-i2\pi/3}\mathcal{A}i( \e^{-i2\pi/3}p),
\ \ \ \ \ \  \ \ 
\mathcal{A}i(p)=\lim\limits_{\substack{z\to\infty\\ z\in A}} \mathcal{A}i(p,z).
\end{equation}

\noindent
Choosing $z_0=0$, the distinguished parameter of $X$ is

\centerline{
$\Psi_{X}(z)=\int\limits_{0}^{z} \omega_{X}
= \dfrac{1}{2\pi i} \int\limits_{0}^{z} \e^{\frac{1}{3}\zeta^{3}-c_2\zeta} d\zeta$,}

\noindent
so the 3 finite asymptotic values of $\Psi_{X}(z)$ are given by
\begin{equation}\label{AV3}
a_{j+1}(c_2)= \eta^{j} \mathcal{A}i(\eta^{j} c_2), \qquad 
j=0,1,2,
\quad \eta=\e^{i2\pi/3},
\end{equation}
with asymptotic paths ending in the exponential tracts 
$\eta^{j} A$,
for $j=0$, $1$ and $2$ respectively.

\noindent
The $(0,3)$--configuration trees for $X$ as in \eqref{AnaliticExp3} are
\begin{multline}
\Lambda_{X}=\Big\{ 
\circled{1}= (\infty_1,a_1,-\infty),
\circled{2}=(\infty_2,a_2,-\infty),
\circled{3}=(\infty_3,a_3,-\infty) ; 
\\
\raiz{1}\,; 
(\Delta_{1\,2},0),(\Delta_{1\,3},K(1,3)) 
\Big\},
\end{multline}
\noindent
where
\begin{equation}\label{vad3}
\begin{array}{c}
\Delta_{1\,2}=a_{2}-a_{1}=  \eta\, 
\text{Ai}(\eta\, c_2),
\\[6pt]
\Delta_{1\,3}= a_{3}-a_{1}= - \text{Ai}(c_2),
\end{array}
\end{equation}
with $K(1,3)\in\ZZ$. 
See Figure \ref{fig-Exp3}. 
The dependency of $\Delta_{1\, 2}$ and $\Delta_{1\, 3}$ on $c_2$ is clear from \eqref{vad3},
however the dependency of $K(1,3)$ on $c_2$ is much more intricate, 
any $K(1,3) \in \ZZ$ appears:
for a full description see \cite{AlvarezMucino} \S8.6.1, particularly figure 14.

\begin{figure}[htbp]
\begin{center}
\includegraphics[width=\textwidth]{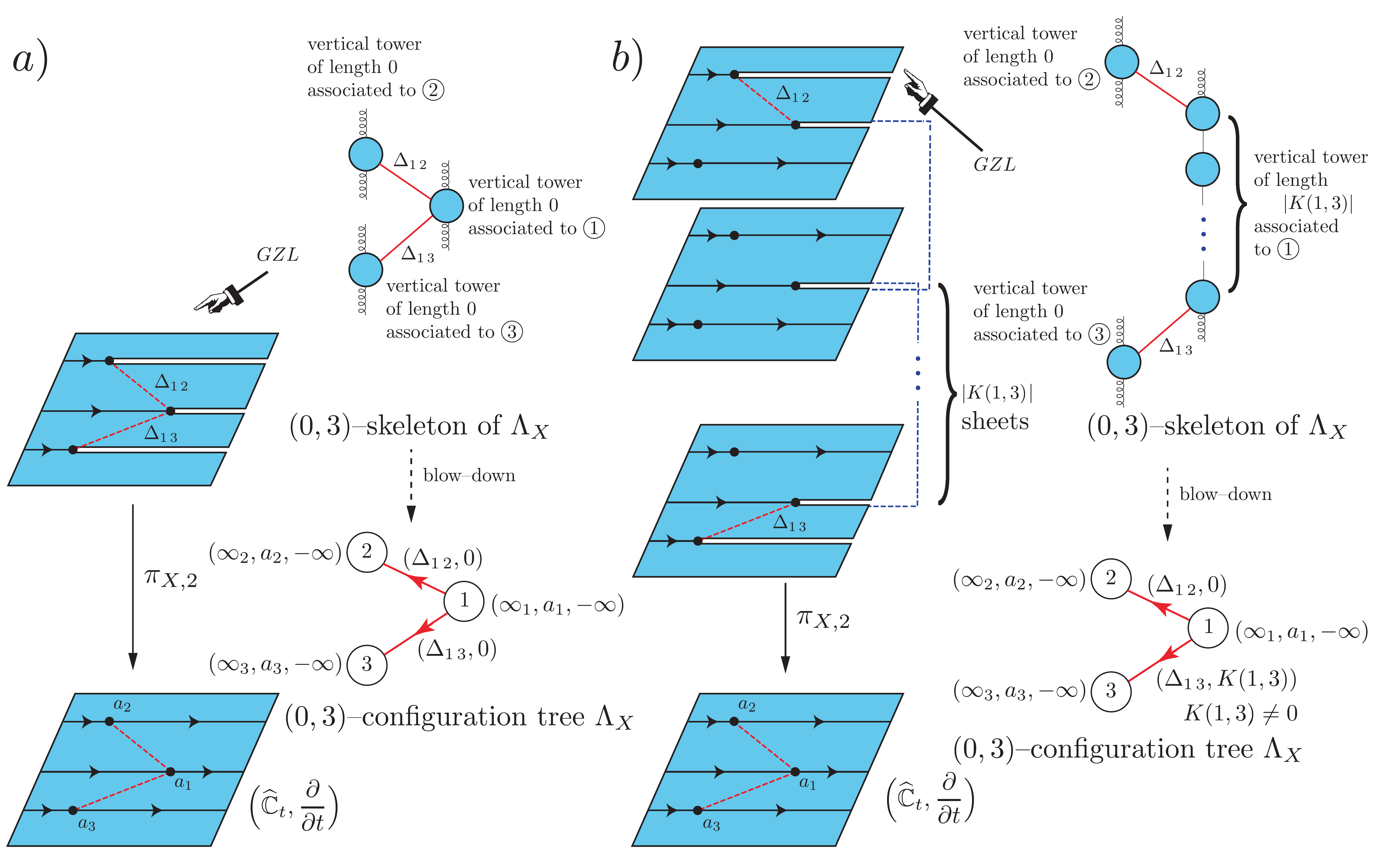}
\caption{
{\bf Vector field 
$X(z)= 2\pi i \exp\left(-\frac{1}{3}z^{3}+c_2 z\right)\del{}{z}\in\E(0,3)$ 
with essential singularity at $\infty$ 
and no poles.}
In (a) we have the case when 
$K(1,2)=K(1,3)=0$ 
so the diagonals $\Delta_{1\, 2}$ and $\Delta_{1\, 3}$
share the same sheet of $\R_{X}$.
Case (b) is when 
$K(1,2)=0$ and 
$K(1,3)\in \ZZ \backslash \{ 0 \}$, 
thus the diagonals $\Delta_{1\, 2}$ and $\Delta_{1\, 3}$ lie
on two different sheets of $\R_{X}$.
When starting on the global zero level sheet (the one that contains the diagonal 
$\Delta_{1\,2}$) 
one needs to go around the branch point
$(\infty_1, a_1, -\infty)$ exactly $K(1,3)$ times (see Remark \ref{pesos-como-info-geometrica}.4) 
in order to reach the sheet containing the diagonal
$\Delta_{1\, 3}$ (thus in this figure $K(1,3)<0$).
Note that for case (b) the global zero level sheet is non canonical:
if we instead choose as the global zero level sheet the one containing the diagonal $\Delta_{1\,3}$,
then the integer parameters would be $K(1,3)=0$ and $K(1,2)>0$.
}
\label{fig-Exp3}
\end{center}
\end{figure}
\end{example}

\subsection{$X\in\E(r,d)$ has $r\geq1$ poles on $\CC_{z}$ and an isolated essential singularity at $\infty\in\CW_{z}$}

The next example shows a simple case where 
the soul is non--trivial: it consists of more than one sheet. 

\begin{example}\label{ejemplo-alma-no-trivial}
Consider the vector field 
\\
\centerline{
$X(z)=\dfrac{\e^{z}}{(z-p_{1}) (z-p_{2})}\ddel{}{z}
\in \E(2, 1)$, 
}
with $p_1= 9 i\frac{\pi}{2}$ and $p_2= -i\frac{\pi}{2}$.
Its distinguished parameter is then
\\
\centerline{
$\Psi_{X}(z)=\int\limits_{z_0}^{z} \omega_{X} 
= \frac{\e^{-z}}{4} \big(-8 - 9 \pi^2 + 16 i \pi (1 + z) - 4 z (2 + z)\big)$. }
The vector field $X$ has an isolated essential singularity at $\infty\in\CW_{z}$ 
and $\Psi_{X}$ has one finite asymptotic value 
\\
\centerline{
$a_{1}=\Psi_{X}(\infty)=0$}

\noindent 
with asymptotic path inside 
the exponential tract
$\{z\in\CC_{z} \ \vert \ \Re{z}>0\}$. 
The poles $p_1, p_2$ have associated critical values $\widetilde{p}_1=-5\pi+2i$ 
and $\widetilde{p}_2=-5\pi-2i$ respectively.
\\
According to the labelling conventions in Definition
\ref{d-confTree},
the 
$(2,1)$--configuration tree 
\begin{multline}
\Lambda_{X}=\Big\{ \circled{1}=(p_1,\widetilde{p}_1,-1),
\circled{2}=(\infty_1,a_1,-\infty),
\circled{3}=(p_2,\widetilde{p}_2,-1) ;
\\
\raiz{1}\, ;\,
(\Delta_{1\, 2}, 0), (\Delta_{2\, 3}, -3)  
\Big\},
\end{multline}
has two pole vertices $\circled{1}$, $\circled{3}$, an essential vertex $\circled{2}$, and 
two edges 

\centerline{
$\Delta_{1\, 2}=a_{1}-\widetilde{p}_{1}=5\pi-2i$ \quad with weight $0$,} 
and 

\centerline{
$\Delta_{2\, 3}=\widetilde{p}_{2}-a_{1}=-5\pi-2i$ \quad with weight $-3$.}

\noindent
Note that the $(2,1)$--soul of $\Lambda_{X}$ is the Riemann surface consisting of 
12 half planes, 2 finite height horizontal strips, two cone points with cone angle $4\pi$ 
(corresponding to the two simple poles), a cone point with cone angle $8\pi$ (corresponding 
to the infinitely ramified branch point 
$(\infty_1,a_1, -\infty)$) and a horizontal branch cut starting at 
the branch point $(\infty_1,a_1, -\infty)$.
See Figures \ref{figejemplo-alma-no-trivial} and \ref{figPiezasBasicas}.
\begin{figure}[htbp]
\begin{center}
\includegraphics[width=\textwidth]{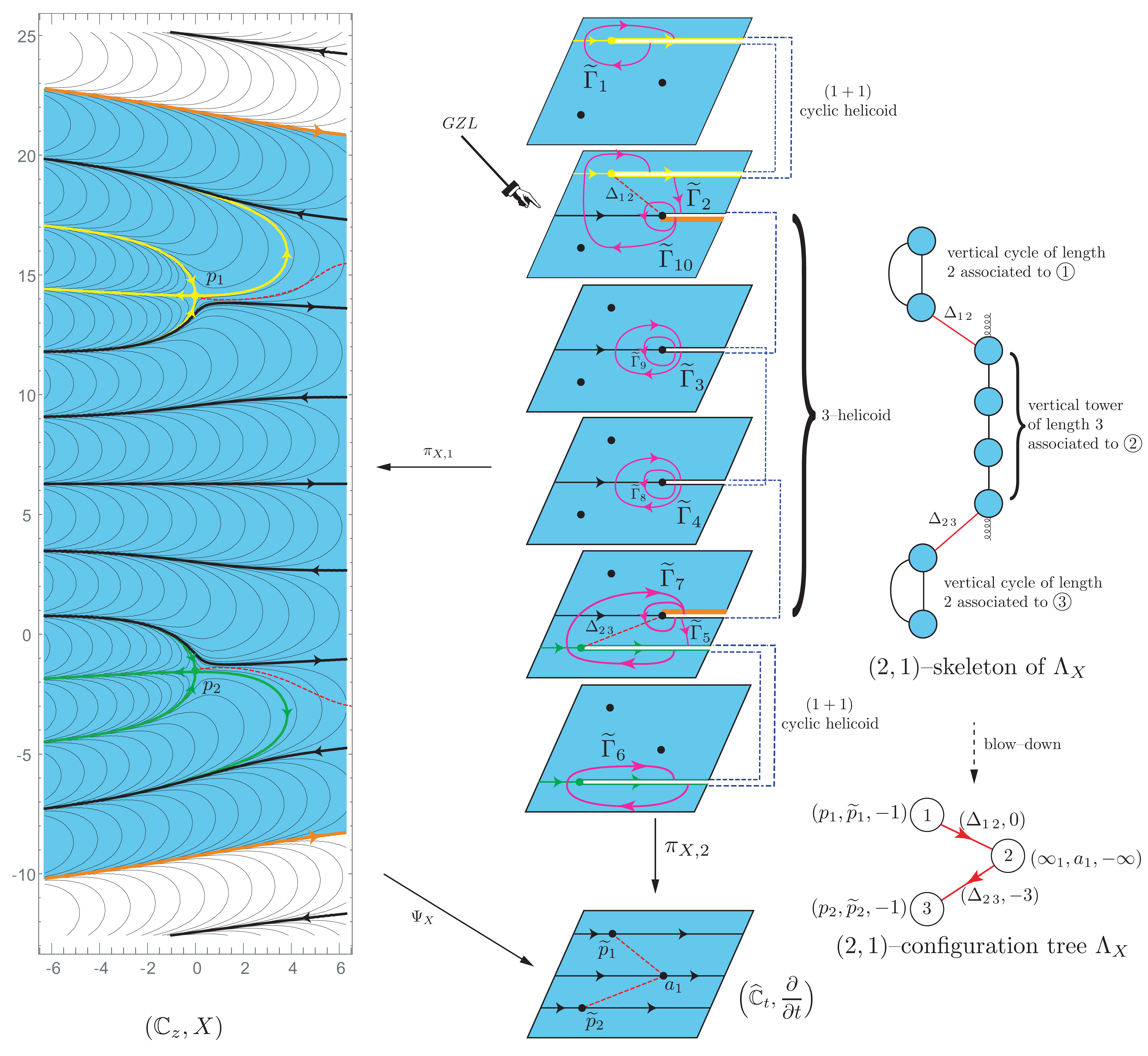}
\caption{{\bf Vector field 
$X(z)=\dfrac{\e^{z}}{(z-9i\frac{\pi}{2}) (z+i\frac{\pi}{2})}\del{}{z}$ 
with essential singularity at $\infty$ 
and two simple poles at $p_1=9i\frac{\pi}{2}$ and $p_2=-i\frac{\pi}{2}$.}
The Riemann surface $\R_{X}$ consists of two semi--infinite helicoids, two $(1+1)$--cyclic helicoids
and a $3$--helicoid. 
The soul, Definition \ref{soul}, is shaded blue and consists of the Riemann surface $\R_{X}$ minus
the semi--infinite helicoids. The boundary (where the two semi--infinite helicoids should be glued) 
is coloured orange.}
\label{figejemplo-alma-no-trivial}
\end{center}
\end{figure}
\end{example}

The next examples present $(r,d)$--configuration trees with more than 
one weighted edge. 


\begin{example}\label{ejemplo-E23}
Consider the vector field 
\\
\centerline{
$X(z)=-\dfrac{\e^{z^{3}}}{3 z^{2}}\ddel{}{z}
\in \E(2, 3)$.}
If $z_{0}=0$ the distinguished parameter is
\\
\centerline{
$\Psi_{X}(z)=\e^{-z^{3}}-1$.} 
Thus the pole $p_{1}=0$ has order $-\nu_{1}=-2$ and critical value $\widetilde{p}_{1}=0$, while
the e\-ssen\-tial singularity at $\infty\in\CW_{z}$ 
has finite asymptotic value $a_{1}=-1$, 
with multiplicity 3. 
In order to distinguish each of the three finite asymptotic values, one has to consider the 
asymptotic path associated to each and see on which of the following exponential 
tracts each lies in
\begin{equation}\label{exptract1}
\begin{array}{cc}
A_{1}=\{ z\in\CC  \, \vert \arg(z)\in (-\pi/6,\pi/6)\, \}, &
A_{2}=\{ z\in\CC  \, \vert \arg(z)\in ( \pi/2,5\pi/6)\, \}, \\
A_{3}=\{ z\in\CC  \, \vert \arg(z)\in (7\pi/6,3\pi/2)\, \}. 
\end{array}
\end{equation}
That is $(\infty_{1},-1, -\infty), (\infty_{2},-1,-\infty), (\infty_{3},-1, -\infty)\in\R_{X}$ are 3 logarithmic
branch points corresponding to the above exponential tracts as in Definition \ref{singesen}.
\\
The $(2,3)$--configuration tree has three essential vertices, and one pole vertex.
According to Definition \ref{d-confTree}.4, since $r\neq 0$ the root must be
the unique pole vertex, so we 
conveniently label the vertices as follows
\begin{equation*}
\begin{array}{rclrcl}
\circled{1}=(z_{1},t_{1},-\nu_{1}) &=& (p_{1},\widetilde{p}_{1},-2), & 
\quad\circled{2}=(z_{2},t_{2},-\nu_{2}) &=& (\infty_{1},a_{1},-\infty),
\\
\circled{3}=(z_{3},t_{3},-\nu_{3}) &=& (\infty_{2},a_{1},-\infty), &
\quad\circled{4}=(z_{4},t_{4},-\nu_{4}) &=& (\infty_{3},a_{1},-\infty).
\end{array}
\end{equation*} 

\noindent 
In this way the $(2,3)$--configuration tree is
\begin{equation}
\Lambda_{X}=\Big\{ \circled{1}, \circled{2}, \circled{3}, \circled{4};
\raiz{1}\, ;\,
(\Delta_{1\, 2}, 0), (\Delta_{1\, 3}, 1), (\Delta_{1	\, 4}, -1) 
\Big\}.
\end{equation}
Once again the edges $\Delta_{1\, 2}$, $\Delta_{1\, 3}$ and $\Delta_{1\, 4}$ correspond to 
the diagonals/semi--residues between the branch points associated to the corresponding 
vertices:
\begin{equation}
\begin{array}{lr}
\Delta_{1\, 2}=\int\limits^{\infty_{1}}_{p_{1}} \omega_{X}
=a_{1}-\widetilde{p}_{1}=-1, &
\Delta_{1\, 3}=\int\limits^{\infty_{2}}_{p_{1}} \omega_{X}=a_{1}-\widetilde{p}_{1}=-1, \\
\Delta_{1\, 4}=\int\limits^{\infty_{3}}_{p_{1}} \omega_{X}=a_{1}-\widetilde{p}_{1}=-1.  
\end{array}
\end{equation}
The weight $K(1,2)=0$ since the branch points corresponding to the root $\raiz{1}$ and the
vertex $\circled{2}$ share the same sheet in $\R_{X}$, 
$K(1,3)=1$ since in order to reach the diagonal $\Delta_{1\,3}$ one has to go up one sheet, 
at the branch point corresponding to $\circled{1}$ ({\it i.e.} $\Delta_{1\,3}$ is one sheet above 
the diagonal $\Delta_{1\,2}$ in $\R_{X}$), and 
$K(1,4)=-1$ since in order to reach the diagonal $\Delta_{1\,4}$ one has to go down one sheet, 
at the branch point corresponding to $\circled{1}$ ({\it i.e.} $\Delta_{1\,4}$ is one sheet below 
the diagonal $\Delta_{1\,2}$ in $\R_{X}$).

\noindent
Recalling Remark \ref{remArbol}.2 and the definition of semi--residues \eqref{diagonalsemiresidue}, 
note that it is possible to calculate the weights 
$K(\msigma,\mrho)$ by considering the phase portrait of $X$: the path of integration from
$p_{1}$ to $\infty_{1}$ stays on the same angular sector about $p_{1}$ so $K(1,2)=0$, 
the path of integration from $\infty_{1}$ to $\infty_{2}$ necessarily crosses two angular sectors 
going counterclockwise around $p_{1}$ corresponding to going up a sheet in $\R_{X}$ 
so $K(1,3)=1$, 
and the path of integration from $\infty_{1}$ to $\infty_{3}$ crosses two angular sectors 
going clockwise around $p_{1}$ corresponding to going down a sheet in $\R_{X}$ 
so $K(1,4)=-1$.

\noindent
In this case the decomposition, provided by Lemma \ref{lema-arboles-horizontales}, 
into horizontal subtrees is

\centerline{
$\Lambda_{X} 
= \Lambda_{H(1)} 
\cup \Lambda_{H(1,3)}
\cup \Lambda_{H(1,4)}
$,
}

\noindent
where
\begin{equation*}
\begin{array}{ll}
\Lambda_{H(1)}=\Big\{ \circled{1},\circled{2};\raiz{1};(\Delta_{1\, 2},0) \Big\}, &
\Lambda_{H(1,3)}=\Big\{ \circled{1},\circled{3};\raiz{1};(\Delta_{1\, 3},1) \Big\},
\\[4pt]
\Lambda_{H(1,4)}=\Big\{ \circled{1},\circled{4};\raiz{1};(\Delta_{1\, 4},-1) \Big\}.
\end{array}
\end{equation*}
See Figure \ref{figejemplo-E23} and the left hand side of Figure \ref{fig7ejemploscampos}.

\begin{figure}[htbp]
\begin{center}
\includegraphics[width=\textwidth]{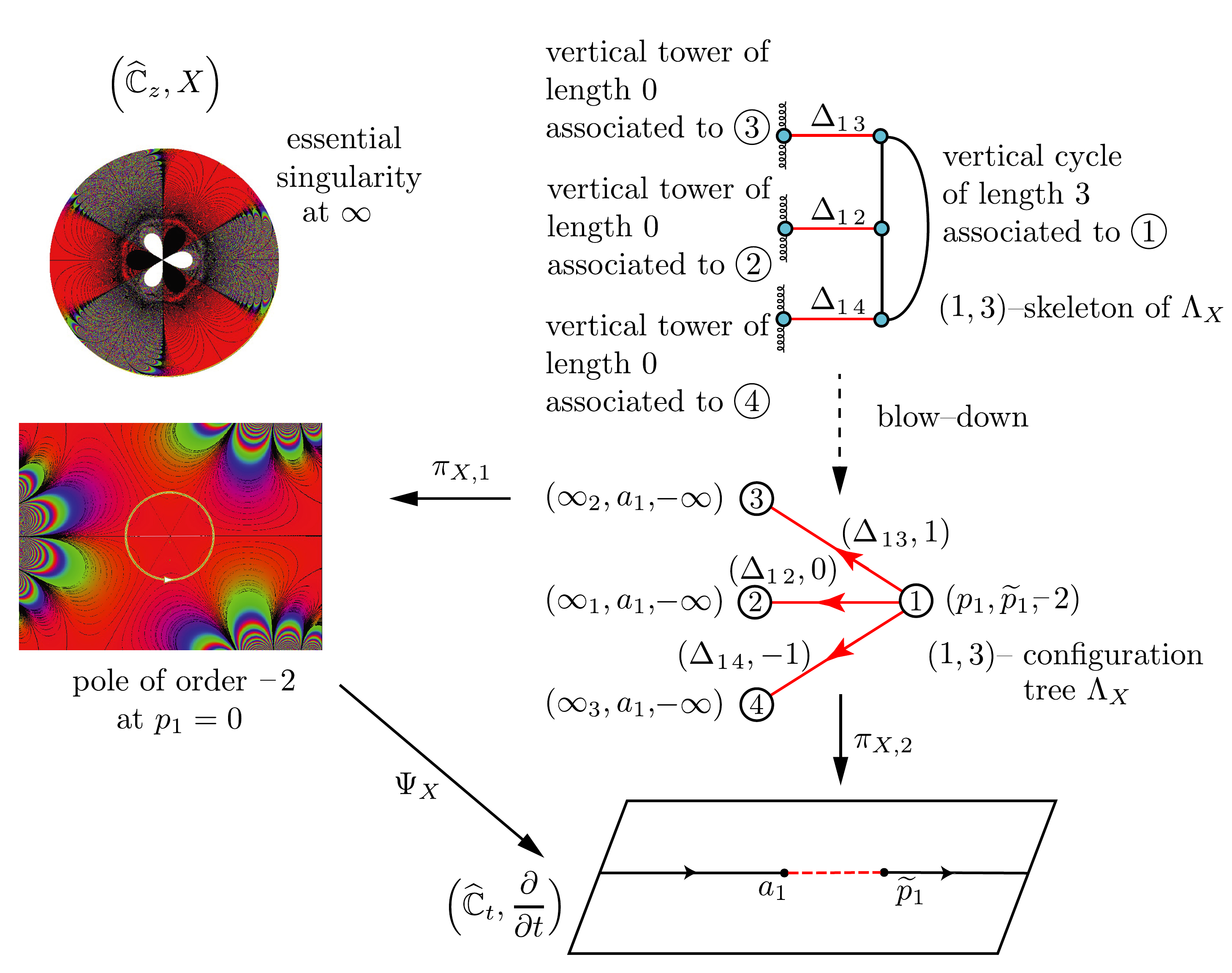}
\caption{{\bf Vector field 
$X(z)=-\dfrac{\e^{z^{3}}}{3 z^{2}}\del{}{z}$ with an essential singularity at $\infty$ 
and pole $p_{1}=0$ of order $-2$.}
The diagonals  
$\Delta_{1\,2}$, $\Delta_{1\,3}$ and $\Delta_{1\,4}$,
and their projections are shown as red segments. 
The Riemann surface $\R_{X}$ is not drawn.
See Example \ref{ejemplo-E23}, and 
\S\ref{confTree-to-skeleton} for the right drawing.
}
\label{figejemplo-E23}
\end{center}
\end{figure}
\end{example}

\begin{example}\label{ejemplo-E33}
In a similar vein as the previous example consider the vector field
\\
\centerline{
$
X(z)=\dfrac{\e^{z^{3}}}{3z^{3}-1}\ddel{}{z} \in \E(3,3),
$}
with simple poles at $p_{1}=\dfrac{1}{\sqrt[3]{3}}$, $p_{2}= \dfrac{\e^{i 2\pi/3}}{\sqrt[3]{3}}$, 
$p_{3}= \dfrac{\e^{-i 2\pi/3}}{\sqrt[3]{3}}$,
and an essential singularity at $\infty\in\CW_{z}$.
Its distinguished parameter is
\\
\centerline{
$\Psi_{X}(z)=\int\limits_{0}^{z} \omega_{X} 
= -z \e^{-z^{3}}$. }

\noindent 
Thus the critical values corresponding to the poles are

\centerline{
$\widetilde{p}_{1}=-\dfrac{1}{\sqrt[3]{3\e}}$, 
\
$\widetilde{p}_{2}= -\dfrac{\e^{i 2\pi/3}}{\sqrt[3]{3\e}}$ 
\ and \ 
$\widetilde{p}_{3}= -\dfrac{\e^{-i 2\pi/3}}{\sqrt[3]{3\e}}$.
}

\noindent
The essential singularity at $\infty$ has 
$a_{1}=0$ as its finite asymptotic value with multiplicity 3, once again with the same exponential tracts as the 
previous example, see equation \eqref{exptract1},
hence $(\infty_{1},0, -\infty), 
(\infty_{2},0, -\infty), (\infty_{3},0, -\infty)\in\R_{X}$ are the 3 logarithmic  
branch points corresponding to the mentioned exponential tracts.
\\
The $(3,3)$--configuration tree has three essential vertices and three pole vertices,
which we 
label as
\begin{equation}\label{verticesejemplo-E33}
\begin{array}{rclrcl}
\circled{1}=(z_{1},t_{1},-\nu_{1}) &=& (p_{2},\widetilde{p}_{2},-1),&
\quad\circled{2}=(z_{2},t_{2},-\nu_{2}) &=& (p_{1},\widetilde{p}_{1},-1),
\\
\circled{3}=(z_{3},t_{3},-\nu_{3}) &=& (p_{3},\widetilde{p}_{3},-1),&
\quad\circled{4}=(z_{4},t_{4},-\nu_{4}) &=& (\infty_{1},a_{1},-\infty),
\\
\circled{5}=(z_{5},t_{5},-\nu_{5}) &=& (\infty_{2},a_{1},-\infty),&
\quad\circled{6}=(z_{6},t_{6},-\nu_{6}) &=& (\infty_{3},a_{1},-\infty).
\end{array}
\end{equation}

\noindent
Thus the $(3,3)$--configuration tree (see Figure \ref{figejemplo-E33}) is\footnote{
The root is $\raiz{3}$ so as to agree with the conventions of Definition \ref{d-confTree}.}
\begin{multline}\label{arbolejemplo-E33}
\Lambda_{X}=
\Big\{ \circled{1},\circled{2},\circled{3},\circled{4},\circled{5},\circled{6};
\raiz{3}\, ;\,
\\
(\Delta_{3\, 6}, 0), 
(\Delta_{3\, 2}, 1),
(\Delta_{2\, 4}, 1),
(\Delta_{4\, 1}, 0), 
(\Delta_{1\, 5}, 1) 
\Big\},
\end{multline}
with edges given by the diagonals/semi--residues
\begin{equation}\label{pesosejemplo-E33}
\begin{array}{ll}
& \Delta_{3\, 6} = \int\limits_{p_{3}}^{\infty_{3}} \omega_{X} = a_{1} - \widetilde{p}_{3} = 
\dfrac{\e^{- i 2\pi/3}}{\sqrt[3]{3\e}},
\\
& \Delta_{3\, 2} = \int\limits_{p_3}^{p_1} \omega_{X} = (\widetilde{p}_1 - \widetilde{p}_3) = 
-\Big(\dfrac{1-\e^{-i 2\pi/3}}{\sqrt[3]{3\e}}\Big), 
\\
& \Delta_{2\, 4} = \int\limits_{p_1}^{\infty_1} \omega_{X} =  a_1 - \widetilde{p}_1= 
\dfrac{1}{\sqrt[3]{3\e}},
\\
& \Delta_{4\, 1} = \int\limits_{\infty_1}^{p_{2}} \omega_{X} =  \widetilde{p}_{2} - a_1 =
-\dfrac{1}{\sqrt[3]{3\e}} \e^{i 2\pi/3},
\\
& \Delta_{1\, 5} = \int\limits_{p_{2}}^{\infty_{2}} \omega_{X} = a_{1} - \widetilde{p}_{2} = 
\dfrac{\e^{ i 2\pi/3}}{\sqrt[3]{3\e}}, 
\end{array}
\end{equation}
and weights $K(3,6)=0$, $K(3,2)=1$, $K(2,4)=1$, $K(4,1)=0$, $K(1,5)=1$.
\begin{figure}[htbp]
\begin{center}
\includegraphics[width=\textwidth]{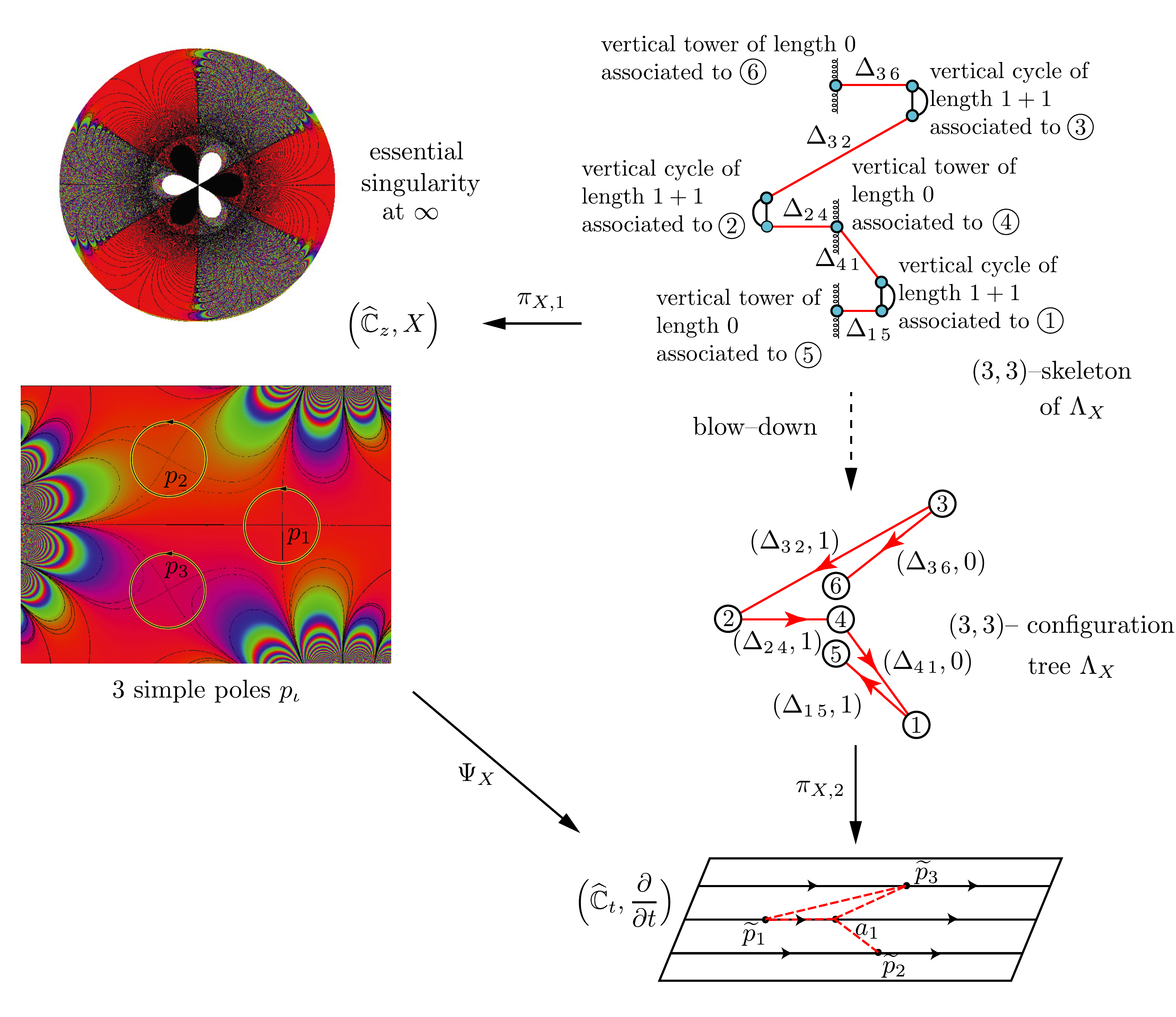}
\caption{{\bf Vector field 
$X(z)=\dfrac{\e^{z^{3}}}{3z^{3}-1}\del{}{z}$ with an essential singularity at $\infty$ and 3 simple poles $p_{\iota}$.}
The five diagonals and their projections are shown in red.
The Riemann surface $\R_{X}$ is not drawn.
See Example \ref{ejemplo-E33}, and \S\ref{confTree-to-skeleton} for the drawing on the right.
}
\label{figejemplo-E33}
\end{center}
\end{figure}

\begin{figure}[htbp]
\begin{center}
\includegraphics[width=\textwidth]{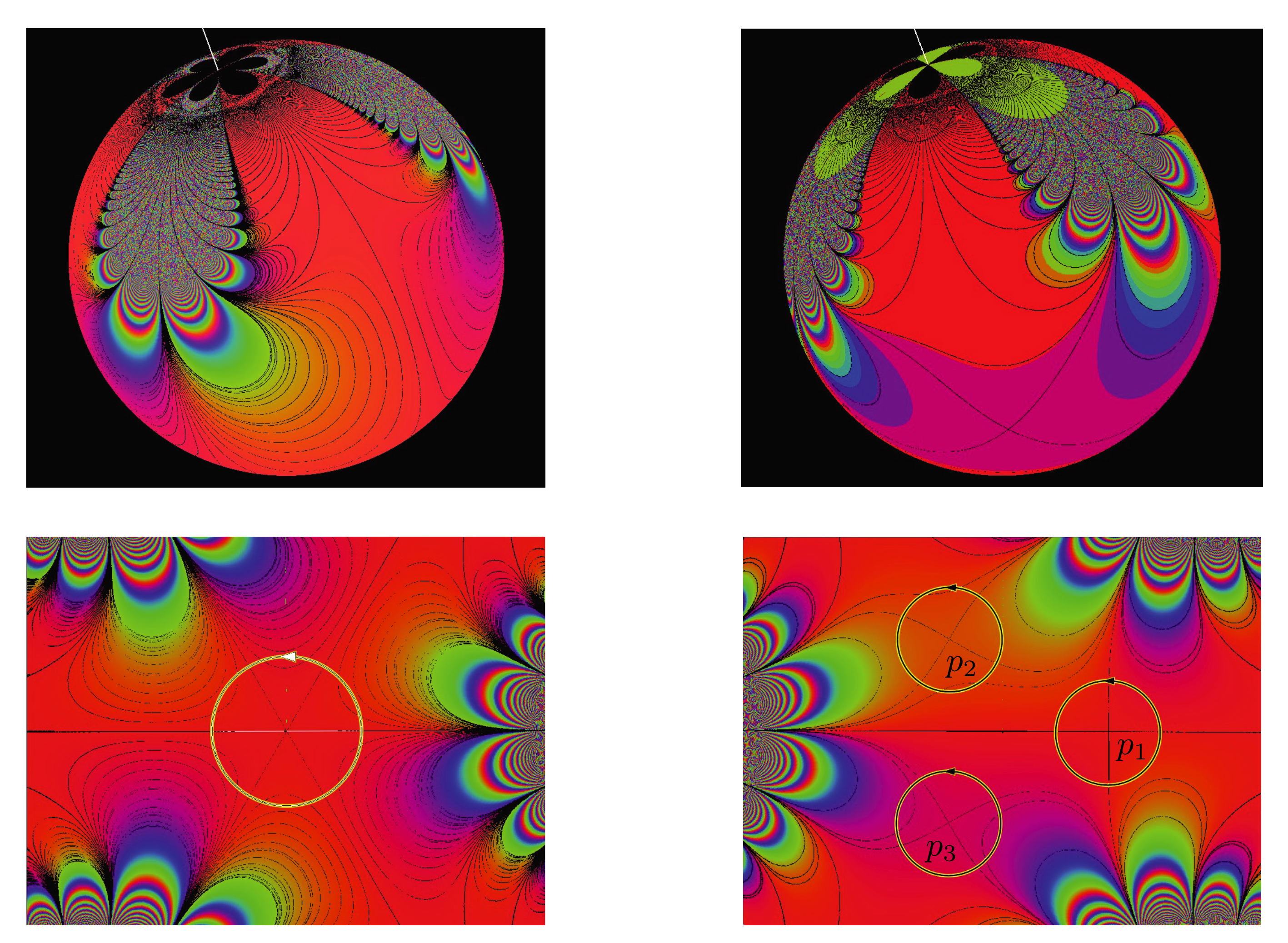}
\caption{{\bf Detail of vector fields in Examples \ref{ejemplo-E23} and \ref{ejemplo-E33}.}
The left hand side shows the vector field $X(z)=-\frac{\e^{z^{3}}}{3z^{2}}\del{}{z}$, the right hand side 
the vector field $X(z)=\frac{\e^{z^{3}}}{3z^{3}-1}\del{}{z}$.
Each angular sector around the poles corresponds to a half plane on $\R_{X}$.
Note that the dynamics of $\Re{X}$ 
in a neighbourhood of $\infty\in\CW$ are different. 
The images contain the information needed to construct the corresponding $(r,d)$--configuration 
trees, as explained in the text.
}
\label{fig7ejemploscampos}
\end{center}
\end{figure}

\noindent
Once again it is instructive to examine in detail how these weights are calculated, 
Figures \ref{figejemplo-E33} and \ref{fig7ejemploscampos} will facilitate the discussion.
\\
Consider the phase portrait of $X$:
for the calculation of the weight $K(3,6)=0$, consider the projection of the diagonal $\Delta_{3\,6}$
onto $\CC_{z}$, clearly this path remains on one angular sector about $p_{3}$ 
(that corresponds to the exponential tract $A_3$ containing $\infty_3\in\overline{\CC}_{z}$).
Thus the logarithmic branch point 
$(\infty_{3},a_{1}, -\infty)$ and the finitely ramified branch point 
$(p_3,\widetilde{p}_3, -1)$ 
share the same sheet on $\R_{X}$.
\\
Now consider the projection onto $\CC_{z}$ of the diagonals $\Delta_{3\,6}$ and $\Delta_{3\,2}$:
the projection of $\Delta_{3\,6}$ lies on the exponential tract $A_3$ associated to $\infty_3$ 
while the projection of the diagonal $\Delta_{3\,2}$ lies on the strip flow determined by $p_1$ and $p_3$.
In order to go from the exponential tract $A_3$ to the strip flow just mentioned, one must go through three
half planes about $p_3$. This is equivalent to going up (or down) one level on $\R_X$, hence $K(3,2)=1$.
\\
Similarly, considering the projections of $\Delta_{3\,2}$ and $\Delta_{2\,4}$ we note that when coming from 
the pole $p_3$ to the pole $p_1$ and then to $\infty_1$ contained in $A_1$ we go around $p_1$ 
and touch upon three half planes (since $\infty_1$ lies on the real axis), thus $K(2,4)=1$.
\\
For $K(2,1)=0$, note that the projection of $\Delta_{4\,1}$ lies entirely within the strip flow 
determined by $p_1$ and $p_2$, 
thus the logarithmic branch point $(\infty_1,a_{1}, -\infty)$ and the finitely ramified branch point 
$(p_{2},\widetilde{p}_{2}, -1)$ share the same sheet on $\R_{X}$. 
\\
Finally, by considering the projections of $\Delta_{4\,1}$ and $\Delta_{1\,5}$ we see that
the projection of $\Delta_{1\,5}$ lies on the exponential tract associated to $\infty_{2}$ 
while the projection of the diagonal $\Delta_{4\,1}$ lies on the exponential tract 
associated to $\infty_{1}$. In order to go from one exponential tract to another one must cross 
at least three angular sectors (but no more than four), this is 
equivalent to the fact that one must go up or down one level on $\R_{X}$ to get from 
the sheet containing $\Delta_{4\,1}$ to the sheet containing $\Delta_{1\,5}$.
Hence $K(1, 5)=1$.

\noindent
In this case the decomposition, provided by Lemma \ref{lema-arboles-horizontales}, 
into horizontal subtrees is

\centerline{
$\Lambda_{X} 
= \Lambda_{H(3)} 
\cup \Lambda_{H(3,2)}
\cup \Lambda_{H(2,4)}
\cup \Lambda_{H(1,5)}
$,
}

\noindent
where
\begin{equation*}
\begin{array}{ll}
\Lambda_{H(3)}=\Big\{ \circled{3},\circled{6};\raiz{3};
(\Delta_{3\, 6},0) \big\}, & 
\Lambda_{H(3,2)}=\Big\{ \circled{2},\circled{3};\raiz{3};(\Delta_{3\, 2},1) \Big\}, 
\\[4pt]
\Lambda_{H(2,4)}=\Big\{ \circled{1}, \circled{2},\circled{4};\raiz{2};(\Delta_{2\, 4},1)
(\Delta_{4\,1},0) \Big\}, &
\Lambda_{H(1,5)}=\Big\{ \circled{1},\circled{5};\raiz{1};(\Delta_{1\, 5},1) \Big\}.
\end{array}
\end{equation*}
\end{example}

\begin{example}\label{Aproximacion-arboles}
Recall Example \ref{E23-1-10-20-50}, where the vector fields

\centerline{
$X(z)= \frac{\e^{z^d}}{z^r} \del{}{z}\in\E(r,d), 
\ \ \
\text{ for }d\geq1$,}

\noindent
are approximated by the rational vector fields

\centerline{
$X_{\tt n}(z)=\dfrac{ 1 }{ z^r \big( 1 - \frac{z^d}{{\tt n}} \big)^{\tt n} }\del{}{z} \in\E(r+{\tt n}d,0),
\ \ \ 
\text{ for } {\tt n} \geq 1$.}

\noindent 
We wish to explore (some of) the $(r,d)$--configuration trees for $X$ and $X_{\tt n}$.

First note that the answer will heavily depend on the parameters $r$ and $d$. 
For instance, \eqref{val-asintoticos-aproximacion} shows that there is only one 
finite asymptotic value $a$
with multiplicity $d$ (as in Example \ref{ejemplo-E23}) if and only if $r=-1\pmod{d}$, 
{\it i.e.} $(r+1)/d=k\in\NN$.

\noindent
In this case the unique finite asymptotic value is
$a = (k-1)! / d$.
Thus $\R_{X}$ has a branch point at 
$\circled{1}=(p_1,\widetilde{p_1}, -\nu_1)=(0,0, -r)$ of 
ramification index $\nu_1=r=k d-1$ and
logarithmic branch points at 
$\circled{\scalebox{0.75}{$1 \text{$+$} \sigma$}}=
(\infty_{\sigma}, a, -\infty)\in\R_{X}$,
with asymptotic paths $\alpha_{\sigma}(\tau)=\tau \e^{i 2\pi\sigma/d}$ for $\sigma=1,\ldots,d$.

\noindent
On the other hand the Riemann surfaces $\R_{X_{\tt n}}$ associated to 
$X_{\tt n}(z)$ have a branch point at $\circled{1}=(p_1,\widetilde{p_1},-r)=(0,0,-r)$ 
of ramification index $r+1=k d$ 
and branch points at

\centerline{
$\circled{\scalebox{0.75}{$1 \text{$+$} \sigma$}}_{\tt n}=
\big(\widehat{e}_\sigma({\tt n}),\widetilde{e}_\sigma({\tt n}), -{\tt n} \big)=
\big( \e^{i2\pi\sigma/d} {\tt n}^{1/d},a\, {\tt n}^{k} {\tt n}! /({\tt n}+k)! , -{\tt n} \big)$
} 

\noindent
of ramification index ${\tt n}+1$, for $\sigma=1,\ldots,d$.
Hence by examining the corresponding phase portraits we can see that the 
$(r,d)$--configuration trees are given by
\begin{multline}
\Lambda_{X_{\tt n}} = \Big\{ \circled{1}, \circled{2}_{\tt n},
\ldots, \circled{\scalebox{0.75}{$1\text{$+$}\sigma$}}_{\tt n}, \ldots,
\circled{\scalebox{0.75}{$1\text{$+$}d$}}_{\tt n}
;\raiz{1}; \\
(\Delta_{1\,2},0), (\Delta_{1\,3},k), 
\ldots, \big(\Delta_{1\,(1+\sigma)}, (\sigma-1) k \big), \ldots, 
\big(\Delta_{1\,(1+d)}, (d-1) k \big) 
\Big\},
\end{multline}
\begin{multline}
\Lambda_{X}=\Big\{ \circled{1},\circled{2},
\ldots, \circled{\scalebox{0.75}{$1\text{$+$}\sigma$}}, \ldots,
\circled{\scalebox{0.75}{$1\text{$+$}d$}};\raiz{1}; \\
(\Delta_{1\,2},0), (\Delta_{1\,3},k), 
\ldots, \big(\Delta_{1\,(1+\sigma)}, (\sigma-1) k \big), \ldots, 
\big(\Delta_{1\,(1+d)}, (d-1) k \big) 
\Big\},
\end{multline}
whose skeletons are as in Figure \ref{esqueletos-aproximacion-menos1-mod-d}.

\begin{figure}[htbp]
\begin{center}
\includegraphics[width=\textwidth]{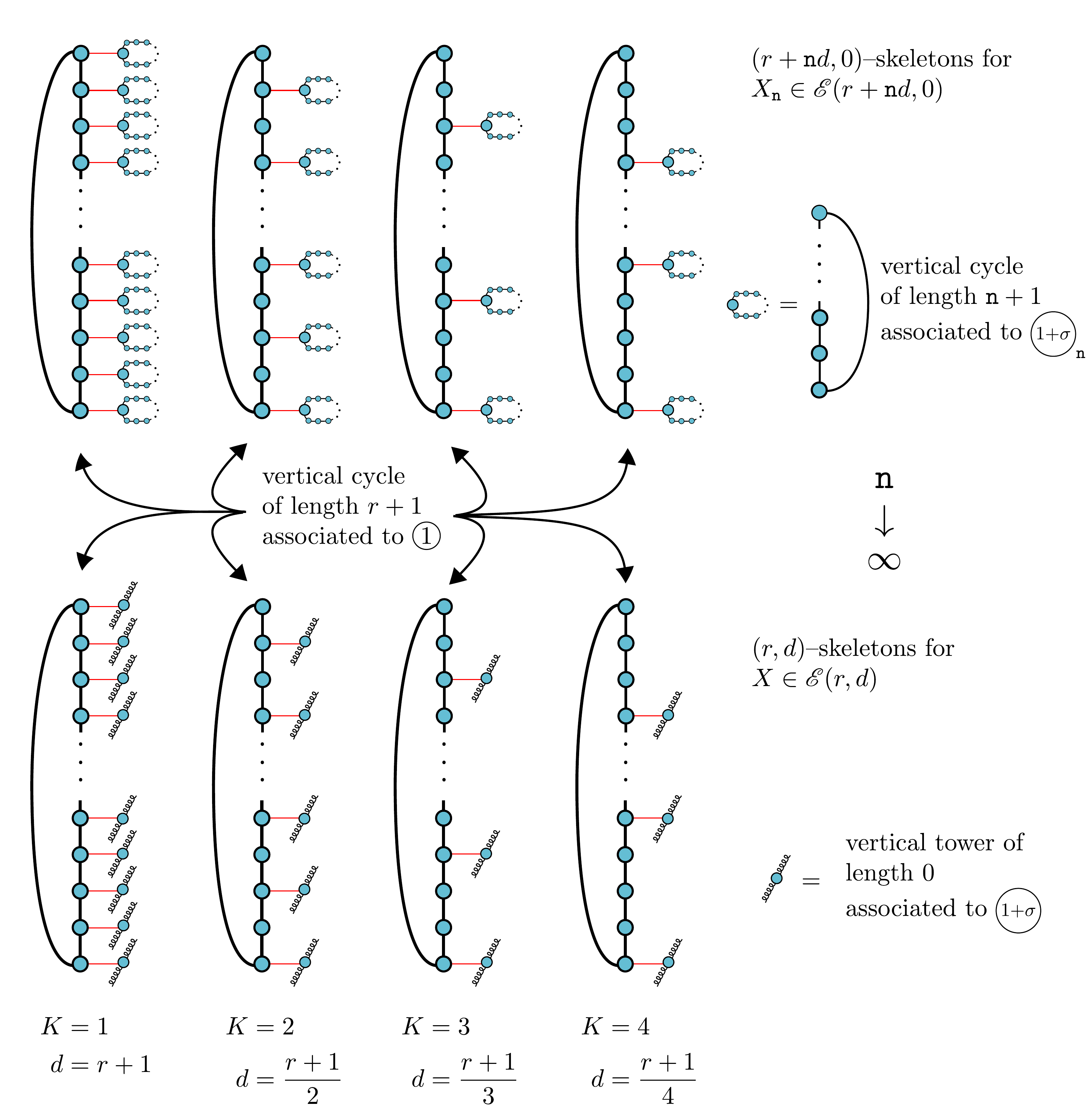}
\caption{Regarding Example \ref{Aproximacion-arboles}; the
top figures are the $(r +{\tt n}d,0)$--skeletons for $X_{\tt n}\in\E(r +{\tt n}d,0)$ 
approximating $X\in\E(r,d)$, when
$r= -1\pmod{d}$.
Bottom figures are the $(r,d)$--skeletons for $X\in\E(r ,d)$.
Note that for $X_{\tt n}$ the critical values are $0$ and $a\, {\tt n}^{k} {\tt n}! /({\tt n}+k)!$ 
and for $X$ the only critical value is $0$ and the only finite asymptotic value is $a=(k-1)! / d$.
Further note that $k=(r+1)/d$ is the number of sheets that separate the different diagonals
both in $\R_{X_{\tt n}}$ and $\R_{X}$ .
}
\label{esqueletos-aproximacion-menos1-mod-d}
\end{center}
\end{figure}
\end{example}

\begin{example}\emph{A prototypical $(r,4)$--configuration tree.}
\label{EjemploPH} 
Consider a vector field 
\begin{equation}
X(z)= \frac{\e ^{E(z)} }{(z-p_1)^{\nu_1}(z-p_2)^{\nu_2}}
\del{}{z},
\ \ \
r=\nu_1+\nu_2,
\end{equation}

\noindent 
$E(z)$ a polynomial of degree 4,
and $\Im{p_1}>\Im{p_2}$. 
The singularity at $\infty\in\CW_z$ has
four finite asymptotic values $(\infty_1, a_1)$, 
$(\infty_2, a_2)$, $(\infty_3, a_2)$ and
$(\infty_4, a_3 )$;
note that two of them differ exclusively 
by their exponential tract, 
sharing the asymptotic value $a_2 \in \CC_z$.
The existence of such a polynomial $E(z)$ will be proved in \S\ref{skeleton-to-RX},
in particular this involves solving the system of equations \eqref{sistema-realizable}.

\noindent 
The following
$(r,4)$--configuration tree $\Lambda_{X}$, 
will be used to exemplify
some constructions and possible complexities that arise in the proof of the Main Theorem.
Thus the $(r,4)$--configuration tree has three essential vertices and two pole vertices, which we 
label as follows
\begin{equation}\label{verticesPoincHopf}
\begin{array}{rclrclrcl}
\circled{1} &=& (p_{1},\widetilde{p}_{1},-\nu_{1}), &
\quad\circled{2} &=& (\infty_{2},a_2,-\infty), &
\circled{3} &=&  (\infty_{1},a_1,-\infty), \\[6pt]
\quad\circled{4} &=& (p_{2},\widetilde{p}_{2},-\nu_{2}), &
\circled{5} &=& (\infty_3, a_2,-\infty), & \circled{6} &=& (\infty_4,a_3,-\infty). \\
\end{array}
\end{equation}
Let
\begin{multline}
\Lambda_{X} = \Big\{ \circled{1}, \circled{2}, \circled{3}, \circled{4},\circled{5}, \circled{6}\ ;
\raiz{1}\, ;
\\
(\Delta_{1\, 2}, 0), \big(\Delta_{1\, 5}, -2),
\big(\Delta_{2\, 3}, K(2, 3)\big), \big(\Delta_{2\, 4}, K(2,4)\big), \big(\Delta_{2\, 6}, K(2,6)\big)
\big) \Big\},
\end{multline}
with edges given by the diagonals/semi--residues
\begin{equation}\label{pesosejemplo7}
\begin{array}{lr}
\Delta_{1\, 2} = \int\limits^{\infty_2}_{p_1} \omega_{X} = a_2 - \widetilde{p}_1,
&
\Delta_{1\, 5} = \int\limits_{p_1}^{\infty_3} \omega_{X} = a_2 - \widetilde{p}_1,
\\[12pt]
\Delta_{2\, 3} = \int\limits_{\infty_2}^{\infty_1} \omega_{X} 
=  a_1 - a_2,
&
\Delta_{2\, 4} = \int\limits_{\infty_2}^{p_2} \omega_{X} 
= \widetilde{p}_2 - a_2,
\\[12pt]
\Delta_{2\, 6} = \int\limits_{\infty_2}^{\infty_4} \omega_{X} 
= a_3 - a_2,
\end{array}
\end{equation}
and weights $K(1,2)=0$, $K(1,5)=-2$, $K(2,6) < K(2,4) < K(2,3) \leq -1$.
Figure \ref{proyeccionArbol} shows the $(r,4)$--configuration tree $\Lambda_{X}$ 
together with the corresponding $(r,4)$--skeleton. 
This figure will be used as a guide in the proof of the Main Theorem.

\noindent
In this case the decomposition, provided by Lemma \ref{lema-arboles-horizontales}, 
into horizontal subtrees is

\centerline{
$\Lambda_{X} 
= \Lambda_{H(1)} 
\cup \Lambda_{H(1,5)}
\cup \Lambda_{H(2,3)}
\cup \Lambda_{H(2,4)}
\cup \Lambda_{H(2,6)}
$,
}

\noindent
where
\begin{equation*}
\begin{array}{ll}
\Lambda_{H(1)}=\Big\{ \circled{1},\circled{2};\raiz{1};(\Delta_{1\, 2},0) \Big\}, & 
\Lambda_{H(1,5)}=\Big\{ \circled{1},\circled{5};\raiz{1};(\Delta_{1\, 5},-2) \Big\},
\\[6pt]
\Lambda_{H(2,3)}=\Big\{ \circled{2},\circled{3};\raiz{2};(\Delta_{2\, 3},K(2,3)) \Big\}, &
\Lambda_{H(2,4)}=\Big\{ \circled{2},\circled{4};\raiz{2};(\Delta_{2\, 4},K(2,4)) \Big\}, 
\\[6pt]
\Lambda_{H(2,6)}=\Big\{ \circled{2},\circled{6};\raiz{2};(\Delta_{2\, 6},K(2,6)) \Big\}.
\end{array}
\end{equation*}

\begin{figure}[htbp]
\begin{center}
\includegraphics[width=\textwidth]{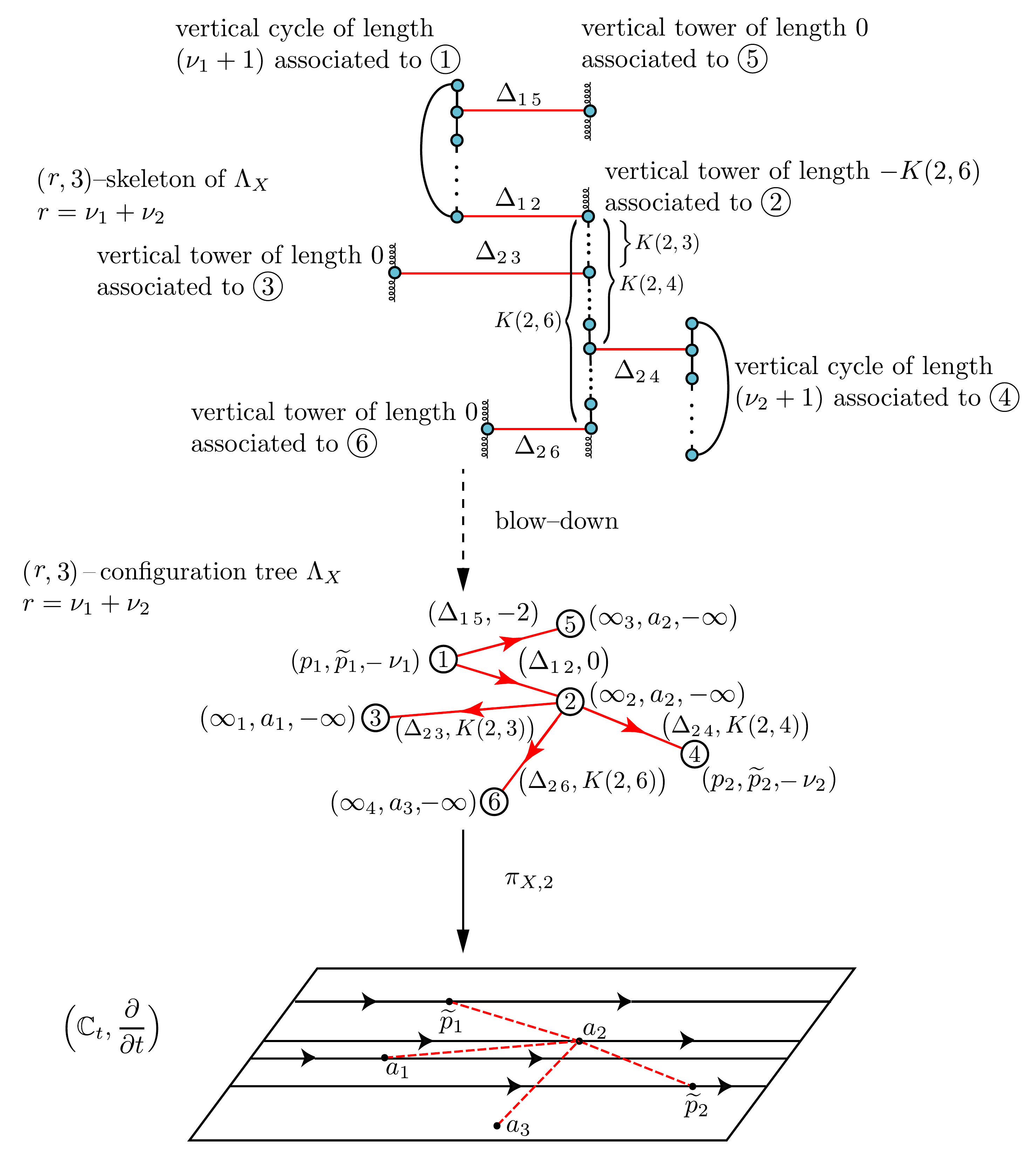}
\caption{
{\bf The $(r,4)$--configuration tree $\Lambda_{X}$ and its 
$(r,4)$--skeleton corresponding to Example \ref{EjemploPH}.} 
The root is $\circled{1}$, the weight $K(1, 2)=0$ 
(hence the branch points 
$\circled{1}$, $\circled{2}$ which are the endpoints of
the diagonal $\Delta_{1\,2}\subset\R_{X}$, share the 
global zero level (GZL) sheet),
$K(1, 5)=-2\pmod{\nu_1+1}$, and 
$K(2,6)<K(2, 4)<K(2,3)\leq-1$.
The information 
about how many sheets we have gone ``up'' or ``down'' on the Riemann surface is 
given by the weights. 
The asymptotic value $a_2$ has multiplicity 2.
}
\label{proyeccionArbol}
\end{center}
\end{figure}
\end{example}

\section{Proof of Main Theorem: description of the family $\E(r,d)$ via combinatorial scheme}
\label{Mainclassification}

\begin{proof}[\textbf{Plan for proof}]
\noindent
That $\E(r,d)$ is a complex manifold of dimension $r+d+1$ is obvious, 
see \cite{AlvarezMucino2}
for more general discussion.

The proof of the bijection $\E(r,d) \cong \left\{ \big[ \Lambda_{X}\big] \right\}$ 
shall procede as follows:

\noindent
1) In \S\ref{X-to-confTree} starting from $\Psi_X$, we construct the 
$(r,d)$--configuration tree $\Lambda_{X}$.

\noindent
2) In \S\ref{confTree-to-skeleton}, we start with an 
abstract $(r,d)$--configuration tree $\Lambda_{X}$ 
Definition \ref{d-confTree} 
and construct the $(r,d)$--skeleton of $\Lambda_{X}$ 
(as an associated combinatorial object).

\noindent
3) In \S\ref{skeleton-to-RX} from the $(r,d)$--skeleton of $\Lambda_{X}$, 
we proceed to construct  
a Riemann surface $\R_{X}$
in $\E(r,d)$.

\smallskip
\noindent
Note that our Dictionary Proposition \ref{basic-correspondence} 
provides the correspondence between 
$\R_X$, $\Psi_{X}$ and $X\in\E(r,d)$.

\medskip
\noindent
The suitable equivalence class $[\Lambda_{X}]$ of $(r,d)$--configuration trees $\Lambda_{X}$ 
will be explained in \S\ref{clasesE(d)}.

\subsection{From $X\in\E(r,d)$ to an $(r,d)$--configuration tree 
$\Lambda_{X}$}\label{X-to-confTree}\hfill

\noindent
Recalling Definition \ref{d-confTree} of $(r,d)$--configuration tree, we have:

\noindent
$\bullet$ The trivial case:
\emph{$\Psi_X$ has exactly one finite asymptotic or critical value:}\\
From Lemma \ref{oneasymptoticvalue}, 
only the following two cases are possible,
\begin{enumerate}[label=\arabic*),leftmargin=*]
\item $X(z)=\frac{\mu}{ (z-p_1)^{r}} \del{}{z}$, {\it i.e.} $(r,d)=(r,0)$, or 
\item $X(z)=\mu^{-1} \e^z \del{}{z}$, {\it i.e.} $(r,d)=(0,1)$,  
\end{enumerate}
where $p_1\in\CC_{z}$ and $\mu\neq0$.

\noindent
For (1), 
$\Lambda_{X}=\Big\{ \circled{1}=(p_{1},\widetilde{p}_{1}, -r);
\raiz{1}\, ;\,
\varnothing
\Big\}$, 
see Example \ref{ejemplo-un-polo}.

\noindent
For (2), 
$\Lambda_{X}=\Big\{ \circled{1}=(\infty_{1},a_{1},-\infty); 
\raiz{1}\, ;\,
\varnothing \Big\}$,
see Example \ref{ejemplo-un-va}.

\smallskip
\noindent
$\bullet$ The non--trivial case:
\emph{$\Psi_{X}$ has two or more finite asymptotic or critical values, {\it i.e.} $d+n\geq2$:}

\noindent
Considering the surface 
$\R_{X}$, recall
Equation \eqref{essenvert} and the reduced
divisor, Definition \ref{divisor-reducido}.

\smallskip
\noindent
\textbf{1. Vertices of $\Lambda_{X}$}.
Let the vertices be those obtained from the reduced divisor
\begin{multline}\label{verticesenumerados}
V =
\Big\{ \circled{\,\iota\,} = 
\big(p_{\iota},\widetilde{p}_{\iota},-\nu_{\iota} \big) \Big\}_{\iota=1}^{n}
\cup 
\Big\{ 
\circled{\scalebox{0.75}{$n \text{$+$} \sigma$}} 
= 
\big( \infty_{\sigma},a_{\sigma}, -\infty\big)
\Big\}_{\sigma=1}^{d}
 \\
=\Big\{ \circled{\msigma}=
\big(z_{\msigma},t_{\msigma},-\nu_{\msigma} \big) \Big\}_{\msigma=1}^{n+d} .
\end{multline}
There are $n+d$ vertices. 

\noindent
Note that the essential vertices 
$\circled{\scalebox{0.75}{$n \text{$+$} \sigma$}}$ 
are labelled as in 
Remark \ref{correspexponentialtract}.2, that is according to the natural counterclockwise order 
of the exponential tracts in $S^1$ about $\infty\in\CW_z$.

\noindent
Root choice:

\noindent
If $r=0$ let the root be 
$\raiz{\varrho} \doteq \circled{1} = \big( \infty_1 ,a_1, -\infty \big)$.

\noindent
If $r\neq 0$ let the vertex 
$\big(p_{\varrho},\widetilde{p}_{\varrho},-\nu_{\varrho} \big)$ be 
such that $\Im{\widetilde{p}_\varrho}\geq\Im{\widetilde{p}_\iota}$ and 
$\Re{\widetilde{p}_\varrho}\leq\Re{\widetilde{p}_\iota}$ 
for $1\leq \iota\leq n$;
{\it i.e.} $\widetilde{p}_\varrho \in \CC_t$ is the top \& left--most 
critical value, 
following the root condition in Definition \ref{d-confTree}.
In this case, choose the root to be 
$\raiz{\varrho}\doteq \big(p_{\varrho},\widetilde{p}_{\varrho},
-\nu_{\varrho} \big)$.

\smallskip
\noindent
\textbf{2. Edges of $\Lambda_{X}$}. 
From Definition \ref{diagonal}, the diagonals, associated to different 
pairs $t_{\msigma},t_{\mrho}$ of 
finite asymptotic or critical values, are \emph{oriented} segments 

\centerline{
$\Delta_{\msigma\mrho}=
\overline{(z_{\msigma},t_{\msigma}, -\nu_{\msigma}) 
(z_{\mrho},t_{\mrho}, -\nu_{\mrho})}$ 
\, in $\R_{X}$,}

\noindent 
whose endpoints project down, via $\pi_{X,2}$, 
to the finite asymptotic or critical values 
$t_{\msigma}$, $t_{\mrho}$. 
From Lemma \ref{pareja-diagonales} it follows that 
there are at least two\footnote{
This is so because diagonals are oriented.
} 
diagonals associated to each finite asymptotic or critical value.
As a first step, we ignore orientation and consider the diagonals as undirected edges\footnote{
We agree to leave only one undirected edge for each pair of oriented edges.
}, 
which without loss of generality 
we shall simply denote by $E=\{ \Delta_{\msigma \mrho} \}$.
In this way we obtain a connected graph, 
say $G=\{ V; E \}$.

\noindent
A subgraph formed by the set of vertices associated to branch points that share the same sheet in 
$\R_{X}$ will be called a \emph{horizontal subgraph}.
Note that each horizontal subgraph, consisting of the vertices 
(branch points) sharing a same sheet of $\R_X$, say
$\{\circled{\ell}=(z_\ell,t_\ell,-\nu_\ell)\}_{\ell=1}^s$ 
together with the corresponding set of edges (undirected diagonals) on the same sheet, 
forms a complete graph $K_s$ with $s(s-1)$ edges.
However, by eliminating appropriate edges from $K_s$ we can always obtain an (undirected)
left--right--top--bottom linear tree of the vertices 
$\{\circled{\ell}=(z_\ell,t_\ell,-\nu_\ell)\}_{\ell=1}^s$,
recall Definition \ref{arbol-lineal-enraizado}.

\noindent
Now replace each of the horizontal subgraphs of $G$ by the corresponding (undirected)
left--right--top--bottom linear tree. 
As a consequence, the diagonals $\Delta_{\msigma \mrho}$
and finite height horizontal strips 

\centerline{
$\left( \big\{ \Im{t_\msigma} \leq \Im{t} \leq \Im{t_\mrho} \big\}, 
\del{}{t} \right) \subset \R_X$}

\noindent
are in bijective correspondence,
as in Lemma \ref{descomposicion-planos-bandas}.2--3. 

This produces a connected (undirected) graph 
$\overline{\Lambda}_X$ whose horizontal subgraphs 
are (undirected) left--right--top--bottom linear subtrees.
The next lemma shows its shape.
\begin{lemma}\label{no-ciclos-entre-pisos}
If $X\in\E(r,d)$ then the graph $\overline{\Lambda}_X$ is a tree. 
\end{lemma}

\begin{proof}
Recall the decomposition of $\CC_z$ in 
half planes and finite height horizontal strips, Lemma \ref{descomposicion-planos-bandas}.

\noindent 
Assume that 
all the asymptotic and critical values $t_\msigma$, associated to the branch points 
$(z_\msigma, t_\msigma)$, lie on different horizontal trajectories of 
$(\CC_{t},\del{}{t})$ 
(we leave the general case for the interested reader, see for instance the discussion preceding
Proposition \ref{soul-of-LambdaX}).

\noindent
Note that the intersection of the interior of any 
two  finite height horizontal strips
is empty. 

\noindent
Since $\CC_z$ is simply connected and 
each diagonal determines a finite height horizontal strip, 
there is no loop/cycle of diagonals. 
\end{proof}

\noindent
Finally, assign an orientation to $\overline{\Lambda}_X$ by choosing the appropriate 
orientation so that the edges point away from the root vertex 
$\raiz{\varrho}$.

\noindent
We thus obtain a non--weighted, 
directed rooted tree
\begin{equation}\label{graficaenCt}
\Big\{
\big\{ \circled{\msigma}=\big(z_{\msigma},t_{\msigma},-\nu_{\msigma} \big) 
\big\}_{\msigma = 1}^{d+n} ;
\,\raiz{\varrho}\,;
\big\{ \Delta_{\msigma \mrho} \big\} \Big\} ,
\end{equation}
\noindent 
satisfying conditions (1--6) of Definition \ref{d-confTree}.

\noindent
By simple inspection, it is clear that each diagonal in \eqref{graficaenCt} 
falls into one of the cases mentioned in Remark \ref{casosVAf}.

\smallskip
\noindent
\textbf{3. Weights of $\Lambda_{X}$}.
At this point of the description of the surface $\R_{X}$, the branch points correspond to vertices 
of a non--weighted tree whose edges correspond to the diagonals between branch points. 
However, as mentioned in Remark \ref{pesos-como-info-geometrica}, an important part of the description 
of the Riemann surface $\R_{X}$ as a pasting of geometric pieces, 
corresponds to the number of sheets in $\R_{X}$ that 
separate the diagonals (pairwise).
Recalling that in a rooted tree there is a unique (simple) path from each vertex to the root,
allows us to assign a 
weight to each edge/diagonal of the tree.

\noindent
As an aid, the reader can follow the construction by considering Example \ref{EjemploPH}.
[We will include such references inside square brackets.]

\noindent
For the assignment of 
weights 
$\{K(\msigma, \mrho)\}$ 
to the edges $\{ \Delta_{\msigma \mrho}\}$
we shall proceed by induction on the depth 
of the vertices as follows.
	
\begin{enumerate}[label=\alph*),leftmargin=*]

	\item 
	We start by considering the vertices contiguous to the root 
	$\raiz{\varrho}$
	({\it i.e.} vertices of depth 1 in \eqref{graficaenCt}), say $\{\circled{\mvarrho}\}$.
	Since all the branch points corresponding to $\{\circled{\mvarrho}\}$ share a 
	sheet with the branch point corresponding to the root 
	$\raiz{\varrho}$, 
	any of the corresponding edges can be assigned a weight zero.
	Choose one, say $\circled{\mvarrho_{0}}$, and 
	assign the weight $K(\varrho,\mvarrho_{0})=0$ to the edge 
	$\Delta_{\varrho \mvarrho_{0}}$ 
	and call this sheet in $\R_{X}$
	the \emph{global zero level (GZL)} sheet.
	
	\noindent
	If there are more vertices in $\{\circled{\mvarrho}\}$ whose branch points share the global zero level
	sheet then the corresponding edges are also assigned the weight 0.
	
	\noindent 
	[Referring to Example \ref{EjemploPH}, our 
	first edge is $\Delta_{1\, 2}$ with 
	weight $K(1,2)=0$; note that the diagonal 
	is as in case (3) of Remark \ref{casosVAf}.
	Moreover, there are no more vertices sharing the GZL sheet.]
	
\item
	Now consider the vertices of depth 1 that do not share the GZL sheet,
	say $\{\circled{\mrho}\}\subset\{\circled{\mvarrho}\}$. 
	For each of these vertices consider their edge 
	$\Delta_{\varrho  \mrho}$: 
	we assign the weight $K(\varrho, \mrho)$ such that
	
\smallskip 
	
\centerline{
$2\pi K(\varrho, \mrho)$ is the 
\emph{argument between the sheets
containing $\Delta_{\varrho \mvarrho_0}$ and $\Delta_{\varrho \mrho}$.}
}
	
\smallskip 	
\noindent 
[Referring to Example \ref{EjemploPH}, the vertex of depth 1 not sharing the GZL sheet is 
$\circled{5}$. In this case the weight $K(1,5)=-2\pmod{\nu_1 +1}$.]
	
	\noindent
	If \eqref{graficaenCt} has only vertices contiguous to the root $\raiz{\varrho}$, then
	we have completed the construction of $\Lambda_{X}$.
		
	\item 
	We now consider the set of vertices of depth 2.
	Of course there is an edge from a vertex of depth 1, say $\circled{\mvarrho}$ 
	to a vertex of depth 2, say $\circled{\msigma}$.
	The associated weight for the edge $\Delta_{\mvarrho\, \msigma}$ is defined 
as $K(\mvarrho,\msigma)$ such that
	
\smallskip 
	
\centerline{
$2\pi K(\mvarrho,\msigma)$ is the 
\emph{argument between the sheets
containing $\Delta_{1\,\mvarrho}$ and $\Delta_{\mvarrho\, \msigma}$.}
}	

\smallskip
		
	\item 
	Continue the assignment of weights as in (c) for all the edges that contain
	vertices of depth 2.
	
	\noindent
	[Referring to Example \ref{EjemploPH}, the weight $K(2, 3)\leq-1$ 
	since on $\R_{X}$ the diagonal $\Delta_{2\,3}$ is $\abs{K(2,3)}$ sheets below the
	diagonal $\Delta_{1\,2}$; 
	similarly the weight $K(2, 4)\leq-2$, 
	since on $\R_{X}$ the diagonal $\Delta_{2\,4}$ is $\abs{K(2,4)}$ sheets 
	below the diagonal $\Delta_{1\,2}$.]
	 
	\item 
	Repeat (c) and (d) with vertices of depth $\geq 3$, assigning the weights 
	until all the vertices are exhausted.
	
	\noindent
	[Referring to Example \ref{EjemploPH}, the last edge to be considered is $\Delta_{3\, 5}$ 
	with corresponding weight $K(3, 5)=-2$.]
		
\end{enumerate}
We have thus constructed an $(r,d)$--configuration tree
\begin{equation}\label{LambdaX}
\Lambda_X
=
\Big\{
\big\{ \circled{\msigma}=\big(z_{\msigma},t_{\msigma},-\nu_{\msigma} \big) 
\big\}_{\msigma = 1}^{d+n} ;
\raiz{\varrho}\, ;
\big\{ (\Delta_{\msigma \mrho},\ K(\msigma, \mrho) ) \big\} \Big\}
\end{equation}
associated to $\Psi_{X}$. 

\begin{remark}
\label{continuous-edge}
[Remark \ref{pesos-como-info-geometrica} revisited.]
The integer weight $K(\msigma,\mrho)$ associated to the edge 
$\Delta_{\msigma \mrho}$ can be incorporated into a continuous ``edge'' by considering

\centerline{
$\widetilde{\Delta}_{\msigma \mrho} 
\doteq \Delta_{\msigma \mrho} \e^{i 2\pi K(\msigma,\mrho)} \in \widetilde{\CC^{*}}$, 
}

\noindent
where $\widetilde{\CC^{*}}=\{\abs{z}\e^{i \arg(z)}  \}$
is the universal cover of $\CC^{*}$ and $\arg(z)$ is the multivalued argument. 
This will provide the compatibility of ``continuous coordinates/parameters'' for the 
manifold structure of $\E(r,d)$.
\end{remark}

\subsection{From an $(r,d)$--configuration tree $\Lambda_{X}$ to the $(r,d)$--skeleton of 
$\Lambda_{X}$} \label{confTree-to-skeleton}

\noindent
Let $\Lambda_{X}$ be an 
abstract 
$(r,d)$--configuration tree as in 
Definition \ref{d-confTree}.
In fact, we want to show the existence of a vector field $X$.
In order to achieve this goal,
we construct  
the \emph{$(r,d)$--skeleton of $\Lambda_{X}$} 
(as a certain \emph{``blow--up''} of $\Lambda_{X}$,
see Definition \ref{skeleton}).

The \emph{$(r,d)$--skeleton of $\Lambda_{X}$} will contain the same information as $\Lambda_{X}$. 
\begin{enumerate}[label=\alph*),leftmargin=*]
\item With the disadvantage of being more cumbersome to express. 
\item With the advantage that it will enable us to identify the equivalence classes of $\Lambda_{X}$ in \S\ref{clasesE(d)}.
\item Also note that the $(r,d)$--skeleton of $\Lambda_{X}$ describes the ``embedding'' of $\Lambda_{X}$ in 
$\overline{\CC}_{z}\times\CC_{t}$. 
\end{enumerate}

\noindent
Figure \ref{proyeccionArbol} presents a particular example that will help the reader follow 
the construction.

A priori, 
$\Lambda_{X}$ has two types of vertices: essential vertices 
$\circled{\scalebox{0.75}{$n \text{$+$} \sigma$}} 
=(\infty_{\sigma}, a_{\sigma}, -\infty)$
and pole vertices 
$\circled{\, \iota \, }=(p_{\iota},\widetilde{p}_{\iota},-\nu_{\iota})$.


\smallskip
Before proceeding to the construction of the $(r,d)$--skeleton of $\Lambda_{X}$, 
we shall need the following
two definitions/constructions.

\smallskip
Associated to the pole 
vertices, recalling Definition \ref{helicoide-ciclico},
the following construction is natural.
See right hand side of Figures \ref{figPiezasBasicas}.c, 
\ref{figejemplo-un-polo}, \ref{figejemplo-dos-polos} 
and the blow up of vertices 
$\circled{1}$ and $\circled{4}$ in Figure \ref{proyeccionArbol}.

\begin{remark}\label{def-vertical-cycle}
For each pole vertex 
$\circled{\, \iota \,}=
(p_{\iota},\widetilde{p}_{\iota}, -\nu_{\iota})$ of $\Lambda_{X}$, the

\centerline{
vertical cycle of length $\nu_\iota+1$ associated to $\circled{\iota}$}

\noindent
is a cyclic graph consisting of exactly $\nu_\iota+1$  copies of the vertex $\circled{\iota}$ 
joined by $\nu_{\iota}+1$ vertical edges (without weights).
The vertices on the vertical cycle are also assigned a local level: in this case arithmetic modulo 
$(\nu_{\iota}+1)$ is to be used.
\\
The vertical cycle of length $\nu_{\iota}+1$ will only have vertices of valence 2.
Once again, call one direction of the vertical cycle \emph{up} and the other direction \emph{down}. 
To be precise, up corresponds to the anti--clockwise direction of 
$\beta(\theta)=\pi_{X,2}^{-1}\big( t_{\msigma}+\rho\,\e^{i2\pi\theta})$ considered 
in Remark \ref{pesos-como-info-geometrica}.3.
\end{remark}

Similarly, associated to essential vertices, 
recalling Definition \ref{helicoide-finito},
the following construction is natural.

\begin{remark}\label{def-vertical-tower}
For each essential vertex 
$\circled{\scalebox{0.75}{$n \text{$+$} \sigma$}} 
=(\infty_{\sigma}, a_{\sigma}, - \infty)$, 
of $\Lambda_{X}$, let
\\ \centerline{
$K_{\max}(\sigma)=\max\limits_{\mrho}\{0,K(\sigma,\mrho)\}$ \quad and \quad 
$K_{\min}(\sigma)=\min\limits_{\mrho}\{0,K(\sigma,\mrho)\}$,
}\\  
where the maximum and minimum are taken over all the edges that start at 
$\circled{\scalebox{0.75}{$n \text{$+$} \sigma$}} $ and 
end at the respective $\{\circled{\mrho}\}$.
Then by letting

\centerline{
$K(\sigma)\doteq K_{\max}(\sigma) - K_{\min}(\sigma)$}

\noindent
we shall say that the 

\centerline{
vertical tower of length $K(\sigma)$ associated to 
$\circled{\scalebox{0.75}{$n \text{$+$} \sigma$}} $} 

\noindent
is a linear graph consisting of exactly 
$K(\sigma) +1\geq1$ copies of the vertex 
$\circled{\scalebox{0.75}{$n \text{$+$} \sigma$}} $ joined by 
$K(\sigma)$ 
\emph{vertical edges} (without weights). 
Each of the vertices of the vertical tower will have a \emph{local level} assigned to it: 
the local level assigned to the first vertex of the vertical tower will be $K_{\min}(\sigma)$, 
the local level assigned to the second vertex of the vertical tower will be $K_{\min}(\sigma)+1$,
continuing in this way the local level assigned to the last vertex of the vertical tower will be
$K_{\max}(\sigma)$.  
We shall call the increasing direction, 
using $\beta(\theta)=\pi_{X,2}^{-1}\big( t_{\msigma}+\rho\,\e^{i2\pi\theta})$, 
of the local level \emph{up} and the decreasing 
direction \emph{down}.
\\
The vertical tower will have vertices of valence 1 at the extreme local levels 
$K_{\min}(\sigma)$ 
and $K_{\max}(\sigma)$, otherwise of valence 2.
\end{remark}

\noindent
On the right hand side of Figure \ref{figPiezasBasicas}.b a simple example of a vertical 
tower is presented.
However in Example \ref{EjemploPH} and 
Figure \ref{proyeccionArbol} a more complex $(r,4)$--skeleton is shown: 
vertex $\circled{2}\in\Lambda_{X}$ blows up into
the vertical tower of length $- K(2,6)$. 
Moreover,

\noindent 
the local zero level is assigned to 
the vertex where the edge $\Delta_{1\,2}$ is attached;
 
\noindent 
the local $K(2,3)\leq -1$ level is assigned to the vertex where the edge $\Delta_{2\,3}$ is attached;

\noindent 
the local $K(2,4)\leq -2$ level is assigned to the vertex where the edge $\Delta_{2\,4}$ is attached; and

\noindent
the local $K(2,6)\leq -3$ level is assigned to the vertex where the edge $\Delta_{2\,6}$ is attached.

\smallskip
We can now define the associated combinatorial object.

\begin{definition}\label{skeleton}
Let $\Lambda_{X}$ be an $(r,d)$--configuration tree.
The \emph{$(r,d)$--skeleton of $\Lambda_{X}$} is the undirected graph obtained by:
\begin{enumerate}[label=\alph*),leftmargin=*]
		\item Replacing each essential and pole vertices 
		$\circled{\msigma}=(z_\msigma, t_\msigma, -\nu_\msigma)\in\Lambda_{X}$, 
		with their associated vertical tower 
		or vertical cycle respectively.
		
		\item 
		For each
		directed weighted edge, 
		$(\Delta_{\msigma \mrho}, K(\msigma, \mrho))\in\Lambda_{X}$,  
		eliminate the weight and consider it an undirected \emph{horizontal edge} 
		$\pm\Delta_{\msigma \mrho}$.
		
		\item The undirected $\pm\Delta_{\msigma \mrho}$ edge is to have as its ends 
		the local level 0 vertex of the vertical tower or vertical cycle associated to 
		$\circled{\mrho}$, 
		and the local level $K(\msigma,\mrho)$ vertex of the vertical tower 
		or vertical cycle associated to $\circled{\msigma}$; noting that if $\circled{\msigma}$ 
		is a pole vertex, modular arithmetic is to be used.
		
	\end{enumerate}
\end{definition}

\begin{remark}\label{propiedadesdelesqueleto}
The $(r,d)$--skeleton of $\Lambda_{X}$ has the following properties 
(also see Diagram \ref{diagramaSKConf}): 
\begin{enumerate}[label=\arabic*.,leftmargin=*]
\item The edges of the $(r,d)$--skeleton of $\Lambda_{X}$ are divided in two sets: 

\noindent
$\bullet$ the vertical edges 
(alluded to in Definitions \ref{def-vertical-tower} and \ref{def-vertical-cycle} above), and 

\noindent
$\bullet$ the \emph{horizontal edges} 
$\pm\Delta_{\msigma \mrho}$,  
which form a finite set of connected subtrees, 
that correspond precisely to the horizontal subtrees of the $(r,d)$--configuration 
tree $\Lambda_X$
(recall Definition \ref{d-confTree}.7).

\item Consider two horizontal edges $\Delta_{\mvarrho \msigma}$ and 
$\Delta_{\msigma \mrho}$ in the $(r,d)$--skeleton of $\Lambda_{X}$ that share the 
vertex $\circled{\msigma}$ in the original $(r,d)$--configuration tree $\Lambda_{X}$. 
We shall say that:

In the $(r,d)$--skeleton of $\Lambda_{X}$,
the (horizontal) edge $\Delta_{\msigma \mrho}$, 
relative to the (horizontal) edge $\Delta_{\mvarrho \msigma}$, 
is 
$\left\{ \hspace{-5pt}
\begin{array}{l}
\text{at the \emph{same (global) level} if } K(\msigma,\mrho) = 0, \\
K(\msigma,\mrho) \text{ levels \emph{up} if }K(\msigma,\mrho)>0, \\
K(\msigma,\mrho) \text{ levels \emph{down} if }K(\msigma,\mrho)<0.
\end{array}
\right.$

\noindent
Hence, using geometry and combinatorics, 
$K(\msigma,\mrho)$ can be recognized as 

\noindent 
$\bigcdot$ the number of sheets in $\R_{X}$ separating 
the diagonals $\Delta_{\mvarrho\msigma}$ and $\Delta_{\msigma\mrho}$, 
or equivalently

\noindent 
$\bigcdot$ the number of levels in the 
$(r,d)$--skeleton of $\Lambda_{X}$ separating the edges 
$\Delta_{\mvarrho\msigma}$ and $\Delta_{\msigma\mrho}$.

\item 
Note that even though one can recognize which is the GZL sheet on the $(r,d)$--skeleton 
of $\Lambda_{X}$, this is a property inherited from $\Lambda_{X}$: it is not intrinsic to 
the $(r,d)$--skeleton.

\item Roughly speaking, the 
$(r,d)$--configuration tree $\Lambda_{X}$ is a \emph{blow--down} of the 
$(r,d)$--skeleton of $\Lambda_{X}$, see Diagram \ref{diagramaSKConf} and Figure \ref{proyeccionArbol}
for an example.
\end{enumerate}
\end{remark}

\subsection{From the $(r,d)$--skeleton of $\Lambda_{X}$ to the Riemann surface $\R_{X}$}
\label{skeleton-to-RX}

\noindent
We proceed in two steps:
In the first step, from the $(r,d)$--skeleton of $\Lambda_{X}$ we will construct a connected Riemann surface 
with boundary, the $(r,d)$--soul of $\Lambda_{X}$ (see Definition \ref{soul}).

\noindent
As the second and final step, we shall glue $2d$ infinite helicoids on the 
boundaries of the $(r,d)$--soul of $\Lambda_{X}$ to obtain 
the simply connected 
Riemann surface $\R_X$ (without boundary).

\begin{definition}\label{soul}
Given an $(r,d)$--skeleton of $\Lambda_{X}$ as above, the associated 
\emph{$(r,d)$--soul of $\Lambda_{X}$}
is a flat Riemann surface 

\noindent
1) constructed from the gluing of 
half planes $(\HH^2_{\pm}, \del{}{t})$
and
finite height horizontal strips 
$\big(\{ 0 < \Im{z} < h\}, \del{}{t} \big)$,

\noindent
2) having two families of cone points:
\begin{enumerate}[label=\alph*),leftmargin=*]
\item the first with $n$ conic points $\{ (p_{\iota},\widetilde{p}_{\iota}) \}_{\iota=1}^{n}$ each with cone angle 
$2(\nu_{\iota}+1)\pi$ and $r=\sum_{\iota}\nu_{\iota}$,

\item the second with $d$ conic points $\{(z_{\sigma}, a_{\sigma})\}_{\sigma=1}^{d}$ each with cone 
angle 
$2 \big(K(\sigma)+1\big) \pi$, $K(\sigma)\geq 0$,
\end{enumerate}

\noindent  
3)
$d$ horizontal branch cuts starting at the cone points of the second family.

\noindent
The branch cuts determine the boundary of the $(r,d)$--soul, $2d$ horizontal boundaries, 
recall Equation \eqref{sheetminuscuts}.
\end{definition}
For $X(z)=\frac{1}{P(z)}\e^{E(z)}\del{}{z}\in\E(r,d)$ 
the $(r,d)$--soul of the $(r,d)$--configuration tree $\Lambda_{X}$ 
is a simply connected Riemann surface 
with $n$ cone points $\{(p_{\iota},\widetilde{p}_{\iota})\}_{\iota=1}^{n}$ each with cone angle
$2(\nu_{\iota}+1)\pi$, corresponding to the
$n$ finitely ramified branch points  
associated to the poles of $X$, 
where $-\nu_{\iota}$ is the order of the pole $p_{\iota}$;
$d$ cone points $\{ (\infty_{\sigma}, a_{\sigma}) \}_{\sigma=1}^{d}$ 
each with cone angle 
$2\big(K(\sigma)+1 \big) \pi$,
corresponding to the logarithmic branch points of $X$;
with boundary 
the $2d$ horizontal segments
$[a_\sigma,\infty)_{-}$ $\cup\ [a_\sigma,\infty)_{+}$.

\medskip 

\noindent
\textbf{1. Construction of the \emph{$(r,d)$--soul of $\Lambda_X$} 
from the \emph{$(r,d)$--skeleton of $\Lambda_{X}$}.}

From Remark \ref{propiedadesdelesqueleto}.2, we see that 
the $(r,d)$--skeleton of $\Lambda_{X}$ 
has two types of vertices: 
those that do not share horizontal edges 
and those that share horizontal edges (the vertices that belong 
to a horizontal subtree).

\smallskip
\emph{Generic horizontal subtrees assumption}.
For simplicity, let us first assume that on any given horizontal subtree
the asymptotic and critical values, 
associated to the vertices of the horizontal subtree,    
all lie on different horizontal trajectories of $(\CC_{t},\del{}{t})$.
Figure \ref{fig-ejemplo-z9} illustrates this assumption.

\smallskip
Starting from the $(r,d)$--skeleton of $\Lambda_{X}$ consider the following construction.

\begin{enumerate}[label=\alph*),leftmargin=*]
	\item 
	Replace each vertex\footnote{
	Recall that all the vertices of the $(r,d)$--skeleton of 
	$\Lambda_{X}$ are either the original vertices 
	$\circled{\msigma}$ of the 
	original $(r,d)$--configuration tree $\Lambda_{X}$, or copies of them. 
	Thus any vertex in the $(r,d)$--skeleton of $\Lambda_{X}$ projects to 
	a unique vertex on $\Lambda_{X}$.
	} 
	of the $(r,d)$--skeleton of $\Lambda_{X}$ that does not share a horizontal edge with a sheet 
	$\CC_{t}\backslash L_{\msigma}$. 
	
	\item 
	Given a horizontal subtree with $s$ vertices, say
	$\big\{\circled{\msigma_1}, \ldots, \circled{\msigma_s}\big\}$, 
	replace the given horizontal subtree with a sheet 
	\\
	\centerline{
	$\CC_{t}\backslash \{ L_{\msigma_\ell} \}_{\ell=1} ^{s} ,$}
	where each
	$L_{\msigma_\ell} $ is the horizontal branch cut associated to the vertex 
	$\circled{\msigma_\ell}$,
	recall Equation \eqref{sheetminuscuts}.
	By the generic horizontal subtrees assumption, 
	all the values $\{t_{\msigma_\ell}\}$ lie on different horizontal trajectories of $\del{}{t}$, 
	then none of the horizontal branch cuts $L_{\msigma_\ell}$ intersect in $\CC_{t}$.
	
	\noindent
	Continue this replacement process for every horizontal subtree.
	
	\noindent
	Note that we obtain stacked copies of $\CC_{t}\backslash L_{\msigma}$ and 
	$\CC_{t}\backslash\{ L_{\msigma_\ell} \}_{\ell=1} ^{s}$, 
	but they retain their relative position respect to the $(r,d)$--skeleton of $\Lambda_{X}$, 
	by the fact that we still have not removed the vertical edges of the $d$--skeleton of 
	$\Lambda_{X}$.
	
	\item 
	We now replace the vertical towers and vertical cycles in the 
	$(r,d)$--skeleton of $\Lambda_{X}$ with $\abs{K}$--helicoids or $(\nu+1)$--cyclic helicoids 
	respectively
	(recall Definitions \ref{helicoide-finito} and \ref{helicoide-ciclico}).
	On each vertical tower or vertical cycle, say the one associated to the vertex 
$\circled{\msigma}$, 
apply Corollary \ref{pegado-isometrico} to glue
the horizontal branch cuts by alternating the boundaries of 
$\CC_{t}\backslash L_{\msigma}$; so as to form finite helicoids or cyclic helicoids
over the vertex $\circled{\msigma}$, 
making sure that all the finite helicoids go up 
when turning 
counter--clockwise around the vertex. 

        \noindent
	In the case where a vertical tower is involved, the finite helicoid has two boundaries 
	consisting of $[t_{\msigma},\infty)_{+}$ and $[t_{\msigma},\infty)_{-}$; in the case where 
	a vertical cycle is 
	involved we obtain a cyclic helicoid, that is a finite helicoid whose boundaries have been 
	identified/glued.
	 
\end{enumerate}
\smallskip

\emph{Non--generic (degenerate) horizontal subtrees}.
We now turn our attention to the
particular case 
when on some horizontal subtree there are at least two 
asymptotic or critical values 
$\{t_{\msigma}\}_{\msigma=1}^{d+n}\subset(\CC,\del{}{t})$ 
arising from the vertices 
$\circled{\msigma}$, 
that lie on the same horizontal trajectory  of $\del{}{t}$.
Since there are only a finite set of asymptotic or critical values, 
then for any small enough angle $\theta > 0$, 
the set of values $\{t_{\msigma}\}\subset(\CC,\e^{i\theta}\del{}{t})$ 
lie on $m+n$ different trajectories of 
the rotated vector field
$\Re{\e^{i\theta}\del{}{t}}$. 
Proceed with the construction (a)--(e) as above but using 
$\e^{i\theta} L_{\msigma}$ instead of $L_{\msigma}$ for 
the construction. Note that for small enough $\theta>0$ 
all the surfaces obtained are homeomorphic. 
Finally let $\theta\to 0^{+}$ and consider the limiting 
surface.

\medskip
According to Definition \ref{soul} we have:

\begin{proposition}\label{soul-of-LambdaX} 
Every $\Lambda_X$ has a canonically associated $(r,d)$--soul.
\end{proposition}

\begin{proof}
The \emph{$(r,d)$--soul of the $(r,d)$--configuration 
tree $\Lambda_{X}$}
is the Riemann surface with boundary, described by (a)--(c) above.
\end{proof}

\begin{example}
The $(r,d)$--soul is shaded blue in all the figures. 

\noindent
1) $X\in\E(r,0)$, so $\Psi_{X}$ is a polynomial, 
in which case the $(r,0)$--soul of $\Lambda_{X}$ is $\R_{X}$. 
See Figure \ref{figejemplo-dos-polos}.

\noindent
2) $X(z)=\e^{z}\del{}{z}\in\E(0,1)$, so $\Psi_{X}$ is an exponential, 
in which case the $(0,1)$--soul of $\Lambda_{X}$ 
consists of $\CC_{t}\backslash L_{1}$, a single sheet with exactly one branch cut. 
See Figure \ref{figejemplo-un-va} and figure 11.a in \cite{AlvarezMucino}.

\noindent
3) $X(z)=\e^{z^{2}}\del{}{z}\in\E(0,2)$, so $\Psi_{X}$ is the error function, 
in which case the $(0,2)$--soul of $\Lambda_{X}$ 
consists of $\CC_{t}\backslash (L_{1}\cup L_{2})$, a single sheet with exactly two branch cuts. 
See Figure \ref{tabla-funcion-error} and figure 11.b in \cite{AlvarezMucino}.

\noindent
4) An $(r,d)$--configuration tree has all $K(\msigma,\mrho)\equiv 0$ if and only if on the corresponding 
Riemann surface $\R_{X}$ all the diagonals share the same sheet 
$\CC_{t}\backslash \{ L_\msigma \}_{\msigma=1}^{d+n}$. 
In this case the $(r,d)$--soul of $\Lambda_{X}$ is this sheet.
See Figure \ref{fig-ejemplo-z9}.b.
\end{example}

\noindent
\textbf{2. Construction of $\R_{X}$ from the $(r,d)$--soul of $\Lambda_X$.}
To each of the $2d$ boundaries of the $(r,d)$--soul of $\Lambda_{X}$, 
glue a semi--infinite helicoid to obtain a simply connected Riemann surface $\R_{X}$. 
This surface has exactly $d$ logarithmic 
branch points over $d$ finite asymptotic values 
of $\Psi_X$
and $n$ finitely ramified branch points with ramification indices 
that add up to $r+n$.

We can recognize that our isometric glueing
Corollary \ref{pegado-isometrico} in the above 
cases coincides with the Maskit surgery 
as is defined by 
M. Taniguchi
\cite{Taniguchi1} p. 68, 
\cite{Taniguchi2} p. 110--115.
In fact, $\R_{X}$ is realized via Maskit 
surgeries with
 
\noindent $\bullet$
$d$ exp--blocks (in our language $2d$ semi--infinite helicoids) and 

\noindent $\bullet$
$r$ quadratic blocks,

\noindent  
hence following\footnote{
It is to be noted that in the case of $X\in\E(r,0)$, $r\geq1$, there is no need to use M. Taniguchi's
results involving 
exponential blocks.
} 
\cite{Taniguchi1} theorem 1 and
\cite{Taniguchi2} theorem 2.14,
there exist polynomials $E(z)$ of degree $d$ and $P(z)$ of degree $r$ arising from $\Lambda_{X}$, 
which characterize the function 
$$
\Psi_{X}\in SF_{r,d}=
\left\{\int_{z_0}^{z}P(\zeta)\,\e^{-E(\zeta)} d\zeta\ + b \ \Big{\vert} \ 
P,E\in\CC[z], \ \deg{P}=r, \ \deg{E}=d\right\}.
$$

\noindent
Alternatively\footnote{
Note that these results are classical, in particular 
R. Nevanlinna \cite{Nevanlinna1}, \cite{Nevanlinna2} and 
G. Elfving \cite{Elfving} essentially proved the 
correspondence between $\R_{X}$ and $\Psi_{X}$.},
since $\R_{X}$ is a log--Riemann surface with $d$ logarithmic branch points over 
$d$ finite asymptotic values
and $n$ finitely ramified branch points with ramification indices that add up to $r+n$ 
whose finite completion is simply connected, 
by theorem 1.1 of \cite{Biswas-PerezMarco-2}, it follows that $\Psi_{X}\in SF_{r,d}$.

Finally assign to $\R_{X}$ a flat metric $\big(\R_{X},\pi_{X,2}^{*}(\del{}{t})\big)$ 
induced by $\pi_{X,2}$.
By the Dictionary Proposition \ref{basic-correspondence}, 
our sought after vector field is 
\\ \centerline{
$X(z)=\Psi_{X}^{*}(\del{}{t})(z)= \frac{1}{P(z)}\, \e^{E(z)} \del{}{z}\in \E(r,d)$ }
\\
as required.

We have essentially proved the following.

\begin{proposition}
Consider the following set of $(r,d)$--configuration
trees

\centerline{
$
\left\{ 
\begin{array}{l}
\Lambda_X 
\text{ has at least two branch points }
\\
\text{over  
different values in  }\CC_t 
\end{array}
\right \}
$,}

\noindent 
then 
the $(r,d)$--soul of $\Lambda_{X}$ determines 
a unique vector field $X \in \E(r,d)$.
\end{proposition}
\begin{proof}
If $\Lambda_X$ has only one ramification value in $\CC_t$, then 
by simple inspection it is

\centerline{
$\Lambda_{X}=\Big\{ \circled{1}=(p_{1},\widetilde{p}_{1}, -r);
\raiz{1}\, ;\,
\varnothing
\Big\}$ 
\ or \
$\Lambda_{X}=\Big\{ \circled{1}=(\infty_{1},a_{1},-\infty); 
\raiz{1}\, ;\,
\varnothing \Big\}$,}

\noindent 
see Examples \ref{ejemplo-un-polo}, \ref{ejemplo-un-va}.
The corresponding vector fields 

\centerline{
$X(z)=\frac{\mu}{ (z-p_1)^{r}} \del{}{z} \in \E(r,0)$ 
\ and \ 
$X(z)=\mu^{-1} \e^z \del{}{z} \in \E(0,1)$,  }

\noindent 
are not uniquely determined,
since $\mu\neq0$ is not unique.

On the other hand, if $\Lambda_X$ 
has at least two branch points 
over  different values, say  $t_1, t_2 \in \CC_t$, 
then the diagonal $\Delta_{1\,2 }$ satisfies the Equation 
\eqref{diagonalsemiresidue},

\centerline{$
\int_{z_1}^{z_2 } 
P(\zeta)\e^{-E(\zeta)} d\zeta
= 
t_{2}-t_{1}. 
$}

\noindent 
It allows us the computation of the factors
$\lambda$, $\mu$ in Equation \eqref{def-lambda}, 
obtaining the uniqness of $P(z)$ and $E(z)$.
\end{proof}

\begin{remark}\label{encajedeLambdaenR}
Note that the $(r,d)$--configuration tree $\Lambda_{X}$  
is a tree ``embedded'' in $\R_{X}\subset\CW_{z}\times\CW_{t}$;
however it is an embedding as a subset of $\overline{\CC}_{z} \times \CW_{t}$. 
It is not a genuine embedding in $\R_{X}$ since the logarithmic branch points are 
not in fact part of the surface $\R_{X}\subset\CC_{z}\times\CW_{t}$ 
(see also Definition \ref{singesen}).
On the other hand, on the $(r,d)$--skeleton of $\Lambda_{X}$, the branch points of $\R_{X}$
are 
replaced by a vertical tower or vertical cycle during the blow--up process of $\Lambda_{X}$
(the vertical edges of the $(r,d)$--skeleton of $\Lambda_{X}$ indicate how 
many sheets separate the diagonals).
\\
In this sense, both the $(r,d)$--configuration tree $\Lambda_{X}$ and the $(r,d)$--skeleton of $\Lambda_{X}$ 
project to a graph $\pi_{X,2}(\Lambda_{X})\subset\CC_{t}$. 
See Figures \ref{figejemplo-dos-polos}--\ref{figejemplo-E33} and \ref{proyeccionArbol},
in particular $\pi_{X,2}(\Lambda_{X})$ need not be a tree as in Figure \ref{figejemplo-E33}.
This is represented by the diagram: 
\begin{center}
\begin{picture}(180,85)(0,10)

\put(-90,40){\vbox{\begin{equation}\label{diagramaSKConf}\end{equation}}}

\put(55,90){$(r,d)$--skeleton of $\Lambda_{X}$}

\put(100,83){\vector(0,-1){24}}
\put(105,70){blow--down}

\put(95,59){\vector(0,1){24}}
\put(55,70){blow--up}

\put(-60,46){$\CW_{z}\times\CW_{t}\ \hookleftarrow \
\R_{X}\ $``\,$\hookleftarrow$\,'' $(r,d)$--configuration tree 
$\Lambda_{X}\hookrightarrow\overline{\CC}_{z}\times\CW_{t}$}

\put(100,38){\vector(0,-1){24}}
\put(105,25){$ \pi_{X,2} $}

\put(70,5){$\pi_{X,2}(\Lambda_{X})\subset\CC_{t}$.}
\end{picture}
\end{center}
\end{remark}

\subsection{The equivalence relation on $(r,d)$--configuration trees}
\label{clasesE(d)}

\begin{remark}\label{clase-equiv-arbol}
{\it Non--uniqueness of $(r,d)$--configuration trees $\Lambda_{X}$ associated to $\Psi_{X}$.} 

\noindent
1. Even though condition (4) of Definition \ref{d-confTree} provides a clear choice for the 
root $\raiz{\varrho}$ of $\Lambda(r,d)$, 
if the branch point corresponding to the root shares more than one sheet with other branch points, 
then each of these sheets could be the global zero level sheet. 
Hence each choice provides a different global zero level subtree and thus a different 
$(r,d)$--configuration tree $\Lambda_{X}$.

\noindent
2. The choice of the weight when considering an edge that connects a pole vertex 
with any other type of vertex is not unique because of the modular arithmetic involved.
For instance, if we have a weighted edge $(\Delta_{\iota\mrho},K(\iota,\mrho))$ 
connecting a pole vertex 
$\circled{\iota}=(p_{\iota},\widetilde{p}_{\iota},-\nu_{\iota})$ 
to any other vertex $\circled{\mrho}$, 
then changing $K(\iota,\mrho)$ to $K(\iota,\mrho)+\ell \nu$ for $\ell\in\ZZ$, will give rise to a 
different $(r,d)$--configuration tree associated to the same $\Psi_{X}$.

\noindent
3. However, 
recalling Remark \ref{propiedadesdelesqueleto}.2--3,
it is to be noted that for fixed $X\in\E(r,d)$, even though there are choices
in the construction of $\Lambda_{X}$, up to re--labelling of the vertices, the
$(r,d)$--skeletons associated to each choice
will be the same as undirected graphs.
\end{remark}

\noindent
Following is an example that illustrates (1)--(3). 

\begin{example}[Example \ref{ejemplo-E33} revisited]
\label{ejemplo-E33-nuevo}
Let us consider once again 
\\
\centerline{
$
X(z)=\dfrac{\e^{z^{3}}}{3z^{3}-1}\ddel{}{z} \in \E(3,3).
$}

\noindent
However we shall now re--label the vertices, choose a different global zero level and assign 
different weights to some edges.
\begin{equation}\label{vertices-nuevos-ejemplo-E33}
\begin{array}{rclrcl}
\circled{1}=(z_{1},t_{1},\nu_{1}) &=& (p_{2},\widetilde{p}_{2},-1),&
\quad\circled{4}=(z_4 ,t_4 ,\nu_4) &=& (p_{1},\widetilde{p}_{1},-1),
\\
\circled{5}=(z_5 ,t_5 ,\nu_5) &=& (p_{3},\widetilde{p}_{3},-1),&
\quad\circled{2}=(z_2 ,t_2 ,\nu_2) &=& (\infty_{1},a_{1},-\infty),
\\
\circled{3}=(z_3 ,t_3 ,\nu_3) &=& (\infty_{2},a_{1},-\infty),&
\quad\circled{6}=(z_{6},t_{6},\nu_{6}) &=& (\infty_{3},a_{1},-\infty).
\end{array}
\end{equation} 

\noindent
Thus the $(3,3)$--configuration tree (see Figure \ref{figejemplo-E33-nuevo}) is
\begin{multline}\label{arbolejemplo-E33-bis}
\Lambda_{X}=
\Big\{ \circled{1},\circled{2},\circled{3},\circled{4},\circled{5},\circled{6};
\raiz{5}\, ;\,
\\
(\Delta_{5\,4}, 0), 
(\Delta_{5\, 6}, -5), 
(\Delta_{4\, 2}, -1), 
(\Delta_{2\, 1}, 0),
(\Delta_{1\, 3}, -1) 
\Big\},
\end{multline}
with edges given by the diagonals/semi--residues
\begin{equation}\label{pesosejemplo-E33-bis}
\begin{array}{ll}
& \Delta_{5\, 4} = \int\limits^{p_{1}}_{p_{3}} \omega_{X} = \widetilde{p}_{1} - \widetilde{p}_{3} = 
-\Big(\dfrac{1-\e^{-i 2\pi/3}}{\sqrt[3]{3\e}}\Big), 
\\
& \Delta_{5\, 6} = \int\limits_{p_{3}}^{\infty_{3}} \omega_{X} = a_{1} - \widetilde{p}_{3} = 
\dfrac{\e^{- i 2\pi/3}}{\sqrt[3]{3\e}},
\\
& \Delta_{4\, 2} = \int\limits_{p_1}^{\infty_1} \omega_{X} = a_1 - \widetilde{p}_1 = 
\dfrac{1}{\sqrt[3]{3\e}},
\\
& \Delta_{2\, 1} = \int\limits_{\infty_1}^{p_{2}} \omega_{X} =  \widetilde{p}_{2} - a_1 =
-\dfrac{1}{\sqrt[3]{3\e}} \e^{i 2\pi/3},
\\
& \Delta_{1\, 3} = \int\limits_{p_{2}}^{\infty_{2}} \omega_{X} = a_{1} - \widetilde{p}_{2} = 
\dfrac{\e^{ i 2\pi/3}}{\sqrt[3]{3\e}}, 
\end{array}
\end{equation}
and weights $K(5,4)=0$, $K(5,6)=-5$, $K(4,2)=-1$, $K(2,1)=0$, $K(1,3)=-1$.

\noindent
Of course this $(3,3)$--configuration 
tree is different from the one of Example \ref{ejemplo-E33},
however \emph{their corresponding $(3,3)$--skeletons are the same up to re--labelling of the vertices}.
Compare Figures \ref{figejemplo-E33} 
and \ref{figejemplo-E33-nuevo}.
\begin{figure}[htbp]
\begin{center}
\includegraphics[width=\textwidth]{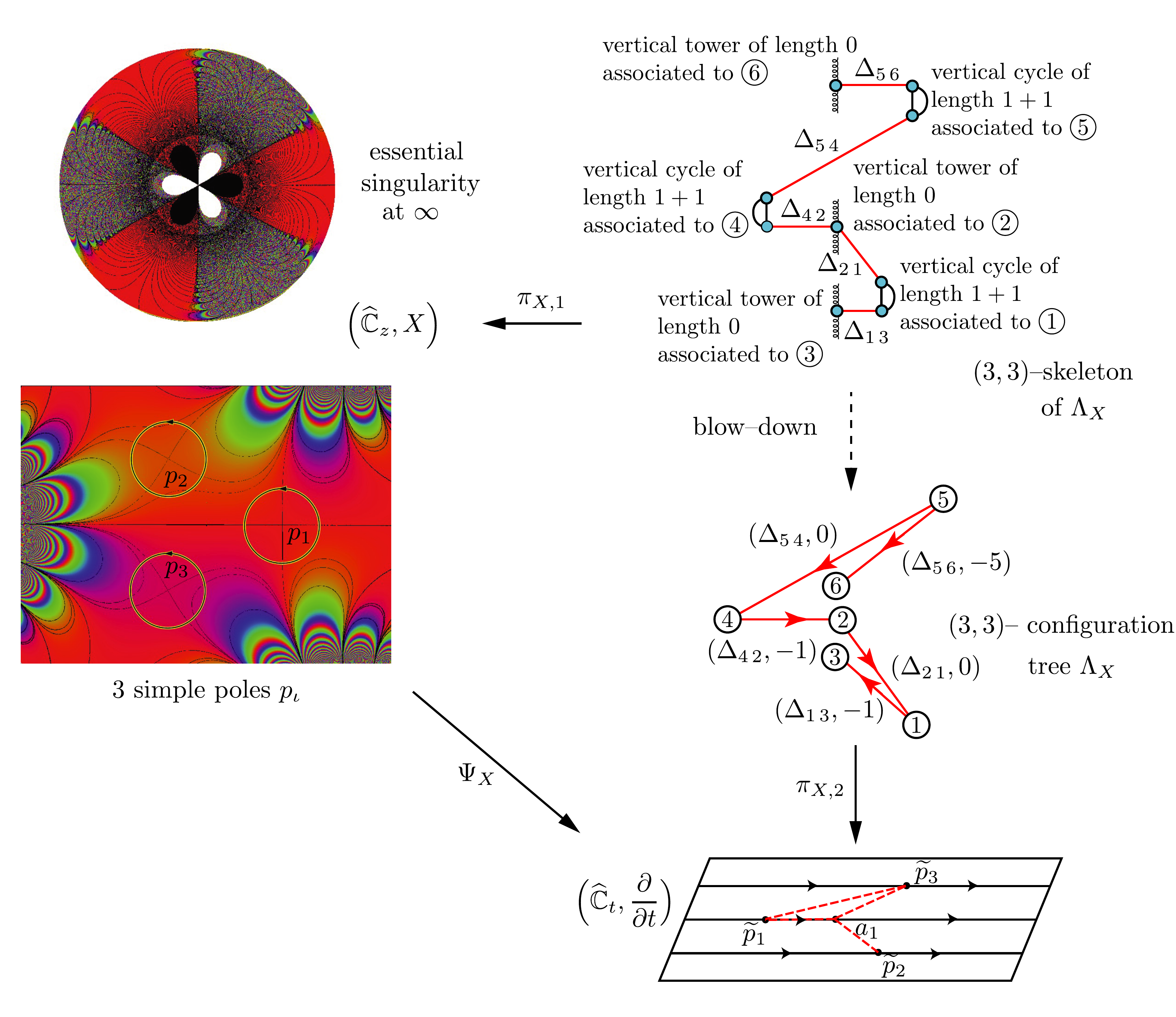}
\caption{{\bf Vector field $\frac{\e^{z^{3}}}{3z^{3}-1}\del{}{z}$ with an essential singularity at $\infty$ and 3 simple poles $p_{\iota}$.}
The five diagonals and their projections are shown in red.
The Riemann surface $\R_{X}$ is not drawn.
See Example \ref{ejemplo-E33-nuevo}.
}
\label{figejemplo-E33-nuevo}
\end{center}
\end{figure}
\end{example}

Summarizing, the following choices and/or conventions have been made 
for the $(r,d)$--skeleton of $\Lambda_{X}$.

\noindent
1) Because of condition (4) of Definition \ref{d-confTree} 
and Remark \ref{correspexponentialtract}.3 
the root $\raiz{\varrho}$ is unique.

\noindent
2) The choice of $z_0\in\CC_z$ as the initial point of integration of 
$\Psi_{X}(z)=\int\limits_{z_0}^{z}\omega_{X}$, 
allows the critical and asymptotic values to be well defined 
(and thus the vertices of the $(r,d)$--skeleton of $\Lambda_{X}$).

\noindent
3) Condition (7) of Definition \ref{d-confTree} provides a unique choice of the subset of diagonals 
needed to specify the $(r,d)$--skeleton of $\Lambda_{X}$.

\noindent
4) The ``blow--up'' of the pole and essential vertices (of the $(r,d)$--configuration tree $\Lambda_{X}$) 
into vertical cycles and vertical towers (of the $(r,d)$--skeleton of $\Lambda_{X}$) respectively, eliminates
the weights $K(\msigma,\mrho)$ for each diagonal $\Delta_{\msigma\,\mrho}$ 
on the $(r,d)$--skeleton of $\Lambda_{X}$. 

\noindent
Thus, as stated in Remark \ref{clase-equiv-arbol}.3, an immediate consequence is the following.

\begin{definition}\label{equivalenciaArboles}
\emph{Two $(r,d)$--configuration trees $\Lambda_{1}$ and $\Lambda_{2}$ are equivalent} 
if their corresponding $(r,d)$--skeletons
are the same up to re--labelling of the vertices.
\end{definition}

This finishes the proof of the Main Theorem. 
\qedhere
\end{proof}

\section{Decomposition of the phase portraits into invariant components}\label{Decomposicion}

Recall Lemma \ref{descomposicion-planos-bandas}
providing a decomposition in
half planes and finite height horizontal strips 
related to $\Re{X}$. 
The interior of these pieces
are invariant open components under 
the real vector field $\Re{X}$.

\begin{theorem}\label{HorizontalStripStructures}
For $X \in \E(r,d)$, 
its phase portrait  
decomposes into $\Re{X}$--invariant components
as follows
\begin{multline}\label{BetterStripDecomposition}
\big(\CC_{z},X\big)=
\underbrace{
\left(\bar{\HH}^2_\pm ,\del{}{z}\right)
\cup \ldots \cup \left(\bar{\HH}^2_\mp ,\del{}{z}\right)
}_{2(r+1) \leq N_p \leq 4r} 
\\
\bigcup_{a_\sigma} 
\Biggl[
\left(\Big\{0\leq |\Im{z}|\leq 2\pi (K_\sigma + 1) \Big\},\e^z \del{}{z}\right)_{a_{\sigma}}\qquad\qquad
\\
\qquad
\cup
\left(\bar{\HH}_{\pm}^2,\e^z\del{}{z}\right)_{a_{\sigma},upper}
\cup
\left(\bar{\HH}_{\pm}^2,\e^z\del{}{z}\right)_{a_{\sigma},lower}
\Biggr]
\\
\bigcup_ {\ell}^{M\leq\infty} 
\left( \Big\{ 0\leq\Im{z}\leq h_{\ell} \Big\},\del{}{z}\right),
\end{multline}
where $\{a_\sigma\}$ are the finite asymptotic values of $\Psi_{X}$, 
$N_p$ is the number of half planes associated to the poles of $X$ 
(equivalently the number of hyperbolic sectors around the poles of $X$) and 
$M$ is the number of diagonals (equivalently the number of 
finite height strips in the phase portrait of $\Re{X}$).

\noindent
There are an infinite number of half planes 
$\big(\bar{\HH}^2_\pm ,\del{}{z}\big)$ in the decomposition if and only if $d\geq 1$.
\end{theorem}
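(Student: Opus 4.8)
The plan is to establish the decomposition \eqref{BetterStripDecomposition} by transporting, via the global flow box $\Psi_X$ (Lemma \ref{maximalOmega}) and the biholomorphism $\pi_{X,1}$ (Lemma \ref{LemmaRX}), the canonical decomposition of the Riemann surface $\R_X$ into elementary building blocks. Concretely, I would first invoke the construction of $\R_X$ from the soul of $\Lambda_X$ (\S\ref{direccion2}): $\R_X$ is obtained from the soul by gluing a semi--infinite helicoid to each of the $2d$ boundaries. The soul itself is $\R_{X_0}$ for $X_0(z)=\frac{1}{P(z)}\del{}{z}\in\E(r,0)$ with $d$ branch cuts removed (Remark \ref{soul-su-descripcion}), so it decomposes into closed half planes $\bar{\HH}^2_\pm$ coming from the pole vertices together with finite height strips coming from the diagonals; the count $2(r+1)\leq N_p\leq 4r$ follows from the ramification indices summing to $r$ (each pole of order $-\mu_\iota$ contributes a cyclic helicoid of $2(\mu_\iota+1)$ half planes before the diagonals cut some of them into strips, giving the stated bounds). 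For each finite asymptotic value $a_\sigma$, the pair of semi--infinite helicoids glued over it, truncated at the level where the diagonal $\Delta_{\cdot\sigma}$ lives (which is $K_\sigma$ sheets away), contributes one finite piece $\big(\{0\leq|\Im z|\leq 2\pi K_\sigma\},\e^z\del{}{z}\big)_{a_\sigma}$ together with the two genuinely infinite helicoidal tails, each of which pulls back under $\Psi_X^{-1}$ to a half plane $\big(\bar{\HH}^2_\pm,\e^z\del{}{z}\big)_{a_\sigma,\,up/low}$. The remaining finite height strips are collected in the last union over $\ell$.

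Next I would carry out the bookkeeping that turns this surface--level picture into the stated $(\CC_z,X)$--decomposition. Since $\pi_{X,1}:\Omega_X\to\CC_z\setminus\MP$ is a biholomorphism sending $\pi_{X,2}^*(\del{}{t})$ to $X$, every half plane, strip, and helicoidal block of $\R_X$ maps to a $\Re X$--invariant region of $\CC_z$, and adding back the poles $\MP$ assembles these into a cover of $\CC_z$ whose overlaps are along separatrices (horizontal trajectories through the branch points). The $\e^z\del{}{z}$ appearing on the essential--singularity blocks is exactly the local model at $\infty\in\CW_z$ forced by the exponential tracts: on the exponential tract $\alpha_\sigma$ the distinguished parameter behaves like $\mathrm{const}\pm\e^{-z}$ up to a polynomial factor, so the pull--back vector field is analytically $\e^{w}\del{}{w}$ in a suitable coordinate $w$, which is precisely the entire sector model $\ent{}{}$ referenced in the introduction. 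I would make the strip count $M\leq\infty$ precise by noting $M$ is finite exactly when $\R_X$ has finitely many diagonals, i.e.\ when $d=0$.

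For the final biconditional — there are infinitely many half planes $\bar{\HH}^2_\pm$ in the decomposition if and only if $d\geq 1$ — I would argue both directions from the helicoid structure. If $d\geq 1$, each of the $2d$ semi--infinite helicoids glued onto the soul is, by Definition \ref{piezasFatou}, an infinite succession $\bar{\HH}^2_\pm\cup\bar{\HH}^2_\mp\cup\cdots$ of half planes; the truncation at level $K_\sigma$ removes only finitely many of them from the finite block, leaving infinitely many half planes in the two infinite tails over each $a_\sigma$, hence infinitely many half planes overall. Conversely, if $d=0$ then $X\in\E(r,0)$, $\Psi_X$ is a polynomial, $\R_X$ equals its own soul, and the soul of an $(r,0)$--configuration tree is built from exactly $N_p\leq 4r$ half planes plus finitely many finite height strips (the diagonals $\Delta_{\iota\kappa}$ between the $n\leq r$ pole vertices); with no essential vertices there are no semi--infinite helicoids to contribute an infinite family, so only finitely many half planes occur. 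I expect the main obstacle to be the second paragraph: justifying rigorously that the block decomposition of $\R_X$ descends to a genuine decomposition of $(\CC_z,X)$ into $\Re X$--invariant components with the stated constants $N_p$, $K_\sigma$, and $h_\ell$ — in particular tracking how the diagonals cut the cyclic helicoids over poles to produce exactly the bounds $2(r+1)\leq N_p\leq 4r$, and verifying that the gluing along separatrices introduces no double counting — rather than the clean helicoid argument for the biconditional itself.
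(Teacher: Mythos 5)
Your proposal takes essentially the same route as the paper: the printed proof is exactly the row--by--row identification of the building blocks of Definition \ref{piezasFatou} inside the $(r,d)$--skeleton/soul of $\Lambda_X$, transported to $(\CC_z,X)$ by the biholomorphism $\pi_{X,1}$ of Lemma \ref{LemmaRX}, with the pole count $N_p=2r+2n$, $1\leq n\leq r$, yielding $2(r+1)\leq N_p\leq 4r$, and the ``moreover'' biconditional coming from the $2d$ semi--infinite helicoids just as you argue. One incidental assertion in your write--up is wrong, although it is not needed for the statement being proved: you claim that $M$ is finite exactly when $\R_X$ has finitely many diagonals, ``i.e.\ when $d=0$''. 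By Definition \ref{diagonal} a diagonal must have both endpoints at branch points sharing the same sheet, and there are only $d+n$ such branch points over finite values, so the set of diagonals is finite for every $(r,d)$; the infinitely many copies of $\overline{t_{\msigma}t_{\mrho}}$ in $\pi_{X,2}^{-1}\big(\overline{t_{\msigma}t_{\mrho}}\big)$ when $d\geq 1$ are not diagonals and lie in the interiors of half planes, hence contribute no strips. Indeed the paper's own Example \ref{dos-campos-de-la-funcion-error} (the error function with $\lambda=i$, so $d=2$) has exactly one finite--height strip in its decomposition, contradicting your ``$M$ finite iff $d=0$''. Deleting that sentence (the theorem only asserts $M\leq\infty$) leaves your argument in line with the paper's proof.
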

\begin{proof}
Decomposition \eqref{BetterStripDecomposition} follows 
by recalling Definition \ref{piezasFatou},
the biholomorphism $\pi_{X,1}$ presented in Diagram \ref{RX}
and the fine structure of the $(r,d)$--skeleton of $\Lambda_X$. 
It is an 
accurate description of the phase portrait decomposition 
of $\Re{X}$:

\noindent
The first row depicts the, at least $2(r+1)$ and at most $4r$, 
half planes associated to the 
$r$ poles.

\noindent
On the second row are the $d$ finite helicoids
arising from the $d$ finite asymptotic values 
$\{a_\sigma\}$, 
where it is to be noticed that this can be an empty 
collection. 

\noindent
On the third row are the $2d$ semi--infinite helicoids.

\noindent
Finally, on the fourth row the finite height strips 
associated to the diagonals in $\R_{X}$.
\end{proof}

\begin{corollary}
\label{trayectorias-incompletas}
Let $X \in \E(r,d)$, $d \geq 1$. 
Then,
the incomplete trajectories of $\Re{X}$ in $\CW_z$
are infinite, numerable and  Lebesgue measure zero. 
\hfill $\qed$
\end{corollary}

Obviously, the number of incomplete trajectories
is finite if and only of $r\geq1$ and $d=0$.
Compare with
\cite{LopezMucino} and  
\cite{Langley}. 

\section{On the topology of $\Re{X}$}\label{topoclassification}
Consider the group of orientation 
preserving 
homeomorphisms of $\CC$, 
$$
Homeo(\CC)^{+}
=\{h:\CW_{z}\rightarrow\CW_{z}
\ | \ 
\text{ preserving orientation and fixing } \infty\in\CW\}.$$

\begin{definition}\label{deftopequiv}
Let $X_{1}, X_{2}\in\E(r,d)$ be two singular analytic vector fields.

\noindent
They are \emph{topologically equivalent}
if there exists $h\in Homeo(\CC)^{+}$ which takes the 
trajectories of $\Re{X_{1}}$ to trajectories of $\Re{X_{2}}$, 
preserving real time orientation, but not 
necessarily the parametrization. 

\noindent 
A \emph{bifurcation for $\Re{X_{1}}$ occurs},
when the topology of its phase portrait topologically 
changes under small deformation of $X_1$ in the family 
$\E(r,d)$, otherwise $\Re{X_1}$
is \emph{structurally stable} in $\E(r,d)$.
\end{definition}

Let  $\Lambda_{X}=\Big\{ 
\big\{ \circled{\msigma}=\big(z_{\msigma},t_{\msigma},-\nu_{\msigma} \big) 
\big\}_{\sigma = 1}^{d+n} ;
\, \raiz{\varrho} \, ;
\big\{
(\Delta_{\msigma \mrho}, K(\msigma ,\mrho)) 
\big\} \Big\}$ 
be a $(r,d)$--configuration tree. 
By simple inspection we have

\begin{theorem}[Structural stability of $\Re{X}$ for $X\in\E(r,d)$]\label{estabilidadestructural}
\hfil\\
The real vector field
$\Re{X}$ is structurally stable in $\E(r,d)$ if and only if 
 
\noindent
i) $X$ has only simple poles and

\noindent
2) 
$\Im{\Delta_{\msigma\mrho} }\neq 0$ for all 
weighted edges of $\Lambda_{X}$.
\end{theorem}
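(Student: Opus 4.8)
The strategy is to translate ``structurally stable'' into properties of the Riemann surface $\R_X$, hence of the $(r,d)$--skeleton of $\Lambda_X$, and finally into the two stated conditions on $\Lambda_X$ itself, using the decomposition in Theorem \ref{HorizontalStripStructures}. First I would recall that, by the Main Theorem and Corollary \ref{analyticdependence}, a small perturbation of $X$ in $\E(r,d)$ corresponds to a small perturbation of the parameters $\{z_0,(z_1,t_1,\mu_1),\widetilde{\lambda}_{\msigma\mrho}\}$ defining $[\Lambda_X]$, subject to the constraint that the integer data (ramification indices $\mu_\iota$ and the ``level'' integers $K(\msigma,\mrho)$) jump only at special loci. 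Consequently the phase portrait of $\Re{X}$ changes topologically exactly when one of these discrete invariants is forced to change, or when a connection between separatrices appears/disappears; I would make this precise by noting that the combinatorial type of the decomposition \eqref{BetterStripDecomposition} (how many half planes per pole, the heights $2\pi K_\sigma$, the strip heights $h_\ell$, and which diagonals are horizontal) is a complete topological invariant of $\Re{X}$, since each invariant $\Re{X}$--component is a half plane, a finite/semi--infinite helicoid, or a finite height strip flow, and these glue according to $\Lambda_X$.

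Next I would prove the two implications. For ($\Leftarrow$): assume all poles are simple and $\Im{\widetilde{\lambda}_{\msigma\mrho}}\neq 0$ for every edge. Simplicity of the poles means each pole vertex has $\mu_\iota=1$, so each pole contributes exactly $2(r/n)=2$ half planes and a cyclic helicoid with two sheets; this number is locally constant under perturbation because $\mu_\iota$ cannot drop and, staying in $\E(r,d)$ with $\sum\mu_\iota=r=n$, cannot rise either. The condition $\Im{\widetilde{\lambda}_{\msigma\mrho}}\neq 0$ says no diagonal is horizontal, equivalently (Remark \ref{propiedadesdelesqueleto}.3 and the fourth row of \eqref{BetterStripDecomposition}) every finite height strip in the decomposition has strictly positive height $h_\ell>0$, and the relative heights $\Im{t_\mrho}-\Im{t_\msigma}$ never vanish, so no new horizontal subtree (no new saddle connection among the $b_{\vartheta}$) can be created by a small move; the integers $K_\sigma$ likewise stay put. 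Hence the combinatorial type of \eqref{BetterStripDecomposition} is locally constant, so $\Re{X}$ is structurally stable. For ($\Rightarrow$), I argue contrapositively: if some pole has order $-\mu_\iota\leq -2$, then an arbitrarily small perturbation inside $\E(r,d)$ splits it into poles of lower order (or merely perturbs the $\mu_\iota+1\geq 3$ separatrix configuration), changing the number of half planes attached at that vertex and hence the phase portrait; and if some edge has $\Im{\widetilde{\lambda}_{\msigma\mrho}}=0$, i.e. a horizontal diagonal, then the corresponding strip has height $0$ and an arbitrarily small perturbation of $t_\mrho$ or $t_\msigma$ either opens a positive--height strip or reverses which branch point lies above which, in either case altering the gluing pattern of \eqref{BetterStripDecomposition}; either way a bifurcation occurs.

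The main obstacle I expect is the ($\Leftarrow$) direction: one must show that \emph{no other} mechanism can produce a bifurcation when the two conditions hold. Concretely, the delicate point is ruling out that a small perturbation creates an unexpected saddle connection or collapses a strip --- this requires knowing that the ``preferred horizontal subtree'' structure (the open-strip emptiness condition in Definition \ref{d-confTree}) is stable, i.e. that no vertex $\circled{\mvarrho}$ can migrate into an open horizontal strip $\{\Im{t_\msigma}<\Im{t}<\Im{t_\mrho}\}$ under a small move, which is exactly guaranteed by the strict inequalities $\Im{\widetilde{\lambda}_{\msigma\mrho}}\neq 0$ together with the fact that the finitely many critical/asymptotic values $\{t_\msigma\}$ depend continuously (Corollary \ref{analyticdependence}) on the parameters. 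I would also need to check that the semi--infinite helicoids and the finite helicoid heights $2\pi K_\sigma$ are unaffected, which follows since $K_\sigma\in\ZZ$ is discrete and the logarithmic branch points persist (Lemma \ref{pareja-finita-infinita}). Assembling these observations --- persistence of the discrete data plus persistence of the strict height inequalities --- gives local constancy of the topological type, completing the equivalence. The phrase ``by simple inspection'' preceding the statement suggests the authors intend precisely this reading of \eqref{BetterStripDecomposition}, and I would structure the write-up around that decomposition.
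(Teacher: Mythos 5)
Your proposal is correct and follows exactly the route the paper intends: the paper gives no written argument beyond ``by simple inspection'' of the $(r,d)$--skeleton and the strip--structure decomposition of Theorem \ref{HorizontalStripStructures}, and your write--up is precisely that inspection made explicit --- persistence of the discrete data ($\mu_\iota$, $K(\msigma,\mrho)$) together with the strict inequalities $\Im{\widetilde{\lambda}_{\msigma\mrho}}\neq 0$ for the ($\Leftarrow$) direction, versus splitting a multiple pole or breaking a horizontal diagonal for ($\Rightarrow$). Only a cosmetic slip: a simple pole contributes $2(\mu_\iota+1)=4$ half planes (a cyclic helicoid with two sheets), not $2$, which does not affect the argument.
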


\begin{proof}
A diagonal $\Delta_{\msigma\mrho}$ has 
$\Im{\Delta_{\msigma\mrho}} = 0$
and poles of $X$ at its two extreme points, 
if and only if 
$\Delta_{\msigma\mrho}$ determines a 
saddle connection of $\Re{X}$, 
see Lemma \ref{descomposicion-planos-bandas}.3.   
This is the unique bifurcation scenario.
\end{proof}

Recalling that an $(r,d)$--skeleton of $\Lambda_{X}$ is a graph embedded in 
$\overline{\CC}_z \times \CC_t$, with a specific complex parameter  
associated to each edge and
having horizontal and vertical attributes, we now make the following.

\noindent
As a direct consequence of the structure of the 
$(r,d)$--skeleton of $\Lambda_X$ we have.
%
\begin{theorem}
[Number of non topologically equivalent vector fields $\Re{X}$
for $X\in\E(r,d)$]
\label{numbertop}
\hfill
\\
Given a fixed pair $(r,d)$:
\begin{enumerate}[label=\arabic*),leftmargin=*] 
\item 
The number of topologies of $\Re{X}$ is 
infinite when 
\\
\centerline{
$(r,d)\in\big\{(r\geq2,1), (r\geq1,2),(r\geq0,d\geq3)\big\}$.}

\item The number of topologies is
\begin{enumerate}[label=\alph*),leftmargin=*]
\item one when $(r,d)=(0,1), (1,0)$;
\item two when $(r,d)=(0,2)$, $(1,1)$;
\item finite when $(r,0)$.
\end{enumerate}
\end{enumerate}
\end{theorem}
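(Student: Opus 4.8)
The plan is to translate the counting of topological equivalence classes of $\Re{X}$ into a counting of discrete data carried by the $(r,d)$--skeleton of $\Lambda_X$, using Theorem \ref{HorizontalStripStructures} as the bridge. The key observation is that, after the structural stability criterion of Theorem \ref{estabilidadestructural}, the topology of the phase portrait of $\Re{X}$ depends only on the \emph{combinatorial type} of the $(r,d)$--skeleton: which pole vertices and essential vertices share sheets (the horizontal subtree structure), the multiplicities $\mu_\iota$ of the poles, the cyclic order in which the half planes and finite strips are attached around each pole and essential vertex, and the integers $K(\msigma,\mrho) \bmod (\mu_\iota+1)$ recording how many sheets separate consecutive diagonals. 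Continuous parameters (the actual positions $t_\msigma\in\CC_t$, the lengths of diagonals, the strip heights $h_\ell$) do not affect the topology as long as no imaginary part of a weight crosses zero and no pole collides; so moving along a connected stratum of the parameter space in which these combinatorial data are fixed does not change the topology. Hence the number of topologies is at most the number of realizable combinatorial skeleton types, and it is infinite precisely when that set is infinite.

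First I would dispatch the finitely many "small" cases directly. For $(r,d)=(1,0)$ and $(0,1)$, Lemma \ref{oneasymptoticvalue} forces $\Psi_X$ to have a single finite critical (resp. asymptotic) value, the $(r,d)$--configuration tree is a single vertex with no edges, and there is a unique skeleton, giving exactly one topology — item (2a). For $(0,2)$ and $(1,1)$ the skeleton has two vertices and one edge $(\Delta_{1\mvarrho},\lambda_{1\mvarrho})$ with $\lambda_{1\mvarrho}\in\CC^*$; the only combinatorial freedom after normalizing the starting leaf is whether the second vertex lies on the level-0 sheet or not, but with only one edge the minimality condition pins $K=0$, and the remaining discrete choice (the relative vertical position of the two essential towers in $(0,2)$, resp. the pole tower and essential tower in $(1,1)$) yields exactly two inequivalent skeletons, hence two topologies — item (2b). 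For $(r\ge 2,0)$, $\Psi_X$ is a polynomial, the soul equals $\R_X$, there is no essential vertex, and the skeleton is a tree on $n\le r$ pole vertices carrying a partition $\sum\mu_\iota = r$ of the pole orders. Counting: at most $p(r)$ partitions; at most $r!$ labelings; at most $(r-1)\cdot 3^{r-1}$ for the choice of an oriented traversable tree shape together with the modular data $K(\msigma,\mrho)\bmod(\mu_\iota+1)$ at each of the $r-1$ edges (here the factor $3^{r-1}$ bounds the "up/level/down" type of each edge up to the equivalence of Definition \ref{equivalenciaArboles}, and $r-1$ bounds the placement of the level-0 subtree). Multiplying gives the stated bound $3^{r-1}\times(r-1)\times r!\times p(r)$ — item (2c). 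These bounds need not be sharp; only an upper bound is claimed, so I would be generous rather than precise.

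For item (1), I would argue that in each listed range the skeleton admits infinitely many pairwise inequivalent combinatorial types, and that distinct types genuinely give distinct topologies of $\Re{X}$. The source of infinitude is always an integer parameter $K(\msigma,\mrho)\in\ZZ$ that is \emph{not} subject to modular reduction: by Footnote \ref{modarith}, the reduction $K\bmod(\mu_\iota+1)$ only occurs at an edge incident to a pole vertex of ramification index $\mu_\iota+1$, so an edge incident to an \emph{essential} vertex (whose tower is linear, not cyclic) carries a genuine $\ZZ$ of inequivalent choices. Concretely: when $d\ge 3$ one has at least three essential vertices and can form a subtree with an edge between two essential vertices whose weight $\widetilde{\lambda}_{\sigma\rho}$ has $K(\sigma,\rho)$ running over $\ZZ$ (as in Example \ref{ejemplo5}); when $d=2$ and $r\ge 1$ one attaches a pole vertex to an essential tower and then an essential vertex above it, again with an unbounded vertical shift between the two essential sheets; when $d=1$ and $r\ge 2$, with a single essential tower and at least two pole vertices, one can stack the second pole's sheet arbitrarily high on the essential tower relative to the level-0 sheet. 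In each case, increasing $K$ by one changes the number of full turns of a semi-infinite or finite helicoid relative to the soul, which changes the cyclic word $\mathcal{W}_X$ at $\infty$ (more precisely the count of entire-sector blocks interleaved with half planes, cf. the Dynamical applications theorem), hence changes the topological type of the phase portrait of $\Re{X}$ — so one gets infinitely many topologies. Conversely, in the complementary cases $(0,0)$ (empty), $(1,0)$, $(0,1)$, $(1,1)$, $(0,2)$ the analysis of (2) already exhibited a \emph{finite} list, so the dichotomy is exactly as stated.

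The main obstacle I anticipate is not the upper bound in (2c), which is a crude multiplicativity estimate, but the verification in (1) that raising the unbounded integer $K$ really produces a \emph{topologically} distinct phase portrait rather than merely a distinct skeleton representative. One must rule out the possibility that some orientation-preserving homeomorphism of $\CW_z$ fixing $\infty$ conjugates the two portraits; this requires extracting a genuine topological invariant — the combinatorial structure of how the $\Re{X}$-invariant half planes and finite strips cyclically wrap around $\infty$, equivalently the admissible word $\mathcal{W}_X$ — and showing it is monotone in $K$. This is where the detailed structure of the $(r,d)$--skeleton near $\infty_\sigma$, together with the biholomorphism $\pi_{X,1}$ of Lemma \ref{LemmaRX} and Theorem \ref{HorizontalStripStructures}, does the real work; everything else is bookkeeping.
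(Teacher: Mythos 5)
Your overall strategy is the same as the paper's: reduce the count of topologies to a count of $(r,d)$--skeletons of $\Lambda_X$, dispatch $(1,0),(0,1)$ via Lemma \ref{oneasymptoticvalue}, handle $(0,2),(1,1)$ by noting the two branch points must share one sheet, get the polynomial bound for $(r\geq 2,0)$ by a crude multiplicative estimate, and produce infinitely many classes in the cases of item (1) from an unbounded integer $K(\msigma,\mrho)$ along a vertical tower attached to an essential vertex (no modular reduction there), exactly as in the paper.

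Two points need repair. First, in case (2b) your mechanism for the count ``two'' is wrong: once the minimality condition forces $K=0$, both branch points lie on the \emph{same} sheet and there is no residual ``relative vertical position of the two towers'' to choose. The actual dichotomy, as in the paper's Table \ref{drtopo} and the error-function Figure \ref{tabla-funcion-error}, is whether the single weight is real or not, i.e.\ whether $\Im{\lambda_{1\,2}}=0$ (all invariant components are half planes) or $\Im{\lambda_{1\,2}}\neq 0$ (exactly one finite-height strip appears); that is what produces the two topological classes. Second, your device for proving in item (1) that distinct values of $K$ give genuinely distinct topologies --- monotonicity of the cyclic word $\mathcal{W}_X$ at $\infty$ in $K$ --- is not substantiated and is delicate: the letter counts are constrained ($\varepsilon=2d$ and $h-e=2(d-r-1)$ by Theorem \ref{corresp-germen-palabra}), the number of parabolic letters is bounded by the number of edges of $\Lambda_X$, and the paper itself warns that $\mathcal{W}_X$ is not a global topological invariant. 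The paper instead reads distinctness directly off the skeleton/surface: increasing $K$ inserts additional sheets, i.e.\ additional invariant half-plane components separating the two horizontal subgraphs attached to the same tower, and this adjacency pattern of canonical invariant regions (half planes and strips, via Theorem \ref{HorizontalStripStructures}) is preserved by any orientation-preserving homeomorphism carrying trajectories to trajectories. Admittedly the paper is terse on this step too, but if you want a rigorous argument you should argue with the invariant-region decomposition rather than with the word at infinity.
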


For $X \in \E(r,0)$, the phase portrait
$\Re{X}$ on $\CC_{z}$ 
only has a finite number $n \leq r$ of multiple saddle points. 
These phase portraits 
were first studied by W.~M.~Boothby \cite{Boothby1}, \cite{Boothby2}, 
showing that they appear as the real part of 
certain harmonic functions (non necessarily polynomials);
in our framework, 
the imaginary part of 
$\Psi_X(z)=\int^z P(\zeta) \e^{-E(\zeta)} d\zeta$.

\begin{proof}

For each $(r,d)\in\big\{(r\geq2,1), (r\geq1,2),(r\geq0,d\geq3)\big\}$ there will be at least one topological
$(r,d)$--skeleton of $\Lambda_{X}$ with at least one vertical tower with two horizontal subgraphs attached to 
the \emph{same} vertical tower. 
These horizontal subgraphs are vertically separated from each other by an integer number 
$K(\sigma,\rho)$, of degree 2 vertices on a vertical tower. 
Hence there are an infinite number of different ways, described by $\{K(\sigma,\rho)\geq 1\}$, 
we can attach these two subgraphs to the vertical tower, 
each of which represents a different configuration in $\R_{X}$.

\smallskip
The remaining cases are 
$(r,d)\in\big\{ (0,1), (0,2), (1,1), (r\geq1,0)\big\}$.
\\
The cases $(0,1), (1,0)$ are trivial by Lemma \ref{oneasymptoticvalue}.
For cases $(0,2)$ and $(1,1)$: $\R_{X}$ has two branch points 
hence they must share the same sheet. 
Thus each one of these cases have exactly two topologies.
Case (0,2) is illustrated in Figure
\ref{tabla-funcion-error}.
Case $(r\geq 2,0)$ corresponds to $\Psi_{X}$ being a polynomial, 
hence the number of topologies is finite. 
\end{proof}

Table \ref{drtopo} presents a summary of the 
possible non topologically equivalent vector fields $\Re{X}$, for $X\in\E(r,d)$, that arise 
for different pairs $(r,d)$.
\begin{table}[htp]
\caption{Topologies of $\Re{X}$ for different pairs $(r,d)$.}
{\small
\begin{center}
\begin{tabular}{|c|c|c|l|}
\hline
& & & \\[-8pt]
$r$ & $d$ & \# of topologies & $(r,d)$--configuration trees $\Lambda_X$ \\
& & of $\Re{X}$ & chosen as a representative for the topological class\\[2pt]
\hline 
\hline
& & & \\[-8pt]
1 & 0 & 1 & $\Lambda_X=\{(p_{1},\widetilde{p}_{1},-1);\varnothing\}$\\[2pt]
\hline
& & & \\[-8pt]
0 & 1 & 1 & $\Lambda_X=\{(\infty_{1},a_{1},-\infty);\varnothing\}$\\[2pt]
\hline
& & & \\[-8pt]
0 & 2 & 2 & $\Lambda_X=\{(\infty_{1},a_{1},-\infty), (\infty_{1},a_{2},-\infty); \big( \Delta_{1\, 2}, K(1, 2) \big) \},$\\
& & & \ with $\Delta_{1\, 2}\in\CC^{*}$, two topologies: $\Delta_{1\, 2}\in\RR, \Delta_{1\, 2}\not\in\RR$\\[2pt]
\hline
& & & \\[-8pt]
1 & 1 & 2 & $\Lambda_X=\{(\infty_1,a_1,-\infty), (p_1,\widetilde{p}_1,-1); \big(\Delta_{1\, 2}, K(1\, 2) \big) \},$\\
& & & \ with $\Delta_{1\, 2}\in\CC^{*}$, two topologies: $\Delta_{1\, 2}\in\RR, \Delta_{1\, 2}\not\in\RR$\\[2pt]
\hline
& & & \\[-8pt]
2 & 0 & 3 & $\Lambda_X=\{(p_{1},\widetilde{p}_{1},-2); \varnothing \},$ gives rise to one topology.\\
& & & $\Lambda_X=\{(p_1,\widetilde{p}_1,-1), (p_2,\widetilde{p}_2,-1); \big(\Delta_{1\, 2}, K(1\, 2) \big) \},$\\
& & & \ with $\Delta_{1\, 2}\in\CC^{*}$, two topologies: $\Delta_{1\, 2}\in\RR, \Delta_{1\, 2}\not\in\RR$\\[2pt]
\hline
& & & \\[-8pt]
$r\geq3$ & 0 & finite & 
$\Lambda_X=\Big\{(p_1,\widetilde{p}_1,-\nu_1), 
\dots, (p_n,\widetilde{p}_n,-\nu_n);$\\ 
& &  & 
\qquad \qquad \ \ $\{\big(\Delta_{\iota \kappa}, K(\iota, \kappa)\big) \ \vert\ \iota,\kappa\in\{1,\dots,n-1\} \}  \Big\},$\\
& & & $1\leq n\leq r$ being the number of distinct poles\\[2pt]
\hline
& & & \\[-8pt]
$r\geq2$ & $1$ & infinite & 
$\Lambda_X=\Big\{(\infty_1,a_1,-\infty),$\\
& & & 
\qquad \qquad \ \ $(p_1,\widetilde{p}_1,-\nu_1), \dots, (p_n,\widetilde{p}_n,-\nu_n);$\\ 
& & & 
\qquad \qquad \ \ $\{\big(\Delta_{\msigma \mrho}, K(\msigma, \mrho)\big) \ \vert\ 
\msigma,\mrho\in\{1,\dots,n+1\} \}  \Big\},$\\
& & & $1\leq n\leq r$ being the number of distinct poles\\[2pt]
\hline
& & & \\[-8pt]
$r\geq1$ & $2$ & infinite & 
$\Lambda_X=\Big\{(\infty_1,a_1,-\infty), (\infty_2,a_2,-\infty),$ \\
& & & 
\qquad \qquad \ \ $(p_1,\widetilde{p}_1,-\nu_1), \dots, (p_n,\widetilde{p}_n,-\nu_n);$\\ 
& & & 
\qquad \qquad \ \ $\{\big(\Delta_{\msigma \mrho}, K(\msigma, \mrho)\big) \ \vert\ 
\msigma,\mrho\in\{1,\dots,n+2\} \}  \Big\},$\\
& & & $1\leq n\leq r$ being the number of distinct poles\\[2pt]
\hline
& & & \\[-8pt]
$r\geq0$ & $d\geq3$ & infinite & 
$\Lambda_X=\Big\{(\infty_1,a_1,-\infty), (\infty_2,a_2,-\infty),$ \\
& & &
\qquad \qquad \ \ 
$(\infty_3,a_3,-\infty), \dots, (\infty_d,a_d,-\infty),$ \\
& & &
\qquad \qquad \ \ 
$(p_1,\widetilde{p}_1,-\nu_1), \dots, (p_n,\widetilde{p}_n,-\nu_n);$\\ 
& & & 
\qquad \qquad \ \ $\{\big(\Delta_{\msigma \mrho}, K(\msigma, \mrho)\big) \ \vert\ 
\msigma,\mrho\in\{1,\dots,d+n\} \}  \Big\},$\\
& &  & $0\leq n\leq r$ being the number of distinct poles\\[2pt]
\hline
\end{tabular}
\end{center}
\label{drtopo}
}
\end{table}

\begin{remark}
\label{function-coarser-vectorfield}
As a side note, 
Theorem \ref{numbertop} and 
Proposition \ref{lema-arbol-3-0} 
show that the topological classification of 
functions is coarser than the
topological classification of phase portraits of vector fields, 
even for $\Psi_X$ and $X$ in $\E(r,d)$.
\end{remark}


\section{The essential singularity at $\infty$}\label{singularidad-esencial-al-infinito}

Our naive question;

\centerline{
\emph{how can we describe 
the 
essential singularity of $X$ at $\infty\in\CW_{z}$, 
for $X\in\E(r,d)$?,}} 

\noindent 
is answered in this section. 

\smallskip 

In \cite{AlvarezMucino} \S5, germs of singular analytic vector fields $X$ are studied;
the Poincar\'e--Hopf index theory 
and a certain version of the 
decomposition in angular sectors 
for essential isolated singularities
are established for the phase portrait of
$\Re{X}$.

\noindent In fact, 
starting with a simple closed anticlockwise orientated
path $\gamma$
enclosing $z_{\vartheta} 
\in \{p_1, \ldots, p_r, \infty \} \subset  \CW_z$
a pole, zero or essential singularity,
the notion of an admissible cyclic word 
$\mathcal{W}_{X}$ in the alphabet 
$\{H, E, P, \ent{}{}\}$ is 
well defined,
\begin{equation}
\label{de-germen-a-palabra}
\big( (\CW, z_{\vartheta}),\Re{X} \big)
\longmapsto 
\mathcal{W}_{X}.
\end{equation}

\begin{remark}
\label{definicion-sectores}
The letters in the alphabet are the usual angular sectors 
for vector fields: hyperbolic $H$, elliptic $E$, 
parabolic $P$ (see 
\cite{Andronov-L-G-M} p.~304, 
\cite{Arnold-Ilyashenko} p.~86) and 
Figure \ref{los-cuatro-sectores}.
A new  
\emph{class 1 entire sector $\ent{}{}$} 
based upon $\e^z \del{}{z}$ at infinity: 
recalling Diagram \ref{diagramaRX}, 
$\ent{}{}$ can be thought as the image under $\pi_{X,1}$ of a semi--infinite helicoid
contained in $\R_{X}$,
see Example \ref{ejemplo-un-va}, Figures \ref{figPiezasBasicas}.a, \ref{figejemplo-un-va} 
and \ref{los-cuatro-sectores}. 
For full details see \cite{AlvarezMucino} p.~151.
\begin{figure}[htbp]
\begin{center}
\includegraphics[width=0.95\textwidth]{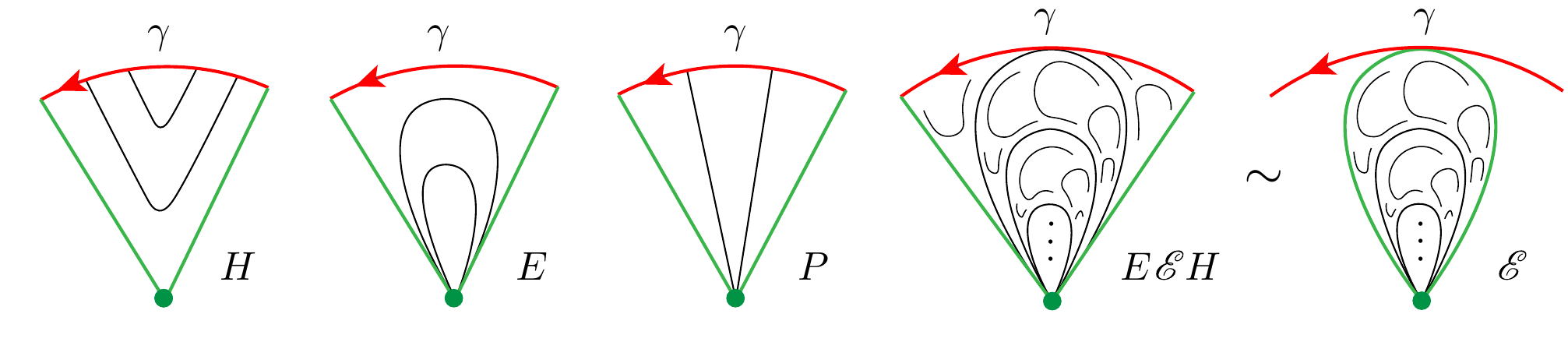}
\caption{ 
Hyperbolic $H$, elliptic $E$, parabolic $P$ and 
entire $\ent{}{}$ sectors at a singular point 
$(\CW_z, z_{\vartheta})$ of 
a vector field $\Re{X}$.
The point $z_{\vartheta}$ is green,  
the path $\gamma$ is shown in red.
The equivalent relation 
$E\ent{}{} H \sim \ent{}{}$
is illustrated on the right.
}
\label{los-cuatro-sectores} 
\end{center}
\end{figure}
\end{remark}

\smallskip
Specific attributions encoded by the word $\mathcal{W}_X$ 
in \eqref{de-germen-a-palabra}
are as follows.

\begin{enumerate}[label=\arabic*),leftmargin=*]
\item
\emph{Equivalence classes.}
The word $\mathcal{W}_X$ is well defined up to the
relations  

\centerline{ $E\ent{}{}H \sim \E $ 
\  and \ 
$H\ent{}{}E \sim \E $,}

\noindent 
according to \cite{AlvarezMucino} pp. 166--167.
Under this equivalence the word becomes independent of the 
choice of the path $\gamma$ enclosing the singularity.

\item
\emph{Poincar\'e--Hopf index.}
If
the number of letters $H$, $E$ and $\E$ that appear in a word 
$\mathcal{W}_{X}$ at $z_{\vartheta}$, 
is denoted by $h$,  $e$ and $\varepsilon$ respectively, then
the Poincar\'e--Hopf index formula is
\begin{equation}
\label{Poincare-Hopf-formula-a-la-Bendixon}
PH(X, z_{\vartheta}) = 1 + \frac{e-h + \varepsilon}{2}.
\end{equation}

\noindent 
Furthermore,
in theorem A p.~130 and \S6 of \cite{AlvarezMucino}, 
the Poincar\'e--Hopf index theorem
\begin{equation}
\label{Poincare-Hopf-formula-global}
\chi(\CW) = \sum PH(X, z_{\vartheta}) 
\end{equation}

\noindent 
is extended to include germs of 
singular analytic vector fields 
$X$ that determine an admissible cyclic word.

\item
\emph{Displacement of parabolic sectors.} 
As matter of record, each parabolic sector 
$P_\nu$ of $\mathcal{W}_X$
has a displacement number $\nu \in \CC \backslash \RR$,  
see \cite{AlvarezMucino} pp. 149--150. 

\item
\emph{The residue.} In fact the residue of 
the vector field germ is 

\centerline{$ Res(\omega_X ) \doteq 
Res(X, z_{\vartheta})= 
\frac{1}{2\pi i }\int_\gamma \omega_X$, }

\noindent recall 
that $\omega_X(X)=1$, also see
\cite{AlvarezMucino} p.~167.

\noindent 
Clearly for $X \in \E(r,d)$ all the residues are zero, since 
$\omega_X = P(z)\e^{-E(z)}dz $ is holomorphic on $\CC_z$.
\end{enumerate}

\smallskip

The action \eqref{LaAccion}, 

\centerline{
$
\mathcal{A}: Aut(\CC) \times \E(r,d) \longrightarrow \E(r,r), 
\ \ \
(T, X) \longmapsto T^*X$, 
}

\noindent 
is a valuable tool, see \cite{AlvarezMucino2}. 

\noindent 
Clearly, for each singularity 
$z_{\vartheta} \in \{p_1, \ldots , p_r, \infty\}$ of $X$,
the germ $\big( (\CW_z, z_{\vartheta }),X(z) \big)$ is

\noindent 
$\bigcdot$
a local analytic invariant (under the
local biholomorphisms of $(\CW_z, z_{\vartheta})$, and

\noindent 
$\bigcdot$
an analytic invariant under 
the above action of $Aut(\CC)$.

\begin{example}[Cyclic words at poles]
For $
X(z)=
\frac{1}{(z-p_{\iota})^{\nu_{\iota}} } \del{}{z}$,
the cyclic word
$\mathcal{W}_{X}$ consists of 
exactly $2(\nu_{\iota}+1)$ hyperbolic sectors $H$,
see Figure \ref{fig:forma-normal}:
\begin{equation}
\Big( (\CC_{z},p_{\iota}),
\Re{X}
\Big) \longmapsto 
\mathcal{W}_X=\underbrace{HH\cdots HH}_{2(\nu_{\iota}+1)}.
\end{equation}
\noindent 
The Poincar\'e--Hopf index 
is $PH(X,p_\iota)= - \nu_\iota$. 
\end{example}

\begin{example}[A cyclic 
word at $\infty$, from a zero of $X$]
Recall the rational vector field 
$
X(z)=\frac{1}{
(z-p_{1})^{\nu_{1}} (z-p_{1})^{\nu_{2}} 
} 
\del{}{z}$ in Example 
\ref{ejemplo-dos-polos}, 
in our language the description of the singularity 
at infinity is
\begin{equation}
\Big( (\CW_{z}, \infty),
\Re{X} \Big) \longmapsto 
\mathcal{W}_X=\underbrace{EE\cdots EE}_{\nu_1+ \nu_2 + 2 }.
\end{equation}
\noindent 
The Poincar\'e--Hopf index is 
$PH(X,\infty)= \nu_1+ \nu_2 +2 $,
also see Figure \ref{fig:forma-normal}.
\end{example}

\begin{example}
[The cyclic word at $\infty$
of the exponential vector field has two entire sectors]
Recall the exponential vector field 
$X(z)= \e^z \del{}{z}$
in Example 
\ref{ejemplo-un-va} 
and Figure \ref{figejemplo-un-va},
we have 
\begin{equation}
\Big( (\CW_{z}, \infty),
\Re{X} \Big) 
\longmapsto 
\mathcal{W}_X= 
E \ent{}{} H \ent{}{} \sim \ent{}{} \ent{}{}.
\end{equation}

\noindent 
The Poincar\'e--Hopf index of $X$ at $\infty$ is 
$2$.
\end{example}

\begin{example}[The error function]
\label{dos-campos-de-la-funcion-error}
The vector field

\centerline{
$X(z)= \mu \frac{\sqrt{\pi} }{4}  \e^{z^ 2} \del{}{z}$, 
\ \ \ $\mu \in \CC^*,$}

\noindent 
has associated the error function

\centerline{$\Psi(z) = \mu^{-1}\frac{2}{ \sqrt{\pi} }
\int_0 ^z \e^{-\zeta^2} d\zeta$.}

\noindent Case $\mu= 1$, the 
logarithmic branch points are 

\centerline{$\{ (\infty_1, -1, -\infty), (\infty_2, 1, -\infty),  
(\infty_3, \infty, -\infty), (\infty_4, \infty, -\infty) \}$,} 

\noindent 
using the notation in equations \eqref{essenvert}, and 
the $\Re{X}$--invariant decomposition is

\centerline{$
(\CW_z, X) = \bigcup_{\sigma=1}^\infty 
\big( \overline{\HH}^2_\sigma , \del{}{z} \big) . $}

\noindent The cyclic word is
$$
\Big( (\CW_{z}, \infty),
\Re{X} \Big) \longmapsto 
\mathcal{W}_X= 
E \ent{}{} H H \ent{}{} E \ent{}{} HH \ent{}{}.
$$
\noindent 
See  Figure \ref{tabla-funcion-error}. 

\noindent 
Case $\mu= i$, the 
logarithmic branch points are 

\centerline{$\{ (\infty_1, -i, -\infty), (\infty_2, i, -\infty),  
(\infty_3, \infty, -\infty), (\infty_4, \infty, -\infty) \}$,} 

\noindent and the $\Re{X}$--invariant decomposition is

\centerline{$
 (\CW_z, X) = \Big(\bigcup_{\sigma=1}^\infty 
 \big( \overline{\HH}^2_\sigma,  \del{}{z} \big)
 \Big) \
\cup \ \big( \{ -1 \leq \Im{z} \leq 1 \}, \del{}{z} \big).$}

\noindent The cyclic word is
$$
\Big( (\CW_{z}, \infty),
\Re{X} \Big) \longmapsto 
\mathcal{W}_X= 
E \ent{}{} H H  \ent{}{} P_{2i} E \ent{}{} HH \ent{}{} P_{-2i},
$$
note that 
the appearance of two opposite parabolic sectors 
having displacements $\pm 2i$
is due the
horizontal strip in the decomposition.
See  Figure \ref{tabla-funcion-error}. 

\noindent
In both cases the Poincar\'e--Hopf index of $X$ at $\infty$ is 
$2$.
\begin{figure}[htbp]
\begin{center}
\includegraphics[width=0.6\textwidth]{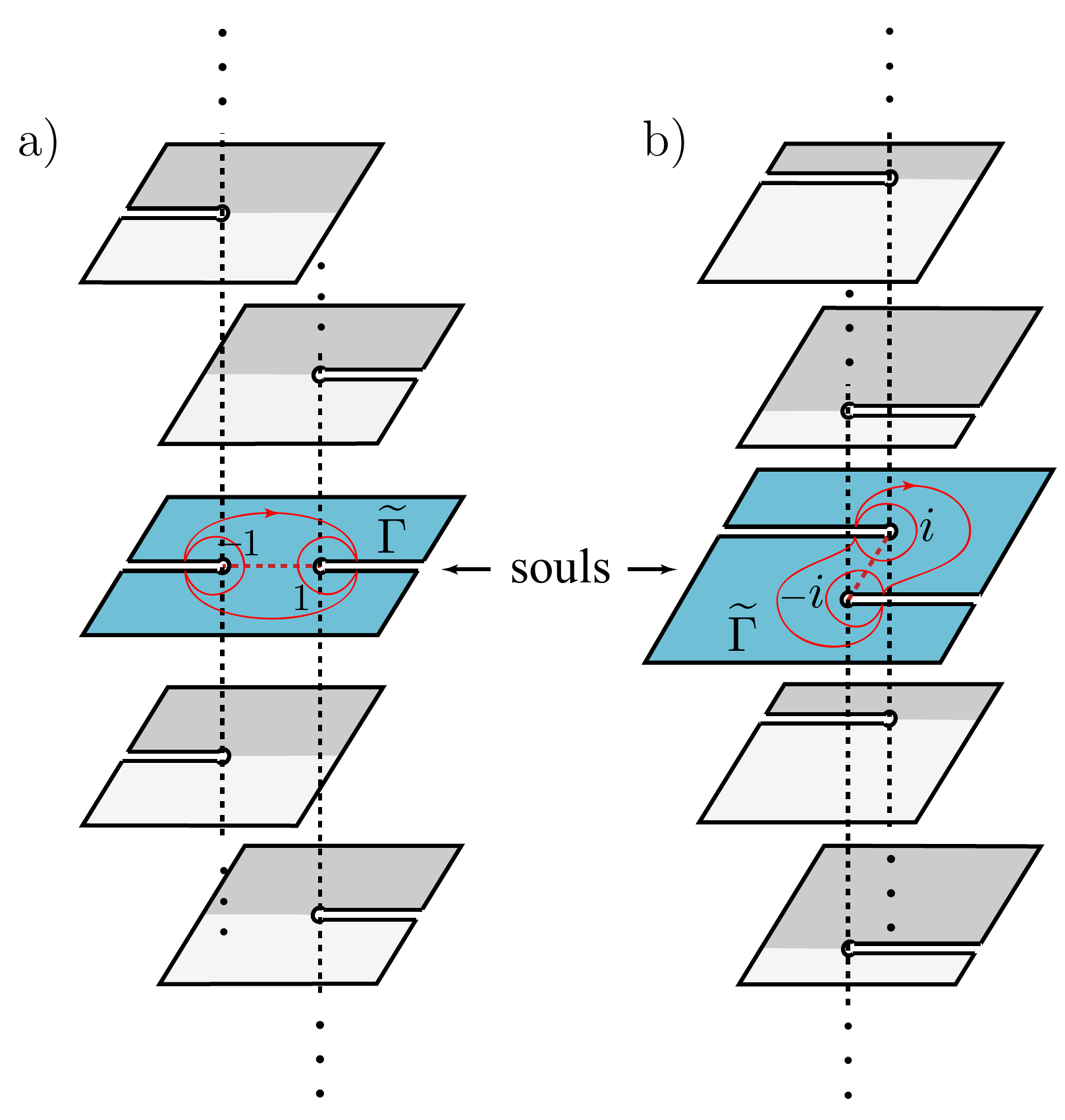}
\caption{ 
The Riemann surfaces of $X$ and $\e^{i \pi/2} X$
for $X \in \E(0,2)$. 
In (a) is the one associated to 
the error function 
$\Psi_{X_1}(z) = \frac{2}{ \sqrt{\pi} }
\int_0 ^z \e^{-\zeta^2} d\zeta$, having a diagonal
$\Delta_{1\,2}$ which determines a 
homoclinic trajectory
of $\Re{X}$.
In (b) is the one corresponding to 
$\Psi_{X_2}(z) = \frac{2i}{ \sqrt{\pi} }
\int_0 ^z \e^{-\zeta^2} d\zeta$.
The red curves represent taut $\widetilde{\Gamma}$'s that allow the 
recognition of the words.
The global topologies of the corresponding 
$\Re{X}$, $X \in \E(0,2)$, are described  in the third row of Table 2, 
and the germ of singularities at $\infty$ in Example 
\ref{dos-campos-de-la-funcion-error}.
}
\label{tabla-funcion-error} 
\end{center}
\end{figure}
\end{example}

\begin{remark}
\emph{The singularity
at infinity does not determine the analytic 
class of $X$.}
\label{el-germen-al-infinito-no-determina-la-clase-de-X}
1. For $X\in\E(3,0)$ having simple zeros,
all the  germs $\big( (\CW_z, \infty), X \big)$ 
are analytically equivalent. Thus the singularity
at infinity does not determine the analytic 
class of $X$ in $\E(3,0)/Aut(\CC)$.  

\noindent
2. We further note that the vector field 
$$
\widetilde{X}(p_3, z)=\mu \dfrac{2p_3 - 1}{12 z(z-1)(z- p_3)}
\e^{z^{4}}
\ddel{}{z}\in\E(3,4),
$$

\noindent
has the same behaviour in $\{z\in\CC\ \vert\ \abs{z}<R\}\subset\CC_{z}$, 
for adequate choices of $\mu\in\CC^{*}$ and  $R>0$,
as $X(p_3, z)$ given by \eqref{familia-polinomios-que-vienen-de-3-polos}, hence the singularity
at infinity does not determine the analytic 
class of $X$ in $\E(3,4)/Aut(\CC)$, see \cite{AlvarezMucino} \S9 and \S10.
\end{remark}
Thus, for vector fields  in $\E(r,d)$, $r+d \geq 3$,
the $Aut(\CC)$--equivalence notion from
\eqref{LaAccion} is very rigid.

We now have that for the essential singularity:

\begin{theorem}\label{corresp-germen-palabra}
\ 
\begin{enumerate}[label=\arabic*),leftmargin=*]
\item 
Let be $X\in\E(r,d)$,
the cyclic word $\mathcal{W}_{X}$ at $\infty$
is recognized as 
\begin{equation}
\big( (\CW_{z},\infty),\Re{X} \big) \longmapsto \mathcal{W}_X=W_{1} W_{2} \cdots W_{k}, 
\quad W_{\iota}\in\{ H,E,P,\ent{}{} \},
\end{equation}
with exactly $\varepsilon=2d$ letters $W_{\iota}=\ent{}{}$.

\noindent
Moreover, $h-e=2(d-r-1)$.

\item 
The word $\mathcal{W}_X$  
is a local topological invariant
of the germ $\big( (\CW_z, \infty), \Re{X} \big)$.

\item
Conversely, 
a germ of a singular complex analytic vector field 
$\big((\CC ,0), Y \big)$  
is the restriction of an $X \in \E(r,d)$ at $\infty$ 
if and only if 
the point $0$ is 
an isolated essential singularity 
of $Y$ 
and  satisfies that
\\
i) the residue of the word $Res(\omega_{Y} ) = 0$,
\\
ii) the Poincar\'e--Hopf index of the word $PH(Y, 0 ) = 2 + r$, 
\\
iii) $\mathcal{W}_{Y}$ is an admissible cyclic word
having exactly $2d$ entire sectors $\ent{}{}$.
\end{enumerate}
\end{theorem}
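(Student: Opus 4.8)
The three parts correspond to: (1) counting entire sectors and relating the sector counts to $(r,d)$; (2) promoting the word to a complete topological invariant; (3) a realization/characterization statement. I would organize the argument around the $\Re{X}$--invariant decomposition of Theorem~\ref{HorizontalStripStructures} and the dictionary of Proposition~\ref{basic-correspondence}, since the word $\mathcal{W}_X$ is, by construction (\cite{AlvarezMucino} \S5), read off from how the invariant regions (half planes, strips, finite and semi--infinite helicoids) accumulate along a small loop $\gamma$ around $\infty$.

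\textbf{Part (1).} First I would recall from Lemma~\ref{pareja-finita-infinita} that $\Psi_X$ has exactly $2d$ logarithmic branch points over $\CW_t$ ($d$ over finite asymptotic values, $d$ over $\infty$), and that in the construction of $\R_X$ from the soul (\S\ref{direccion2}, step~3) each such branch point is obtained by gluing a semi--infinite helicoid. Each semi--infinite helicoid contributes, near $\infty\in\CW_z$, exactly one entire sector $\ent{}{}$ to $\mathcal{W}_X$ — this is the defining local picture of $\ent{}{}$ coming from $\e^z\del{}{z}$ (Example~\ref{ejemplo3}, and \cite{AlvarezMucino} p.~151). Since no other building block in decomposition \eqref{BetterStripDecomposition} produces an entire sector, $\varepsilon = 2d$. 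For the relation $h-e = 2(d-r-1)$: apply the Poincar\'e--Hopf formula $PH(X,\infty) = 1 + \tfrac{e-h+\varepsilon}{2}$ of \S\ref{epilogo}(2) together with the global index theorem $\chi(\CW)=2 = \sum_{z_\vartheta} PH(X,z_\vartheta)$; the poles contribute $\sum_\iota (-\mu_\iota) = -r$ (Example ``Cyclic words at poles''), so $PH(X,\infty) = 2+r$, and substituting $\varepsilon=2d$ gives $e-h = 2(d-r-1)$, i.e. $h-e = 2(d-r-1)$.

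\textbf{Part (2).} The word is already known to be a well-defined local analytic invariant (the affine action, \S\ref{epilogo}) and, after passing to the equivalence classes $E\ent{}{}H\sim\ent{}{}$, independent of $\gamma$. To see it is a \emph{complete} topological invariant of the germ, I would argue that the cyclic sequence of sectors together with their types completely determines the germ of the horizontal foliation of $\MQ_X$ near $\infty$ up to orientation-preserving homeomorphism: each sector has a standard model ($H$, $E$, $P$, $\ent{}{}$), the gluing along separatrices is rigid, and parabolic displacements $\nu\in\CC\setminus\RR$ do not affect the topology. Conversely two germs with the same word are glued from the same cyclic list of standard models, hence are topologically conjugate by a sector-by-sector homeomorphism matched along boundaries. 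The reference is Theorem~5.1 and \S5--6 of \cite{AlvarezMucino}; the new content here is only that we are in the essential (infinitely many sectors after unfolding, but finitely many after the $\sim$ reduction) case, which is already covered by the admissibility framework.

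\textbf{Part (3).} For the converse, the forward implication is immediate from parts (1) and (2) and the residue computation ($\omega_X$ holomorphic on $\CC_z$ forces $Res(X,\infty)=\tfrac{1}{2\pi i}\int_\gamma\omega_X = 0$). For the reverse implication — given $\big((\CC,0),Y\big)$ with isolated essential singularity, admissible word $\mathcal{W}_Y$, $Res(\mathcal{W}_Y)=0$, $PH(Y,0)=2+r$, and exactly $2d$ entire sectors — I would reconstruct an $X\in\E(r,d)$ realizing it. The idea: from $\mathcal{W}_Y$ build a candidate soul and helicoid structure (the $2d$ entire sectors dictate $2d$ semi--infinite helicoids, hence $d$ finite asymptotic values and $d$ logarithmic points over $\infty$; the remaining $H,E,P$ sectors and the index condition dictate $r$ as the total ramification over the finite critical values), assemble an $(r,d)$--configuration tree $\Lambda$, and invoke the Main Theorem to get $X\in\E(r,d)$ with $\R_X = \R_\Lambda$; then check that the germ of this $X$ at $\infty$ has word $\mathcal{W}_Y$ by part (2). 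The residue-zero hypothesis is exactly what guarantees no residual ``translation'' obstruction, i.e. that the word can actually close up around a holomorphic $\omega_X$.

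\textbf{Main obstacle.} The delicate point is the reverse implication of (3): showing that the combinatorial data extracted from an abstract admissible word $\mathcal{W}_Y$ (sector types, displacements, the index and residue constraints) is \emph{exactly} enough to build a legitimate $(r,d)$--configuration tree — in particular that one can consistently choose the finite asymptotic/critical values and the inter-sheet integers $K(\msigma,\mrho)$ so that \eqref{pesos-lambdas} and the minimality/preferred-subtree conditions of Definition~\ref{d-confTree} hold, and that different admissible choices land in the same equivalence class $[\Lambda]$. Equivalently: verifying that the map from germs-with-admissible-word to $[\Lambda]$'s is well-defined and surjective onto the subset cut out by (i)--(iii). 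I expect this to require carefully matching the local sector combinatorics around $\infty$ with the global helicoid-gluing combinatorics of \S\ref{direccion2}, using Theorem~\ref{HorizontalStripStructures} as the bridge.
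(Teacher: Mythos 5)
Your parts (1) and (2) run essentially parallel to the paper. The index computation is the same: the paper also combines $PH(X,\infty)=1+\tfrac{e-h+2d}{2}$ with $2=\chi(\CW)=PH(X,\infty)-r$ to get $PH(X,\infty)=2+r$ and then $h-e=2(d-r-1)$ (note a small sign slip in your intermediate line: the substitution gives $e-h=-2(d-r-1)$, not $e-h=2(d-r-1)$; your final statement is the correct one). For the count $\varepsilon=2d$, you read it off from the $2d$ semi--infinite helicoids in the decomposition \eqref{BetterStripDecomposition}; the paper's device for making this rigorous is to lift a loop $\gamma$ around $\infty$ to $\Gamma$ in the soul of $\R_X$ and take a \emph{taut} deformation $\widetilde{\Gamma}$, with the rule that a letter $\ent{}{}$ is produced exactly when $\widetilde{\Gamma}$ bounces off a boundary of the soul --- and the soul has precisely $2d$ boundary rays $[a_\sigma,\infty)_\pm$. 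So your assertion that ``each semi--infinite helicoid contributes exactly one $\ent{}{}$ and nothing else does'' is correct but is exactly the point the taut-deformation argument is there to justify; as written you are asserting it rather than proving it. Part (2) is dispatched in the paper ``by simple inspection'' (resting on the sector framework of \cite{AlvarezMucino} \S5), which your standard-models/gluing sketch matches.

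Where you genuinely diverge is the reverse implication of (3). The paper does not rebuild a configuration tree from the abstract word: it invokes (a slight modification of) corollary 10.1 of \cite{AlvarezMucino}, which already realizes an admissible word with prescribed residue, index and entire-sector data as the germ at $\infty$ of a vector field of the relevant class; the only adjustment needed is to account for the $r$ poles via the same identity \eqref{66nueva}. Your plan --- extract soul/helicoid combinatorics from $\mathcal{W}_Y$, assemble an $(r,d)$--configuration tree, and apply the Main Theorem --- is a plausible and more self-contained route, but the ``main obstacle'' you flag (choosing asymptotic/critical values and the integers $K(\msigma,\mrho)$ consistently, and checking well-definedness of the class $[\Lambda]$) is precisely the content that the cited corollary packages, and you leave it unexecuted. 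So as a blind proof your (1)--(2) are sound, while (3) in the converse direction remains a program rather than a proof unless you either carry out that reconstruction in detail or fall back on the realization result from \cite{AlvarezMucino} as the paper does.
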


\begin{proof} 
The proof of statement (1) follows 
the arguments in \S5, \S9 and \S10 
of \cite{AlvarezMucino}.

\noindent
Step 1: Take a simple path $\gamma \subset (\CW_z, \infty)$ enclosing only 
$\infty$ 
($\gamma$ does not enclose any poles of $X$).

\noindent
Step 2: Lift $\gamma$ to $\Gamma$ in $\R_{X}\subset\CW_{z}\times\CW_{t}$.
Note that a priori,
$\Gamma$ does not lie completely in the soul of $\R_{X}$, recall Definition \ref{soul}.

\noindent
Step 3: 
The singularity at $\infty$ of $X$ has a certain self--similarity
(as the examples in \S \ref{concreteexamples} shown),
hence in order to recognize a simple word describing it, a suitable 
deformation of 
$\Gamma$ is required. 
That is, we
deform $\Gamma$ to a \emph{taut} deformation $\widetilde{\Gamma}$ 
in the soul of $\R_{X}$.
For examples of a taut deformation $\widetilde{\Gamma}$ see Figures \ref{tabla-funcion-error} and \ref{TresValoresAsintoticos}. 
For the appropriate technical definitions and another example see
pp. 211--212 of \cite{AlvarezMucino}, in particular figure 17. 

\smallskip 
The taut deformation $\widetilde{\Gamma}$ recognizes 
letters $W_{\iota}$ at $\infty$ as follows:
\begin{itemize}[leftmargin=*]
\item letters $P$ when $\widetilde{\Gamma}$ crosses
finite height strip flows,

\item letters $H$ when $\widetilde{\Gamma}$ 
makes a half circle around 
a branch point of $\R_{X}$,

\item letters $E$ when $\widetilde{\Gamma}$ 
makes a half circle around (the branch point at)
$\infty$ on a sheet of $\R_{X}$,

\item letters $\E$ when $\widetilde{\Gamma}$ 
touches a component of the boundary of the soul;
see Figures \ref{los-cuatro-sectores} and \ref{tabla-funcion-error}.
\end{itemize}

\smallskip 

As for the difference $h-e$ between the number of sectors $H$ and $E$ appearing in the 
cyclic word $\mathcal{W}_{X}$ at $\infty$, 
we shall use 
the Poincare--Hopf index theory extended to these kinds of
singularities 
(Theorem A in \S6 of \cite{AlvarezMucino} with 
$M=\CW_{z}$). 

\noindent
From the fact that $X\in\E(r,d)$ 
has exactly $r$ poles (counted with order) in $\CC_{z}$
and since $PH(X,p_{\iota})=-\nu_{\iota}$ for a pole $p_{\iota}$ of order $-\nu_{\iota}$,
then equation (6.6) of \cite{AlvarezMucino} gives us

\begin{equation}\label{66nueva}
2=\chi(\CW)=PH(X,\infty)+\sum_{p_{\iota}\in\MP} PH(X,p_{\iota}) \\
=PH(X,\infty)-r.
\end{equation}

On the other hand from equation (6.5) of \cite{AlvarezMucino}  

\centerline{
$PH(X,\infty)=1+\frac{e-h+2d}{2}$,
}

\noindent 
the result follows.

Assertion (2) follows by simple inspection. 

For assertion (3), use
a slight modification of corollary 10.1 of 
\cite{AlvarezMucino}. 
The only change arises from the fact that $X\in\E(r,d)$ 
has exactly $r$ poles (counted with order) in $\CC_{z}$. 
Once again, by \eqref{66nueva} the result follows.
\end{proof}

\begin{example}[Cyclic words at $\infty$]
1. Recall the vector field in Example \ref{ejemplo-alma-no-trivial}
$$X(z)=\dfrac{\e^{z}}{(z-9i\frac{\pi}{2}) (z+i\frac{\pi}{2})}\del{}{z}$$
Figure \ref{figejemplo-alma-no-trivial} shows the $(2,1)$--skeleton of $\Lambda_{X}$ 
together with the soul of $\Lambda_{X}$. Here we also show a \emph{taut} curve 
$\widetilde{\Gamma}_1\cup\cdots\cup\widetilde{\Gamma}_{10}=\widetilde{\Gamma}(\tau)
=\big( \gamma(\tau),(\Psi_{X}\circ\gamma)(\tau) \big) \subset\R_{X}$ 
where $\gamma(\tau)$ is a simple closed curve enclosing $\infty\in\CW_z$ with $p_1=9i\frac{\pi}{2}$
and $p_2=-i\frac{\pi}{2}$ lying in its exterior. 
As shown in \cite{AlvarezMucino} \S9.1, we can read the admissible cyclic word 
\begin{equation}
\mathcal{W}_X=
\underbrace{E E}_{\widetilde{\Gamma}_1}
\underbrace{P}_{\widetilde{\Gamma}_2}
\underbrace{EE}_{\widetilde{\Gamma}_3}
\underbrace{EE}_{\widetilde{\Gamma}_4}
\underbrace{P}_{\widetilde{\Gamma}_5}
\underbrace{EE}_{\widetilde{\Gamma}_6}
\underbrace{E P E \E H H}_{\widetilde{\Gamma}_7}
\underbrace{HH}_{\widetilde{\Gamma}_8}
\underbrace{HH}_{\widetilde{\Gamma}_9}
\underbrace{H H \E E P E}_{\widetilde{\Gamma}_{10}},
\end{equation}
so that the number of elliptic, hyperbolic and entire sectors are $e=12$, $h=8$ and $\varepsilon=2$
respectively.
Thus we may calculate the Poincar\'e--Hopf index at $\infty$, 
which in this case turns out to be 
$$
\begin{array}{ll}
PH(X,\infty)& = 
1+
\dfrac{e-h+\varepsilon}{2}
\\
&=1+3=4.
\end{array}
$$

\noindent 
2. Recall the vector field 
$X(z)=\frac{-\e^{z^3}}{ 3z^2} \del{}{z}$
in Example \ref{ejemplo-E23},  
\begin{equation}
\Big( (\CW_{z}, \infty),
\Re{X} \Big) 
\longmapsto 
\mathcal{W}_X= \ent{}{} \ent{}{}  \ent{}{} \ent{}{}\ent{}{} \ent{}{}
\end{equation}
\noindent 
The Poincar\'e--Hopf index of $X$ at $\infty$ is 
$4$.

\noindent 
3. Recall the vector field 
$X(z)=\frac{\e^{z^3}}{ 3z^3-1} \del{}{z}$
in Example \ref{ejemplo-E33},  
\begin{equation}
\Big( (\CW_{z}, \infty),
\Re{X}\Big) 
\longmapsto 
\mathcal{W}_X= \ent{}{} EE \ent{}{}  \ent{}{} \ent{}{}\ent{}{} \ent{}{}.
\end{equation}
\noindent 
The Poincar\'e--Hopf index of $X$ at $\infty$ is 
$5$.
\end{example}

In the next example we show that it is possible to calculate, among other things,
the Poincar\'e--Hopf index.

\begin{example}\label{EjemploPHrevisit}[Example \ref{EjemploPH} revisited.]
Considering Example \ref{EjemploPH} once again, 
Figure \ref{figPoincHopf-2} shows the $(r,4)$--skeleton of $\Lambda_{X}$ 
together with the soul of $\Lambda_{X}$. Here we also show a \emph{taut} curve 
$\widetilde{\Gamma}(\tau)=\big( \gamma(\tau),(\Psi_{X}\circ\gamma)(\tau) \big) \subset\R_{X}$ 
where $\gamma(\tau)$ is a simple closed curve enclosing $\infty\in\CW_z$ with $p_1$
and $p_2$ lying in its exterior. 
As shown in \cite{AlvarezMucino} \S9.1 and \S6, we can read the admissible cyclic word 
\begin{multline}
\mathcal{W}_X=
\underbrace{E E}_{\widetilde{\Gamma}_1}\
\underbrace{P \E H H \E E P E}_{\widetilde{\Gamma}_2}
\underbrace{EE}_{\substack{\widetilde{\Gamma}_3\\ (\nu_1 -2)\\ \text{copies}}}
\underbrace{P}_{\widetilde{\Gamma}_4}
\underbrace{EE}_{\substack{\widetilde{\Gamma}_5\\ -K(2,3)-1\\ \text{copies}}}
\underbrace{P \E H H \E E P E}_{\widetilde{\Gamma}_6}
\\
\underbrace{EE}_{\substack{\widetilde{\Gamma}_7\\ -K(2,4)+K(2,3)-1\\ \text{copies}}}
\underbrace{P}_{\widetilde{\Gamma}_8}\
\underbrace{EE}_{\substack{\widetilde{\Gamma}_9\\ \nu_2 \\ \text{copies}}}\
\underbrace{E P E}_{\widetilde{\Gamma}_{10}}
\underbrace{EE}_{\substack{\widetilde{\Gamma}_{11}\\ -K(2,6)+K(2,4)-1\\ \text{copies}}}
\underbrace{P \E H H \E E P E \E H H}_{\widetilde{\Gamma}_{12}}
\\
\underbrace{HH}_{\substack{\widetilde{\Gamma}_{13}\\ -K(2,6)+K(2,4)-1\\ \text{copies}}}
\underbrace{H H}_{\widetilde{\Gamma}_{14}}
\underbrace{HH}_{\substack{\widetilde{\Gamma}_{15}\\ -K(2,4)+K(2,3)-1\\ \text{copies}}}
\underbrace{H H}_{\widetilde{\Gamma}_{16}}
\underbrace{H H}_{\substack{\widetilde{\Gamma}_{17}\\ -K(2,3)-1\\ \text{copies}}}
\underbrace{H H \E E P E}_{\widetilde{\Gamma}_{18}},
\end{multline}
and thus calculate the Poincar\'e--Hopf index at $\infty$, 
which in this particular case turns out to be 
$$
\begin{array}{ll}
PH(X,\infty)& = 
1+
\dfrac{1}{2} 
\big(2\big( \nu_1+\nu_2 - K(2,6) + 1 \big)
-2\big( - K(2,6) + 4 \big) + 8 \big)
\\
&=2+\nu_1+\nu_2.
\end{array}
$$

\begin{figure}[htbp]
\begin{center}
\includegraphics[width=\textwidth]{23-PoincareHopf-2.pdf}
\caption{{\bf The soul of $\Lambda_{X}$ and the cyclic word $\mathcal{W}_{X}$.}
The left hand side shows the $(r,4)$--skeleton of $\Lambda_{X}$ of Example \ref{EjemploPH}, 
in the middle the corresponding soul together with the curve $\widetilde{\Gamma}$ 
decomposed into the corresponding $\widetilde{\Gamma}_{\iota}$ on each sheet of the soul. 
On the right hand side are the corresponding syllables of the cyclic word $\mathcal{W}_{X}$
around $\infty\in\CW_{z}$. See Example \ref{EjemploPHrevisit}.
}
\label{figPoincHopf-2}
\end{center}
\end{figure}
\end{example}

\section{Relations with other works}

\subsection{The case that all critical and asymptotic 
values are real}

Recall the following result. 
\begin{theorem}[Eremenko et al., 
\cite{EremenkoGabrielov}, \cite{EremenkoGabrielov2}]
\emph{
If all critical points of a rational function 
$\frac{Q}{P}(z)$ are real, 
then it is equivalent to a real rational function.}
\end{theorem}
\noindent
This immediately implies that for such a rational function all the critical values are also real.

Motivated by the above, 
we have.

\begin{corollary}[Real critical and asymptotic values]
\hfill
\begin{enumerate}[label=\arabic*),leftmargin=*]
\item If all critical and asymptotic values  
of $\Psi_X$ for $X \in \E(r,d)$ are in $\RR$, 
then the following assertions hold. 

\begin{enumerate}[label=\alph*),leftmargin=*]
\item  $\R_X$, as in \eqref{BetterStripDecomposition}, is the union of half planes.

\item  $\Psi_X : U \subset \CW \longrightarrow \HH^2$ 
is a Schwartz--Christoffel map, for each half plane $U$.

\item $X$ is unstable in $\E(r,d)$, thus a bifurcation for 
$\Re{X}$ occurs. 
\end{enumerate}

\item The 
critical and 
asymptotic values are in $\RR$ if and only if 
the family of rotated vector fields 
$\{ \Re{ \e^{i \theta} X} 
\ \vert \ 
\theta \in \RR/ 2\pi n \}$ bifurcates 
exactly at $\theta=n\pi$ for $n\in\ZZ$. 
\end{enumerate}
\hfill\qed
\end{corollary}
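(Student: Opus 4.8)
The plan is to leverage the decomposition \eqref{BetterStripDecomposition} together with the correspondence between diagonals in $\R_X$ and the differences $t_\mrho - t_\msigma$ of critical/asymptotic values, as recorded in \eqref{diagonalsemiresidue}. Throughout, note that $\Re{\e^{i\theta}X}$ has $\Psi_X$ as distinguished parameter composed with multiplication by $\e^{-i\theta}$ on $\CC_t$; equivalently, rotating $X$ by $\e^{i\theta}$ rotates the horizontal foliation of $\MQ_X$ by angle $\theta$, so the $\Re{\e^{i\theta}X}$--invariant pieces of Theorem \ref{HorizontalStripStructures} are governed by the imaginary parts of $\e^{-i\theta}(t_\mrho-t_\msigma)$ and of $\e^{-i\theta}(t_\mrho-t_\msigma)$ evaluated on the weights $\widetilde\lambda_{\msigma\mrho}$ of $\Lambda_X$.

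First I would prove (1). Assume all critical and asymptotic values $\{t_\msigma\}$ of $\Psi_X$ lie in $\RR$. For (a): every diagonal $\Delta_{\msigma\mrho}\subset\R_X$ has $\pi_{X,2}(\Delta_{\msigma\mrho})=\overline{t_\msigma t_\mrho}\subset\RR$, so by \eqref{diagonalsemiresidue} each weight $\widetilde\lambda_{\msigma\mrho}$ is a nonzero \emph{real} multiple of a root of unity coming from the sheet count, hence has $\Im{\lambda_{\msigma\mrho}}=0$; in particular no finite-height strip $\{0\le\Im{z}\le h_\ell\}$ with $h_\ell>0$ can occur in the fourth row of \eqref{BetterStripDecomposition}, because such strips correspond exactly to non-horizontal diagonals (Theorem \ref{HorizontalStripStructures}). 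The finite and semi-infinite helicoids of the second and third rows are themselves unions of half planes by Definition \ref{piezasFatou}. Therefore $\R_X$ is a union of half planes, proving (a). Part (b) is then immediate: restricted to each half plane $U$ the distinguished parameter $\Psi_X$ maps $U$ biholomorphically onto $\HH^2_\pm$, sending the branch cuts (whose vertices are the real points $t_\msigma$) to boundary segments of $\HH^2$, which is precisely the defining property of a Schwarz--Christoffel map. For (c): since there is at least one diagonal (Lemma \ref{pareja-diagonales}, using $d+n\ge 2$; the one-value cases $(r,0)$ and $(0,1)$ are handled separately and are themselves structurally unstable or trivial) with $\Im{\lambda_{\msigma\mrho}}=0$, the criterion of Theorem \ref{estabilidadestructural} fails, so $X$ is unstable in $\E(r,d)$.

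Next I would prove (2). One direction: if all critical and asymptotic values are real, then rotating by $\e^{i\theta}$ sends them to $\e^{i\theta}t_\msigma$; for $\theta\notin\pi\ZZ$ these have strictly nonzero (and all same-signed) imaginary parts, so in particular all diagonals become non-horizontal and Theorem \ref{estabilidadestructural} gives structural stability of $\Re{\e^{i\theta}X}$ for every such $\theta$ in a punctured neighbourhood of $n\pi$, whereas at $\theta=n\pi$ the values are again real and, by part (1c), $\Re{\e^{in\pi}X}$ is unstable; hence a bifurcation of the rotated family occurs exactly at $\theta=n\pi$. Conversely, suppose the rotated family $\Re{\e^{i\theta}X}$ bifurcates at $\theta=n\pi$ for all $n\in\ZZ$. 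By Theorem \ref{estabilidadestructural} applied to $\e^{i\theta}X$, instability at $\theta=n\pi$ forces either a multiple pole (whose order is rotation-invariant, so this would make the family unstable for \emph{all} $\theta$, contradicting that a bifurcation is a genuine change) or the existence of an edge $\Delta_{\msigma\mrho}$ with $\Im{\e^{-in\pi}\widetilde\lambda_{\msigma\mrho}}=\pm\Im{\widetilde\lambda_{\msigma\mrho}}=0$; running this for $n=0$ shows every ``bifurcating'' edge already has real weight, and since the minimality and connectedness of $\Lambda_X$ (Definition \ref{d-confTree}) propagate realness of one weight to realness of all critical and asymptotic values via telescoping sums $t_\mrho-t_{\msigma'}=\sum(t_\mrho-t_\msigma)$ along the tree, we conclude all $t_\msigma\in\RR$ (after the affine normalization fixing $t_1\in\RR$).

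The main obstacle I anticipate is the converse direction of (2): one must rule out the possibility that the rotated family bifurcates at $\theta=n\pi$ for a reason \emph{other} than real weights — e.g. a coincidence among the heights of branch cuts on a common sheet (the ``preferred horizontal subtree'' condition of Definition \ref{d-confTree}) that is itself destroyed under rotation. Handling this cleanly requires arguing that such degeneracies are codimension-$\ge 1$ in $\theta$ and generically move, so that demanding a bifurcation at \emph{every} integer multiple of $\pi$ — a rigid arithmetic condition — can only be met by the genuinely real configuration; making ``bifurcates at $\theta=n\pi$ for all $n$'' precise and showing it is equivalent to the real-weight condition rather than merely implied by it is the delicate point, and I would phrase it using the fact that $\Im{\e^{-i\theta}(t_\mrho-t_\msigma)}$ is a sinusoid in $\theta$ vanishing on all of $\pi\ZZ$ iff $\Im{t_\mrho-t_\msigma}=0$.
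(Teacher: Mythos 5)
The paper offers no written argument for this corollary (it is stated with a \qed as an immediate consequence of Theorem \ref{HorizontalStripStructures} and Theorem \ref{estabilidadestructural}), and your treatment of part (1) and of the forward implication in part (2) is essentially the intended derivation: real values force every weight $\widetilde{\lambda}_{\msigma\mrho}$ to have zero imaginary part, which kills the finite--height strips in \eqref{BetterStripDecomposition}, exhibits the helicoids as unions of half planes, and violates the stability criterion of Theorem \ref{estabilidadestructural}. (Two small caveats you should tidy up: in the edge cases $(r,d)=(1,0),(0,1)$ the tree has no edges, so instability in (1c) and the bifurcation claim in (2) are vacuous rather than ``unstable or trivial''; and when $X$ has a multiple pole the rotated family is never structurally stable in $\E(r,d)$, so your forward argument for (2) --- ``stable off $\pi\ZZ$, unstable at $\pi\ZZ$, hence bifurcation'' --- must instead argue directly that the horizontal saddle connections present at $\theta\in\pi\ZZ$ break for nearby $\theta$, changing the topology, while no weight changes horizontality elsewhere.)

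The genuine gap is in your converse for (2). A bifurcation of $\Re{\e^{i\theta}X}$ at $\theta=n\pi$ yields, via Theorem \ref{estabilidadestructural}, only that \emph{some} edge has $\Im{\widetilde{\lambda}_{\msigma\mrho}}=0$; your claim that ``connectedness of $\Lambda_X$ propagates realness of one weight to realness of all critical and asymptotic values via telescoping sums'' is a non sequitur --- the sum of the weights along a path being relevant tells you nothing, since only one summand is known to be real, and a configuration with one real weight and one non--real weight does bifurcate at every $\theta\in\pi\ZZ$ without having all values real. The statement can only be an equivalence if ``bifurcates at $\theta=n\pi$'' is read as ``the bifurcation set of the rotated family is exactly $\pi\ZZ$'': each edge $\Delta_{\msigma\mrho}$ contributes the bifurcation angles $\theta\equiv-\arg(t_{\mrho}-t_{\msigma})\ (\mathrm{mod}\ \pi)$ (your sinusoid observation, applied \emph{per edge}), the multiple--pole condition is $\theta$--independent and contributes no isolated bifurcation values, and the union of these arithmetic progressions equals $\pi\ZZ$ if and only if every difference $t_{\mrho}-t_{\msigma}$ is real; only \emph{then} does connectedness of the tree enter, to conclude that all critical and asymptotic values lie on one horizontal line, hence in $\RR$ after the normalization fixing the base point $z_{0}$. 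Your closing paragraph gestures at this mechanism but does not replace the flawed telescoping step with it, so as written the converse is not proved.
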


\subsection{Relations with Bely{\u\i}'s functions}

A \emph{rational function $\frac{Q}{P}(z)$ 
is Bely{\u\i}} if it has only three 
critical values $\{ 0,1, \infty \}$,
the original source is \cite{Belyi}, see
\cite{LandoZvonkin} Ch. 2 for current developments.
Recently Ch. J. Bishop \cite{Bishop} 
develops analogous combinatorial and analytic ideas
for entire functions. 

By Lemma \ref{pareja-finita-infinita}, 
we have that for $X\in\E(r,d)$, the distinguished parameter
$\Psi_X$ has an even number $2d$ of asymptotic values (counted 
with multiplicity). Hence,
the construction of a certain $\Psi_X(z)$ having  
three asymptotic values, say at $\{0, 1, \infty\}$ 
set theoretically, as in 
Bely{\u \i}'s theory, is possible.

\begin{example} Let
$X(z)= \frac{\sqrt{\pi} }{4} \e^{z^ 2} \del{}{z}$ 
be as in Example 
\ref{dos-campos-de-la-funcion-error},
having associated the error function
$\Psi(z) = \frac{2}{ \sqrt{\pi} }\int_0 ^z \e^{-\zeta^2} d\zeta$
with logarithmic branch points,
 
\centerline{$\{ (\infty_1, -1, -\infty), 
(\infty_2, 1, -\infty ),  
(\infty_3, \infty, -\infty), (\infty_4, \infty, -\infty) \}$,} 

\noindent 
using the notation in Equations \eqref{essenvert} and 
Definition \ref{divisor-reducido}. 
In set theoretically language, its asymptotic values are
$\{ -1 , 1 , \infty\} $, hence
it is a Bely{\u\i} function. 
\end{example}

\begin{example}{\it 
A transcendental Bely{\u \i} function
that does not belong to the family
$\Psi_X$, for $X \in \E(r,d)$.
}
\label{TresValoresAsintoticos-texto}
With the present techniques, we can describe the following vector field arising from a transcendental Bely{\u \i} function due to R. Nevanlinna
\cite{Nevanlinna1} p.\,292.
Let $\R$ be the Riemann surface that consists of 
half a Riemann sphere (cut along the extended real line 
$\RR\cup\{\infty\}\subset\CW$)
glued to three semi--infinite towers of copies of 
$\CW\backslash (a,b]$ where
$(a,b]\in \{ (-\infty,0], (0,1],(1,\infty] \}$, as in 
Figure \ref{TresValoresAsintoticos}.
The general version of the dictionary 
(\cite{AlvarezMucino} Lemma 2.6)
shows 
that a transcendental function 
$\Upsilon(z) :\CC_z \longrightarrow \CW_{t}$ and a 
vector field $X(z)= \frac{1}{\Upsilon^\prime (z)} \del{}{z}$ are
associated to $\R$. 

\noindent 
The (logarithmic) branch points
of $\Upsilon(z)$ are 

\centerline{$\{ (\infty_1, 0), (\infty_2, 1),  
(\infty_3, \infty) \}$.} 

\noindent 
Of course there is only one such possible Riemann surface 
(up to M\"obius transformation).
Compare also with the line complex description as 
in p.~292 of \cite{Nevanlinna1}.

\noindent The cyclic word is
$$
\Big( (\CW_{z}, \infty),
\Re{X} \Big) \longmapsto 
\mathcal{W}_X= 
H \ent{}{} E  \ent{}{} H \mathcal{T}.
$$
\noindent 
Note the appearance of a new angular sector 
$\mathcal{T}$ having an accumulation point
of double zeros of $X$:
the phase portrait of $\Re{X}$ is obtained by considering the pullback 
of $\Re{\del{}{t}}$ via $\Upsilon$,   
see Figure \ref{tres-sectores-2}.c.
The 1--order of $X$ is finite 
and at least 1.

\begin{figure}[htbp]
\begin{center}
\includegraphics[width=0.80\textwidth]{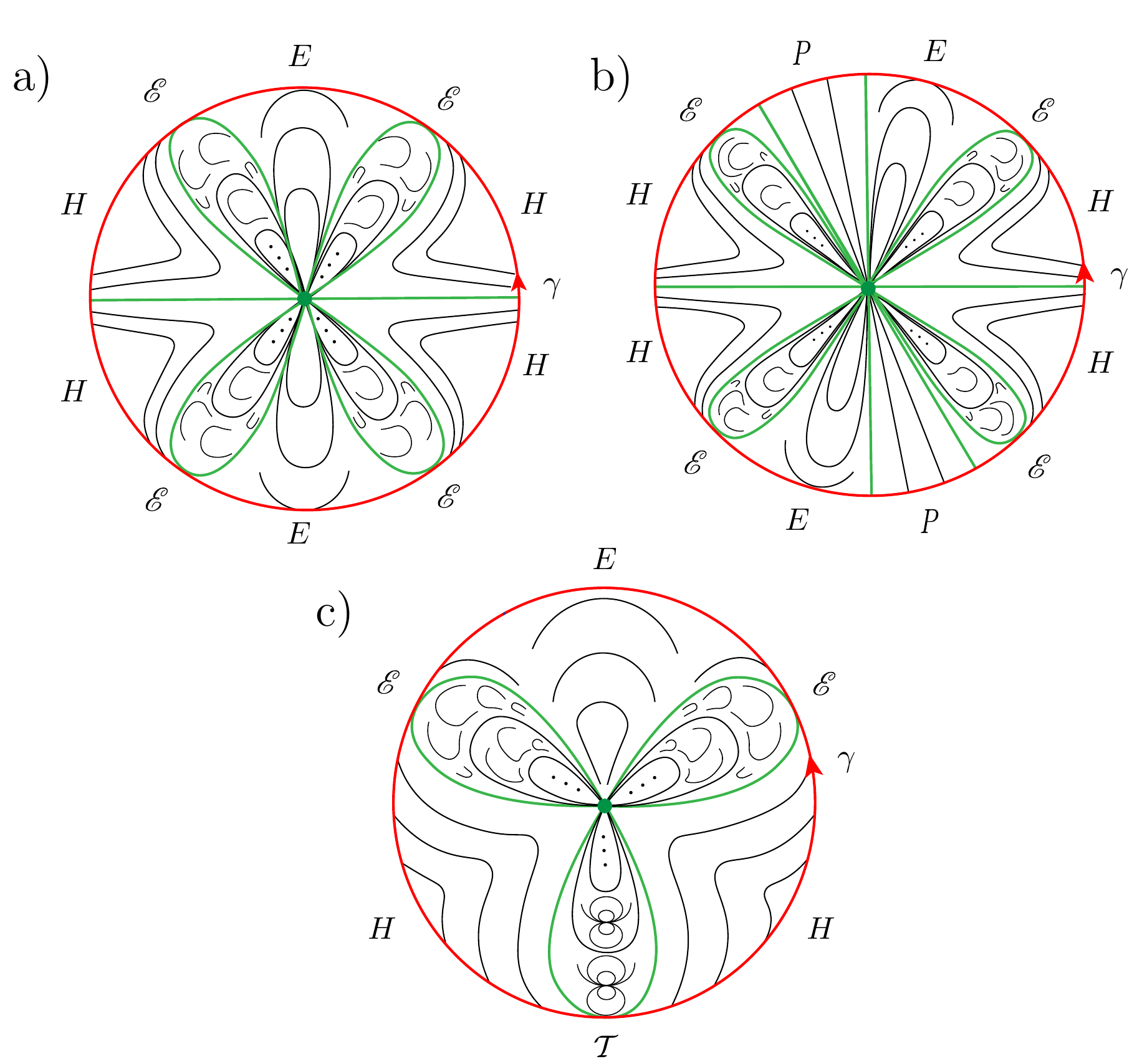}
\caption{
The cyclic words (a)--(b)
appearing in Examples \ref{dos-campos-de-la-funcion-error} 
and (c) in \ref{TresValoresAsintoticos-texto}.
Numerical models for (a)--(b)  
appeared as figures 15 and 16 in \cite{AlvarezMucino}.
}
\label{tres-sectores-2}
\end{center}
\end{figure}

\begin{figure}[htbp]
\begin{center}
\includegraphics[width=0.85\textwidth]{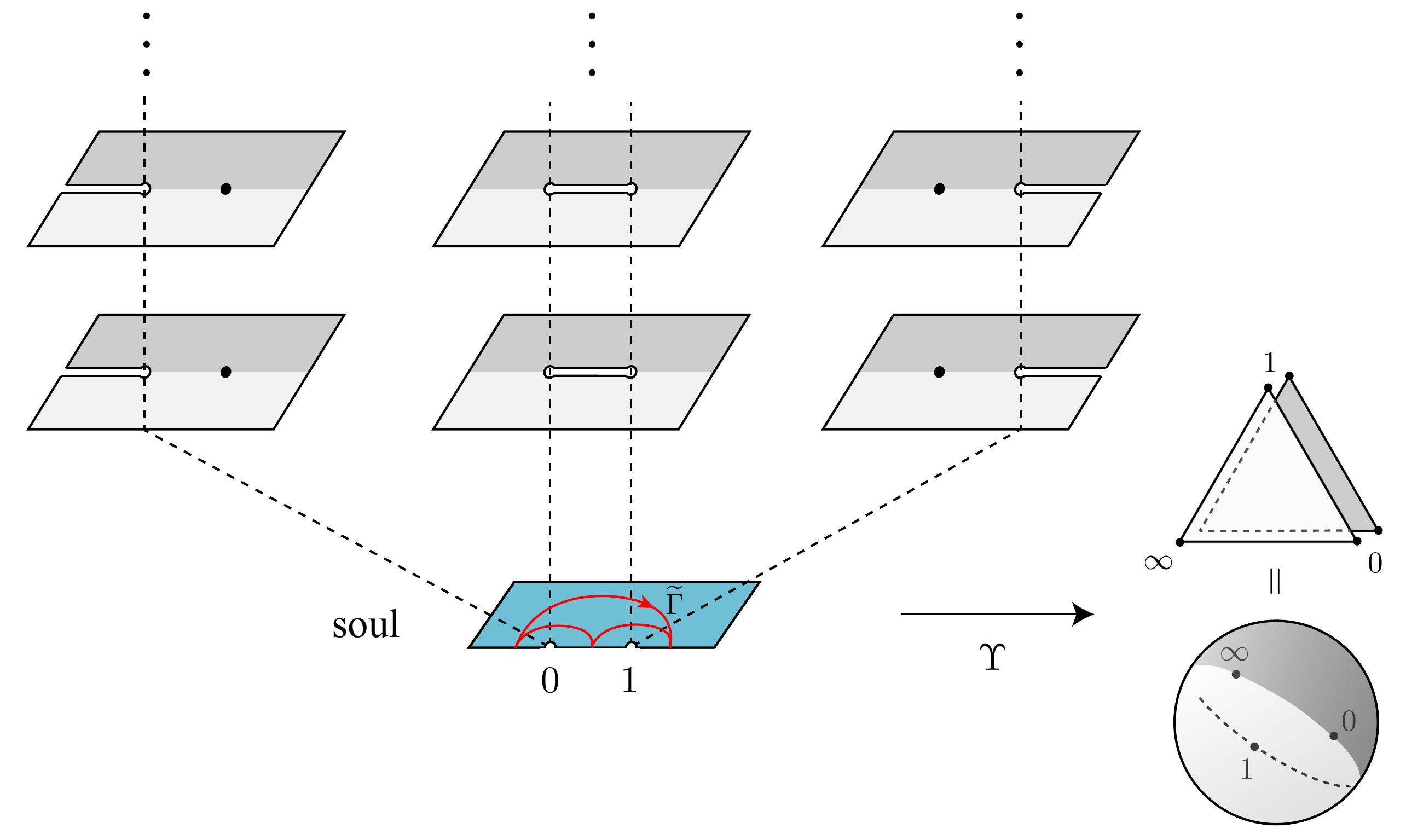}
\caption{
Riemann surface corresponding to a 
transcendental 
Bely{\u\i} function $\Upsilon$. 
The path $\widetilde{\Gamma}$ is a taut deformation of 
$\Gamma= (\Psi_X \circ \gamma)$ originated by a
$\gamma$ bounding
the singularity $\big( (\CW, \infty ), X_\Upsilon \big)$.
Note that topologically this is the only 
possible surface with exactly three logarithmic branch 
points. 
Compare with Lemma \ref{pareja-finita-infinita} and 
note that
the singularities in the central column are 
topologically different
from semi--infinite helicoids in 
Figure \ref{figPiezasBasicas}.
}
\label{TresValoresAsintoticos}
\end{center}
\end{figure}
\end{example}

\subsection{Relation with complex correspondence principle in mechanics}
C. Bender \emph{et al.} studies the relation between 
classical and quantum mechanics using a $\CC$
complex framework, see \cite{Bender-1}, 
\cite{Bender-2} and \cite{Bender-3}.
Motivated by the correspondence principle, asserting that quantum mechanics
resembles classical mechanics in the high--quantum--number limit. 
These works introduce the
concept of a local quantum probability density $\rho(z)$ 
in the complex plane.
C. Bender proposes the novel
approach of constructing a complex contour $C$ 
on which
$\rho(z)dz$ is an infinitesimal probability measure. 
Thus, $C$ must satisfies
$$
\begin{array}{rr}
\hbox{condition I:} & \ \ \Im{\rho(z) dz} =0,
\\
\hbox{condition II:} & \ \ \Re{\rho(z) dz}  > 0,
\\
\hbox{condition III:} & \ \ \int_C \rho(z)dz = 1,
\end{array}
$$

\noindent 
see \cite{Bender-1}.
In our language, we consider $\rho(z) dz$ as 
entire  1--form
on $\CC$. By using the singular complex analytic Dictionary Proposition 
\ref{basic-correspondence} and conditions I--II, we have that
$C$ can be interpreted as a trajectory of 

\centerline{
$\Re{\frac{1}{\rho(z)} \del{}{z}}$, \ where
$\frac{1}{\rho  (z)} \del{}{z} \in \E(r, 2)$, }

\noindent 
condition III is a certain normalization. 
The works of C. Bender {\it et al.} illustrate the application 
of the corresponding trajectory structures.

\subsection{Future work}
\label{future-work}

\subsubsection{Relations with  Dessin's d'enfants}
\label{Dessin's-d'-enfants}
In the combinatorial framework, 
dessin's d'enfants are well known 
plane bipartite trees associated to Bely{\u \i} functions;
in their modern form were promoted and named by A. Grothendieck.
Among other things, they have been used to study the action of the absolute Galois group of $\QQ$, see 
\cite{Grothendieck}.

By the singular complex analytic Dictionary Proposition 
\ref{basic-correspondence}, 
we may consider a polynomial Bely{\u \i} 
function $\Psi_{X}$ as the distinguished parameter 
of the associated vector field 
$X \in \E(r, 0)$, $r\geq 2$.  
That is, up to action of $Aut(\CC)$ on the target, 
we assume that the critical values of $\Psi_{X}$
are $\{0,1,\infty \}$. 
By assigning the color black to the vertices associated to the critical value 0 and 
the color white to the vertices associated to the critical value 1, we see that $\Lambda_{X}$
is a bipartite graph, as in the usual theory.

The extension of the theory for $\Psi_X$,
$X \in \E(r, d)$ with $r\geq 1$ is possible
and is an interesting subject. 
In fact, 
\emph{$(r,d)$--configuration trees $\Lambda_{X}$ that lie over exactly three critical or asymptotic
values are a natural extension of dessins d'enfant of the 
structurally finite Bely{\u\i} functions $\Psi_{X}$.}

\subsubsection{Topological classification of $\Re{X}$ for $X\in\E(r,d)$}
As suggested by the results of \S\ref{topoclassification}; 
a careful study of the $(r,d)$--skeleton of $\Lambda_X$ 
allows for a global topological classification of $\Re{X}$ for $X\in\E(r,d)$, 
in terms of the placement of the critical and asymptotic values. 
This study is in progress, however the technical language needed to 
provide a clear exposition would require too much space, thus it 
has been left for a future work. A particularly interesting case is
the bound for the number of 
topological classes $\{ \Re{X} \ \vert \ \E(r, 0)\}$, $r\geq 3$.

\subsubsection{Dynamical coordinates for other families of vector fields}
As Example \ref{TresValoresAsintoticos-texto} suggests, there are other families 
of vector fields where the construction of the dynamical coordinates $\Lambda_{X}$ 
is certainly possible.

\noindent
For instance, when considering the family 
\begin{equation*}
\E(s,r,d)=
\left\{ X(z)=\frac{Q(z)}{P(z)}\ \e^{E(z)}\del{}{z} \ \Big\vert \ 
\begin{array}{l}
Q,\ P,\ E\in\CC[z],  \\
\deg{Q}=s,\ \deg{P}=r,\ \deg{E}=d 
\end{array}
\right\},
\end{equation*} 

\noindent
as in
\cite{AlvarezMucino2}, 
we are presented with two intrinsically different cases:

\noindent
1) If $\Psi_X$ is single valued
(this is equivalent to requiring that the 
associated 1--form $\omega_X$ have all its residues 
zero), 
then vertices of the form 
$(q_\iota,\infty,\nu_\iota)$, corresponding to the zeros 
$\MZ=\{q_\iota\}_{\iota=1}^s$ of $X$, 
need to be added to the description of $\Lambda_X$.

\noindent
2) If 
$\Psi_X$ is multivalued  
(there appear at least two
non zero residues for $\omega_X$)
then extra structure will be required, 
because of the appearance of logarithmic singularities 
over those $q_\iota\in\CC_z$ where the associated 1--form 
has non--zero residue.

\subsubsection{On cyclic words}
\hfill\\
\textbf{Cyclic words as topological or analytical invariants for germs.}
The word $\mathcal{W}_X$ (as in Theorem \ref{corresp-germen-palabra}), 
is a local topological invariant
of a germ $\big( (\CW_z, \infty), \Re{X} \big)$, $X \in \E(r,d)$.

\noindent 
Moreover,
the word $\mathcal{W}_X$ in general, 
is not a global 
topological invariant of $X \in \E(r, d)$.
For example all the vector fields $X\in \E(r, 0)$, $r \geq 3$, 
with all critical and asymptotic values in $\RR$,
have the same word $\mathcal{W}_X = \underbrace{EE \cdots EE}_{2r +2}$ 
at $\infty$.

\noindent
However, it is possible to modify the definitions of angular sectors $P_\nu$, $E$ and $\ent{}{}$ so that 
in fact the corresponding $\mathcal{W}_X$ is a \emph{global analytic invariant} of $X$ modulo
$Aut(\CC)$. This is left for a future project.

\noindent
\textbf{Other angular sectors as letters for cyclic words.}
As shown in Example \ref{TresValoresAsintoticos-texto} and in 
examples 5.9, 5.12 and figures 2, 5 of \cite{AlvarezMucino}; there are certainly other possible 
angular sectors that can be used as letters for cyclic words.
In this context and considering the above examples, it is clear that there are an infinite number 
of topologically different angular sectors (letters) that can appear in a cyclic word associated to an 
essential singularity for a vector field $X$. 

\noindent
However, it is not immediately clear 
\emph{how many topologically different letters there are 
when we specify the $p$--order of $X$}, 
that is 
the coarse analytic invariant of functions and vector fields.
For instance, by once again considering Example \ref{TresValoresAsintoticos-texto}, 
$X(z)=\Upsilon^{*}(\del{}{t})(z)=\frac{1}{\Upsilon'(z)}\del{}{z}$;
we may also consider 
$Y(z)=\Upsilon^{*}(\mu t\del{}{t})(z)=\mu \frac{\Upsilon(z)}{\Upsilon'(z)}\del{}{z}$ 
which provides a (very) different vector field.


\section{Epilogue}\label{epilogo}
The second part of the proof of the Main Theorem provides a proof 
of the following 
result of independent interest.
Recalling Equations \eqref{poleenumeration}, \eqref{essenenumeration},
\eqref{puntos-criticos-sing-trascen} and \eqref{valores-criticos-asintoticos} 
we have.

\begin{corollary}
\label{Psi-con-valores-criticos-asintoticos-preasignados}
Given $r+d$ values

\centerline{
$\widetilde{p}_1, \ldots, \widetilde{p}_r, a_1, \ldots, a_d\in\CC_{t}$,} 

\noindent
possibly repeated, with the exception that  
if $d\geq 1$ there are at least two non--repeated values.
Then there exists a (non unique) vector filed
$X \in \E(r,d)$ having these $r+d$ ramification values, 
{\it i.e.} a
collection of 
$n+d$ 
realizable vertices, see \eqref{sistema-realizable},  

\centerline{
$\left\{ (p_{\iota},\widetilde{p}_{\iota},-\nu_{\iota})
\right\}_{\iota=1}^n \cup
\left\{
(\infty_{\sigma},a_{\sigma},-\infty)\right\}_{\msigma=1}^{d}$
}

\noindent 
for the corresponding $\Psi_{X}$. 
\end{corollary}

\begin{remark}
In our case, the complete collection of ramification values are 
$\{\widetilde{p}_1,  \ldots , \widetilde{p}_n, a_1,\ldots, a_d, 
\infty\} \subset \CW_t $.
Moreover, the use of ramification values $t_{\msigma}$ provides information 
of the moduli space of $\Psi_{X}$.
Recalling the classical Riemann's idea that 
for ramified cover maps over $\CW_t$
with $\mathfrak{n} \geq 4$ ramification values, we can 
specify three 
of them, and the other $\mathfrak{n}-3$ determine
holomorphic deformations of the cover maps
$\pi_{X, 2}$.
\end{remark}

Because of the singular complex analytic Dictionary 
Proposition \ref{basic-correspondence}, 
the works of R. Thom \cite{Thom} and J. Mycielski \cite{Mycielski},
describes the situation for polynomials 
$\Psi_{X}$, {\it i.e.} the case $d=0$.
However the answer is not unique, that is given a set 
of preassigned critical values 
$\{  \widetilde{p}_1,\ldots , \widetilde{p}_n \}$ 
there are a finite number of polynomials $\Psi_{X}$ 
with the above set as critical values, namely

\begin{theorem*}[Mycielski--Thom]\label{Mycielski}
Given $r$ points 
$\widetilde{p}_1, \ldots, \widetilde{p}_r
\in\CC_t$, there exist
$r$ points 
$p_1, \ldots, p_r \in\CC_z$ such that 
the (monic) polynomial of degree $r+1$ 

\centerline{
$\Psi(z)=(r+1)\int^z \prod\limits_{\iota=1}^r (\zeta-p_\iota) d\zeta$
}

\noindent
satisfies 
\begin{enumerate}[label=\arabic*),leftmargin=*]
\item 
$\Psi(p_\iota)=\widetilde{p}_\iota$ and
$\Psi^\prime(p_\iota) = 0$, for $\iota=1,\ldots,r$.
\item 
If $\beta$ occurs $k$ times in the collection 
$p_1, \ldots, p_r$ then 
$(z-\beta)^{k+1}$ divides $\Psi(z)-\Psi(\beta)$.
\hfill
\qed
\end{enumerate}
\end{theorem*}
\begin{proof}[Proof of Corollary
\ref{Psi-con-valores-criticos-asintoticos-preasignados}]
Recalling Equation \eqref{sistema-realizable},
we want to show that there exists: 

\begin{enumerate}[label=\alph*),leftmargin=*]
\item $r$ points $p_1,\ldots,p_r\in\CC_{z}$, determining a (monic) polynomial 
$P(z) = \prod\limits_{\iota=1}^r (z-p_\iota)$,
\item a polynomial $E(z)$ of degree $d$,  
\item $d$ asymptotic paths $\alpha_\sigma(\tau)$, 
\end{enumerate}
such that the distinguished parameter
$\Psi_{X}(z)=
\int^{z}  P(\zeta) \e^{-E(\zeta)} \, d\zeta$ satisfies 

\noindent 
i) 
$\Psi_{X}(p_\iota)=\widetilde{p}_\iota$
and $\Psi_{X}^{(\ell )} (p_\iota) = 0$, 
for $1 \leq \ell \leq \nu_{\iota}=1$,

\noindent 
ii) 
$\lim\limits_{\tau\to\infty} \Psi_{X}(\alpha_\sigma (\tau))=a_\sigma$, 
for $\sigma=1,\ldots,d$.

\noindent
Furthermore,
the polynomials $P(z)$ and $E(z)$ are non--unique.

Note that the \emph{geometrical} construction carried out in \S\ref{confTree-to-skeleton}
(the second part of the proof of the 
Main Theorem) can be carried out
\emph{by only specifying the critical and asymptotic values and the corresponding $K(\msigma,\mrho)$}
of the $(r,d)$--configuration tree, 
this uses Lemma \ref{oneasymptoticvalue} actively.

That is we can construct a Riemann surface $\R$ by glueing sheets $\CC_{t}$ with branch cuts starting at 
$\{ \widetilde{p}_1, \ldots, \widetilde{p}_r, a_1, \ldots, a_d \}$. 
As before, $\R$ is recognized as a simply connected Riemann surface $\R_{X}$ 
corresponding to some $X\in\E(r,d)$; thus showing that for every possible choice of 
$(\widetilde{p}_1, \ldots, \widetilde{p}_r, a_1, \ldots, a_d)\in\CC^{r+d}$ there are polynomials
$P(z)$ and $E(z)$ of degrees $r$ and $d$ respectively such that $\Psi_{X}$ has precisely
$\{ \widetilde{p}_1, \ldots, \widetilde{p}_r, a_1, \ldots, a_d \}$ as critical and asymptotic values.

For the non--uniqueness of the polynomials $P(z)$ and $E(z)$ note that: 
when $d=0$ generically, by Bezout's
Theorem, there are $(r+1)^{r}$ solutions of the system \eqref{sistema-realizable}. 
For the case 
$r=3,$ $d=0$ see Equation \eqref{tres-puntos-de-ramificacion}
and for
$r=0$, $d=3$, recall Example \ref{ejemplo-Exp3} where there are an
infinite number of solutions for each vertex of finite asymptotic values $a_1, a_2, a_3\in\CC_t$: 
each parameter $K(1,3)\in\ZZ$ provides a different $X\in\E(0,3)$. 
The general case now follows easily from the above examples.
\end{proof}
\begin{remark}
1. Corollary \ref{Psi-con-valores-criticos-asintoticos-preasignados} 
gives rise to a 
complex analytic set
in $\overline{\CC}_{z}^{\,r+d} \times \CC_{t}^{r+d}$ 
consisting of the sets of branch points that determine 
$\R_{X}$ with $X\in\E(r,d)$.

\noindent
2. Corollary \ref{Psi-con-valores-criticos-asintoticos-preasignados} can be interpreted as 
saying that the map from $\CC[z]_{=r}\times\CC[z]_{=d}$ to $\CC^{r+d}$ 
given by the 
Equation \eqref{sistema-realizable} is surjective.
\end{remark}

Recalling \eqref{campo-X-con-P-y-E} and \eqref{coeficientes-P-y-E}, 
there are two obvious ways of parametrizing $\E(r,d)$: 

\smallskip

\noindent
1) Specifying the coefficients of $P(z)$ and $E(z)$.

\smallskip 

\noindent 
2) Specifying the roots of $P(z)$ and $E(z)$ together with the non zero coefficient $\mu$.

\smallskip

\noindent 
Noting that the roots of $P(z)$ correspond to the poles of $X$, 
equivalently to the critical points of $\Psi_{X}$.
In the particular case of $d=0$, 
the usual geometrical/dynamical 
interpretation of (2) arises.
However, 
we are not aware of a geometrical/dynamical 
interpretation of the roots of $E(z)$;
compare with \cite{AlvarezMucino2} where a study of the 
discrete symmetries of $X$ is provided. 

\smallskip

\noindent
3) A third kind of ``parametrization'' is given by 
Corollary \ref{Psi-con-valores-criticos-asintoticos-preasignados}: 
given a set of critical and asymptotic values 
$\{  \widetilde{p}_{1},\ldots , \widetilde{p}_{n}, a_{1},\ldots, a_{d} \}$,
there are non--unique $X\in\E(r,d)$ such that the above set are precisely 
the critical and asymptotic values of $\Psi_{X}$.

\noindent
Note that the non--uniqueness arises from the solution of the
system of  transcendental equations \eqref{sistema-realizable}.

\smallskip 

Recalling Diagram \ref{diagramaRX}, parametrizations (2) 
case $d=0$
and (3)
can be represented in Diagram \ref{diagramaDificultades} by 
specifying the left hand side or the bottom part, respectively:

\begin{center}
\begin{picture}(180,60)(40,4)
\put(-51,15){\vbox{\begin{equation}\label{diagramaDificultades}\end{equation}}}

\put(-50,50){$\CC_{z} \supset \{ p_1,\ldots p_n, \infty_1, \ldots, \infty_d\} $}

\put(108,50){$  
\big\{ (p_{\iota},\widetilde{p}_{\iota},-\nu_{\iota})
\big\}_{\iota =1}^{n}
\cup
\big\{(\infty_{\sigma},a_{\sigma},-\infty) 
\big\}_{\sigma =1}^{d} \subset \R_X$}

\put(107,53){\vector(-1,0){27}}
\put(85,60){$\pi_{X,1}$}

\put(175,40){\vector(0,-1){25}}
\put(180,25){$ \pi_{X,2} $}

\put(25,42){\vector(3,-1){100}}
\put(55,17){$ \Psi_X $}

\put(128,6){$\{  \widetilde{p}_{1},\ldots , \widetilde{p}_{r},
a_{1},\ldots, a_{d} \} \subset \CC_t .$ }

\end{picture}
\end{center}

?`Is there another way of parametrizing $X\in\E(r,d)$?

Striving for a unique geometrical/dynamical solution 
in the general case $r, d\geq1$,
$(r,d)$--configuration trees provide a 
``mixed approach''. 
Further study of the above question
and effective parameters
from Diagram \ref{diagramaDificultades} is the goal 
of a future project.





\begin{thebibliography}{99}

\bibitem{Abramowitz}
M. Abramowitz, I. A. Stegun (Eds.). 
\emph{Hypergeometric Functions}, Ch. 15 in 
Handbook of Mathematical Functions with 
Formulas, Graphs, and Mathematical Tables, 
9th printing. New York: Dover, (1972) 555--566.


\bibitem{AlvarezMucino}
\newblock A. Alvarez--Parrilla, J. Muci\~no--Raymundo, 
\emph{Dynamics of singular complex analytic vector fields with 
essential singularities I}, 
Conform. Geom. Dyn., 21 (2017), 126--224.
\url{http://dx.doi.org/10.1090/ecgd/306}

\bibitem{AlvarezMucino2}
A. Alvarez--Parrilla, J. Muci\~no--Raymundo, 
\emph{Symmetries of complex analytic vector fields with an essential 
singularity on the Riemann sphere}, 
Advances in Geom. to appear, 
(2019), 25 pages.
\url{http://arxiv.org/abs/1904.11667}

\bibitem{AlvarezMucinoSolorzaYee}
A. Alvarez--Parrilla, J. Muci\~no--Raymundo, S. Solorza and C. Yee--Romero, 
\emph{On the geometry, flows and visualization of 
singular complex analytic vector fields on Riemann surfaces}, 
Proceedings of the 2018 Workshop in Holomorphic Dynamics, 
C. Cabrera {\it et al.} Eds.,
Instituto de Matem\'aticas, UNAM, M\'exico, \emph{Serie Papirhos}, Actas 1 (2019), 21--109.
\url{https://arxiv.org/abs/1811.04157}

\bibitem{Andronov-L-G-M}
 A. A. A. Andronov, E. A. Lentovich, I. I. Gordon, A. G. Maier,
\emph{Qualitative Theory of Second--Order Dynamic Systems}, 
J. Wiley \& Sons, New--York, Toronto, 1973. 

\bibitem{Arnold-Ilyashenko}
V. I. Arnold, Y. Ilyashenko, 
\emph{Ordinary Differential Equations}, in Dynamical Systems I,
D. V. Anosov, V. I. Arnold Eds., Springer--Verlag, Berlin, 1994.

\bibitem{Belyi}
G. V. Bely{\u \i},
\emph{On Galois extensions of a maximal cyclotonic field},
Math. USSR Izvestija,
193, (1980), no 14, 247--256. 
\url{https://doi.org/10.1070/IM1980v014n02ABEH001096}

\bibitem{Bender-1}
C. M. Bender,  D. W. Hook,  P. N. Meisinger,  Q.--H. Wang, 
\emph{Complex correspondence principle},
Physical Review Letters
PRL 104, 12 February
(2010) 
061601--0616014.
\url{https://doi.org/10.1103/PhysRevLett.104.061601}

\bibitem{Bender-2}
C. M. Bender,  D. W. Hook,  P. N. Meisinger,  Q.--H. Wang, 
\emph{Probability density in the complex plane},
Annals of Physics 325 (2010) 2332--2362.
\url{https://doi.org/10.1016/j.aop.2010.02.011}

\bibitem{Bender-3} 
C. M. Bender,
\emph{PT symmetry in quantum and classical physics},
With contributions from P. E. Dorey, C. Dunning, 
A. Fring, D. W. Hook, H. F. Jones, S. Kuzhel, 
G. L\'{e}vai, R. Tateo,
World Sci., Hackensack, NJ (2019).
 

\bibitem{BergweilerEremenko}
W. Bergweiler, A. Eremenko,
\emph{On the singularities of the inverse to a 
meromorphic function of finite order},
Revista Matem\'atica Iberoamericana,  11, 2, (1995) 355--373.
\url{http://dx.doi.org/10.4171/RMI/176}

\bibitem{Bishop}
Ch. J. Bishop,
\emph{Constructing entire functions by quasiconformal folding,}
Acta Math., 214 (2015) 1--60. 
\url{https://doi-org.pbidi.unam.mx:2443/10.1007/s11511-015-0122-0}

\bibitem{Biswas-PerezMarco-1}
K. Biswas, R. P\'erez--Marco,
\emph{Log--Riemann surfaces, Caratheodory convergence
and Euler's formula},
Contemporary Math., 639 (2015) 197--203.
\url{http://de.doi.org/10.1090/com/639/12826} 

\bibitem{Biswas-PerezMarco-2}
K. Biswas, R. P\'erez--Marco,
\emph{Uniformization of simply connected  finite 
type Log--Riemann surfaces},
Contemporary Math., 639 (2015) 205--216.
\url{http://de.doi.org/10.1090/com/639/12827} 

\bibitem{Biswas-PerezMarco-3}
K. Biswas, R. P\'erez--Marco,
\emph{The ramificant determinant}, 
SIGMA 15 (2019), 086, 28 pages.
\url{https://www.emis.de/journals/SIGMA/2019/086/sigma19-086.pdf} 

\bibitem{Boothby1}
W. M. Boothby, 
\emph{The topology of regular curve families with multiple saddle points},
Amer. J. Math.,
Vol. 73, No. 2 (Apr., 1951), 405--438.
\url{https://www.jstor.org/stable/2372185} 

\bibitem{Boothby2}
W. M. Boothby, 
\emph{The topology of the level curves of harmonic functions
with critical points},
Amer. J. Math.,
Vol. 73, No. 3 (Jul., 1951), 512--538.
\url{https://www.jstor.org/stable/2372305}



\bibitem{Branner-Dias}
B. Branner, K. Dias, 
{\it Classification of complex polynomial vector 
fields in one complex variable}, 
J. Difference Equ. Appl.,
16,
5--6, 
(2010), 
463--517.
\url{https://doi.org/10.1080/10236190903251746}

\bibitem{Cheeger-Gromoll}
J. Cheeger, D. Gromoll,
\emph{The structure of complete manifolds of nonnegative curvature}, 
Bull. Amer. Math. Soc., 74 (1968), 1147--1150.
\url{https://doi-org.pbidi.unam.mx:2443/10.1090/S0002-9904-1968-12088-9}




\bibitem{Dias-Tan}
K. Dias, L. Tan,
\emph{On parameter space of complex polynomial vector fields in
$\CC$},
J. Differential Equations, 260, 1 (2016), 628--652.
\url{http://dx.doi.org/10.1016/j.jde.2015.09.001}


\bibitem{Elfving}
G.~Elfving, 
\emph{\"Uber eine Klasse von Riemannschen Fl\"achen und ihre Uniformisierung},
Acta Soc. Sci. Fennicae, N.S. 2, Nr. 3 (1934), 1--60.

\bibitem{EremenkoGabrielov}
A.~Eremenko, A.~Gabrielov,
\emph{Rational functions with real critical points 
and the B. and M. Shapiro conjecture in real enumerative geometry}, 
Ann. of Math. (2) 155 (2002), no. 1, 105–129.
\url{http://dx.doi.org/10.2307/3062151}

\bibitem{EremenkoGabrielov2}
A.~Eremenko, A.~Gabrielov,
\emph{An elementary proof of the B. and M. 
Shapiro conjecture for rational functions},
In  Notions of positivity and the geometry of polynomials, 167–178, 
Trends Math., Birkhäuser/Springer Basel AG, Basel, 2011.
\url{http://dx.doi.org/10.1007/978-3-0348-0142-3_10}

\bibitem{Frias-Mucino}
M. E. Fr\'ias--Armenta, J. Muci\~no--Raymundo,
{\it Topological and analytic classification of 
vector fields with only isochronous centers},
J. Eqs. Appl., 
19,
10, 
(2013), 
1694--1728. 
\url{https://doi.org/10.1080/10236198.2013.772598}


\bibitem{Grothendieck}
A. Grothendieck.
\emph{Equisse d'un program}, In Geometric Galois
actions 1, L. Schneps \emph{et al.} Ed., 
vol. 242 London Math. Soc. Lecture
Note Ser., (1997) 7--48. 
\url{https://doi.org/10.1017/CBO9780511758874.003}

\bibitem{HockettRamamurti}
K. Hockett, S. Ramamurti, 
\emph{Dynamics near the essential singularity of a class of entire
vector fields},
Transactions of the AMS, vol. 345, 2 (1994),  693--703. 
\url{https://doi.org/10.1090/S0002-9947-1994-1270665-5}

\bibitem{HuaYang}
X.--H. Hua, Ch.--Ch. Yang, 
\emph{Dynamics of Transcendental Functions},
Gordon and Breach (1998).



\bibitem{Kilmes-Rousseau}
M. Klime\v{s}, Ch. Rousseau, 
\emph{Generic $2$--parameter perturbations of parabolic singular points
of vector fields in $\mathbb{C}$},
Conform. Geom. Dyn., 22 (2018) 141--184.
\url{https://doi.org/10.1090/ecgd/325}

\bibitem{Klein-2}
F. Klein, 
\emph{On Riemann's Theory of Algebraic Functions and Their Integrals},
Dover Publications Inc., New York, (1963).  
\url{http://www.gutenberg.org/ebooks/36959}

\bibitem{Jenkins}
J. Jenkins, 
\emph{Univalent Functions and Conformal Mapping},
Ergebnisse Der Mathematik Und Iher Grenzgebiete, 
Springer--Verlag, 
Berlin, 
1958.
\url{https://doi.org/10.1007/978-3-642-88563-1}



\bibitem{LandoZvonkin}
S. K. Lando, A. K. Zvonkin,
\emph{Graphs on Surfaces and Their Applications},
EMS 141, Springer--Verlag (2004).
\url{https://doi.org/10.1007/978-3-540-38361-1}


\bibitem{Langley}
J. K. Langley,
\emph{Trajectories escaping to infinity in finite time},
Procc. Amer. Math. Soc.  
Vol. 145, Num. 5, May (2017),  2107--2117.
\url{https://dx.doi.org/10.1090/proc/13377}


\bibitem{LopezMucino}
J.\,L. L\'opez, J. Muci\~no--Raymundo,
\emph{On the problem of deciding whether a holomorphic vector field
is complete}, in
Operator Theory: Advances and Applications 114, (2000), 171--195.
\url{https://doi.org/10.1007/978-3-0348-8698-7_13}


\bibitem{Magana}
J. C. Maga\~na--Cac\'eres, 
\emph{Classification of 1--forms on the Riemann sphere up to $PSL(2, \CC)$},
Bull. Mexican Math. Soc., (3), 25, 3 (2019), 597--617.
\url{https://doi.org/10.1007/s40590-018-0217-7}


\bibitem{Masoero}
D. Masoero, 
\emph{Painleve I, coverings of the sphere and Belyi functions},
Constr. Approx., 39, 1 (2014), 43--74.
\url{https://doi.org/10.1007/s00365-013-9185-3}



\bibitem{MasurTabachnikov}
H. Masur, S. Tabachnikov,  
\emph{Rational billiards and flat structures},
in Handbook of Dynamical Systems, 
Vol. 1A, 
B.~Hasselblatt \emph{et al.} Ed., 
North--Holland, Amsterdam, 2002, 1015--1089.
\url{https://doi.org/10.1016/S1874-575X(02)80015-7}

\bibitem{MR}
J. Muci\~no--Raymundo, 
\emph{Complex structures adapted to smooth vector fields},
Math. Ann., 322, (2002), 229--265.
\url{https://doi.org/10.1007/s002080100206}

\bibitem{MucinoValero}
J. Muci\~no--Raymundo, C. Valero--Vald\'ez, 
\emph{Bifurcations of meromorphic vector fields on the Riemann sphere},
Ergod. Th. \& Dynam. Sys., 15, (1995), 1211--1222.
\url{https://doi.org/10.1017/S0143385700009883}


\bibitem{Mycielski}
J. Mycielski, 
\emph{Polynomials with preassigned values at their
branching points}, 
Amer. Math. Montly 77:8, 853-855,
\url{https://doi.org/10.1080/00029890.1970.11992599}



\bibitem{Nevanlinna1}
R. Nevanlinna,  
\emph{Analytic Functions}, 
Springer--Verlag, (1970).
\url{https://doi.org/10.1007/978-3-642-85590-0}

\bibitem{Nevanlinna2}
R. Nevanlinna, 
\emph{\"Uber Riemannsche Fl\"achen mit endlich vielen Windungspunkten}, 
Acta Math., 58(1), 295--373, (1932).
\url{https://doi.org/10.1007/BF02547780}



\bibitem{Schwarz}
H. A. Schwarz, 
\emph{Ueber diejenigen F\"alle, inwelchen die Gaussische 
hypergeometrische Reihe eine algebraische Function 
ihres vierten Elementes darstellt},
J. Reine Angew. Math., 75, (1873), 292--335.
\url{https://doi.org/10.1515/crll.1873.75.292}

\bibitem{Speiser2}
A. Speiser, 
\emph{\"Uber Riemannsche Fl\"achen},
Comment. Math. Helv., 2 (1930), 284--293. 

\bibitem{Strebel}
K. Strebel, 
\emph{Quadratic Differentials}, 
Springer--Verlag (1984).
\url{https://doi.org/10.1007/978-3-662-02414-0}

\bibitem{Taniguchi1}
M. Taniguchi, 
\emph{Explicit representations of structurally finite entire functions},
Proc. Japan Acad. Ser. A Math. Sci., Vol. 77, (2001), 68--70.
\url{https://projecteuclid.org/euclid.pja/1148393085}

\bibitem{Taniguchi2}
M. Taniguchi, 
\emph{Synthetic deformation space of an entire function},
Cont. Math., Vol. 303, (2002), 107--136.
\url{http://dx.doi.org/10.1090/conm/303/05238}

\bibitem{Thom}
R. Thom, 
\emph{L'\'equivalence de une function diff\'erentiable et 
d'un polynome}, 
Topology, 3, 2 (1965) 297--307.  
\url{https://doi.org/10.1016/0040-9383(65)90079-0}

\bibitem{Thurston}
W. P. Thurston,
\emph{Three--Dimensional Geometry and Topology. Vol. 1.},
Princeton University Press, USA (1997).

\medskip

\end{thebibliography}
\end{document}